\newcommand{\arm}[2][i]{\ensuremath{{_nA}_{#2}^{#1}}}
\newcommand{\oneton}[2][1]{\ensuremath{\{{#1},\linebreak[0]\ldots,\linebreak[0]{#2}\}}}
\newcommand{\armzeroi}[1][i]{\ensuremath{\langle v_0,v_{#1} \rangle}}
\newcommand{\armzeronminusone}{\ensuremath{\langle v_0,v_{n-1},v_{n+1} \rangle}}
\newcommand{\armzeron}{\ensuremath{\langle v_0,v_n,v_{n+2} \rangle}}
\newcommand{\armonei}[1][i]{\ensuremath{\langle v_0,\linebreak[0]u_{#1},\linebreak[0]u_{(n-3)+#1},\linebreak[0]u_{2(n-3)+#1},\linebreak[0]\ldots,\linebreak[0]u_{2(n-1)(n-3)+#1},\linebreak[0]v_{#1}
\rangle}}
\newcommand{\armonenminustwo}{\ensuremath{\langle v_0,\linebreak[0]u_{(2n-1)(n-3)+2(n-1)+1},\linebreak[0]v_{n-2} \rangle}}
\newcommand{\armonenminusone}{\ensuremath{\langle v_0,\linebreak[0]u_{(2n-1)(n-3)+2(n-1)+2},\linebreak[0]u_{(2n-1)(n-3)+2(n-1)+3},\linebreak[0]v_{n-1},\linebreak[0]v_{n+1}
\rangle}}
\newcommand{\armonen}{\ensuremath{\langle v_0,\linebreak[0]u_{(2n-1)(n-3)+1},\linebreak[0]\ldots,\linebreak[0]u_{(2n-1)(n-3)+2(n-1)},\linebreak[0]v_n,\linebreak[0]v_{n+2} \rangle}}
\newcommand{\phidef}[3]{\ensuremath{#1_{#2}}&\vline&\ensuremath{v_{#3}}}
\newcommand{\invseq}[3][n]{\ensuremath{\{{_{#1}{#2}}_i,{_{#1}{#3}}_i^j\}_{i=0, j\ge i}^{\infty}}}
\newcommand{\dinvseq}[3][n]{\ensuremath{\{D({_{#1}{#3}}_0^i,{_{#1}{#2}}_i),\linebreak[0]d[{_{#1}{#3}}_0^i,{_{#1}{#3}}_i^j]\}_{i=0, j\ge i}^{\infty}}}
\newcommand{\branch}{\ensuremath{(v_0,\linebreak[0]v_0,\linebreak[0]v_0,\ldots)}}
\newcommand{\X}[2][n]{\ensuremath{{_{#1}X}_{#2}}}
\newcommand{\K}{\ensuremath{{_nK}}}
\newcommand{\nphi}[1][n]{\ensuremath{{_{#1}\phi}}}
\newcommand{\nPhi}[3][n]{\ensuremath{{_{#1}\Phi}_{#3}^{#2}}}
\newcommand{\nS}{\ensuremath{{_nS}}}
\newcommand{\nSdef}[3]{\ensuremath{{_nS}(v_{#1})=\linebreak[0]\{\langle
v_{#2},\linebreak[0]v_{#3} \rangle\}}}
\newcommand{\edge}[4]{\ensuremath{\langle {#1}_{#2},\linebreak[0]{#3}_{#4}
\rangle}}
\newcommand{\brkedge}[4]{\ensuremath{\langle {#1}_{#2},}\\&&\ensuremath{{#3}_{#4}
\rangle}}
\newcommand{\bbrkedge}[4]{\ensuremath{\langle {#1}_{#2},}\\\indent\indent\ensuremath{{#3}_{#4}
\rangle}}
\newcommand{\bbbbrkedge}[4]{\ensuremath{\langle {#1}_{#2},\linebreak{#3}_{#4}
\rangle}}
\newcommand{\subedge}[5]{\ensuremath{(v_{#1},\linebreak[0]\edge{#2}{#3}{#4}{#5},\linebreak[0]\X{1})}}
\newcommand{\imphiedge}[4]{\ensuremath{\nphi(\edge{#1}{#2}{#3}{#4})}}
\newcommand{\imphisubedge}[5]{\ensuremath{\nphi(\subedge{#1}{#2}{#3}{#4}{#5})}}
\newcommand{\imnS}[1]{\ensuremath{\nS(v_{#1})}}
\newcommand{\imphi}[2]{\ensuremath{\nphi(\linebreak[0]{#1}_{#2})}}
\newcommand{\imnSphi}[2]{\ensuremath{\nS(\linebreak[0]\imphi{#1}{#2})}}
\newcommand{\dualver}[2]{\ensuremath{{#1}_{#2}^*}}
\newcommand{\dnphi}{\ensuremath{d[\nphi]}}
\newcommand{\imdnphi}[2]{\ensuremath{b_{#1}}&\vline&\ensuremath{a_{#2}}}
\newcommand{\imdnphiinv}[4]{\ensuremath{\dnphi^{-1}(\edge{a}{#1}{a}{#2})}&=&\ensuremath{\edge{b}{#3}{b}{#4}}}
\newcommand{\Y}[2][n]{\ensuremath{{_{#1}Y}_{#2}}}
\newcommand{\Barm}[2][i]{\ensuremath{{_nB}_{#2}^{#1}}}
\newcommand{\Barmonei}[1][i]{\ensuremath{\langle v_0,\linebreak[0]w_{#1},\linebreak[0]w_{n-3+{#1}},\linebreak[0]w_{2(n-3)+{#1}},
\linebreak[0]\ldots,\linebreak[0]w_{(n-2)(n-3)+{#1}},\linebreak[0]v_{#1}
\rangle}}
\newcommand{\Barmonenminustwo}{\ensuremath{\langle v_0,\linebreak[0]v_{n-2} \rangle=\linebreak[0]\arm[n-2]{0}}}
\newcommand{\Barmonenminusone}{\ensuremath{\langle v_0,\linebreak[0]v_{n-1},\linebreak[0]v_{n+1} \rangle=\linebreak[0]\arm[n-1]{0}}}
\newcommand{\Barmonen}{\ensuremath{\langle v_0,\linebreak[0]w_{(n-1)(n-3)+1},\linebreak[0]\ldots,\linebreak[0]w_{(n-1)(n-3)+n-2},\linebreak[0]v_n,\linebreak[0]v_{n+2} \rangle}}
\newcommand{\nlambda}{\ensuremath{{_n\lambda}}}
\newcommand{\llambdadef}[3]{\ensuremath{#1_{#2}}&\vline&\ensuremath{b_{#3}}}
\newcommand{\imPhibar}[4]{\ensuremath{\overline{\nPhi{#1}{#2}}(\linebreak[0]\arm[#3]{#4})}}
\newcommand{\xseq}[4]{\ensuremath{\langle
x_{#2}^{#1},\linebreak[0]x_{#3}^{#1},\linebreak[0]\ldots,\linebreak[0]x_{#4}^{#1}
\rangle}}
\newcommand{\xxxseq}[4]{\ensuremath{\langle
x_{#2}^{#1},\linebreak
x_{#3}^{#1},\linebreak[0]\ldots,\linebreak[0]x_{#4}^{#1} \rangle}}
\newcommand{\xxxxxseq}[4]{\ensuremath{\langle
x_{#2}^{#1},\linebreak[0]x_{#3}^{#1},\linebreak[0]\hspace{-.03cm}\ldots\hspace{-.03cm},\linebreak[0]x_{#4}^{#1}
\linebreak[0]\rangle}}
\newcommand{\xxxxxxseq}[4]{\ensuremath{\langle
x_{#2}^{#1},\linebreak[0]x_{#3}^{#1},\linebreak[0]\ldots,\linebreak[0]x_{#4}^{#1}
\linebreak[0]\rangle}}
\newcommand{\xxxxxxxseq}[4]{\ensuremath{\langle
\linebreak
x_{#2}^{#1},\linebreak[0]x_{#3}^{#1},\linebreak[0]\ldots,\linebreak[0]x_{#4}^{#1}
\linebreak[0]\rangle}}
\newcommand{\xxxxxxxxseq}[4]{\ensuremath{\langle
x_{#2}^{#1},\linebreak[0]x_{#3}^{#1},\linebreak[0]\hspace{.1cm}\ldots\hspace{.1cm},\linebreak[0]x_{#4}^{#1}
\rangle}}
\newcommand{\subdivedge}[3]{\ensuremath{(\linebreak[0]\edge{v}{#1}{v}{#2},\linebreak[0]\X{#3})}}
\newcommand{\ssssubdivedge}[3]{\ensuremath{(\linebreak[0]\bbbbrkedge{v}{#1}{v}{#2},\linebreak[0]\X{#3})}}
\newcommand{\imedgePhibar}[5]{\ensuremath{\overline{\nPhi{#1}{#2}}(\linebreak[0]\subdivedge{#3}{#4}{#5})}}
\newcommand{\vseqonebase}[4][]{\ensuremath{\langle
v_0,\linebreak[0]v_{n-1},\linebreak[0]v_0,\linebreak[0]v_{n-2},\linebreak[0]v_0,\linebreak[0]v_1,\linebreak[0]v_0,\linebreak[0]\underline{v_2,\linebreak[0]v_0},\linebreak[0]\ldots,\linebreak[0]\underline{v_{n-3},\linebreak[0]v_0},#4
\linebreak[0]v_n,#1\linebreak[0]v_0,\linebreak[0]v_{n-3},\linebreak[0]v_0,#3\linebreak[0]\underline{v_{n-4},\linebreak[0]v_0},\linebreak[0]\ldots,\linebreak[0]\underline{v_1,\linebreak[0]v_0},#2\linebreak[0]v_{n-2},\linebreak[0]v_0,\linebreak[0]v_{n-1},\linebreak[0]v_0
\rangle}}
\newcommand{\vseqone}[1][]{\ensuremath{\vseqonebase[#1]{}{}{}}}
\newcommand{\vvvseqone}[4][]{\ensuremath{\langle
v_0,\linebreak[0]v_{n-1},\linebreak[0]v_0,\linebreak[0]v_{n-2},\linebreak[0]v_0,\linebreak[0]v_1,\linebreak[0]v_0,\hspace{.05cm}\linebreak[0]\underline{v_2,\linebreak[0]v_0}\hspace{.05cm},\hspace{.08cm}\linebreak[0]\ldots\hspace{.08cm},\linebreak[0]\underline{v_{n-3},\linebreak[0]v_0}\hspace{.05cm},#4
\linebreak[0]v_n,#1\linebreak[0]v_0,\linebreak[0]v_{n-3},\linebreak[0]v_0,#3\hspace{.05cm}\linebreak[0]\underline{v_{n-4},\linebreak[0]v_0}\hspace{.05cm},\hspace{.08cm}\linebreak[0]\ldots\hspace{.08cm},\linebreak\underline{v_1,\linebreak[0]v_0},#2\linebreak[0]v_{n-2},\linebreak[0]v_0,\linebreak[0]v_{n-1},\linebreak[0]v_0
\rangle}}
\newcommand{\vseqonea}[1][]{\ensuremath{\vseqonebase[#1]{}{}{}}}
\newcommand{\vvseqonea}[4][]{\ensuremath{\langle
v_0,\linebreak[0]v_{n-1},\linebreak[0]v_0,\linebreak[0]v_{n-2},\linebreak[0]v_0,\linebreak[0]v_1,\linebreak[0]v_0,\linebreak[0]\underline{v_2,\linebreak[0]v_0},\linebreak[0]\ldots,\linebreak[0]\underline{v_{n-3},\linebreak[0]v_0},#4
\linebreak[0]v_n,#1\linebreak[0]v_0,\linebreak[0]v_{n-3},\linebreak[0]v_0,#3\linebreak[0]\underline{v_{n-4},\linebreak[0]v_0},\linebreak\ldots,\linebreak[0]\underline{v_1,\linebreak[0]v_0},#2\linebreak[0]v_{n-2},\linebreak[0]v_0,\linebreak[0]v_{n-1},\linebreak[0]v_0
\rangle}}
\newcommand{\vvvseqonea}[4][]{\ensuremath{\langle
v_0,\linebreak[0]v_{n-1},\linebreak[0]v_0,\linebreak[0]v_{n-2},\linebreak[0]v_0,\linebreak[0]v_1,\linebreak[0]v_0,\linebreak[0]\underline{v_2,\linebreak[0]v_0},\linebreak[0]\ldots,\linebreak[0]\underline{v_{n-3},\linebreak[0]v_0},#4
\linebreak[0]v_n,#1\linebreak[0]v_0,\linebreak[0]v_{n-3},\linebreak[0]v_0,#3\linebreak[0]\underline{v_{n-4},}\linebreak[0]\underline{v_0},\linebreak[0]\ldots,\linebreak[0]\underline{v_1,\linebreak[0]v_0},#2\linebreak[0]v_{n-2},\linebreak[0]v_0,\linebreak[0]v_{n-1},\linebreak[0]v_0
\rangle}}
\newcommand{\vvvvseqonea}[4][]{\ensuremath{\langle
v_0,\linebreak[0]v_{n-1},\linebreak[0]v_0,\linebreak[0]v_{n-2},\linebreak[0]v_0,\linebreak[0]v_1,\linebreak[0]v_0,\linebreak[0]\underline{v_2,\linebreak[0]v_0},\linebreak[0]\hspace{.08cm}\ldots\hspace{.08cm},\linebreak[0]\underline{v_{n-3},\linebreak[0]v_0},#4
\linebreak[0]v_n,#1\linebreak[0]v_0,\linebreak[0]v_{n-3},\linebreak[0]v_0,#3\linebreak[0]\underline{v_{n-4},\linebreak[0]v_0},\linebreak[0]\hspace{.08cm}\ldots\hspace{.08cm},\linebreak[0]\underline{v_1,\linebreak[0]v_0},#2\linebreak[0]v_{n-2},\linebreak[0]v_0,\linebreak[0]v_{n-1},\linebreak[0]v_0
\rangle}}
\newcommand{\vseqoneb}[1][]{\ensuremath{\vseqonebase[#1]{}{}{}}}
\newcommand{\vvvseqoneb}[4][]{\ensuremath{\langle
v_0,\linebreak[0]v_{n-1},\linebreak[0]v_0,\linebreak[0]v_{n-2},\linebreak[0]v_0,\linebreak[0]v_1,\linebreak[0]v_0,\linebreak[0]\underline{v_2,\linebreak[0]v_0},\hspace{.08cm}\linebreak[0]\ldots\hspace{.08cm},\linebreak[0]\underline{v_{n-3},\linebreak[0]v_0},#4
\linebreak[0]v_n,#1\linebreak[0]v_0,\linebreak[0]v_{n-3},\linebreak[0]v_0,#3\linebreak[0]\underline{v_{n-4},\linebreak[0]v_0},\hspace{.08cm}\linebreak[0]\ldots\hspace{.08cm},\linebreak[0]\underline{v_1,\linebreak[0]v_0},#2\linebreak[0]v_{n-2},\linebreak[0]v_0,\linebreak[0]v_{n-1},\linebreak[0]v_0
\rangle}}
\newcommand{\vseqonec}[1][]{\ensuremath{\vseqonebase[#1]{}{}{}}}
\newcommand{\vvseqonec}[4][]{\ensuremath{\langle
v_0,\linebreak[0]v_{n-1},\linebreak[0]v_0,\linebreak[0]v_{n-2},\linebreak[0]v_0,\linebreak[0]v_1,\linebreak[0]v_0,\linebreak[0]\underline{v_2,\linebreak[0]v_0},\linebreak[0]\ldots,\linebreak[0]\underline{v_{n-3},\linebreak[0]v_0},#4
\linebreak[0]v_n,#1\linebreak[0]v_0,\linebreak[0]v_{n-3},\linebreak[0]v_0,#3\linebreak[0]\underline{v_{n-4},\linebreak[0]v_0},\linebreak[0]\ldots,\linebreak[0]\underline{v_1,}\linebreak[0]\underline{v_0},#2\linebreak[0]v_{n-2},\linebreak[0]v_0,\linebreak[0]v_{n-1},\linebreak[0]v_0
\rangle}}
\newcommand{\impj}[2][j]{\ensuremath{p^{#1}({#2})}}
\newcommand{\vseqpbase}[4][j]{\ensuremath{\langle
v_0,\linebreak[0]v_{n-1},\linebreak[0]v_0,\linebreak[0]v_{\impj[#1]{1}},\linebreak[0]v_0,#3\linebreak[0]\underline{v_{\impj[#1]{2}},\linebreak[0]v_0},\linebreak[0]\ldots,\linebreak[0]\underline{v_{\impj[#1]{n-2}},\linebreak[0]v_0},
\linebreak[0]v_n,#4\linebreak[0]v_{n+2},\linebreak[0]v_n,\linebreak[0]v_0,\linebreak[0]v_{\impj[#1]{n-2}},\linebreak[0]v_0,#2\linebreak[0]\underline{v_{\impj[#1]{n-3}},\linebreak[0]v_0},\linebreak[0]\ldots,\linebreak[0]\underline{v_{\impj[#1]{1}},\linebreak[0]v_0},\linebreak[0]v_{n-1},\linebreak[0]v_0
\rangle}}
\newcommand{\vseqpa}[1][j]{\ensuremath{\vseqpbase[#1]{}{}{}}}
\newcommand{\vvvvseqpa}[4][j]{\ensuremath{\langle
v_0,\linebreak[0]v_{n-1},\linebreak[0]v_0,\linebreak[0]v_{\impj[#1]{1}},\linebreak[0]v_0,#3\linebreak[0]\underline{v_{\impj[#1]{2}},\linebreak[0]v_0},\linebreak[0]\ldots,\linebreak[0]\underline{v_{\impj[#1]{n-2}},\linebreak[0]v_0},
\linebreak[0]v_n,#4\linebreak[0]v_{n+2},\linebreak[0]v_n,\linebreak[0]v_0,\linebreak[0]v_{\impj[#1]{n-2}},\linebreak[0]v_0,#2\linebreak[0]\underline{v_{\impj[#1]{n-3}},}\linebreak\underline{v_0},\linebreak[0]\ldots,\linebreak[0]\underline{v_{\impj[#1]{1}},\linebreak[0]v_0},\linebreak[0]v_{n-1},\linebreak[0]v_0
\rangle}}
\newcommand{\vvvseqpb}[4][j]{\ensuremath{\langle
v_0,\linebreak[0]v_{n-1},\linebreak[0]v_0,\linebreak[0]v_{\impj[#1]{1}},\linebreak[0]v_0,#3\linebreak[0]\underline{v_{\impj[#1]{2}},\linebreak[0]v_0},\linebreak\ldots,\linebreak[0]\underline{v_{\impj[#1]{n-2}},\linebreak[0]v_0},
\linebreak[0]v_n,#4\linebreak[0]v_{n+2},\linebreak[0]v_n,\linebreak[0]v_0,\linebreak[0]v_{\impj[#1]{n-2}},\linebreak[0]v_0,#2\linebreak[0]\underline{v_{\impj[#1]{n-3}},\linebreak[0]v_0},\linebreak[0]\ldots,\linebreak[0]\underline{v_{\impj[#1]{1}},\linebreak[0]v_0},\linebreak[0]v_{n-1},\linebreak[0]v_0
\rangle}}
\newcommand{\vseqpc}[1][j]{\ensuremath{\vseqpbase[#1]{}{}{}}}
\newcommand{\vseqtwo}{\ensuremath{\langle
v_0,\linebreak[0]v_{n-1},\linebreak[0]v_{n+1},\linebreak[0]v_{n-1},\linebreak[0]v_0
\rangle}}
\newcommand{\vseqthree}{\ensuremath{\langle
v_0,\linebreak[0]v_{n-1},\linebreak[0]v_0,\linebreak[0]v_n,\linebreak[0]v_0,\linebreak[0]v_{n-1},\linebreak[0]v_0
\rangle}}
\newcommand{\vvvvseqthree}{\ensuremath{\langle
v_0,\linebreak[0]v_{n-1},\linebreak[0]v_0,\linebreak
v_n,\linebreak[0]v_0,\linebreak[0]v_{n-1},\linebreak[0]v_0
\rangle}}
\newcommand{\vseqfour}{\ensuremath{\langle
v_0,\linebreak[0]v_{n-1},\linebreak[0]v_0,\linebreak[0]v_n,\linebreak[0]v_{n+2},\linebreak[0]v_n,\linebreak[0]v_0,\linebreak[0]v_{n-1},\linebreak[0]v_0
\rangle}}
\newcommand{\vseqonehalfbase}[2][\ensuremath{,\linebreak[0]v_{n+2}}]{\ensuremath{\langle
v_0,\linebreak[0]v_{n-1},\linebreak[0]v_0,\linebreak[0]v_{n-2},\linebreak[0]v_0,\linebreak[0]v_1,\linebreak[0]v_0,#2\linebreak[0]\underline{v_2,\linebreak[0]v_0},\linebreak[0]\ldots,\linebreak[0]\underline{v_{n-3},\linebreak[0]v_0},
\linebreak[0]v_n #1 \rangle}}
\newcommand{\vseqonehalf}[1][\ensuremath{,\linebreak[0]v_{n+2}}]{\ensuremath{\vseqonehalfbase[#1]{}}}
\newcommand{\vvvvvseqonehalf}[2][\ensuremath{,\linebreak[0]v_{n+2}}]{\ensuremath{\langle
v_0,\linebreak[0]v_{n-1},\linebreak[0]v_0,\linebreak[0]v_{n-2},\linebreak[0]v_0,\linebreak[0]v_1,\linebreak[0]v_0,#2\linebreak[0]\underline{v_2,\linebreak[0]v_0},\linebreak[0]\hspace{.08cm}\ldots\hspace{.08cm},\linebreak[0]\underline{v_{n-3},\linebreak[0]v_0},
\linebreak[0]v_n #1 \rangle}}
\newcommand{\vvvvseqonehalf}[2][\ensuremath{,\linebreak[0]v_{n+2}}]{\ensuremath{\langle
v_0,\linebreak
v_{n-1},\linebreak[0]v_0,\linebreak[0]v_{n-2},\linebreak[0]v_0,\linebreak[0]v_1,\linebreak[0]v_0,#2\linebreak[0]\underline{v_2,\linebreak[0]v_0},\linebreak[0]\ldots,\linebreak[0]\underline{v_{n-3},\linebreak[0]v_0},
\linebreak[0]v_n #1 \rangle}}
\newcommand{\vvseqonehalf}[2][\ensuremath{,\linebreak[0]v_{n+2}}]{\ensuremath{\langle
v_0,\linebreak[0]v_{n-1},\linebreak[0]v_0,\linebreak
v_{n-2},\linebreak[0]v_0,\linebreak[0]v_1,\linebreak[0]v_0,#2\linebreak[0]\underline{v_2,\linebreak[0]v_0},\linebreak[0]\ldots,\linebreak[0]\underline{v_{n-3},\linebreak[0]v_0},
\linebreak[0]v_n #1 \rangle}}
\newcommand{\vvvseqonehalf}[2][\ensuremath{,\linebreak[0]v_{n+2}}]{\ensuremath{\langle
v_0,\linebreak[0]v_{n-1},\linebreak[0]v_0,\linebreak[0]v_{n-2},\linebreak[0]v_0,\linebreak[0]v_1,\linebreak[0]v_0,#2\linebreak[0]\underline{v_2,\linebreak[0]v_0},\linebreak[0]\ldots,\linebreak[0]\underline{v_{n-3},}\linebreak[0]\underline{v_0},
\linebreak[0]v_n #1 \rangle}}
\newcommand{\vseqonehalfa}[1][\ensuremath{,\linebreak[0]v_{n+2}}]{\ensuremath{\vseqonehalfbase[#1]{}}}
\newcommand{\vvseqonehalfa}[2][\ensuremath{,\linebreak[0]v_{n+2}}]{\ensuremath{\langle
v_0,\linebreak[0]v_{n-1},\linebreak[0]v_0,\linebreak[0]v_{n-2},\linebreak[0]v_0,\linebreak[0]v_1,\linebreak[0]v_0,#2\linebreak[0]\underline{v_2,}\linebreak\underline{v_0},\linebreak[0]\ldots,\linebreak[0]\underline{v_{n-3},\linebreak[0]v_0},
\linebreak[0]v_n #1 \rangle}}
\newcommand{\vseqphalfbase}[2][j]{\ensuremath{\langle
v_0,\linebreak[0]v_{n-1},\linebreak[0]v_0,\linebreak[0]v_{\impj[#1]{1}},\linebreak[0]v_0,\linebreak[0]\underline{v_{\impj[#1]{2}},\linebreak[0]v_0},#2\linebreak[0]\ldots,\linebreak[0]\underline{v_{\impj[#1]{n-2}},\linebreak[0]v_0},
\linebreak[0]v_n,\linebreak[0]v_{n+2} \rangle}}
\newcommand{\vseqphalfa}[1][j]{\ensuremath{\vseqphalfbase[#1]{}}}
\newcommand{\vseqtwohalf}{\ensuremath{\langle
v_0,\linebreak[0]v_{n-1},\linebreak[0]v_{n+1} \rangle}}
\newcommand{\vseqfourhalf}{\ensuremath{\langle
v_0,\linebreak[0]v_{n-1},\linebreak[0]v_0,\linebreak[0]v_n,\linebreak[0]v_{n+2}
\rangle}}
\newcommand{\eseq}[4]{\ensuremath{e_{#2}^{#1}\linebreak[0]\bigvee
\linebreak[0]e_{#3}^{#1}\linebreak[0]\bigvee\linebreak[0]\cdots\linebreak[0]\bigvee
\linebreak[0]e_{#4}^{#1}}}
\newcommand{\eseqq}[4]{\ensuremath{e_{#2}^{#1}\linebreak[0]\hspace{-.1cm}\bigvee
\linebreak[0]\hspace{-.1cm}e_{#3}^{#1}\linebreak[0]\bigvee\linebreak[0]\cdots\linebreak[0]\bigvee
\linebreak[0]e_{#4}^{#1}}}
\newcommand{\eeeseq}[4]{\ensuremath{e_{#2}^{#1}\linebreak[0]\bigvee
\linebreak[0]e_{#3}^{#1}\linebreak[0]\bigvee\linebreak[0]\cdots\linebreak[0]\bigvee
\linebreak e_{#4}^{#1}}}
\newcommand{\eeeeseq}[4]{\ensuremath{e_{#2}^{#1}\linebreak[0]\bigvee
\linebreak
e_{#3}^{#1}\linebreak[0]\bigvee\linebreak[0]\cdots\linebreak[0]\bigvee
\linebreak[0]e_{#4}^{#1}}}
\newcommand{\eseqa}[4]{\ensuremath{e_{#2}^{#1}\linebreak[0]\bigvee
\linebreak[0]e_{#3}^{#1}\linebreak[0]\bigvee\linebreak[0]\cdots\linebreak[0]\bigvee
\linebreak[0]e_{#4}^{#1}}}
\newcommand{\eeseqa}[4]{\ensuremath{e_{#2}^{#1}\linebreak[0]\bigvee
\linebreak[0]e_{#3}^{#1}\linebreak[0]\bigvee\linebreak[0]\cdots\linebreak[0]\bigvee
\linebreak e_{#4}^{#1}}}
\newcommand{\nt}[1][n]{\ensuremath{\tilde{#1}}}
\newcommand{\sseqzero}[2]{\ensuremath{\langle
s_0,\linebreak[0]s_1^{#1},\linebreak[0]\ldots,\linebreak[0]s_{#2}^{#1}
\rangle}}
\newcommand{\sseq}[5][]{\ensuremath{\langle
s_{#3}^{#2},\linebreak[0]s_{#4}^{#2},\linebreak[0]\ldots,\linebreak[0]s_{#5}^{#2}#1
\rangle}}
\newcommand{\vpjseq}[1][\ensuremath{,\linebreak[0]v_n,\linebreak[0]v_{n+2},\linebreak[0]v_n,\linebreak[0]v_0,\linebreak[0]v_{\impj{n-2}},\linebreak[0]v_0,\linebreak[0]\underline{v_{\impj{n-3}},\linebreak[0]v_0},\linebreak[0]\ldots,\linebreak[0]\underline{v_{\impj{1}},\linebreak[0]v_0},\linebreak[0]v_{n-1}}]
{\ensuremath{\langle
v_{\impj{1}},\linebreak[0]v_0,\linebreak[0]\underline{v_{\impj{2}},\linebreak[0]v_0},\linebreak[0]\ldots,\linebreak[0]\underline{v_{\impj{n-2}},\linebreak[0]v_0}#1
\rangle}}
\newcommand{\vvpjseq}[1][\ensuremath{,\linebreak[0]v_n,\linebreak[0]v_{n+2},\linebreak[0]v_n,\linebreak[0]v_0,\linebreak[0]v_{\impj{n-2}},\linebreak[0]v_0,\linebreak[0]\underline{v_{\impj{n-3}},\linebreak[0]v_0},\linebreak[0]\ldots,\linebreak[0]\underline{v_{\impj{1}},\linebreak[0]v_0},\linebreak[0]v_{n-1}}]
{\ensuremath{\langle
v_{\impj{1}},\linebreak[0]v_0,\linebreak[0]\underline{v_{\impj{2}},\linebreak[0]v_0},\linebreak[0]\ldots,\linebreak\underline{v_{\impj{n-2}},\linebreak[0]v_0}#1
\rangle}}
\newcommand{\xedge}[4][x]{\ensuremath{\langle
{#1}_{#3}^{#2},\linebreak[0]{#1}_{#4}^{#2} \rangle}}
\newcommand{\imalpha}[2]{\ensuremath{\alpha(\linebreak[0]x_{#2}^{#1})}}
\newcommand{\imalphabar}[4]{\ensuremath{\overline{\alpha}(\linebreak[0]\xseq{#1}{#2}{#3}{#4})}}
\newcommand{\ntt}[1][n]{\ensuremath{\tilde{\tilde{#1}}}}
\newcommand{\stseqzero}[2]{\ensuremath{\langle
\nt[s]_0,\linebreak[0]\nt[s]_1^{#1},\linebreak[0]\ldots,\linebreak[0]\nt[s]_{#2}^{#1}
\rangle}}
\newcommand{\stseq}[5][]{\ensuremath{\langle
\nt[s]_{#3}^{#2},\linebreak[0]\nt[s]_{#4}^{#2},\linebreak[0]\ldots,\linebreak[0]\nt[s]_{#5}^{#2}#1
\rangle}}
\newcommand{\shseqzero}[2]{\ensuremath{\langle
\hat{s}_0,\linebreak[0]\hat{s}_1^{#1},\linebreak[0]\ldots,\linebreak[0]\hat{s}_{#2}^{#1}
\rangle}}
\newcommand{\bgeq}{\begin{equation}}
\newcommand{\edeq}{\end{equation}}
\newcommand{\bgeqn}{\begin{eqnarray}}
\newcommand{\edeqn}{\end{eqnarray}}
\newcommand{\bgeqnn}{\begin{eqnarray*}}
\newcommand{\edeqnn}{\end{eqnarray*}}
\newtheorem{lemma}{Lemma}[chapter]
\newtheorem{theorem}{Theorem}[chapter]
\newtheorem{theorem1}[lemma]{Theorem}
\newtheorem{corollary}[lemma]{Corollary}
\newtheorem{proposition}[lemma]{Proposition}
\theoremstyle{definition}
\newtheorem{definition}{Definition}[chapter]
\newtheorem{definition1}[lemma]{Definition}
\theoremstyle{plain}
\newtheorem{question}[lemma]{Question}
\begin{document}

\begin{titlepage}
\begin{minipage}[t]{.5in}
\ \
\end{minipage}
\begin{center}
\vspace{.15in} COMPLEXITY OF ATRIODIC CONTINUA
\vspace{.14in}\\
by
\vspace{.14in}\\
CHRISTOPHER TODD KENNAUGH, M.S.
\vspace{.5in}\\
A DISSERTATION
\vspace{.14in}\\
IN
\vspace{.14in}\\
MATHEMATICS
\vspace{.15in}\\
\renewcommand{\baselinestretch}{1}\selectfont
Submitted to the Graduate Faculty\\
of Texas Tech University in\\
Partial Fulfillment of\\
 the Requirements for\\
  the  Degree of\\
\renewcommand{\baselinestretch}{1.5}\selectfont
\vspace{.14in}
DOCTOR OF PHILOSOPHY
\vspace{.4in}\\
APPROVED BY

Wayne Lewis (Chair)\\
Robert Byerly\\
Razvan Gelca\\
Fred Hartmeister,\\
Dean of the Graduate School
\vspace{.5in}\\
May, 2009
\end{center}
\end{titlepage}

\begin{titlepage}
\begin{minipage}[t]{2in}
\ \
\end{minipage}
\vspace{4in}\\
\begin{center}
\copyright 2009, Christopher Kennaugh
\end{center}
\end{titlepage}

\frontmatter

\chapter{ACKNOWLEDGMENTS}

Thanks to Wayne Lewis for being the author's teacher and advisor
and for directing this dissertation.  Thanks to the Department of
Mathematics and Statistics at Texas Tech for having the author as
a graduate student and teaching assistant.  Thanks to Robert
Byerly and Razvan Gelca for being on the committee of this
dissertation. Thanks to Dale Daniel for getting the author in to
topology. Thanks to the author's folks Chris and Pat for all their
support. Dedicated to the Golden Triangle, Texas.

\tableofcontents 

\chapter{ABSTRACT}

This dissertation investigates the relative complexity between a
continuum and its proper subcontinua (see \cite{young2}), in
particular, providing examples of atriodic $n$-od-like continua.
Let $X$ be a continuum and $n$ be an integer greater than or equal
to three. If $X$ is homeomorphic to an inverse limit of
simple-$n$-od graphs with simplicial bonding maps and is
simple-$(n-1)$-od-like, it is shown that the bonding maps can be
simplicially factored through a simple-$(n-1)$-od.  This implies,
in particular, that $X$ is homeomorphic to an inverse limit of
simple-$(n-1)$-od graphs with simplicial bonding maps.  This
factoring is subsequently used (in a strategy adapted from
\cite{minc2}) to show that a specific inverse limit of
simple-$n$-ods with simplicial bonding maps, having the property
of every proper nondegenerate subcontinuum being an arc, is not
simple-$(n-1)$-od-like.

\listoftables \addcontentsline{toc}{chapter}{LIST OF TABLES} 

\listoffigures \addcontentsline{toc}{chapter}{LIST OF FIGURES} 

\mainmatter \setcounter{page}{1} \pagenumbering{arabic}

\chapter{BASIC TERMINOLOGY}
\setcounter{equation}{0}

\begin{definition1} A {\it continuum} is a connected, compact,
metric space.
\end{definition1}

\begin{definition1}
A continuum is {\it decomposable} if it is the union of two of its
proper subcontinua and is {\it indecomposable} otherwise.
\end{definition1}

\begin{definition1}
A continuum is {\it hereditarily decomposable} ({\it hereditarily
indecomposable}) if each of its nondegenerate subcontinua is
decomposable (indecomposable).
\end{definition1}

\begin{definition1}
A continuum $X$ is a {\it triod} (3-od) if it contains a
subcontinuum $M$ so that $X\setminus M$ is the union of three
nonempty mutually separated sets.
\end{definition1}

\begin{definition1}
A continuum is {\it atriodic} if it does not contain a triod.
\end{definition1}

\begin{definition1}
For a positive integer $n$, a {\it simple-$n$-od} is the union of
$n$ arcs joined at an end point.
\end{definition1}

\begin{definition1}
For a sequence of ({\it factor}) spaces $\{X_i\}_{i=0}^{\infty}$
and sequence of ({\it bonding}) maps $\{f_i\}_{i=0}^{\infty}$
where $f_i:X_{i+1}\rightarrow X_i$, the {\it inverse limit} is the
subspace
$M=\{(x_0,x_1,\ldots)\in\prod\limits_{i=0}^{\infty}X_i:f_i(x_{i+1})=x_i\mbox{
for all }i\}$, denoted $M=\varprojlim\invseq[]{X}{f}$, where
$f_i^i=\mbox{identity on }X_i$ \ and for $j>i$ \ $f_i^j=f_i\circ
f_{i+1}\circ\cdots\circ f_{j-1}$.
\end{definition1}

\begin{proposition}
Let $X$ be a continuum and $\mathcal{P}$ be a collection of
connected, compact polyhedra. Then the following are equivalent:

1) $X\approx\varprojlim\invseq[]{X}{f}$ where $X_i\in\mathcal{P}$
and $f_i$ is a continuous surjection for each $i$.\\ \indent 2)
For each $\epsilon>0$ there exist $Y\in\mathcal{P}$ and a
continuous surjection $f:X\rightarrow
Y$ so that $\mathrm{diam}(f^{-1}(y))<\epsilon$ for each $y\in Y$.\\
\indent 3) For each $\epsilon>0$ there exists an open covering
$\mathcal{U}$ of $X$ whose nerve is a member of $\mathcal{P}$ and
so that $\mathrm{diam}(U)<\epsilon$ for each $U\in\mathcal{U}$.
\end{proposition}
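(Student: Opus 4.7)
The plan is to prove the cycle (1) $\Rightarrow$ (2) $\Rightarrow$ (3) $\Rightarrow$ (1), each implication being a standard but careful application of compactness together with polyhedral machinery.

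For (1) $\Rightarrow$ (2), I would use the projection $\pi_j \colon X \to X_j$, which is a continuous surjection because the bonding maps $f_i$ are surjective and $X$ is compact. Given $\epsilon > 0$, a standard argument on the product metric shows that for $j$ sufficiently large every fiber $\pi_j^{-1}(y)$ has diameter less than $\epsilon$: if not, one could pick points $x^{(j)},y^{(j)}$ in the same $j$-th fiber at distance $\geq \epsilon$, pass to convergent subsequences, and produce two distinct points of $X$ agreeing in every coordinate, a contradiction. Set $Y = X_j$ and $f = \pi_j$.

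For (2) $\Rightarrow$ (3), the key preliminary is the following fiber lemma, which I would prove by contradiction using compactness: if $f \colon X \to Y$ is continuous with $\mathrm{diam}(f^{-1}(y)) < \epsilon/2$ for every $y$, there exists $\delta > 0$ so that $\mathrm{diam}(A) < \delta$ implies $\mathrm{diam}(f^{-1}(A)) < \epsilon$. Now take a simplicial decomposition $K$ of $Y$ with mesh less than $\delta$ and let $\mathcal{V}$ be the cover of $Y$ by open stars of the vertices of $K$; then $N(\mathcal{V})$ is simplicially isomorphic to $K$, hence a member of $\mathcal{P}$. Pulling back, $\mathcal{U} = \{f^{-1}(V) : V \in \mathcal{V}\}$ is an open cover of $X$, and because $f$ is surjective the intersection pattern is preserved, so $N(\mathcal{U}) = N(\mathcal{V})$. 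The fiber lemma gives $\mathrm{diam}(f^{-1}(V)) < \epsilon$ for each $V \in \mathcal{V}$.

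For (3) $\Rightarrow$ (1), I would build a sequence of open covers $\mathcal{U}_1, \mathcal{U}_2, \ldots$ of $X$ with $\mathrm{mesh}(\mathcal{U}_i) < 1/i$, with each $N(\mathcal{U}_i) \in \mathcal{P}$, and with $\mathcal{U}_{i+1}$ chosen as a star-refinement of $\mathcal{U}_i$. For each refinement choose a simplicial bonding map $f_i \colon N(\mathcal{U}_{i+1}) \to N(\mathcal{U}_i)$ sending a vertex $U' \in \mathcal{U}_{i+1}$ to some $U \in \mathcal{U}_i$ with $U' \subseteq U$. Canonical maps $g_i \colon X \to N(\mathcal{U}_i)$ defined by partitions of unity subordinate to $\mathcal{U}_i$ then assemble into $g \colon X \to \varprojlim(N(\mathcal{U}_i),f_i)$: injectivity comes from $\mathrm{mesh}(\mathcal{U}_i) \to 0$, and surjectivity onto the inverse limit follows from compactness, yielding the required homeomorphism.

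The main obstacle is in (3) $\Rightarrow$ (1): ensuring that the simplicial bonding maps are bona fide continuous surjections and that the canonical maps $g_i$ can be threaded into a map to the inverse limit. Strict commutativity $f_i \circ g_{i+1} = g_i$ typically fails, only approximate commutativity holds, so either one must modify the $g_i$ using the fact that they are near-simplicial or replace the sequence by a subsequence in which approximate commutativity becomes exact up to a simplicial homotopy; this is the classical Čech-expansion bookkeeping, which is where essentially all the work lies.
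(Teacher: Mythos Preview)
The paper does not actually prove this proposition: it appears in Chapter~I (Basic Terminology) as a stated background result with no proof, and is immediately followed by the definition of $\mathcal{P}$-like. So there is no paper proof to compare against; the proposition is being quoted as folklore.

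Your outline is the standard route, and the first two implications are fine. For (3) $\Rightarrow$ (1) you have correctly located the real work, but be aware of two points you glossed over. First, you need to argue that given $\mathcal{U}_i$ you can find $\mathcal{U}_{i+1}$ that is simultaneously a star-refinement of $\mathcal{U}_i$ \emph{and} has nerve in $\mathcal{P}$; the hypothesis only hands you covers of arbitrarily small mesh with nerve in $\mathcal{P}$, so you must invoke a Lebesgue-number argument to force the refinement condition. Second, the simplicial maps $f_i$ you build from a refinement need not be surjective, and restricting to the image could take you out of $\mathcal{P}$. The usual fix is to pass to the inverse limit first and then observe that one may replace each factor by the image of the projection, which is a subpolyhedron that (for the classes $\mathcal{P}$ one actually cares about, e.g.\ trees or simple-$n$-ods) remains in $\mathcal{P}$; alternatively one appeals directly to the Marde\v{s}i\'c--Segal machinery. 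You flagged the commutativity issue, which is indeed the other piece of bookkeeping.
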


\begin{definition1}
A continuum satisfying the conditions of the above proposition is
$\mathcal{P}${\it-like}.
\end{definition1}

\begin{definition1}
A continuum  which is arc-like (simple-1-od-like,
simple-2-od-like) is {\it chainable} (also {\it snake-like}).
\end{definition1}

\begin{definition1}
A continuum is {\it subchainable} if every proper subcontinuum is
chainable.
\end{definition1}

\begin{definition1} \label{brdef}
Let $n$ be a positive integer greater than or equal to $3$, $K$ be
a continuum, and $v\in K$.  Then $v$ is a {\it branch point of $K$
of order $n$} if and only if for each $\epsilon>0$ there exists an
open cover $\mathcal{U}$ of $K$ so that
$\mathrm{mesh}(\mathcal{U})<\epsilon$, the nerve of $\mathcal{U}$
is a simple-$n$-od, and $v$ is in the element of $\mathcal{U}$ of
order $n$.
\end{definition1}

\begin{definition1}
A {\it graph} is a one-dimensional, connected, finite
simplicial complex.  If $G$ is a graph, then $\mathrm{V}(G)$
denotes the set of vertices and $\mathrm{E}(G)$ denotes the set of
edges of $G$.
\end{definition1}

\begin{definition1}
A map $f$ between graphs is {\it simplicial} provided each
edge is either mapped linearly onto an edge or mapped into a
single vertex.
\end{definition1}

\begin{definition1}[Def. 2.1 \cite{kato}] \label{simpler1}
A graph $H$ is {\it simpler} ($\le$) than a graph $G$ if there
exists a simplicial monotone map from $G$ onto $H$.
\end{definition1}

\begin{definition1} \label{simpler2}
For graphs $G$ and $H$, a $G$-like continuum $X$, and an $H$-like
continuum $Y$, $Y$ is {\it simpler} than $X$ if $H$ is simpler
than $G$.
\end{definition1}

\chapter{INTRODUCTION}
\setcounter{equation}{0}

In 1951, R.H. Bing showed in \cite{bing2} that among hereditarily
decomposable tree-like continua, chainability is equivalent to
atriodicity and stated the following question in \cite{bing1} ($M$
denotes the pseudo-arc): ``It would be interesting to know if each
nondegenerate bounded hereditarily indecomposable plane continuum
which does not separate the plane is homeomorphic to $M$.  This
question would be answered in the affirmative if it were shown
that each bounded atriodic plane continuum which does not separate
the plane can be chained.''  In the same year, the following
statement, along with other related claims, appeared in an
abstract \cite{anderson} given by R.D. Anderson: ``The author
demonstrates the existence of a hereditarily indecomposable plane
continuum, not separating the plane, which is not homeomorphic to
a pseudo-arc (that is, a chained hereditarily indecomposable plane
continuum).''  The first published example of such a continuum, in
1979, is contained in \cite{ingram3} by W.T. Ingram, constructed
with a modification to the first published counterexample to
Bing's second question, given in \cite{ingram2}, with crookedness
appropriately built-in so as to retain nonchainability.

Also by Ingram and published in 1972, \cite{ingram2} is an example
of an inverse limit of simple-3-ods with a single bonding map,
where nonchainability and atriodicity are implied by the example's
properties of positive span and of every proper nondegenerate
subcontinuum being an arc, respectively.  A. Lelek, in the 1964
publication \cite{lelek}, defined for a metric space $X$ the {\it
span} $\sigma(X)$ as the least upper bound on the numbers
$\epsilon$ for which there exists a connected subspace $Z\subseteq
X\times X$ with $\pi_1(Z)=\pi_2(Z)$ so that the distance between
$a$ and $b$ is greater than or equal to $\epsilon$ for each
$(a,b)\in Z$ ({\it surjective span} ${\sigma}^{\ast}(X)$ has the
above definition with the additional condition that $\pi_1(Z)=X$)
and showed that if a continuum $X$ is chainable then $\sigma(X)=0$
(the converse is presently open). More generally, for a space $X$,
metric space $Y$, and a map $f$ of $X$ into $Y$, the {\it span of
$f$} $\sigma(f)$ is the least upper bound on the numbers
$\epsilon$ for which there exists a connected subspace $Z\subseteq
X\times X$ with $\pi_1(Z)=\pi_2(Z)$ so that the distance between
$f(a)$ and $f(b)$ is greater than or equal to $\epsilon$ for each
$(a,b)\in Z$. Then, $\sigma(X)$ is the span of the identity map on
$X$. Ingram showed that for the bonding maps $f_i^j$ in the
example from \cite{ingram2}, there exists a positive number
$\epsilon$ so that $\sigma(f_0^j)>\epsilon$ for each $j$ and that
this implies positive span of the inverse limit. In the 1968
publication \cite{ingram1}, Ingram demonstrated that the property
of being atriodic, known to be present in chainable continua, also
holds, more generally, among subchainable continua. The nature of
the single bonding map in \cite{ingram2} ensures that any
nondegenerate subcontinuum of the inverse limit which is not an
arc is not proper, establishing atriodicity of the inverse limit.
Having every proper nondegenerate subcontinuum as an arc also
implies, by the aforementioned result of Bing, that the Ingram
example \cite{ingram2}, being nonchainable, is necessarily
indecomposable. Other inverse limits with the previously discussed
properties in common with \cite{ingram2} have been constructed
utilizing similar techniques as in \cite{ingram2} in demonstrating
positive span, such as in the example by J.F. Davis and Ingram
\cite{davis}. Published in 1988, this continuum also has the
property of admitting a (monotone) map to a chainable continuum
with only one nondegenerate point inverse which is an arc.

Noting that a simple-$n$-od-like continuum is
simple-$(n+1)$-od-like, for example, some properties of a
continuum can be more readily realized when constructing that
continuum as a more ``complex'' space.  Such instances are in
\cite{lewis1} by W. Lewis and in \cite{mayer} by J.C. Mayer,
published in 1983.  In \cite{lewis1}, for each $n$, an inverse
limit of simple-$n$-ods is constructed by Lewis with bonding maps
sufficiently varying so as to produce chainability of the inverse
limit and whose symmetry allows for a homeomorphism on the inverse
limit to be induced, having only one fixed point and every other
point with period $n$. Thereby, construction of a continuum as one
which is simple-$n$-od-like facilitates a description of a period
$n$ homeomorphism on that continuum, controlled so as to be
simpler than would initially appear in the construction, being
simple-$2$-od-like. Thus is given such a homeomorphism for each
$n$ on a chainable continuum and, further, by introducing
crookedness, such a homeomorphism for each $n$ on the pseudo-arc.
In \cite{mayer}, an inverse limit of simple-4-ods, referred to as
the ``X-odic'' continuum, is constructed by Mayer with a single
bonding map, having the previously discussed properties of
\cite{ingram2} and utilizing similar techniques in demonstrating
positive span as \cite{ingram2}, in addition to admitting an
embedding in the plane with a Lake-of-Wada channel.  In the same
publication as \cite{mayer}, in \cite{young1} S.W. Young showed
that the bonding map in \cite{mayer} can be factored through a
simple-triod, giving that the simple-4-od-like X-odic continuum is
simple-3-od-like. Young remarks, ``although the bonding map,''
induced from the factoring, used in the representation of the
(X-odic) continuum $X$ in the simpler form ``does not seem to help
in establishing the main properties of the continuum $X$, it must
inevitably detract from its name.''

Also by Young and in the same publication as \cite{mayer} and
\cite{young1}, \cite{young2} has the following beginning to its
introduction: ``One of the remarkable features of the continuum of
W.T. Ingram (\cite{ingram2}) is the `gap' in complexity between
the continuum and its proper subcontinua. Specifically, the
continuum is T-like (simple-triod-like), not arc-like and every
proper subcontinuum is arc-like.  This combination of structural
properties leads us to ask if there is a continuum with an even
wider `gap'.''  Question 1 follows: ``Does there exist a continuum
which is 4-od-like, not simple-triod-like and every proper
subcontinuum is arc-like?'' Similar questions with slight
variations on this question are Problem 115 from \cite{lewis2}
published in 1983, ``Is there a continuum which is 4-od-like, not
T-like, and every nondegenerate proper subcontinuum of which is an
arc?'' and Problem 5 from \cite{cook} published in 1990, ``Does
there exist an atriodic simple-4-od-like continuum which is not
simple-triod-like?,'' with a positive answer to the second being a
positive answer to all three.

These questions were answered in the affirmative by P. Minc with
the example in \cite{minc2} published in 1993.  In response to
these questions, Minc states in \cite{minc2}, ``Even after a
perfunctory glance at the problems, it becomes apparent that they
should have a positive answer.  It is very easy to get an example
of a simple-4-od-like continuum such that every proper
subcontinuum is an arc.  Most of such continua appear not to be
simple-triod-like and it is very likely that they really are not.
So the only difficulty is a proof,'' where ``a topological
invariant different than the span is needed to distinguish between
those continua which are simple-triod-like and those that are not.
Another way of approaching the problem is to use a continuum with
simplicial bonding maps and prove that they cannot be factored
through a simple triod.'' The following is the abstract from
\cite{minc1} by Minc and published in 1994: ``An operation $d$ on
simplicial maps between graphs is introduced and used to
characterize simplicial maps which can be factored through an arc.
The characterization yields a new technique of showing that some
continua are not chainable,'' demonstrated with the examples in
\cite{ingram2} and \cite{davis}, ``and allows to prove that span
zero is equivalent to chainability for inverse limits of trees
with simplicial bonding maps.''  Theorem 3.3 from \cite{minc1}
establishes that the following are equivalent for an inverse limit
$X$ of trees with simplicial bonding maps $f_i^j$:

1) X is chainable\\
\indent 2) $\sigma^\ast(X)=0$\\
\indent 3) for each $i$ there exists $j>i$ so that $f_i^j$ can be
(simplicially) factored through an arc.

This alternate, combinatorial technique involving factoring and
the operation $d$ is extended by Minc to the example in \cite{minc2}
in demonstrating that it is not simple-3-od-like, where an inverse
limit of simple-4-ods is constructed with a single bonding map,
controlled so that every proper nondegenerate subcontinuum is an
arc and so that the bonding map $f_0^j$ cannot be (simplicially)
factored through a simple-3-od for each $j$.

The purpose of this dissertation is to investigate this gap in the
complexity between a continuum and its proper subcontinua further,
in particular when the proper nondegenerate subcontinua are arcs.
For each integer $n\ge 3$, an inverse limit {\K} of simple-$n$-ods
with a single bonding map {\nphi} is constructed which is not
simple-$(n-1)$-od-like and whose every proper nondegenerate
subcontinuum is an arc, established in Chapter IV. To this end, an
extension and generalization of the strategy employed in
\cite{minc2} is developed with a general theorem on factoring
established in Chapter III.

\chapter{PRELIMINARIES}
\setcounter{equation}{0}

In this chapter, a theorem on factoring is given (Theorem
\ref{claim17}) stating that for certain inverse limits in the
simplicial setting, in the case the inverse limit is simpler
(Definition \ref{simpler2}) than the complexity of the factor
spaces would indicate, the bonding maps are able to be factored
through simpler graphs. In light of Lemma \ref{lemm}, the proof of
Theorem \ref{claim17} follows from the proof of Propostion 2.1 in
\cite{minc2}.

Let $K$ be a continuum and $n$ be
an integer greater than or equal to $3$.

\begin{lemma} \label{lemm} Suppose $\mathcal{V}$ is an open cover of $K$ so that the
nerve of $\mathcal{V}$ is a simple-$n$-od and $\mathcal{W}$ is an
open cover of $K$ refining $\mathcal{V}$ so that the nerve of
$\mathcal{W}$ is a simple-$(n-1)$-od. Then, there exists an amalgamation $\mathcal{U}$
of $\mathcal{W}$ refining $\mathcal{V}$ so that the
nerve of $\mathcal{U}$ is a simple-$\nt$-od for some
$\nt\in\oneton{n-1}$ and $U_0$, the element of $\mathcal{U}$ of
order $\nt$, is contained in $V_0$, the element of $\mathcal{V}$
of order $n$.
\end{lemma}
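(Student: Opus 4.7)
The plan is to construct $\mathcal{U}$ by choosing a simplicial map between the nerves induced by the refinement, and then amalgamating $\mathcal{W}$ accordingly. For each $W \in \mathcal{W}$, pick $a(W) \in \mathcal{V}$ with $W \subseteq a(W)$, preferring $a(W) = V_0$ whenever possible. Since $W \cap W' \ne \emptyset$ implies $a(W) \cap a(W') \ne \emptyset$, the assignment $a$ extends to a simplicial map between the nerves, sending the unique branch $W_0$ of $\mathcal{W}$'s simple-$(n-1)$-od nerve to some vertex of $\mathcal{V}$'s simple-$n$-od nerve.

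If $W_0 \subseteq V_0$, take $\mathcal{U} = \mathcal{W}$, $U_0 = W_0$, $\tilde{n} = n-1$, and the conclusion is immediate. Otherwise, let $Z = \{W \in \mathcal{W} : W \subseteq V_0\}$; this is nonempty because $V_0$, as the order-$n$ element of $\mathcal{V}$'s nerve, must contain points of $K$ forcing some $W \subseteq V_0$ in the refinement. Choose a connected component $Z_0$ of $Z$ inside the nerve of $\mathcal{W}$ and set $U_0 = \bigcup_{W \in Z_0} W \subseteq V_0$. Form the other elements of $\mathcal{U}$ by partitioning each connected component of the nerve of $\mathcal{W}$ minus $Z_0$ into maximal subpaths along which $a$ is constant; each such subpath lies in a single element of $\mathcal{V}$, so $\mathcal{U}$ refines $\mathcal{V}$.

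Contracting the connected subtree $Z_0$ (to the vertex $U_0$) and then each $a$-constant subpath yields a tree in which every non-$U_0$ vertex has degree $\le 2$; the only potential branch is $U_0$, whose degree equals the boundary-edge count of $Z_0$ inside the nerve of $\mathcal{W}$. A direct count using that the nerve of $\mathcal{W}$ is a simple-$(n-1)$-od of maximum degree $n-1$ (attained only at $W_0$) bounds this count by $n-1$. Thus the nerve of $\mathcal{U}$ is a simple-$\tilde{n}$-od with $\tilde{n} \le n-1$, and $U_0 \subseteq V_0$ is its order-$\tilde{n}$ element.

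The main obstacle is handling the block of $\mathcal{U}$ that contains $W_0$ in the case $W_0 \notin Z_0$: if that block has degree $\ge 3$ in the nerve of $\mathcal{U}$, a second branch point appears, violating the simple-$\tilde{n}$-od conclusion. The fix is to absorb $W_0$'s $a$-constant neighborhood into $W_0$'s block, leaving at most two exit edges; this consolidation is possible because each neighbor of $W_0$ in the nerve of $\mathcal{W}$ lies in an element of $\mathcal{V}$ adjacent to the one containing $W_0$, giving enough common $a$-values to merge.
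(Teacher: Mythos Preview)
Your construction has a genuine gap in the case $W_0\not\subseteq V_0$. Since $W_0$ is the only vertex of order $>2$ in the nerve of $\mathcal{W}$ and $W_0\notin Z$, any connected component $Z_0$ of $Z=\{W:W\subseteq V_0\}$ in that nerve is a subarc of a single arm of the simple-$(n-1)$-od. Hence $Z_0$ has at most two boundary edges, so your $U_0=\bigcup Z_0$ has order at most $2$ in the nerve of $\mathcal{U}$ and cannot be the branch element. Meanwhile the $a$-constant block containing $W_0$ keeps most of the $n-1$ incident arms as distinct exit edges, so the branch point of the nerve of $\mathcal{U}$ sits at that block, which lies in the arm $V^{\tilde\imath}$ of $\mathcal{V}$ containing $W_0$, not in $V_0$. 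Your proposed fix---enlarging the $W_0$-block by absorbing neighbours---cannot work while preserving refinement: any block of the amalgamation must lie in a single element of $\mathcal{V}$, and every element other than $V_0$ has order $\le 2$ in the nerve of $\mathcal{V}$, so there is no single $V$ large enough to swallow the $n-1$ directions leaving $W_0$.

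What the paper does instead is a non-local amalgamation: it merges, for each level $j$ along the arm $V^{\tilde\imath}$, \emph{all} the $W$'s (from every arm of $\mathcal{W}$) that sit at level $V_j^{\tilde\imath}$ into a single element $U_j^{i'}$. This collapses the star at $W_0$ into a linear chain indexed by depth in $V^{\tilde\imath}$, and the branching only reappears at $U_0$, defined as the union of all $W$'s (from the initial segments of every arm) that lie in $V_0$. The arms of $\mathcal{U}$ other than the $i'$-th are then the tails of those arms of $\mathcal{W}$ that actually leave $V^{\tilde\imath}\cup V_0$. The essential idea you are missing is that one must amalgamate \emph{across} different arms of $\mathcal{W}$, not just along $a$-constant connected pieces.
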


\begin{proof}
For each $i\in\oneton{n}$ and some $\nu_i\in\{1,2,\ldots\}$, let
$V_0^i,V_1^i,V_2^i,\ldots,V_{\nu_i}^i$ be a linear chain in
$\mathcal{V}$ so that $\mathcal{V}=\{V_j^i:i\in\oneton{n}\mbox{
and }j\in\oneton[0]{\nu_i}\}$, where $V_0^i=V_0$ for each
$i\in\oneton{n}$, and let $V^i=\bigcup\limits_{j=1}^{\nu_i}V_j^i$.
For each $i\in\oneton{n-1}$ and some $\mu_i\in\{1,2,\ldots\}$, let
$W_0,W_1^i,W_2^i,\ldots,W_{\mu_i}^i$ be a linear chain in
$\mathcal{W}$ so that
$\mathcal{W}=\{W_0,W_j^i:i\in\oneton{n-1}\mbox{ and
}j\in\oneton{\mu_i}\}$, where $W_0$ is the element of
$\mathcal{W}$ of order $n-1$.

Suppose $W_0\not\subseteq V_0$, since otherwise the claim holds
with $\mathcal{U}=\mathcal{W}$.  Let $\nt[\imath]\in\oneton{n}$
and $\nt[\jmath]\in\oneton{\nu_{\nt[\imath]}}$ so that
$W_0\subseteq V_{\nt[\jmath]}^{\nt[\imath]}$ and $W_0\not\subseteq
V_{\nt[\jmath]-1}^{\nt[\imath]}$.  For each $i\in\oneton{n-1}$,
let $j_i\in\oneton{\mu_i}$ so that
$\bigcup\limits_{j=1}^{j_i}W_j^i\subseteq V^{\nt[\imath]}\bigcup
V_0$ and $W_{j_i+1}^i\not\subseteq V^{\nt[\imath]}\bigcup V_0$ or $W_{j_i+1}^i$
does not exist. Let
$\nt[\jmath]_m=\mbox{max}\{j\in\oneton[0]{\nu_{\nt[\imath]}}:W_k^i\subseteq
V_j^{\nt[\imath]}\mbox{ and }W_k^i\not\subseteq
V_{j-1}^{\nt[\imath]}\mbox{ if }V_{j-1}^{\nt[\imath]}\mbox{ exists
or }W_k^i\not\subseteq V^{\hat{\imath}}\mbox{ for all
}\hat{\imath}\not={\nt[\imath]}\mbox{ if
}V_{j-1}^{\nt[\imath]}\mbox{ does not exist},\mbox{ for some
}i\in\oneton{n-1}\mbox{ and }j\in\oneton{j_i}\}$, and let
$i^{\prime}\in\oneton{n-1}$ and
$k_{i^{\prime}}\in\oneton{j_{i^{\prime}}}$ so that
$W_{k_{i^{\prime}}}^{i^{\prime}}\subseteq
V_{\nt[\jmath]_m}^{\nt[\imath]}$ and
$W_{k_{i^{\prime}}}^{i^{\prime}}\not\subseteq
V_{\nt[\jmath]_m-1}^{\nt[\imath]}$ if
$V_{\nt[\jmath]_m-1}^{\nt[\imath]}$ exists or
$W_{k_{i^{\prime}}}^{i^{\prime}}\not\subseteq V^i\mbox{ for all
}i\not={\nt[\imath]}$ if $V_{\nt[\jmath]_m-1}^{\nt[\imath]}$ does
not exist.

Without loss of generality, suppose

i)  there exists
$\hat{\imath}\in\oneton{n-1}$ so that
$\{i\in\oneton{n-1}\setminus\{i^{\prime}\}:W_{j_i+1}^i\mbox{ does
not}\linebreak[0]\mbox{ exist}\}=\{\hat{\imath},\hat{\imath}+1,\ldots,n-1\}$ or

ii)
$\{i\in\oneton{n-1}\setminus\{i^{\prime}\}:W_{j_i+1}^i\mbox{ does
not exist}\}=\emptyset$.

Let $\nt=\hat{\imath}-1$ if (i) and
$\nt=n-1$ if (ii).  Let $\nt[\jmath]_M=\nt[\jmath]$ if
$\nt[\jmath]\ge\nt[\jmath]_m$, or $\nt[\jmath]_M=\nt[\jmath]_m-1$
if $\nt[\jmath]<\nt[\jmath]_m$ and $\{W_k^i\subseteq
V_{\nt[\jmath]_m}^{\nt[\imath]}\mbox{ and }W_k^i\not\subseteq
V_{\nt[\jmath]_m-1}^{\nt[\imath]}:i\in\oneton{n-1}\setminus\{i^{\prime}\}\mbox{
and }k\in\oneton{j_i}\}\bigcup\{W_k^{i^{\prime}}\subseteq
V_{\nt[\jmath]_m}^{\nt[\imath]}\mbox{ and
}W_k^{i^{\prime}}\not\subseteq
V_{\nt[\jmath]_m-1}^{\nt[\imath]}:k\in\oneton{k_{i^{\prime}}-1}\}=\emptyset$,
or $\nt[\jmath]_M=\nt[\jmath]_m$ if otherwise.

For each $i\in\oneton{\nt}\setminus\{i^{\prime}\}$, let
$U_1^i,U_2^i,\ldots,U_{\mu_i-j_i}$ be a linear chain of open sets
defined as $U_j^i=W_{j_i+j}^i$ for each $j\in\oneton{\mu_i-j_i}$,
and let $U_0$ be an open set defined as
$U_0=\bigcup(\{W_j^i\subseteq
V_0:i\in\oneton{n-1}\setminus\{i^{\prime}\}\mbox{ and
}j\in\oneton{j_i}\}\bigcup\{W_j^{i^{\prime}}\subseteq
V_0:j\in\oneton{k_{i^{\prime}}-1}\})$, and let
$U_1^{i^{\prime}},U_2^{i^{\prime}},\ldots,U_{\nt[\jmath]_M+\mu_{i^{\prime}}-k_{i^{\prime}}+1}^{i^{\prime}}$
be a linear chain of open sets defined as
$U_j^{i^{\prime}}=\bigcup(\{W\subseteq V_j^{\nt[\imath]}\mbox{ and
}W\not\subseteq
V_{j-1}^{\nt[\imath]}:W\in\{W_0,W_k^i:i\in\oneton{n-1}\setminus\{i^{\prime}\}\mbox{
and }k\in\oneton{j_i}\}\}\bigcup\{W_k^{i^{\prime}}\subseteq
V_j^{\nt[\imath]}\mbox{ and }W_k^{i^{\prime}}\not\subseteq
V_{j-1}^{\nt[\imath]}:k\in\oneton{k_{i^{\prime}}-1}\})$ for each
$j\in\oneton{\nt[\jmath]_M}$ and
$U_{\nt[\jmath]_M+j}^{i^{\prime}}=W_{k_{i^{\prime}}-1+j}^{i^{\prime}}$
for each $j\in\oneton{\mu_{i^{\prime}}-k_{i^{\prime}}+1}$.  Then
the claim holds with
$\mathcal{U}=\{U_0\}\bigcup\{U_j^i:i\in\oneton{\nt}\setminus\{i^{\prime}\}\mbox{
and
}j\in\oneton{\mu_i-j_i}\}\bigcup\{U_j^{i^{\prime}}:j\in\oneton{\nt[\jmath]_M+\mu_{i^{\prime}}-k_{i^{\prime}}+1}\}$.
\end{proof}

\vspace{3cm}
\begin{figure}[here]
\hspace{3.5cm} \setlength{\unitlength}{.83cm}
\begin{picture}(0,1)

\put(7,0){\oval(6,2)} \multiput(7,1.25)(0,1){4}{\oval(4.5,1.5)}
\multiput(7,-1.25)(0,-1){4}{\oval(4.5,1.5)}
\multiput(10.25,0)(1,0){4}{\oval(1.5,4)}
\multiput(3.75,0)(-1,0){2}{\oval(1.5,4)} \put(1.5,0){\oval(2,4)}
\multiput(.25,0)(-1,0){5}{\oval(1.5,4)}

\put(1.25,.15){\circle{1}}

\put(1.4,.7){\circle{.4}}
\multiput(1.65,.85)(.25,0){7}{\circle{.4}}
\put(3.25,.57){\circle{.4}} \put(3.35,.34){\circle{.4}}
\multiput(3.6,.2)(.25,0){20}{\circle{.4}}
\multiput(8.45,.48)(0,.25){2}{\circle{.4}}
\multiput(8.45,.98)(0,.25){14}{\circle{.4}}
\multiput(8.35,4.51)(-.25,0){11}{\circle{.4}}
\multiput(5.75,4.23)(0,-.25){14}{\circle{.4}}
\multiput(5.65,.7)(-.25,0){8}{\circle{.4}}
\put(3.8,.98){\circle{.4}} \put(3.7,1.26){\circle{.4}}
\multiput(3.6,1.54)(-.25,0){31}{\circle{.4}}

\put(.95,-.25){\circle{.35}}
\multiput(.75,-.4)(-.25,0){8}{\circle{.35}}
\put(-1.1,-.68){\circle{.35}}
\multiput(-1,-.89)(.25,0){17}{\circle{.35}}
\put(3.1,-.65){\circle{.35}} \put(3.2,-.42){\circle{.35}}
\multiput(3.3,-.19)(.17,0){20}{\circle{.25}}
\multiput(6.68,-.37)(0,-.25){2}{\circle{.35}}
\multiput(6.68,-.87)(0,-.25){7}{\circle{.35}}
\multiput(6.78,-2.6)(.25,0){7}{\circle{.35}}
\multiput(8.38,-2.37)(0,.25){9}{\circle{.35}}
\multiput(8.62,-.27)(.25,0){21}{\circle{.35}}

\multiput(.75,.15)(-.25,0){14}{\circle{.4}}
\multiput(-2.6,-.13)(0,-.25){5}{\circle{.4}}
\put(-2.5,-1.41){\circle{.4}}
\multiput(-2.25,-1.41)(.25,0){24}{\circle{.4}}
\put(3.6,-1.13){\circle{.4}} \put(3.7,-.85){\circle{.4}}
\multiput(3.8,-.57)(.25,0){8}{\circle{.4}}
\multiput(5.65,-.85)(0,-.25){16}{\circle{.4}}






\end{picture}
\vspace{4cm} \caption{A possibility for $\mathcal{W}$ and
$\mathcal{V}$.}

\end{figure}
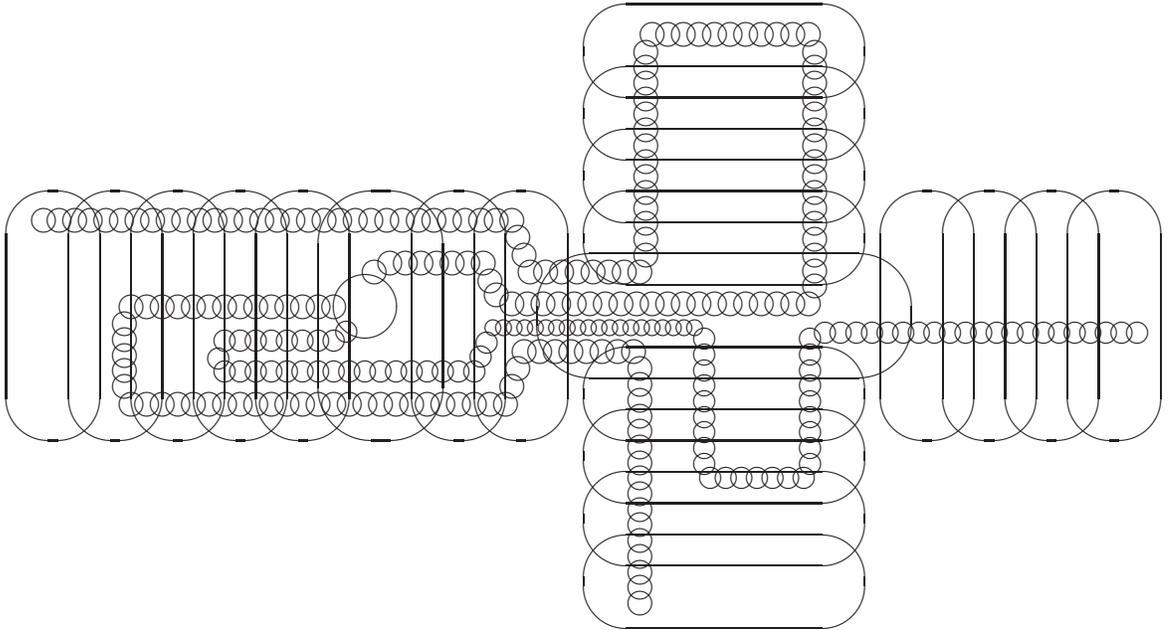

\newpage
\begin{figure}[here]
\vspace{4cm} \hspace{3.5cm} \setlength{\unitlength}{.83cm}
\begin{picture}(0,1)

\put(7,0){\oval(6,2)} \multiput(7,1.25)(0,1){4}{\oval(4.5,1.5)}
\multiput(7,-1.25)(0,-1){4}{\oval(4.5,1.5)}
\multiput(10.25,0)(1,0){4}{\oval(1.5,4)}
\multiput(3.75,0)(-1,0){2}{\oval(1.5,4)} \put(1.5,0){\oval(2,4)}
\multiput(.25,0)(-1,0){5}{\oval(1.5,4)}

\put(1.25,.15){\circle{1}}

\put(1.4,.7){\circle{.4}}
\multiput(1.65,.85)(.25,0){7}{\circle{.4}}
\put(3.25,.57){\circle{.4}} \put(3.35,.34){\circle{.4}}
\multiput(3.6,.2)(.25,0){20}{\circle{.4}}
\multiput(8.45,.48)(0,.25){2}{\circle{.4}}
\multiput(8.45,.98)(0,.25){14}{\circle*{.4}}
\multiput(8.35,4.51)(-.25,0){11}{\circle*{.4}}
\multiput(5.75,4.23)(0,-.25){14}{\circle*{.4}}
\multiput(5.65,.7)(-.25,0){8}{\circle*{.4}}
\put(3.8,.98){\circle*{.4}} \put(3.7,1.26){\circle*{.4}}
\multiput(3.6,1.54)(-.25,0){31}{\circle*{.4}}

\put(.95,-.25){\circle{.35}}
\multiput(.75,-.4)(-.25,0){8}{\circle{.35}}
\put(-1.1,-.68){\circle{.35}}
\multiput(-1,-.89)(.25,0){17}{\circle{.35}}
\put(3.1,-.65){\circle{.35}} \put(3.2,-.42){\circle{.35}}
\multiput(3.3,-.19)(.17,0){20}{\circle{.25}}
\multiput(6.68,-.37)(0,-.25){2}{\circle{.35}}
\multiput(6.68,-.87)(0,-.25){7}{\circle*{.35}}
\multiput(6.78,-2.6)(.25,0){7}{\circle*{.35}}
\multiput(8.38,-2.37)(0,.25){9}{\circle*{.35}}
\multiput(8.62,-.27)(.25,0){21}{\circle*{.35}}

\multiput(.75,.15)(-.25,0){14}{\circle{.4}}
\multiput(-2.6,-.13)(0,-.25){4}{\circle{.4}}
\put(-2.6,-1.13){\circle*{.4}}
\multiput(-2.5,-1.41)(.25,0){25}{\circle*{.4}}
\put(3.6,-1.13){\circle*{.4}} \put(3.7,-.85){\circle*{.4}}
\multiput(3.8,-.57)(.25,0){8}{\circle*{.4}}
\multiput(5.65,-.85)(0,-.25){16}{\circle*{.4}}

\color{Red}

\multiput(8.45,.98)(0,.25){14}{\circle{.4}}
\multiput(8.35,4.51)(-.25,0){11}{\circle{.4}}
\multiput(5.75,4.23)(0,-.25){14}{\circle{.4}}
\multiput(5.65,.7)(-.25,0){8}{\circle{.4}}
\put(3.8,.98){\circle{.4}} \put(3.7,1.26){\circle{.4}}
\multiput(3.6,1.54)(-.25,0){31}{\circle{.4}}

\multiput(6.68,-.87)(0,-.25){7}{\circle{.35}}
\multiput(6.78,-2.6)(.25,0){7}{\circle{.35}}
\multiput(8.38,-2.37)(0,.25){9}{\circle{.35}}
\multiput(8.62,-.27)(.25,0){21}{\circle{.35}}

\put(-2.6,-1.13){\circle{.4}}
\multiput(-2.5,-1.41)(.25,0){25}{\circle{.4}}
\put(3.6,-1.13){\circle{.4}} \put(3.7,-.85){\circle{.4}}
\multiput(3.8,-.57)(.25,0){8}{\circle{.4}}
\multiput(5.65,-.85)(0,-.25){16}{\circle{.4}}

\thicklines \put(-2.85,-1.125){\framebox(.57,1.575){}}
\put(-2.45,-.15){\framebox(1.17,.72){}}
\put(-1.45,-1.1){\framebox(1.17,1.5){}}
\put(-.47,-1.05){\framebox(1.16,1.6){}}

\put(.55,-1.1){\line(1,0){1.67}} \put(.55,-1.1){\line(0,1){2.2}}
\put(.55,1.1){\line(1,0){1.83}} \put(2.38,1.1){\line(0,-1){.5}}
\put(2.38,.6){\line(-1,0){.16}} \put(2.22,.6){\line(0,-1){1.7}}

\put(2.15,1.23){\line(0,-1){1.85}}
\put(2.15,-.62){\line(-1,0){.1}}
\put(2.05,-.62){\line(0,-1){.445}}
\put(2.05,-1.065){\line(1,0){1.25}}
\put(3.3,-1.065){\line(0,1){1.1}} \put(3.3,.035){\line(1,0){.07}}
\put(3.37,.035){\line(0,1){1.195}}
\put(3.37,1.23){\line(-1,0){1.22}}

\put(3.02,.8){\line(1,0){.55}} \put(3.02,.8){\line(0,-1){1.45}}
\put(3.02,-.65){\line(1,0){.4}} \put(3.42,-.65){\line(0,1){.31}}
\put(3.57,.8){\line(0,-1){.33}} \put(3.42,-.34){\line(1,0){.73}}
\put(3.57,.47){\line(1,0){.78}} \put(4.35,.47){\line(0,-1){.5}}
\put(4.35,-.03){\line(-1,0){.2}} \put(4.15,-.03){\line(0,-1){.31}}

\put(4.025,-.32){\line(1,0){2.445}}
\put(4.025,-.32){\line(0,1){.74}}
\put(4.025,.42){\line(1,0){4.195}}
\put(6.47,-.32){\line(0,-1){.53}} \put(8.22,.42){\line(0,1){.53}}
\put(6.47,-.85){\line(1,0){.45}} \put(8.22,.95){\line(1,0){.45}}
\put(6.92,-.85){\line(0,1){.85}} \put(8.67,.95){\line(0,-1){.95}}
\put(8.67,0){\line(-1,0){1.75}} \normalcolor

\end{picture}
\vspace{4cm} \caption{A possibility for $\mathcal{U}$.}

\end{figure}
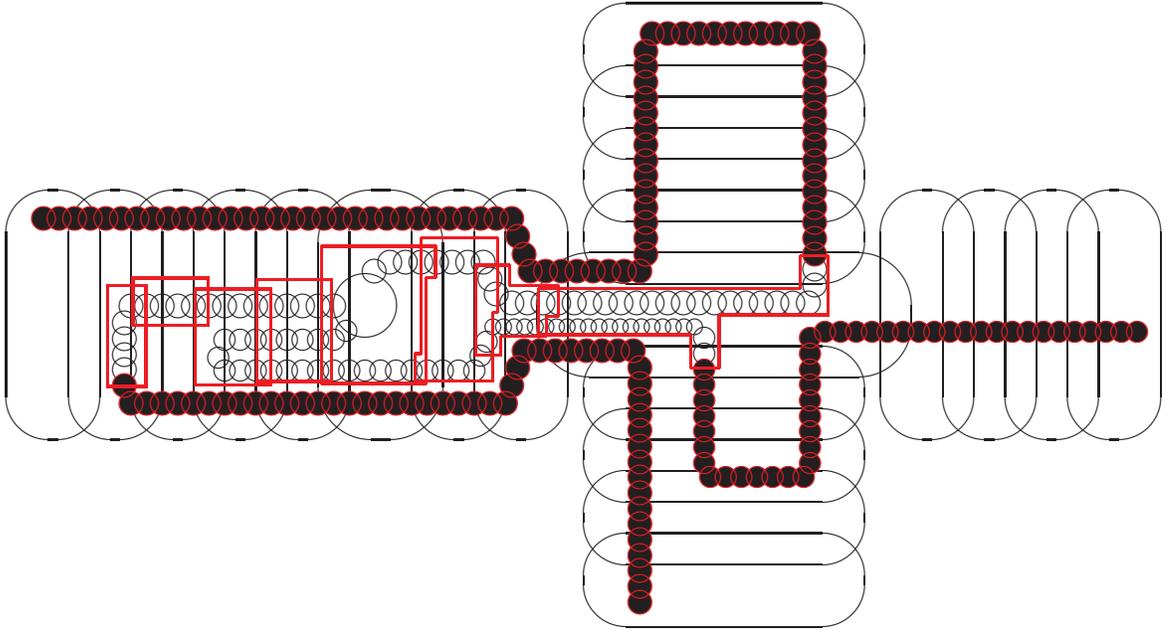

\begin{proposition} \label{claim0} Suppose $v\in K$ is a branch
point of $K$ of order $n$ and $K$ is simple-$(n-1)$-od-like. Then,
for each $\epsilon>0$, there exists an open cover $\mathcal{U}$ of
$K$ so that $\mathrm{mesh}(\mathcal{U})<\epsilon$, the nerve of
$\mathcal{U}$ is a simple-$\nt$-od for some $\nt\in\oneton{n-1}$,
and $v$ is within $\epsilon$ of the element of $\mathcal{U}$ of
order $\nt$.
\end{proposition}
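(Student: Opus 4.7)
The plan is to apply Lemma \ref{lemm} to a carefully chosen pair of covers, one witnessing that $v$ is a branch point of order $n$ and one witnessing simple-$(n-1)$-od-likeness of $K$. The conclusion of the lemma will give us precisely the open cover $\mathcal{U}$ we want; the only work is to arrange things so that the mesh bound and the proximity-to-$v$ statement come for free.

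First I would fix $\epsilon>0$ and apply Definition \ref{brdef} to the branch point $v$ (with the parameter $\epsilon$) to obtain an open cover $\mathcal{V}$ of $K$ with $\mathrm{mesh}(\mathcal{V})<\epsilon$, whose nerve is a simple-$n$-od, and such that $v$ lies in the element $V_0$ of $\mathcal{V}$ of order $n$. Next, I would let $\delta>0$ be a Lebesgue number of $\mathcal{V}$ for $K$. Using simple-$(n-1)$-od-likeness of $K$ together with the Proposition relating inverse-limit, mapping, and covering descriptions of $\mathcal{P}$-likeness, I would produce an open cover $\mathcal{W}$ of $K$ whose nerve is a simple-$(n-1)$-od and with $\mathrm{mesh}(\mathcal{W})<\delta$. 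By the Lebesgue number property, $\mathcal{W}$ refines $\mathcal{V}$.

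Now the hypotheses of Lemma \ref{lemm} are met, so I would apply it to produce an amalgamation $\mathcal{U}$ of $\mathcal{W}$ refining $\mathcal{V}$, whose nerve is a simple-$\tilde{n}$-od for some $\tilde{n}\in\{1,\ldots,n-1\}$, and such that the order-$\tilde{n}$ element $U_0$ of $\mathcal{U}$ is contained in $V_0$. Since each element of $\mathcal{U}$ lies in some element of $\mathcal{V}$ and $\mathrm{mesh}(\mathcal{V})<\epsilon$, we automatically obtain $\mathrm{mesh}(\mathcal{U})<\epsilon$. Furthermore, because $v\in V_0$, $U_0\subseteq V_0$, and $\mathrm{diam}(V_0)<\epsilon$, every point of $U_0$ is within $\epsilon$ of $v$, so $v$ is within $\epsilon$ of $U_0$ as required.

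There is essentially no obstacle beyond arranging these choices: the combinatorial heart of the argument (producing the amalgamation with the branch-point element tucked inside $V_0$) is done in Lemma \ref{lemm}, and what remains is just choosing $\mathcal{V}$ small enough in mesh at the outset and $\mathcal{W}$ small enough to refine $\mathcal{V}$. The only point to be careful about is that the output $\tilde{n}$ need not equal $n-1$; it is whatever value the lemma gives, but this matches the statement of the proposition exactly.
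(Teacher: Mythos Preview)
Your proposal is correct and takes essentially the same approach as the paper, which simply cites Lemma \ref{lemm} and Definition \ref{brdef}; you have supplied the routine details (the Lebesgue number argument to make $\mathcal{W}$ refine $\mathcal{V}$, and the observation that refining $\mathcal{V}$ forces $\mathrm{mesh}(\mathcal{U})<\epsilon$ and $d(v,U_0)<\epsilon$) that the paper leaves implicit.
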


\begin{proof}
The claim follows by applying Lemma \ref{lemm} and Definition \ref{brdef}
\end{proof}

\begin{theorem1} \label{claim17}
Suppose $K$ is homeomorphic to $\varprojlim\invseq[]{X}{\Phi}$
where $X_i$ is a simple-$n$-od graph and $\Phi_i$ is simplicial for each
$i\in\{0,1,\ldots\}$.  If $K$ is simple-$(n-1)$-od-like, then for
each $i\in\{0,1,\ldots\}$ there exist $j\in\{i+1,i+2,\ldots\}$,
a simple-$\nt$-od $T$ for some $\nt\in\oneton{n-1}$, and
simplicial maps $\alpha:X_j\longrightarrow T$ and
$\beta:T\longrightarrow X_i$ so that
$\beta\circ\alpha=\nPhi[]{j}{i}$ and $\beta(t_0)=x_0$ where $t_0$
and $x_0$ are the branch points of $T$ and $X_i$, respectively.
\end{theorem1}

\begin{proof}
Let $\{B_x:x\in\mathrm{V}(X_i)\}$ be a collection of mutually
exclusive connected open sets in $X_i$ so that $x\in B_x$ for each
$x\in\mathrm{V}(X_i)$, and so the nerve of
$\mathcal{V}=\{\pi_i^{-1}(V):V\in\{B_x:x\in\mathrm{V}(X_i)\}\bigcup\{\mbox{open
edges of }X_i\}\}$ is a simple-$n$-od.  Since $K$ is
simple-$(n-1)$-od-like, by Lemma \ref{lemm} there exists an open
cover $\mathcal{U}$ of $K$ refining $\mathcal{V}$ so that the
nerve of $\mathcal{U}$ is a simple-$\nt$-od for some
$\nt\in\oneton{n-1}$ and $U_0$, the element of $\mathcal{U}$ of
order $\nt$, is contained in $\pi_i^{-1}(B_{x_0})$.

Let
$\mathcal{U}_B=\{U\in\mathcal{U}:U\subseteq\pi_i^{-1}(B_x)\mbox{
for some }x\in\mathrm{V}(X_i)\}$ and for each $U\in\mathcal{U}_B$
let $x_U\in\mathrm{V}(X_i)$ so that
$U\subseteq\pi_i^{-1}(B_{x_U})$.  Define an equivalence relation
$\equiv$ on $\mathcal{U}_B$ as $U_p\equiv U_q$ if and only if
$x_U=x_{U_p}$ for all $U\in\langle U_p,U_q
\rangle\bigcap\mathcal{U}_B$, where $\langle U_p,U_q \rangle$
denotes the unique linear chain in $\mathcal{U}$ between $U_p$ and
$U_q$, for each $U_p,U_q\in\mathcal{U}_B$.  Let $T$ be the graph
defined as $\mathrm{V}(T)=\mathcal{U}_B/\equiv$ and
$\widehat{U_p},\widehat{U_q}\in\mathrm{V}(T)$ are adjacent if and
only if $U\in\widehat{U_p}$ or $U\in\widehat{U_q}$ for each
$U\in\langle U_p,U_q \rangle\bigcap\mathcal{U}_B$ for some
$U_p\in\widehat{U_p}$ and $U_q\in\widehat{U_q}$.  Since
$U_0\subseteq\pi_i^{-1}(B_{x_0})$, $U_0\in\mathcal{U}_B$ and
$x_{U_0}=x_0$.  Let $\widehat{U_0}\in\mathrm{V}(T)$ so that
$U_0\in\widehat{U_0}$.  Then, if
$\widehat{U_p},\widehat{U_q}\in\mathrm{V}(T)$ are adjacent or
equal and $U_0\in\langle U_p,U_q \rangle$ for some
$U_p\in\widehat{U_p}$ and $U_q\in\widehat{U_q}$, then
$\widehat{U_p}=\widehat{U_0}$ or $\widehat{U_q}=\widehat{U_0}$.
Thus, $\widehat{U_0}$ is the unique vertex of $T$ with order
greater than 2, provided $T$ is not an arc, and without loss of
generality has order $\nt$.  Then, $T$ is a simple-$\nt$-od.

Define the piecewise-linear map $\beta:T\longrightarrow X_i$ as
$\beta(t)=x_U$ for some $U\in t$ for each $t\in\mathrm{V}(T)$.
Suppose $t_p,t_q\in\mathrm{V}(T)$ are adjacent, and let $U_p\in
t_p$ and $U_q\in t_q$.  If $x_{U_p}$ and $x_{U_q}$ are not
adjacent and not equal then there exists
$x\in\mathrm{V}(X_i)\setminus\{x_{U_p},x_{U_q}\}$ so that
$x\in\langle x_{U_p},x_{U_q} \rangle$, where $\langle
x_{U_p},x_{U_q} \rangle$ denotes the unique arc in $X_i$ between
$x_{U_p}$ and $x_{U_q}$, and so $x_U=x$ for some $U\in\langle
U_p,U_q \rangle\bigcap\mathcal{U}_B$, contradicting $U\in
t_p\bigcup t_q$.  Thus, $\beta(t_p)$ and $\beta(t_q)$ are adjacent
or equal, and so $\beta$ is simplicial.  Also,
$\beta(t_0)=x_{U_0}=x_0$ where $t_0=\widehat{U_0}$.

Let $\epsilon$ be a Lebesgue number for $\mathcal{U}$ and $j>i$ so
that the diameter of $\pi_j^{-1}(x)$ is less than $\epsilon$ for
each $x\in X_j$.  For each $x\in\mathrm{V}(X_j)$ let
$U_x\in\mathcal{U}$ so that $\pi_j^{-1}(x)\subseteq U_x$.  Since
$U_x\bigcap\pi_i^{-1}(\nPhi[]{j}{i}(x))\not=\emptyset$,
$U_x\subseteq\pi_i^{-1}(B_{\nPhi[]{j}{i}(x)})$ for each
$x\in\mathrm{V}(X_j)$, and so $U_{\nt[x]}\in\mathcal{U}_B$ with
$x_{U_{\nt[x]}}=\nPhi[]{j}{i}(\nt[x])$ for each
$\nt[x]\in\mathrm{V}(X_j)$.  Let $t_x\in\mathrm{V}(T)$ so that
$U_x\in t_x$, and define the piecewise-linear map
$\alpha:X_j\longrightarrow T$ as $\alpha(x)=t_x$ for each
$x\in\mathrm{V}(X_j)$.  Then,
$\beta\circ\alpha(\nt[x])=\beta(t_{\nt[x]})=x_{U_{\nt[x]}}=\nPhi[]{j}{i}(\nt[x])$
for all $\nt[x]\in\mathrm{V}(X_j)$.  Suppose
$x_p,x_q\in\mathrm{V}(X_j)$ are adjacent and $t_{x_p}$ and
$t_{x_q}$ are not adjacent and not equal.  Then there exist
$t\in\mathrm{V}(T)\setminus\{t_{x_p},t_{x_q}\}$ so that
$t\in\langle t_{x_p},t_{x_q} \rangle$ and $U_1\in\langle
U_{x_p},U_{x_q} \rangle$ so that $U_1\in t$.  Let
$W_1,W_2,\ldots,W_m\subseteq\langle x_p,x_q \rangle$ for some
$m\in\{1,2,\ldots\}$ be a linear chain of connected sets covering
$\langle x_p,x_q \rangle$ with $x_p\in W_1$ so that the diameter
of $\pi_j^{-1}(W_k)$ is less than $\epsilon$ for each
$k\in\oneton{m}$.  Since $U\bigcap \pi_j^{-1}(W_k)\not=\emptyset$
for some $k\in\oneton{m}$ for all $U\in\langle U_{x_p},U_{x_q}
\rangle$, $U\bigcap\pi_i^{-1}(\langle
\nPhi[]{j}{i}(x_p),\nPhi[]{j}{i}(x_q) \rangle)\not=\emptyset$ for
all $U\in\langle U_{x_p},U_{x_q} \rangle$, and so if $U\in\langle
U_{x_p},U_{x_q} \rangle\bigcap\mathcal{U}_B$ then
$U\subseteq\pi_i^{-1}(B_{\nPhi[]{j}{i}(x_p)})$ or
$U\subseteq\pi_i^{-1}(B_{\nPhi[]{j}{i}(x_q)})$.  If
$\nPhi[]{j}{i}(x_p)=\nPhi[]{j}{i}(x_q)$ then $t_{x_p}=t_{x_q}$,
giving a contradiction.  Thus, without loss of generality,
$\nPhi[]{j}{i}(x_p)\not=\nPhi[]{j}{i}(x_q)$,
$x_{U_1}=\nPhi[]{j}{i}(x_p)$, and there exists $U_2\in\langle
U_{x_p},U_1 \rangle\bigcap\mathcal{U}_B$ so that
$x_{U_2}=\nPhi[]{j}{i}(x_q)$. Let $k\in\oneton{m}$ so that
$U_2\bigcap \pi_j^{-1}(W_k)\not=\emptyset$.  Since
$\pi_i(U_2)\subseteq B_{\nPhi[]{j}{i}(x_q)}$,
$\nPhi[]{j}{i}(W_k)\bigcap B_{\nPhi[]{j}{i}(x_q)}\not=\emptyset$,
and so by linearity $\nPhi{j}{i}(W)\bigcap
B_{\nPhi[]{j}{i}(x_q)}\not=\emptyset$ for all $W\in\langle W_k,W_m
\rangle$.  But, $\pi_j^{-1}(W)\subseteq U_1$ for some $W\in\langle
W_k,W_m \rangle$, and so
$U_1\bigcap\pi_i^{-1}(B_{\nPhi[]{j}{i}(x_q)})\not=\emptyset$
contradicting $U_1\subseteq\pi_i^{-1}(B_{\nPhi[]{j}{i}(x_p)})$.
Thus, $\alpha(x_p)$ and $\alpha(x_q)$ are adjacent or equal, and
so $\alpha$ is simplicial.
\end{proof}

\begin{corollary} Suppose $K$ is homeomorphic to
$\varprojlim\invseq[]{X}{\Phi}$ where $X_i$ is a simple-$n$-od
graph and $\Phi_i$ is simplicial for each $i\in\{0,1,\ldots\}$.  If
$K$ is simple-$(n-1)$-od-like, then $K$ is homeomorphic to
$\varprojlim\invseq[]{Y}{\Psi}$ where $Y_i$ is a simple-$\nt$-od
graph, for some $\nt\in\oneton{n-1}$, and $\Psi_i$ is simplicial
for each $i\in\{0,1,\ldots\}$.
\end{corollary}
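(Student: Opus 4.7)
The plan is to iterate Theorem \ref{claim17} along a cofinal subsequence of indices and then recognize the resulting sequence of simple-$\nt$-ods, with the induced bonding maps, as a new inverse representation of $K$.

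First I would build the subsequence and the factorizations inductively. Set $i_0=0$. Given $i_k$, apply Theorem \ref{claim17} (with the role of $i$ played by $i_k$) to obtain an index $i_{k+1}>i_k$, a simple-$\nt[n]_k$-od $Y_k$ for some $\nt[n]_k\in\oneton{n-1}$, and simplicial maps $\alpha_k:X_{i_{k+1}}\longrightarrow Y_k$ and $\beta_k:Y_k\longrightarrow X_{i_k}$ such that
\[
\beta_k\circ\alpha_k=\nPhi[]{i_{k+1}}{i_k}\qquad\text{and}\qquad \beta_k(t_0^k)=x_0^{i_k},
\]
where $t_0^k$ and $x_0^{i_k}$ are the branch points of $Y_k$ and $X_{i_k}$, respectively. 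Define $\Psi_k:Y_{k+1}\longrightarrow Y_k$ by $\Psi_k=\alpha_k\circ\beta_{k+1}$. Since the composition of simplicial maps between graphs is simplicial, each $\Psi_k$ is simplicial.

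Next I would identify the inverse limit. Since passage to a cofinal subsequence does not change the inverse limit, $K\approx\varprojlim\{X_{i_k},\nPhi[]{i_{k+1}}{i_k}\}_{k=0}^{\infty}$. Now interleave the two sequences to form the inverse sequence
\[
X_{i_0}\xleftarrow{\beta_0}Y_0\xleftarrow{\alpha_0}X_{i_1}\xleftarrow{\beta_1}Y_1\xleftarrow{\alpha_1}X_{i_2}\xleftarrow{\beta_2}\cdots .
\]
The compatibility relations
\[
\beta_k\circ\alpha_k=\nPhi[]{i_{k+1}}{i_k}\qquad\text{and}\qquad \alpha_k\circ\beta_{k+1}=\Psi_k
\]
say exactly that the $X$-subsystem and the $Y$-subsystem obtained by skipping alternate terms of the interleaved sequence both have inverse limits canonically homeomorphic to the inverse limit of the interleaved sequence. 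Consequently $\varprojlim\{Y_k,\Psi_k\}_{k=0}^{\infty}\approx\varprojlim\{X_{i_k},\nPhi[]{i_{k+1}}{i_k}\}_{k=0}^{\infty}\approx K$, which is the desired conclusion.

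The only step that requires any care is verifying the two standard inverse-limit facts used above (invariance under passage to a cofinal subsequence, and the interleaving identification); once the factorizations from Theorem \ref{claim17} are in hand, each reduces to checking that the natural coordinate projections from the interleaved limit to either subsequence are continuous bijections (hence homeomorphisms, as the spaces are compact and metric). No further combinatorial work on the maps $\alpha_k$, $\beta_k$ is needed, and the possibility that $\nt[n]_k$ depends on $k$ is permitted by the statement.
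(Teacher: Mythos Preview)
Your proposal is correct and follows exactly the paper's approach: iterate Theorem \ref{claim17} to obtain a cofinal subsequence with factorizations $\beta_k\circ\alpha_k=\nPhi[]{i_{k+1}}{i_k}$ through simple-$\nt$-ods $Y_k$, and set $\Psi_k=\alpha_k\circ\beta_{k+1}$. The paper's proof is terser (it simply asserts ``the claim holds'' after defining $\Psi_i$), whereas you spell out the interleaving/cofinality justification for why the resulting inverse limit is homeomorphic to $K$; this extra detail is fine and changes nothing substantive.
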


\begin{proof}
By Theorem \ref{claim17}, there exist a strictly increasing
sequence $\{n_0,n_1,\ldots\}\subseteq\{0,1,\ldots\}$ with $n_0=0$
and for each $i\in\{0,1,\ldots\}$ simplicial maps
$\alpha_i:X_{n_{i+1}}\longrightarrow Y_i$ and
$\beta_i:Y_i\longrightarrow X_{n_i}$ with $Y_i$ a simple-$\nt$-od
for some $\nt\in\oneton{n-1}$ so that
$\beta_i\circ\alpha_i=\nPhi[]{n_{i+1}}{n_i}$.  For each
$i\in\{0,1,\ldots\}$ define $\Psi_i:Y_{i+1}\longrightarrow Y_i$ as
$\Psi_i=\alpha_i\circ\beta_{i+1}$, and the claim holds.
\end{proof}

\chapter{THE EXAMPLE \K}
\setcounter{equation}{0}

In this chapter, for an integer $n$ greater than or equal to 3, an
example \K{} of an inverse limit of simple-$n$-ods with a single
bonding map \nphi{} is given.  As in \cite{ingram2}, \cite{davis},
and \cite{minc2}, characteristics of the bonding map ensure every
proper nondegenerate subcontinuum being an arc, implying
atriodicity of the continuum. The bonding map for \K{} being a
simplicial map from a graph which is a subdivision of the range
graph, as for the other examples, allows for the inverse limit to
be represented ``simplicially'' where the factor spaces are graphs
with fixed sets of vertices and the bonding maps are simplicial.
In this context, as developed by Minc in \cite{minc1}, certain
combinatorial conditions involving the operation $d$ and factoring
allow for the determination of the nonchainability of the inverse
limit.  If the bonding map defining the inverse limit is
particularly ``nice'', then these conditions are satisfied,
implying that the bonding maps cannot be factored through arcs,
implying nonchainability.

Critical to this program with factoring bonding maps through arcs
(simple-2-ods), when considering the dual of the factoring, the
dual of the arc remains an arc (or is a point or is empty).  When
factoring through a simple-3-od, for example, this is not
necessarily the case, where the dual of the simple-3-od may be
more ``complicated'' than a simple-3-od.  To maintain this control
of complexity when considering an extension of Minc's algorithm
with such factorings, particulars of the specific bonding maps in
question need to be invoked.  Such is done by Minc in
\cite{minc2}, where, again in the simplicial setting, a
combinatorial program involving the factoring of bonding maps
through simple-3-ods is adapted.

For the specific example in \cite{minc2}, the nature of the
defining bonding map allows for a certain factoring to be
arranged, provided the existence of a factoring in general,
thereby avoiding a ``bad'' case where the dual of the simple-3-od
could possibly be more ``complicated.''  Specifically, branch
point of the simple-3-od going to branch point can be avoided in
the factoring, implying, for the dual of simple-3-ods, there being
at most one point of order bigger than 2.  In extending Minc's
program further, a strategy to control complexity is also used in
the factoring of the bonding maps of \K{} through
simple-$(n-1)$-ods. In addition to satisfying Minc's combinatorial
conditions, the nature of the defining bonding map \nphi{} induces
characteristics in the bonding maps that allow for the
rearrangement of the factoring, given by the theorem in the
previous chapter, to one in which the dual of the
simple-$(n-1)$-od remains a simple-$(n-1)$-od. This is
accomplished in conjunction with and facilitated by branch point
of the simple-$(n-1)$-od going to branch point in the factoring as
given by the theorem, which, paradoxically, is the worst-case
scenario with regard to allowing for the greatest possible
complexity of the dual of the simple-$(n-1)$-od.

\section{Definitions}

\begin{definition}[Def. 2.1 \cite{minc1}] \label{def2.1}
For a graph $G$, let $D(G)$ denote the graph such that

i)  $\mathrm{V}(D(G))$ and $\mathrm{E}(G)$ are in one-to-one
correspondence and

ii)  two vertices of $D(G)$ are adjacent if and only if the edges
of $G$ corresponding to the vertices have a non-empty
intersection.

For $v\in\mathrm{V}(D(G))$, $v^*$ denotes the edge of $G$
corresponding to $v$.
\end{definition}

\begin{definition}[Def. 2.4 \cite{minc1}] \label{def2.4}
Let $\phi:G_1\longrightarrow G_0$ be a simplicial map between
graphs, $C=\{c\subseteq G_1:c\mbox{ is a component of
}\phi^{-1}(e)\mbox{ and }\phi(c)=e\mbox{ for some
}e\in\mathrm{E}(G_0)\}$, and $D(\phi,G_1)$ be the graph so that

i)  $\mathrm{V}(D(\phi,G_1))$ and $C$ are in one-to-one
correspondence and

ii)  two vertices of $D(\phi,G_1)$ are adjacent if and only if the
subgraphs of $G_1$ corresponding to the vertices have a non-empty
intersection.

For $v\in\mathrm{V}(D(\phi,G_1))$, $v^*$ denotes the subgraph of
$G_1$ corresponding to $v$.

Let $d[\phi]:D(\phi,G_1)\longrightarrow D(G_0)$ be the simplicial
map determined by $d[\phi](v)=w$ where $w\in\mathrm{V}(D(G_0))$ so
that $\phi(v^*)=w^*$, for every $v\in\mathrm{V}(D(\phi,G_1))$.
\end{definition}

\begin{definition}[Def. 2.10 \cite{minc1}] \label{def2.10}
Let $\phi:G_1\longrightarrow G_0$ and $\psi:G_2\longrightarrow
G_1$ be simplicial maps between graphs.  Let
$d[\phi,\psi]:D(\phi\circ\psi,G_2)\longrightarrow D(\phi,G_1)$ be
the simplicial map determined by $d[\phi,\psi](v)=w$ where $w$ is
the vertex of $D(\phi,G_1)$ so that $\psi(v^*)\subseteq w^*$, for
every vertex $v$ of $D(\phi\circ\psi,G_2)$.
\end{definition}

\begin{definition}[Def. 4.1 \cite{minc1}] \label{def4.1}
Let $\phi:G_1\longrightarrow G_0$ be a simplicial map between
graphs.  Then $\phi$ is {\it ultra light} if it is light and $v^*$
is an edge of $G_1$ for each $v\in\mathrm{V}(D(\phi,G_1))$.
\end{definition}

\begin{definition}[Def. 5.1 \cite{minc1}] \label{def5.1}
A graph $G^{\prime}$ {\it subdivides} a graph $G$ if
$\mathrm{V}(G)\subseteq \mathrm{V}(G^{\prime})$ and for each
$e\in\mathrm{E}(G)$ there is an arc $(e,G^{\prime})\subseteq
G^{\prime}$ so that

i)  $(e,G^{\prime})$ has the same endpoints as $e$,

ii)  $(d,G^{\prime})\bigcap (e,G^{\prime})=d\bigcap e$ for
$d,e\in\mathrm{E}(G)$ and $d\not=e$, and

iii)  for each $v\in\mathrm{V}(G^{\prime})$, $v\in (e,G^{\prime})$
for some $e\in\mathrm{E}(G)$, and for each
$e^{\prime}\in\mathrm{E}(G^{\prime})$, $e^{\prime}$ is an edge of
$(e,G^{\prime})$ for some $e\in\mathrm{E}(G)$.

If $v\in\mathrm{V}(G)$ and $e\in\mathrm{E}(G)$ so that $v\in e$,
then $(v,e,G^{\prime})$ denotes the edge of $(e,G^{\prime})$
containing $v$.
\end{definition}

\begin{proposition}[Prop. 5.2 \cite{minc1}] \label{prop5.2}
If $G^{\prime}$ is a graph subdividing a graph $G$ and
$G^{\prime\prime}$ is a graph subdividing $G^{\prime}$, then
$G^{\prime\prime}$ subdivides $G$.
\end{proposition}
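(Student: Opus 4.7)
The plan is to build the subdividing arcs in $G''$ by concatenating the subdividing arcs of $G''$ over the edges of $(e,G')$. First, the vertex inclusion $\mathrm{V}(G)\subseteq\mathrm{V}(G'')$ is immediate from $\mathrm{V}(G)\subseteq\mathrm{V}(G')\subseteq\mathrm{V}(G'')$. For each $e\in\mathrm{E}(G)$, by hypothesis $(e,G')$ is an arc in $G'$ whose edges are some $e'_1,\ldots,e'_k\in\mathrm{E}(G')$ meeting consecutively at vertices in $\mathrm{V}(G')$, and the two endpoints of $(e,G')$ coincide with the endpoints of $e$. Define
\[
(e,G'')\;=\;\bigcup_{m=1}^{k}(e'_m,G'').
\]

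Next I would verify that each $(e,G'')$ is an arc in $G''$ with the same endpoints as $e$. For this, observe that consecutive pieces $(e'_m,G'')$ and $(e'_{m+1},G'')$ share, by condition (ii) applied to $G''\!/\!G'$, exactly the single vertex $e'_m\cap e'_{m+1}\in\mathrm{V}(G')$, while non-consecutive pieces are disjoint (any shared vertex would lie in $e'_m\cap e'_{\ell}$, which is empty in $G'$ for $|m-\ell|\ge 2$ since $(e,G')$ is an arc). Hence the concatenation is an arc, and its endpoints in $G''$ are the endpoints of $e'_1$ and $e'_k$, which are the endpoints of $(e,G')$, hence of $e$, giving (i).

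For property (ii), let $d,e\in\mathrm{E}(G)$ with $d\ne e$; I would show $(d,G'')\cap(e,G'')=d\cap e$. Take any point $p\in(d,G'')\cap(e,G'')$. Then $p$ lies in some $(f',G'')$ with $f'$ an edge of $(d,G')$ and in some $(e'_m,G'')$ with $e'_m$ an edge of $(e,G')$. Using condition (ii) for the $G''$-subdivision, $(f',G'')\cap(e'_m,G'')=f'\cap e'_m$, so either $f'=e'_m$ or $p$ is a common vertex of $f'$ and $e'_m$. In either case $p\in(d,G')\cap(e,G')=d\cap e$ by condition (ii) applied to $G'$ subdividing $G$. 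The reverse inclusion $d\cap e\subseteq(d,G'')\cap(e,G'')$ is immediate since any common endpoint of $d$ and $e$ is an endpoint of both $(d,G')$ and $(e,G')$, hence an endpoint of both $(d,G'')$ and $(e,G'')$.

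Finally, property (iii) follows by composing the covering properties for the two subdivisions: any $v\in\mathrm{V}(G'')$ lies in some $(e',G'')$ with $e'\in\mathrm{E}(G')$, and $e'$ is an edge of some $(e,G')$ with $e\in\mathrm{E}(G)$, so $v\in(e,G'')$; likewise every $e''\in\mathrm{E}(G'')$ is an edge of some $(e',G'')$ with $e'\in(e,G')$, hence an edge of $(e,G'')$. The main technical point to get right is property (ii), where one must carefully exclude any spurious intersections coming from nonconsecutive pieces $(e'_m,G'')$ within a single $(e,G'')$ or across two different $(e,G'')$ and $(d,G'')$; the argument above reduces everything to the corresponding intersection statement one level down, so no additional combinatorial input is needed.
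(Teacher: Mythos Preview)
Your proof is correct. Note, however, that the paper does not actually supply its own proof of this proposition: it is stated with a citation to \cite{minc1} and used without further argument. So there is no ``paper's proof'' to compare against here; you have filled in a routine verification that the dissertation simply imports from the literature. One very minor remark: in your treatment of condition (ii), the case $f'=e'_m$ cannot actually occur when $d\neq e$, since an edge of $G'$ lying in both $(d,G')$ and $(e,G')$ would force $(d,G')\cap(e,G')$ to contain more than a point, contradicting the subdivision property for $G'$ over $G$. Your conclusion still goes through as written, but you could streamline by noting this case is vacuous.
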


\begin{definition}[Def. 5.3 \cite{minc1}] \label{def5.3}
Let $G_0^{\prime}$ and $G_1^{\prime}$ be graphs subdividing the
graphs $G_0$ and $G_1$, respectively, and $\phi:G_1\longrightarrow
G_0$ and ${\phi}^{\prime}:G_1^{\prime}\longrightarrow
G_0^{\prime}$ be simplicial maps so that
${\phi}^{\prime}(v)=\phi(v)$ for all $v\in\mathrm{V}(G_1)$.  If
for all $e\in\mathrm{E}(G_1)$, $(e,G_1^{\prime})$ is an edge
whenever $\phi(e)$ is degenerate, and ${\phi}^{\prime}$ restricted
to $(e,G_1^{\prime})$ is an isomorphism onto
$(\phi(e),G_0^{\prime})$ whenever $\phi(e)$ is nondegenerate, then
${\phi}^{\prime}$ is a {\it subdivision of $\phi$ matching
$G_0^{\prime}$}.
\end{definition}

\begin{proposition}[Prop. 5.4 \cite{minc1}] \label{prop5.4}
Let $\phi:G_1\longrightarrow G_0$ be a simplicial map between
graphs.  Let $G_0^{\prime}$ be a graph subdividing $G_0$.  Then
there is a subdivision ${\phi}^{\prime}$ of $\phi$ matching
$G_0^{\prime}$, unique up to an isomorphism.
\end{proposition}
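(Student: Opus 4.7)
The plan is to construct $G_1^\prime$ and $\phi^\prime$ edge-by-edge on $G_1$, then verify Definitions \ref{def5.1} and \ref{def5.3} directly; uniqueness will follow from the fact that on each edge of $G_1$ the combinatorial type of the subdivision is forced.

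For each $e\in\mathrm{E}(G_1)$ with endpoints $u,v\in\mathrm{V}(G_1)$, I distinguish two cases. If $\phi(e)$ is degenerate (so $\phi(u)=\phi(v)$), I take $(e,G_1^{\prime})$ to be a single edge from $u$ to $v$ and set $\phi^{\prime}$ equal to $\phi$ on this edge. If $\phi(e)$ is nondegenerate, let $(\phi(e),G_0^{\prime})$ have the interior vertices $x_1^e,\ldots,x_{k_e}^e$ listed in order from $\phi(u)$ to $\phi(v)$; I introduce a corresponding sequence of fresh interior points $y_1^e,\ldots,y_{k_e}^e$ on $e$, let $(e,G_1^{\prime})$ be the resulting arc from $u$ to $v$, and define $\phi^{\prime}(y_i^e)=x_i^e$, extending linearly on each new edge so that the restriction of $\phi^{\prime}$ to $(e,G_1^{\prime})$ is an isomorphism onto $(\phi(e),G_0^{\prime})$. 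Then I assemble $G_1^{\prime}=\bigcup_{e\in\mathrm{E}(G_1)}(e,G_1^{\prime})$, identified only along the shared endpoints inherited from $\mathrm{V}(G_1)$, and take $\phi^{\prime}$ to be the union of the pieces defined above.

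Next I would verify the hypotheses. Condition (i) of Definition \ref{def5.1} holds by the construction of each $(e,G_1^{\prime})$. Condition (ii) holds because the new interior points introduced for distinct edges of $G_1$ are distinct, so two arcs $(d,G_1^{\prime})$ and $(e,G_1^{\prime})$ can meet only in shared endpoints, which coincide with $d\cap e$. Condition (iii) is immediate from the construction. The map $\phi^{\prime}$ is simplicial since it is linear on each edge of $G_1^{\prime}$ and sends vertices to vertices; it agrees with $\phi$ on $\mathrm{V}(G_1)$ and satisfies the isomorphism condition of Definition \ref{def5.3} on each $(e,G_1^{\prime})$ by construction. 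Thus $\phi^{\prime}$ is a subdivision of $\phi$ matching $G_0^{\prime}$.

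For uniqueness, suppose $\phi^{\prime\prime}:G_1^{\prime\prime}\longrightarrow G_0^{\prime}$ is another subdivision of $\phi$ matching $G_0^{\prime}$. I would construct an isomorphism $h:G_1^{\prime}\longrightarrow G_1^{\prime\prime}$ with $\phi^{\prime\prime}\circ h=\phi^{\prime}$. On $\mathrm{V}(G_1)\subseteq\mathrm{V}(G_1^{\prime})\cap\mathrm{V}(G_1^{\prime\prime})$ I set $h$ to be the identity, which is forced. For each $e\in\mathrm{E}(G_1)$, Definition \ref{def5.3} forces both $(e,G_1^{\prime})$ and $(e,G_1^{\prime\prime})$ to be single edges when $\phi(e)$ is degenerate, and to be arcs each mapping isomorphically onto $(\phi(e),G_0^{\prime})$ when $\phi(e)$ is nondegenerate; in the latter case I let $h$ be the composition of the restriction of $\phi^{\prime}$ with the inverse of the restriction of $\phi^{\prime\prime}$, which is a well-defined isomorphism extending the identity on endpoints. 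These partial isomorphisms agree on overlaps (shared endpoints from $\mathrm{V}(G_1)$) and so assemble to a global isomorphism. I do not expect a serious obstacle here: the only subtlety is the bookkeeping needed to ensure that the interior vertices of different edges remain distinct in $G_1^{\prime}$ and that the number $k_e$ and order of the interior vertices of $(\phi(e),G_0^{\prime})$ depend only on $\phi(e)$, both of which are immediate from Definition \ref{def5.1} applied to $G_0^{\prime}\supseteq G_0$.
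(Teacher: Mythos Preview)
Your proposal is correct and follows the natural edge-by-edge construction: subdivide each edge of $G_1$ to mirror the subdivision of its image edge in $G_0^{\prime}$, and glue along the original vertices. The paper itself does not supply a proof of this proposition---it is quoted from \cite{minc1}---so there is no in-paper argument to compare against; your approach is exactly the standard one that the cited reference would give.
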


\begin{definition}[Def. 5.5 \cite{minc1}] \label{def5.5}
Let $G$ be a graph and
$S:\mathrm{V}(G)\longrightarrow\{X\subseteq\mathrm{E}(G):X\not=\emptyset\}$.
Then $S$ is an {\it edge selection on $G$} if for all
$v\in\mathrm{V}(G)$, $v\in e$ for each $e\in S(v)$.
\end{definition}

\begin{definition}[Def. 5.5 \cite{minc1}] \label{def5.5.1}
Let $G_0$ and $G_1$ be graphs, $S$ be an edge selection on $G_1$,
$G_1^{\prime}$ be a graph subdividing $G_1$, and
$\phi:G_1^{\prime}\longrightarrow G_0$ be simplicial.  If there
exist a graph $H_1$ and a simplicial map
$\lambda:H_1\longrightarrow D(\phi,G_1^{\prime})$ so that $H_1$ is
a subdivision of $G_1$ and $\lambda$ is an isomorphism with

i)  $(v,e,G_1^{\prime})\subseteq{[\lambda(v)]}^*$ for each
$v\in\mathrm{V}(G_1)$ and each $e\in S(v)$ and

ii)  ${[\lambda(v)]}^*\subseteq(e,G_1^{\prime})$ for each
$e\in\mathrm{E}(G_1)$ and
$v\in\mathrm{V}((e,H_1))\setminus\mathrm{V}(G_1)$,

\noindent then {\it $\phi$ is consistent on $S$} and $\lambda$ is
a {\it consistency isomorphism}.
\end{definition}

\begin{definition}[Def. 5.7 \cite{minc1}] \label{def5.7}
Let $G_0$ and $G_1$ be graphs with edge selections $S_0$ and
$S_1$, respectively, $G_1^{\prime}$ be a graph subdividing $G_1$,
and $\phi:G_1^{\prime}\longrightarrow G_0$ be simplicial. Then,
$\phi$ {\it preserves} $(S_0,S_1)$ provided that

i)  $\phi((v,e,G_1^{\prime}))\in S_0(\phi(v))$ for each
$v\in\mathrm{V}(G_1)$ and each $e\in S_1(v)$ and

ii)  for each $e,e^{\prime}\in\mathrm{E}(G_1^{\prime})$ so that
$e\not= e^{\prime}$ and $e\bigcap e^{\prime}=v$ for some
$v\in\mathrm{V}( G_1^{\prime})$, either $\phi(e)\in S_0(\phi(v))$
or $\phi(e^{\prime})\in S_0(\phi(v))$.
\end{definition}

\begin{definition}[Def. 5.10 \cite{minc1}] \label{def5.10}
Let $\{G_i\}_{i=0}^{\infty}$ and $\{G_i^{\prime}\}_{i=1}^{\infty}$
be collections of graphs and $\Phi=\{\phi_i\}_{i=0}^{\infty}$ be a
collection of simplicial maps so that for each
$i\in\{1,2,\ldots\}$, $G_i^{\prime}$ is a subdivision of $G_i$ and
$\phi_{i-1}:G_i^{\prime}\longrightarrow G_{i-1}$.  By Proposition
\ref{prop5.4} there exists a collection of simplicial maps
$\{\psi_i\}_{i=0}^{\infty}$ so that $\psi_0=\phi_0$ and for each
$i\in\{1,2,\ldots\}$, $\psi_i$ is a subdivison of $\phi_i$
matching the domain of $\psi_{i-1}$.  Let $X_0=G_0$, for each
$i\in\{1,2,\ldots\}$, $X_i$ be the domain of $\psi_{i-1}$, and for
$i,j\in\{0,1,\ldots\}$ so that $j>i$ let $\Phi_i^j=\psi_i\circ
\psi_{i+1}\circ\cdots\circ \psi_{j-1}$ and
$\Phi_i^i=\mbox{identity on }X_i$. Then we say {\it
$\varprojlim\invseq[]{X}{\Phi}$ is generated by the sequence
$\Phi$}.

Let $S=\{S_i\}_{i=1}^{\infty}$ where for each
$i\in\{1,2,\ldots\}$, $S_i$ is an edge selection on $G_i$.  We say
{\it $\Phi$ preserves $S$} if $\phi_i$ preserves $(S_i,S_{i+1})$
for each $i\in\{1,2,\ldots\}$.
\end{definition}

\begin{definition}[Def. 5.10 \cite{minc1}] \label{def5.10.1}
We say two inverse limits $\varprojlim\invseq[]{X}{\Phi}$ and
$\varprojlim\invseq[]{Y}{\Psi}$ with simplicial bonding maps are
{\it isomorphic} if there exists a sequence of isomorphisms
$\{\lambda_i\}_{i=0}^{\infty}$ so that
$\lambda_i:X_i\longrightarrow Y_i$ and
$\lambda_i\circ\Phi_i^j=\Psi_i^j\circ\lambda_j$ for all
$i,j\in\{0,1,\ldots\}$ with $j\ge i$.
\end{definition}

\begin{definition} \label{defwedge}
Let $x_0,x_1,\ldots,x_{m_x}$ for some ${m_x}\in\{1,2,\ldots\}$ be
vertices of some graph $G$ and define $\langle
x_0,x_1,\ldots,x_{m_x} \rangle$ to be the path in $G$ given by
$x_0,x_1,\ldots,x_{m_x}$, respecting order, if and only if $x_i$
and $x_{i+1}$ are adjacent for all $i\in\oneton[0]{{m_x}-1}$.  Let
$y_0,y_1,\ldots,y_{m_y}$ for some ${m_y}\in\{1,2,\ldots\}$ be
vertices of $G$ so that $y_i$ and $y_{i+1}$ are adjacent for all
$i\in\oneton[0]{{m_y}-1}$, and define $\langle
x_0,x_1,\ldots,x_{m_x} \rangle\bigvee\langle
y_0,y_1,\ldots,y_{m_y} \rangle=\langle
x_0,x_1,\ldots,x_{m_x},y_1,\ldots,y_{m_y} \rangle$, if and only if
$x_{m_x}=y_0$.
\end{definition}

\newpage
\section{The Bonding Map \nphi}

In this section, a single bonding map \nphi{} between
simple-$n$-ods is defined for an inverse limit \K. Characteristics
of the map ensure that every proper nondegenerate subcontinuum of
\K{} is an arc (Proposition \ref{claim1}) and satisfy certain
conditions essential in the implementation of the previously
mentioned strategy of Minc. In particular, \nphi{} is well-behaved
in the sense of inducing no folding in \dnphi{} (Proposition
\ref{claim4}) and with respect to satisfying conditions sufficient
for demonstrating a ``subdivision'' of \K{} as being isomorphic
with the dual of \K{} (Propositions \ref{claim3} and
\ref{claim6}).  Also, a certain symmetry is exhibited by the
bonding maps (Proposition \ref{claim9}), used in the next section
to obtain a ``well-behaved'' factoring.

For what follows, let \X{0} be a simple-n-od and
$\arm[1]{0},\arm[2]{0},\ldots,\arm[n]{0}$ be $n$ arcs so that
$\bigcup\limits_{i=1}^{n}\arm0=\X{0}$ and
$\arm0\bigcap\arm[j]{0}=\{v_0\}$ for each distinct
$i,j\in\oneton{n}$, and so $v_0$ is an endpoint of \arm0 for each
$i\in\oneton{n}$.

Let \X{0} be the graph defined in the following table.
\vspace{-.4cm}
\begin{table}[here]
\begin{center}
\begin{tabular}[t]{clc}
arc&\vline&as a subgraph\\\hline \arm0&\vline&\armzeroi{} for each $i\in\oneton{n-2}$\\ \arm[n-1]{0}&\vline&\armzeronminusone\\
\arm[n]{0}&\vline&\armzeron
\end{tabular}
\caption{Definition of \X{0}}
\end{center}
\end{table}

\vspace{-.9cm}Let \X{1} be a subdivision of \X{0}, where \arm1 is
a subdivision of \arm0 for each $i\in\oneton{n}$ defined in the
following table. \vspace{-.4cm}
\begin{table}[here]
\begin{center}
\begin{tabular}[t]{clc}
arc&\vline&as a subgraph\\\hline \arm1&\vline&\armonei\\&\vline&
for each $i\in\oneton{n-3}$\\ \arm[n-2]{1}&\vline&\armonenminustwo\\
\arm[n-1]{1}&\vline&\armonenminusone\\
\arm[n]{1}&\vline&\armonen
\end{tabular}
\caption{Definition of \X{1}}
\end{center}
\end{table}

\newpage
Let $\nphi:\X{1}\longrightarrow\X{0}$ be the light simplicial map
determined by the following table where $\nphi(v)$ is defined for
$v\in\mathrm{V}(\X{1})$.
\begin{table}[here]
\begin{center}
\begin{tabular}[t]{clc}
$v$&\vline&$\nphi(v)$\\\hline \phidef{v}{0}{0}\\ \phidef{v}{i}{n+2} for all $i\in\oneton{n-3}$\\
\phidef{v}{n-2}{n+1}\\ \phidef{v}{n-1}{n}\\ \phidef{v}{n}{n}\\
\phidef{v}{n+1}{n+2}\\ \phidef{v}{n+2}{n+2}\\ \phidef{u}{i}{n-1}
for all $i\in\oneton{n-3}$\\ \phidef{u}{j(n-3)+i}{0} for all
$i\in\oneton{n-3}$\\&\vline& and for all odd $j\in\oneton{2(n-1)-1}$\\
\phidef{u}{j(n-3)+i}{{i-1+\frac{j}{2}}\bmod{n-2}} for all
$i\in\oneton{n-3}$\\&\vline& and for all even $j\in\oneton[2]{2(n-1)-2}$\\
\phidef{u}{2(n-1)(n-3)+i}{n} for all $i\in\oneton{n-3}$\\
\phidef{u}{(2n-1)(n-3)+1}{n-1}\\ \phidef{u}{(2n-1)(n-3)+i}{0} for
all even $i\in\oneton[2]{2(n-1)}$\\
\phidef{u}{(2n-1)(n-3)+3}{n-2}\\
\phidef{u}{(2n-1)(n-3)+i}{\frac{i-3}{2}} for all odd
$i\in\oneton[5]{2(n-1)-1}$\\ \phidef{u}{(2n-1)(n-3)+2(n-1)+1}{n-1}\\
\phidef{u}{(2n-1)(n-3)+2(n-1)+2}{n-1}\\
\phidef{u}{(2n-1)(n-3)+2(n-1)+3}{0}
\end{tabular}
\caption{Definition of \nphi}
\end{center}
\end{table}

\newpage

\begin{figure}[here] \vspace{4.3cm}
\hspace{2.8cm} \setlength{\unitlength}{.85cm}
\begin{picture}(0,0)

\put(9.3,-3.8){\LARGE\ensuremath{{_{3}X}_{1}}}

\color{LimeGreen} \put(6,-1.1){\large\ensuremath{{_3A}_{1}^{2}}}
\color{Red} \put(13.3,-1.1){\large\ensuremath{{_3A}_{1}^{3}}}
\color{RawSienna} \put(9.6,2.9){\large\ensuremath{{_3A}_{1}^{1}}}
\normalcolor \put(14,-1.8){\line(-1,0){8}}
\put(10,-1.8){\line(0,1){4}}
\put(9.5,-1.4){\small\ensuremath{v_0}} \put(10,-1.8){\circle*{.4}}
\put(10.4,2.1){\small\ensuremath{v_1}} \put(10,2.2){\circle*{.4}}
\put(5.8,-2.4){\small\ensuremath{v_4}} \put(6,-1.8){\circle*{.4}}
\put(13.8,-2.4){\small\ensuremath{v_5}}
\put(14,-1.8){\circle*{.4}} \put(6.5,-2.4){\small\ensuremath{v_2}}
\put(6.7,-1.8){\circle*{.4}}
\put(13.1,-2.4){\small\ensuremath{v_3}}
\put(13.3,-1.8){\circle*{.4}}

\put(9.5,1.45){\tiny\ensuremath{u_5}} \put(10,1.5){\circle*{.2}}
\put(8.75,-1.55){\tiny\ensuremath{u_6}}
\put(8.9,-1.8){\circle*{.2}}
\put(7.65,-1.55){\tiny\ensuremath{u_7}}
\put(7.8,-1.8){\circle*{.2}}
\put(10.51,-1.55){\tiny\ensuremath{u_1}}
\put(10.66,-1.8){\circle*{.2}}
\put(11.17,-1.55){\tiny\ensuremath{u_2}}
\put(11.32,-1.8){\circle*{.2}}
\put(11.83,-1.55){\tiny\ensuremath{u_3}}
\put(11.98,-1.8){\circle*{.2}}
\put(12.49,-1.55){\tiny\ensuremath{u_4}}
\put(12.64,-1.8){\circle*{.2}}

\put(-.2,-3.8){\LARGE\ensuremath{{_{3}X}_{0}}}

\color{LimeGreen}
\put(-3.5,-1.1){\large\ensuremath{{_3A}_{0}^{2}}} \color{Red}
\put(3.8,-1.1){\large\ensuremath{{_3A}_{0}^{3}}} \color{RawSienna}
\put(.1,2.9){\large\ensuremath{{_3A}_{0}^{1}}} \normalcolor
\put(4.5,-1.8){\line(-1,0){8}} \put(.5,-1.8){\line(0,1){4}}
\put(0,-1.4){\small\ensuremath{v_0}} \put(.5,-1.8){\circle*{.4}}
\put(.9,2.1){\small\ensuremath{v_1}} \put(.5,2.2){\circle*{.4}}
\put(-3.7,-2.4){\small\ensuremath{v_4}}
\put(-3.5,-1.8){\circle*{.4}}
\put(4.3,-2.4){\small\ensuremath{v_5}}
\put(4.5,-1.8){\circle*{.4}} \put(-3,-2.4){\small\ensuremath{v_2}}
\put(-2.8,-1.8){\circle*{.4}}
\put(3.6,-2.4){\small\ensuremath{v_3}}
\put(3.8,-1.8){\circle*{.4}}

\thicklines \put(7.25,.2){\vector(-1,0){4}} \thinlines
\put(4.95,.7){\LARGE \ensuremath{{_3\phi}}}

\end{picture}

\hspace{2.8cm} \setlength{\unitlength}{.85cm}
\begin{picture}(0,12)

\thicklines 

\color{LimeGreen} \put(-3.3,-3.8){\large
\ensuremath{{_3A}_{0}^{2}}} \put(-2.3,-5){\line(1,0){7.5}}
\put(-2.3,-5){\line(0,1){2.5}} \put(-2.3,-2.5){\line(1,0){6.25}}

\color{Red} \put(13,-3.8){\large \ensuremath{{_3A}_{0}^{3}}}
\put(5.2,-5){\line(1,0){7.5}} \put(12.7,-5){\line(0,1){2.5}}
\put(12.7,-2.5){\line(-1,0){6.25}}

\color{RawSienna} \put(4.95,4.05){\large
\ensuremath{{_3A}_{0}^{1}}} \put(3.95,-2.5){\line(0,1){6.25}}
\put(6.45,-2.5){\line(0,1){6.25}} \put(3.95,3.75){\line(1,0){2.5}}
\normalcolor

\thinlines \put(5.2,-3.75){\circle*{.168}}

\color{RawSienna} \put(-2.25,-3.85){\scriptsize
\ensuremath{{_3A}_{1}^{1}}}

\put(5.2,-3.75){\line(-1,0){6.9}}

\color{Red} \put(12.1,-3.225){\scriptsize
\ensuremath{{_3A}_{1}^{3}}}

\put(4.702,-3.5){\line(-1,0){3.752}}
\put(4.702,-3.5){\line(3,-1){.498}}
\put(.95,-3.5){\line(0,1){.375}}
\put(.95,-3.125){\line(1,0){3.625}}
\put(4.575,-3.125){\line(0,1){6.25}}
\put(4.575,3.125){\line(1,0){1.25}}
\put(5.825,3.125){\line(0,-1){6.25}}
\put(5.825,-3.125){\line(1,0){6.225}}

\color{LimeGreen} \put(12.1,-4.475){\scriptsize
\ensuremath{{_3A}_{1}^{2}}}

\put(4.702,-4){\line(-1,0){3.752}}
\put(4.702,-4){\line(3,1){.498}} \put(.95,-4){\line(0,-1){.375}}
\put(.95,-4.375){\line(1,0){11.1}} \normalcolor

\end{picture}
\end{figure}
\vspace{4cm}
\begin{figure}[here]
\caption{The bonding map $_3\phi$}
\end{figure}

\newpage

\begin{figure}[here] \vspace{3.4cm}
\setlength{\unitlength}{.85cm} \hspace{2.8cm}
\begin{picture}(0,0)

\put(9.3,-8.3){\LARGE\ensuremath{{_{4}X}_{1}}}

\color{LimeGreen} \put(6,-1.1){\large\ensuremath{{_4A}_{1}^{3}}}
\color{Orchid} \put(13.3,-1.1){\large\ensuremath{{_4A}_{1}^{1}}}
\color{RawSienna} \put(9.6,2.9){\large\ensuremath{{_4A}_{1}^{2}}}
\color{Red} \put(9.6,-6.8){\large\ensuremath{{_4A}_{1}^{4}}}
\normalcolor \put(14,-1.8){\line(-1,0){8}}
\put(10,-1.8){\line(0,1){4}} \put(10,-1.8){\line(0,-1){4}}
\put(9.5,-1.4){\small\ensuremath{v_0}} \put(10,-1.8){\circle*{.4}}
\put(10.4,2.1){\small\ensuremath{v_2}} \put(10,2.2){\circle*{.4}}
\put(5.8,-2.4){\small\ensuremath{v_5}} \put(6,-1.8){\circle*{.4}}
\put(13.8,-2.4){\small\ensuremath{v_1}}
\put(14,-1.8){\circle*{.4}} \put(6.5,-2.4){\small\ensuremath{v_3}}
\put(6.7,-1.8){\circle*{.4}}
\put(10.4,-5.2){\small\ensuremath{v_4}}
\put(10,-5.1){\circle*{.4}}
\put(10.4,-5.9){\small\ensuremath{v_6}}
\put(10,-5.8){\circle*{.4}}

\put(9.4,1.45){\tiny\ensuremath{u_{14}}}
\put(10,1.5){\circle*{.2}}
\put(8.75,-1.55){\tiny\ensuremath{u_{15}}}
\put(8.9,-1.8){\circle*{.2}}
\put(7.65,-1.55){\tiny\ensuremath{u_{16}}}
\put(7.8,-1.8){\circle*{.2}}

\put(10.321,-1.55){\tiny\ensuremath{u_1}}
\put(10.471,-1.8){\circle*{.2}}
\put(10.792,-1.55){\tiny\ensuremath{u_2}}
\put(10.942,-1.8){\circle*{.2}}
\put(11.263,-1.55){\tiny\ensuremath{u_3}}
\put(11.413,-1.8){\circle*{.2}}
\put(11.734,-1.55){\tiny\ensuremath{u_4}}
\put(11.884,-1.8){\circle*{.2}}
\put(12.205,-1.55){\tiny\ensuremath{u_5}}
\put(12.355,-1.8){\circle*{.2}}
\put(12.676,-1.55){\tiny\ensuremath{u_6}}
\put(12.826,-1.8){\circle*{.2}}
\put(13.15,-1.55){\tiny\ensuremath{u_7}}
\put(13.3,-1.8){\circle*{.2}}

\put(9.5,-2.321){\tiny\ensuremath{u_8}}
\put(10,-2.271){\circle*{.2}}
\put(9.5,-2.792){\tiny\ensuremath{u_9}}
\put(10,-2.742){\circle*{.2}}
\put(9.4,-3.263){\tiny\ensuremath{u_{10}}}
\put(10,-3.213){\circle*{.2}}
\put(9.4,-3.734){\tiny\ensuremath{u_{11}}}
\put(10,-3.684){\circle*{.2}}
\put(9.4,-4.205){\tiny\ensuremath{u_{12}}}
\put(10,-4.155){\circle*{.2}}
\put(9.4,-4.676){\tiny\ensuremath{u_{13}}}
\put(10,-4.626){\circle*{.2}}

\put(-.2,-8.3){\LARGE\ensuremath{{_{4}X}_{0}}}

\color{LimeGreen}
\put(-3.5,-1.1){\large\ensuremath{{_4A}_{0}^{3}}} \color{Orchid}
\put(3.8,-1.1){\large\ensuremath{{_4A}_{0}^{1}}} \color{RawSienna}
\put(.1,2.9){\large\ensuremath{{_4A}_{0}^{2}}} \color{Red}
\put(.1,-6.8){\large\ensuremath{{_4A}_{0}^{4}}} \normalcolor
\put(4.5,-1.8){\line(-1,0){8}} \put(.5,-1.8){\line(0,1){4}}
\put(.5,-1.8){\line(0,-1){4}} \put(0,-1.4){\small\ensuremath{v_0}}
\put(.5,-1.8){\circle*{.4}} \put(.9,2.1){\small\ensuremath{v_2}}
\put(.5,2.2){\circle*{.4}} \put(-3.7,-2.4){\small\ensuremath{v_5}}
\put(-3.5,-1.8){\circle*{.4}}
\put(4.3,-2.4){\small\ensuremath{v_1}}
\put(4.5,-1.8){\circle*{.4}} \put(-3,-2.4){\small\ensuremath{v_3}}
\put(-2.8,-1.8){\circle*{.4}}
\put(.9,-5.2){\small\ensuremath{v_4}} \put(.5,-5.1){\circle*{.4}}
\put(.9,-5.9){\small\ensuremath{v_6}} \put(.5,-5.8){\circle*{.4}}

\thicklines \put(7.25,.2){\vector(-1,0){4}} \thinlines
\put(4.95,.7){\LARGE \ensuremath{{_4\phi}}}

\end{picture}

\hspace{2.8cm} \setlength{\unitlength}{.85cm}
\begin{picture}(0,8.7)

\thicklines 

\color{Red} \put(4.8,-11.929){\large \ensuremath{{_4A}_{0}^{4}}}

\put(3.95,-5){\line(0,-1){6.25}} \put(6.45,-5){\line(0,-1){6.25}}
\put(3.95,-11.25){\line(1,0){2.5}}

\color{LimeGreen} \put(-3.3,-3.8){\large
\ensuremath{{_4A}_{0}^{3}}}

\put(-2.3,-5){\line(1,0){6.25}} \put(-2.3,-5){\line(0,1){2.5}}
\put(-2.3,-2.5){\line(1,0){6.25}}

\color{Orchid} \put(13,-3.8){\large \ensuremath{{_4A}_{0}^{1}}}

\put(12.7,-5){\line(-1,0){6.25}} \put(12.7,-5){\line(0,1){2.5}}
\put(12.7,-2.5){\line(-1,0){6.25}}

\color{RawSienna} \put(4.95,4.05){\large
\ensuremath{{_4A}_{0}^{2}}}

\put(3.95,-2.5){\line(0,1){6.25}}
\put(6.45,-2.5){\line(0,1){6.25}} \put(3.95,3.75){\line(1,0){2.5}}
\normalcolor

\thinlines \put(5.2,-3.75){\circle*{.168}}

\color{RawSienna} \put(-2.25,-3.766){\scriptsize
\ensuremath{{_4A}_{1}^{2}}}

\put(5.2,-3.666){\line(-1,0){6.9}}

\color{Red} \put(5.775,-11.074){\scriptsize
\ensuremath{{_4A}_{1}^{4}}}

\put(4.702,-3.5){\line(-1,0){3.752}}
\put(4.702,-3.5){\line(3,-1){.498}}
\put(.95,-3.5){\line(0,1){.375}}
\put(.95,-3.125){\line(1,0){3.625}}
\put(4.575,-3.125){\line(0,1){6.416}}
\put(4.575,3.291){\line(1,0){1.416}}
\put(5.991,3.291){\line(0,-1){6.25}}
\put(5.991,-2.959){\line(1,0){6.25}}
\put(5.991,-4.541){\line(1,0){6.25}}
\put(12.241,-4.541){\line(0,1){1.582}}
\put(5.991,-4.541){\line(0,-1){6.084}}

\color{LimeGreen} \put(4.275,-11.074){\scriptsize
\ensuremath{{_4A}_{1}^{3}}}

\put(5.2,-3.834){\line(-1,0){4.416}}
\put(.784,-3.834){\line(0,-1){.707}}
\put(.784,-4.541){\line(1,0){3.791}}
\put(4.575,-4.541){\line(0,-1){6.084}}

\color{Orchid} \put(5.075,-11.074){\scriptsize
\ensuremath{{_4A}_{1}^{1}}}

\put(4.702,-4){\line(-1,0){3.752}}
\put(4.702,-4){\line(3,1){.498}} \put(.95,-4){\line(0,-1){.375}}
\put(.95,-4.375){\line(1,0){11.125}}
\put(12.075,-4.375){\line(0,1){1.25}}
\put(12.075,-3.125){\line(-1,0){6.25}}
\put(5.825,-3.125){\line(0,1){6.25}}
\put(5.825,3.125){\line(-1,0){.3125}}
\put(5.5125,3.125){\line(0,-1){7.417}}
\put(5.5125,-4.458){\line(0,-1){6.167}} \normalcolor

\end{picture}
\end{figure}
\begin{figure}\caption{The bonding map ${_4\phi}$}
\end{figure}

\newpage

\begin{figure}[here] \vspace{3.4cm}
\setlength{\unitlength}{.85cm} \hspace{2.8cm}
\begin{picture}(0,0)

\put(9.3,-7.8){\LARGE\ensuremath{{_{5}X}_{1}}}

\color{LimeGreen} \put(6,-1.1){\large\ensuremath{{_5A}_{1}^{4}}}
\color{Orchid} \put(13.3,-1.1){\large\ensuremath{{_5A}_{1}^{2}}}
\color{RawSienna} \put(9.6,2.9){\large\ensuremath{{_5A}_{1}^{3}}}
\color{Red} \put(6.2716,-5.5284){\large\ensuremath{{_5A}_{1}^{5}}}
\color{ProcessBlue}
\put(12.8284,-5.5284){\large\ensuremath{{_5A}_{1}^{1}}}
\normalcolor \put(14,-1.8){\line(-1,0){8}}
\put(10,-1.8){\line(0,1){4}} \put(10,-1.8){\line(-1,-1){2.8284}}
\put(10,-1.8){\line(1,-1){2.8284}}
\put(9.5,-1.4){\small\ensuremath{v_0}} \put(10,-1.8){\circle*{.4}}
\put(10.4,2.1){\small\ensuremath{v_3}} \put(10,2.2){\circle*{.4}}
\put(5.8,-2.4){\small\ensuremath{v_6}} \put(6,-1.8){\circle*{.4}}
\put(13.8,-2.4){\small\ensuremath{v_2}}
\put(14,-1.8){\circle*{.4}} \put(6.5,-2.4){\small\ensuremath{v_4}}
\put(6.7,-1.8){\circle*{.4}}
\put(7.5716,-4.7284){\small\ensuremath{v_7}}
\put(7.1716,-4.6284){\circle*{.4}}
\put(8.0663,-4.2337){\small\ensuremath{v_5}}
\put(7.6663,-4.1337){\circle*{.4}}
\put(12.1284,-4.7284){\small\ensuremath{v_1}}
\put(12.8284,-4.6284){\circle*{.4}}

\put(9.4,1.45){\tiny\ensuremath{u_{27}}}
\put(10,1.5){\circle*{.2}}
\put(8.75,-1.55){\tiny\ensuremath{u_{28}}}
\put(8.9,-1.8){\circle*{.2}}
\put(7.65,-1.55){\tiny\ensuremath{u_{29}}}
\put(7.8,-1.8){\circle*{.2}}

\put(10.2167,-1.65){\tiny\ensuremath{u_{2}}}
\put(10.3667,-1.8){\circle*{.2}}
\put(10.5834,-1.45){\tiny\ensuremath{u_{4}}}
\put(10.7334,-1.8){\circle*{.2}}
\put(10.9501,-1.65){\tiny\ensuremath{u_{6}}}
\put(11.1001,-1.8){\circle*{.2}}
\put(11.3168,-1.45){\tiny\ensuremath{u_{8}}}
\put(11.4668,-1.8){\circle*{.2}}
\put(11.6835,-1.65){\tiny\ensuremath{u_{10}}}
\put(11.8335,-1.8){\circle*{.2}}
\put(12.0502,-1.45){\tiny\ensuremath{u_{12}}}
\put(12.2002,-1.8){\circle*{.2}}
\put(12.4169,-1.65){\tiny\ensuremath{u_{14}}}
\put(12.5669,-1.8){\circle*{.2}}
\put(12.7836,-1.45){\tiny\ensuremath{u_{16}}}
\put(12.9336,-1.8){\circle*{.2}}
\put(13.15,-1.65){\tiny\ensuremath{u_{18}}}
\put(13.3,-1.8){\circle*{.2}}

\put(9.1407,-2.1093){\tiny\ensuremath{u_{19}}}
\put(9.7407,-2.0593){\circle*{.2}}
\put(8.8814,-2.3686){\tiny\ensuremath{u_{20}}}
\put(9.4814,-2.3186){\circle*{.2}}
\put(8.6221,-2.6279){\tiny\ensuremath{u_{21}}}
\put(9.2221,-2.5779){\circle*{.2}}
\put(8.3628,-2.8872){\tiny\ensuremath{u_{22}}}
\put(8.9628,-2.8372){\circle*{.2}}
\put(8.1035,-3.1465){\tiny\ensuremath{u_{23}}}
\put(8.7035,-3.0965){\circle*{.2}}
\put(7.8442,-3.4058){\tiny\ensuremath{u_{24}}}
\put(8.4442,-3.3558){\circle*{.2}}
\put(7.5849,-3.6651){\tiny\ensuremath{u_{25}}}
\put(8.1849,-3.6151){\circle*{.2}}
\put(7.3256,-3.8744){\tiny\ensuremath{u_{26}}}
\put(7.9256,-3.8744){\circle*{.2}}

\put(10.4093,-2.1093){\tiny\ensuremath{u_{1}}}
\put(10.2593,-2.0593){\circle*{.2}}
\put(10.6686,-2.3686){\tiny\ensuremath{u_{3}}}
\put(10.5186,-2.3186){\circle*{.2}}
\put(10.9279,-2.6279){\tiny\ensuremath{u_{5}}}
\put(10.7779,-2.5779){\circle*{.2}}
\put(11.1872,-2.8872){\tiny\ensuremath{u_{7}}}
\put(11.0372,-2.8372){\circle*{.2}}
\put(11.4465,-3.1465){\tiny\ensuremath{u_{9}}}
\put(11.2965,-3.0965){\circle*{.2}}
\put(11.7058,-3.4058){\tiny\ensuremath{u_{11}}}
\put(11.5558,-3.3558){\circle*{.2}}
\put(11.9651,-3.6651){\tiny\ensuremath{u_{13}}}
\put(11.8151,-3.6151){\circle*{.2}}
\put(12.2244,-3.9244){\tiny\ensuremath{u_{15}}}
\put(12.0744,-3.8744){\circle*{.2}}
\put(12.4837,-4.1837){\tiny\ensuremath{u_{17}}}
\put(12.3337,-4.1337){\circle*{.2}}

\put(-.2,-7.8){\LARGE\ensuremath{{_{5}X}_{0}}}

\color{LimeGreen}
\put(-3.5,-1.1){\large\ensuremath{{_5A}_{0}^{4}}} \color{Orchid}
\put(3.8,-1.1){\large\ensuremath{{_5A}_{0}^{2}}} \color{RawSienna}
\put(.1,2.9){\large\ensuremath{{_5A}_{0}^{3}}} \color{Red}
\put(-3.2284,-5.5284){\large\ensuremath{{_5A}_{0}^{5}}}
\color{ProcessBlue}
\put(3.3284,-5.5284){\large\ensuremath{{_5A}_{0}^{1}}}
\normalcolor \put(4.5,-1.8){\line(-1,0){8}}
\put(.5,-1.8){\line(0,1){4}} \put(.5,-1.8){\line(-1,-1){2.8284}}
\put(.5,-1.8){\line(1,-1){2.8284}}
\put(0,-1.4){\small\ensuremath{v_0}} \put(.5,-1.8){\circle*{.4}}
\put(.9,2.1){\small\ensuremath{v_3}} \put(.5,2.2){\circle*{.4}}
\put(-3.7,-2.4){\small\ensuremath{v_6}}
\put(-3.5,-1.8){\circle*{.4}}
\put(4.3,-2.4){\small\ensuremath{v_2}}
\put(4.5,-1.8){\circle*{.4}} \put(-3,-2.4){\small\ensuremath{v_4}}
\put(-2.8,-1.8){\circle*{.4}}
\put(-1.9284,-4.7284){\small\ensuremath{v_7}}
\put(-2.3284,-4.6284){\circle*{.4}}
\put(-1.4337,-4.2337){\small\ensuremath{v_5}}
\put(-1.8337,-4.1337){\circle*{.4}}
\put(2.6284,-4.7284){\small\ensuremath{v_1}}
\put(3.3284,-4.6284){\circle*{.4}}

\thicklines \put(7.25,.2){\vector(-1,0){4}} \thinlines
\put(4.95,.7){\LARGE \ensuremath{{_5\phi}}}

\end{picture}

\setlength{\unitlength}{.88cm} \hspace{2.8cm}
\begin{picture}(0,10)

\thicklines 

\color{Red} \put(-.92,-10.629){\large \ensuremath{{_5A}_{0}^{5}}}

\put(3.691,-5){\line(-1,-1){4.42}}
\put(5.2,-6.509){\line(-1,-1){4.42}}
\put(-.729,-9.42){\line(1,-1){1.509}}

\color{ProcessBlue} \put(10.32,-10.629){\large
\ensuremath{{_5A}_{0}^{1}}}

\put(5.2,-6.509){\line(1,-1){4.42}}
\put(6.709,-5){\line(1,-1){4.42}}
\put(11.129,-9.42){\line(-1,-1){1.509}}

\color{LimeGreen} \put(-3.3,-3.8){\large
\ensuremath{{_5A}_{0}^{4}}}

\put(-2.3,-5){\line(0,1){2.5}} \put(-2.3,-5){\line(1,0){5.991}}
\put(-2.3,-2.5){\line(1,0){6.25}}

\color{Orchid} \put(13,-3.8){\large \ensuremath{{_5A}_{0}^{2}}}

\put(12.7,-5){\line(0,1){2.5}} \put(12.7,-5){\line(-1,0){5.991}}
\put(12.7,-2.5){\line(-1,0){6.25}}

\color{RawSienna} \put(4.95,4.05){\large
\ensuremath{{_5A}_{0}^{3}}}

\put(3.95,-2.5){\line(0,1){6.25}}
\put(6.45,-2.5){\line(0,1){6.25}} \put(3.95,3.75){\line(1,0){2.5}}
\normalcolor

\thinlines \put(5.2,-3.75){\circle*{.168}}

\color{RawSienna} \put(-2.25,-3.766){\scriptsize
\ensuremath{{_5A}_{1}^{3}}}

\put(5.2,-3.666){\line(-1,0){6.9}}

\color{Red} \put(.031,-10.02){\scriptsize
\ensuremath{{_5A}_{1}^{5}}}

\put(4.702,-3.5){\line(-1,0){3.752}}
\put(4.702,-3.5){\line(3,-1){.498}}
\put(.95,-3.5){\line(0,1){.375}}
\put(.95,-3.125){\line(1,0){3.625}}
\put(4.575,-3.125){\line(0,1){6.25}}
\put(4.575,3.125){\line(1,0){1.25}}
\put(5.825,3.125){\line(0,-1){9.009}}
\put(5.825,-5.884){\line(1,-1){3.795}}
\put(9.62,-9.679){\line(1,1){.4}}
\put(10.02,-9.279){\line(-1,1){3.795}}
\put(6.225,-5.484){\line(0,1){.943}}
\put(6.225,-4.541){\line(1,0){5.85}}
\put(12.075,-4.541){\line(0,1){1.416}}
\put(12.075,-3.125){\line(-1,0){5.459}}
\put(6.616,-3.125){\line(-1,-1){.7323}}
\put(5.7663,-3.9747){\line(-1,-1){5.4453}}
\put(.321,-9.42){\line(0,-1){.3}}

\color{ProcessBlue} \put(-.269,-9.72){\scriptsize
\ensuremath{{_5A}_{1}^{1}}}

\put(5.2,-3.834){\line(-1,0){4.416}}
\put(.784,-3.834){\line(0,-1){.707}}
\put(.784,-4.541){\line(1,0){4.333}}
\put(5.283,-4.541){\line(1,0){.376}}
\put(5.659,-4.541){\line(0,-1){1.509}}
\put(5.659,-6.05){\line(1,-1){3.9023}}
\put(9.6787,-9.9523){\line(1,1){.6733}}
\put(10.352,-9.279){\line(-1,1){3.961}}
\put(6.391,-5.318){\line(0,1){.694}}
\put(6.391,-4.624){\line(1,0){5.933}}
\put(12.324,-4.624){\line(0,1){1.748}}
\put(12.324,-2.876){\line(-1,0){6.084}}
\put(6.24,-2.876){\line(0,1){6.25}}
\put(6.24,3.374){\line(-1,0){.6025}}
\put(5.6375,3.374){\line(0,-1){.166}}
\put(5.6375,3.042){\line(0,-1){6.792}}
\put(5.6375,-3.75){\line(-1,-1){.5663}}
\put(5.0125,-4.4165){\line(0,-1){.083}}
\put(4.9538,-4.5997){\line(-1,-1){4.8}}

\color{LimeGreen} \put(-.529,-9.42){\scriptsize
\ensuremath{{_5A}_{1}^{4}}}

\put(5.2,-3.75){\line(-1,0){4.499}}
\put(.701,-3.75){\line(0,-1){.874}}
\put(.701,-4.624){\line(1,0){3.874}}
\put(4.575,-4.624){\line(-1,-1){4.546}}

\color{Orchid} \put(.431,-10.42){\scriptsize
\ensuremath{{_5A}_{1}^{2}}}

\put(4.702,-4){\line(-1,0){3.752}}
\put(4.702,-4){\line(3,1){.498}} \put(.95,-4){\line(0,-1){.375}}
\put(.95,-4.375){\line(1,0){4.333}}
\put(5.449,-4.375){\line(1,0){.3345}}
\put(5.8665,-4.375){\line(1,0){6.1255}}
\put(12.158,-4.375){\line(1,0){.083}}
\put(12.241,-4.375){\line(0,1){1.416}}
\put(12.241,-2.959){\line(-1,0){6.084}}
\put(6.157,-2.959){\line(0,1){6.25}}
\put(6.157,3.291){\line(-1,0){.249}}
\put(5.908,3.291){\line(0,-1){7.041}}
\put(5.908,-3.916){\line(0,-1){.376}}
\put(5.908,-4.458){\line(0,-1){1.343}}
\put(5.908,-5.801){\line(1,-1){3.712}}
\put(9.62,-9.513){\line(0,-1){.083}}
\put(9.62,-9.762){\line(0,-1){.415}}
\put(9.62,-10.177){\line(-1,1){4.42}}
\put(5.2,-5.757){\line(-1,-1){4.32}} \normalcolor

\end{picture}
\end{figure}
\begin{figure}\caption{The bonding map ${_5\phi}$}
\end{figure}

\newpage

Let $\K=\varprojlim\invseq{X}{\Phi}$ be generated by
$\{\nphi,\nphi,\ldots\}$ (Definition \ref{def5.10}).

\begin{proposition} \label{claim1}
Every proper nondegenerate subcontinuum of \K{} is an arc.
\end{proposition}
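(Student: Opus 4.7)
The plan is to follow the Ingram-style strategy of \cite{ingram2}. Let $Y$ be a proper nondegenerate subcontinuum of \K{} and write $Y_i = \pi_i(Y) \subseteq \X{i}$ for the projections. Since $Y$ is properly contained in the compact space \K, a basic open set of \K{} disjoint from $Y$ supplies some $i_0$ with $Y_{i_0} \subsetneq \X{i_0}$; and since \nphi{} is surjective with $\nphi(Y_{i+1}) = Y_i$, $Y_j$ is a proper subcontinuum of $\X{j}$ for every $j \geq i_0$.

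The technical core is a combinatorial analysis of \nphi{} from Table~4.3. The branch vertex satisfies $\nphi(v_0) = v_0$, and \arm[n]{1} is the unique arm of $\X{1}$ whose image under \nphi{} meets all $n$ arms of $\X{0}$; every other arm has image missing at least one arm of $\X{0}$. Using this, one argues that for every proper subcontinuum $A$ of $\X{i}$, the preimage $\nphi^{-1}(A)$ in $\X{i+1}$ decomposes as a disjoint union of arcs and isolated points, together with at most one simple-$k$-od component meeting $v_0$ in $\X{i+1}$ (for some $k \leq n$ depending on how $A$ extends from $v_0$). Hence $Y_{i+1}$ is either an arc or a simple-$m$-od at $v_0$ for some $m \leq k$; iterating, a simple-$m$-od at $v_0$ with $m \geq 3$ cannot persist through all levels for a proper $Y$, since three nondegenerate extensions of $Y$ from the spine point $\bar v = (v_0,v_0,\ldots)\in\K$, propagated through the folding of \nphi{} on \arm[n]{1} under iterated pullback, would force $Y_j = \X{j}$ for sufficiently large $j$, contradicting properness.

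Thus $Y_j$ is an arc in $\X{j}$ for all sufficiently large $j$. The final step is to verify the restrictions $\nphi|_{Y_{j+1}}$ are eventually monotone onto $Y_j$, so that $Y$ is an inverse limit of arcs under monotone bonding maps and therefore an arc. This requires ruling out persistence in the interior of $Y_{j+1}$ of a ``peak'' of \nphi{} inside an arm (a subdivision vertex $u_k$ with $\nphi(u_k) = v_0$). The main obstacle is carrying out this combinatorial bookkeeping --- tracking how a peak maintained by $Y_{j+1}$ forces, under pullback, an extra nondegenerate direction from $\bar v$ in $Y$, once again contradicting properness. This analysis parallels and extends the arguments in \cite{ingram2}, \cite{davis}, and \cite{minc2}.
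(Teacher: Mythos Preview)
Your outline follows the classical Ingram strategy, but it is not a proof: you explicitly flag the eventual-monotonicity of $\nphi|_{Y_{j+1}}$ as ``the main obstacle'' and leave it unresolved, and the persistence argument for a simple-$m$-od at $v_0$ is only asserted. Those are the substantive steps of that approach, not bookkeeping.

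The paper's proof is different and sidesteps both issues. It argues by contradiction: assume $W$ is nondegenerate and not an arc. Since $\nphi$ is light simplicial, some vertex $v\in\mathrm{V}(\X{1})$ lies in $\pi_j(W)$ for infinitely many $j$; iterating $\nphi$ on the finite vertex set drives every vertex to one of the three \emph{fixed} vertices $v_0,v_n,v_{n+2}$, so one of these lies in $\pi_j(W)$ for all $j$. Near each fixed vertex the paper identifies a short arc on which $\nphi$ is an embedding (for instance $\langle v_0,u_{(2n-1)(n-3)+2(n-1)+2}\rangle$ embeds onto $\langle v_0,v_{n-1}\rangle$). If $\pi_j(W)$ stayed inside that arc for all large $j$, then $W$ would be an inverse limit of arcs under embeddings, hence an arc---contradiction. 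So $\pi_j(W)$ escapes the arc infinitely often, which forces it to contain a specific short edge; a direct computation then shows that edge expands under $\nPhi{2}{0}$ (in the $v_n,v_{n+2}$ case) or $\nPhi{4}{0}$ (in the $v_0$ case) onto all of $\X{0}$, giving $W=\K$ and contradicting properness. This fixed-vertex-plus-expansion argument replaces your preimage decomposition and monotonicity analysis entirely.
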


\begin{proof}
Let $W$ be a nondegenerate subcontinuum of \K, and suppose $W$ is
not an arc.  Since \nphi{} is simplicial, there exists
$v\in\mathrm{V}(\X{1})$ so that $v\in\pi_j(W)$ for infinitely many
$j$.  Then, $v_s\in\pi_j(W)$ for all $j$, where $s=0$, $s=n$, or
$s=n+2$.

Case 1:  Suppose $s=n$ or $s=n+2$.  Since $W$ is not an arc and
\nphi{} embeds $\langle u_{(2n-1)(n-3)+2(n-1)},v_n,v_{n+2}
\rangle$ onto $\langle v_0,v_n,v_{n+2} \rangle$, then
$(\X{j}\setminus ((\langle u_{(2n-1)(n-3)+2(n-1)},v_n \rangle,
\linebreak[0]\X{j})\bigcup(\langle v_n,v_{n+2}
\rangle,\X{j})))\bigcap\pi_j(W)\not=\emptyset$ for infinitely many
$j$ (Definition \ref{def5.1}), implying $(\langle
u_{(2n-1)(n-3)+2(n-1)},v_n \rangle, \X{j})\subseteq\pi_j(W)$ for
infinitely many $j$.  Since $\nPhi{2}{0}((\linebreak[0]\langle
u_{(2n-1)(n-3)+2(n-1)},v_n \rangle, \X{2}))=\X{0}$, then
$\X{j}\subseteq\pi_j(W)$ for all $j$, implying $W=\K$.

Case 2:  Suppose $s=0$.  Since $W$ is not an arc and \nphi{}
embeds $\langle v_0,\linebreak u_{(2n-1)(n-3)+2(n-1)+2} \rangle$
onto $\langle v_0,v_{n-1}\rangle$, then $(\X{j}\setminus(\langle
v_0,u_{(2n-1)(n-3)+2(n-1)+2}
\rangle,\X{j}))\bigcap\linebreak[0]\pi_j(W)\not=\emptyset$ for
infinitely many $j$.  If
$\pi_j(W)\bigcap(\bigcup\limits_{i=1}^{n-3}(\langle v_0,u_i
\rangle,\X{j})\bigcup(\langle v_0,u_{(2n-1)(n-3)+1}
\rangle,\linebreak[0]\X{j})\bigcup (\langle
v_0,u_{(2n-1)(n-3)+2(n-1)+1}
\rangle,\X{j})\setminus\{v_0\})\not=\emptyset$ for some $j$, then
$u_{(2n-1)(n-3)+2(n-1)+2}\linebreak[0]\in\pi_j(W)$. Thus,
$(\langle v_0,u_{(2n-1)(n-3)+2(n-1)+2}
\rangle,\X{j})\subseteq\pi_j(W)$ for infinitely many $j$.  Since
$\nPhi{4}{0}((\langle v_0,u_{(2n-1)(n-3)+2(n-1)+2}
\rangle,\X{4}))=\X{0}$, then $\X{j}\subseteq\pi_j(W)$ for each
$j$, implying $W=\K$.
\end{proof}

\begin{proposition} \label{claim1.5}
\K{} is atriodic.
\end{proposition}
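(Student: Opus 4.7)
My plan is to deduce atriodicity of \K{} directly from Proposition \ref{claim1} together with Ingram's theorem in \cite{ingram1}, which (as recalled in the introduction) asserts that every subchainable continuum is atriodic. In other words, the nontrivial work has already been done in establishing the shape of proper nondegenerate subcontinua; the present proposition should be a short corollary.

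The key steps, in order, are the following. First I would invoke Proposition \ref{claim1} to observe that every proper nondegenerate subcontinuum of \K{} is an arc. Second, since an arc is chainable (an arc is simple-$1$-od-like, hence simple-$2$-od-like in the sense of the excerpt), every proper nondegenerate subcontinuum of \K{} is chainable, so that \K{} is subchainable according to the definition given in Chapter I. Finally, I would apply Ingram's theorem from \cite{ingram1} to conclude that \K{} is atriodic.

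The only auxiliary point to check is that a triod contained in \K{} must itself be either \K{} or a proper nondegenerate subcontinuum. If the latter, it would be an arc by Proposition \ref{claim1}, contradicting the fact that an arc is atriodic; so only the case that \K{} itself is a triod could potentially remain, and this is precisely what is ruled out by Ingram's subchainable-implies-atriodic theorem.

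I do not expect any genuine obstacle. The delicate combinatorial analysis of the bonding map \nphi{} has been carried out in the proof of Proposition \ref{claim1}, and the deduction of atriodicity from the arc structure of proper subcontinua is a standard application of \cite{ingram1} requiring no further examination of \nphi{} or of the inverse system $\{\X{i},\nPhi[]{i+1}{i}\}$.
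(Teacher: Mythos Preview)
Your proposal is correct and matches the paper's own proof essentially verbatim: the paper simply cites Proposition~\ref{claim1} together with Theorem~3 of \cite{ingram1}. Your additional remarks about arcs being chainable and the case split on whether a potential triod is proper or all of \K{} are just an unpacking of that citation.
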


\begin{proof}
The claim follows from Proposition \ref{claim1} and Theorem 3 in
\cite{ingram1}.
\end{proof}

\begin{proposition} \label{claim2}
Every proper nondegenerate subcontinuum of \K{} containing
\branch{} is an arc having \branch{} as an endpoint.
\end{proposition}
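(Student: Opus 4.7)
The plan is to adapt the Case 2 argument in the proof of Proposition \ref{claim1}. The key local feature of \nphi{} that I will exploit is that \nphi{} restricted to the edge $\langle v_0, u_{(2n-1)(n-3)+2(n-1)+2} \rangle$ is an isomorphism onto $\langle v_0, v_{n-1} \rangle$. Write $u_\star = u_{(2n-1)(n-3)+2(n-1)+2}$ and $A_j = (\langle v_0, u_\star \rangle, \X{j})$ for the initial sub-arc of arm $n-1$ in \X{j}; by Proposition \ref{prop5.4}, the bonding map restricts to an isomorphism $\Phi_{j-1}^j|_{A_j}: A_j \to (\langle v_0, v_{n-1} \rangle, \X{j-1})$ for each $j \ge 1$. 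By Proposition \ref{claim1}, $W$ is already an arc, so only endpoint-ness of \branch{} needs to be shown. Since $\branch \in W$, $v_0 \in \pi_j(W)$ for every $j$.

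The argument splits on whether $\pi_j(W) \subseteq A_j$ holds eventually. Suppose first that $\pi_j(W) \not\subseteq A_j$ for infinitely many $j$. Then for each such $j$, the connected set $\pi_j(W)$ in the tree \X{j} contains $v_0$ and a point outside $A_j$. Either this extra point lies in arm $n-1$ beyond $u_\star$, in which case $u_\star \in \pi_j(W)$ by tree-connectedness, or it lies in some arm $s \ne n-1$ at $v_0$; in the latter sub-case, a preimage analysis in $\pi_{j+1}(W)$ that uses the fact that $\Phi_j^{j+1}$ collapses the whole star at $v_0$ in \X{j+1} onto the initial edge of arm $n-1$ in \X{j} (exactly the step in Case 2 of Proposition \ref{claim1}'s proof) again forces $u_\star \in \pi_j(W)$. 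Tree-connectedness in \X{j} then gives $A_j \subseteq \pi_j(W)$ for infinitely many $j$, and combining with $\Phi_0^4(A_4) = \X{0}$ and its level-shifted analogues (which follow from the self-similar simplicial structure of the inverse limit) yields $\pi_i(W) = \X{i}$ for every $i$, whence $W = \K$, contradicting that $W$ is proper.

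Suppose instead that $\pi_j(W) \subseteq A_j$ for all $j \ge N$. Since $W$ is nondegenerate, so is $\pi_j(W)$ for $j$ large, and so $\pi_j(W)$ is a nondegenerate sub-arc of the arc $A_j$ containing the endpoint $v_0$, whence $v_0$ is an endpoint of $\pi_j(W)$. The isomorphism $\Phi_{j-1}^j|_{A_j}$ restricts to a homeomorphism $\pi_j(W) \to \pi_{j-1}(W)$ for each $j > N$, so the cofinal inverse limit $\varprojlim_{j \ge N} \pi_j(W)$, which equals $W$, is homeomorphic to $\pi_N(W)$, an arc with $v_0$-endpoint. The sequence \branch{} corresponds to this $v_0$-endpoint, giving that \branch{} is an endpoint of $W$.

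The main obstacle is the sub-case of the first case where the excursion of $\pi_j(W)$ lies in an arm $s \ne n-1$ at $v_0$: concluding $u_\star \in \pi_j(W)$ there requires careful bookkeeping of how $\Phi_j^{j+1}$ acts on the local star at $v_0$, and is precisely where the specific structure of \nphi{} matters. Once this step is imported from the proof of Proposition \ref{claim1}'s Case 2, the remainder of both cases is essentially routine.
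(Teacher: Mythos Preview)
Your argument is correct and follows essentially the paper's approach. The paper organizes it more directly: rather than splitting on whether $\pi_j(W)\subseteq A_j$ eventually, it first shows $v_{n-1}\notin\pi_j(W)$ for all large $j$ (using that $\nPhi{3}{0}((\langle v_0,v_{n-1}\rangle,\X{3}))=\X{0}$), then deduces $\pi_j(W)\subseteq(\langle v_0,v_{n-1}\rangle,\X{j})$ via the same star-at-$v_0$ observation you import from Case~2 of Proposition~\ref{claim1}, and concludes from the embedding $\langle v_0,u_\star\rangle\to\langle v_0,v_{n-1}\rangle$ exactly as in your second case.
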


\begin{proof}
Let $W\subseteq\K$ be a proper nondegenerate subcontinuum so that
$\branch\in W$.  By Proposition \ref{claim1}, $W$ is an arc. Since
$W$ is proper and $\nPhi{3}{0}((\langle v_0,v_{n-1}
\rangle,\X{3}))=\X{0}$, there exists a nonnegative integer $N$ so
that $v_{n-1}\notin\pi_j(W)$ for all $j\ge N$.  Then,
$\pi_j(W)\bigcap(\bigcup\limits_{i=1}^{n-3}(\langle v_0,u_i
\rangle,\X{j})\bigcup(\langle v_0,u_{(2n-1)(n-3)+1}
\rangle,\X{j})\bigcup(\langle v_0,u_{(2n-1)(n-3)+2(n-1)+1}
\rangle,\linebreak\X{j})\setminus\{v_0\})=\emptyset$ for all $j\ge
N$, implying $\pi_j(W)\subseteq(\langle v_0,v_{n-1}
\rangle,\X{j})$ for all $j\ge N$.  Since \nphi{} embeds $\langle
v_0,u_{(2n-1)(n-3)+2(n-1)+2} \rangle$ onto $\langle v_0,v_{n-1}
\rangle$ and $\nphi(v_0)=v_0$, then \branch{} is an endpoint of
the arc $W$.
\end{proof}

Let $\nS:\mathrm{V}(\X{0})\longrightarrow\{\mbox{nonempty subsets
of }\mathrm{E}(\X{0})\}$ be an edge selection (Definition
\ref{def5.5}) on \X{0} defined as:  \nSdef{i}{0}{i} for all
$i\in\oneton{n}$, \nSdef{n+1}{n-1}{n+1}, \nSdef{n+2}{n}{n+2}, and
$\nS(v_0)=\{\langle v_0,v_i \rangle:i\in\oneton{n-1}\}$.

\begin{proposition} \label{claim3}
\nphi{} preserves $(\nS,\nS)$ (Definition \ref{def5.7}).
\end{proposition}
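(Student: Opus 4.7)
The plan is to verify the two conditions of Definition \ref{def5.7} directly, using the explicit tables defining $\nphi$ and $\nS$; both conditions reduce to a finite case analysis because $\X{0}$ and $\X{1}$ are finite graphs.

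For condition (i), for each $v \in \mathrm{V}(\X{0})$ and each $e \in \nS(v)$, I would identify $(v,e,\X{1})$ as the unique edge of $\X{1}$ incident to $v$ lying in the subdividing arc of $e$, then compute its $\nphi$-image and check membership in $\nS(\nphi(v))$. For example, when $v = v_i$ with $i \in \oneton{n-3}$, the only edge in $\nS(v_i)$ is $\armzeroi$; from $\arm[i]{1} = \armonei$ one reads off $(v_i,\armzeroi,\X{1}) = \langle u_{2(n-1)(n-3)+i}, v_i \rangle$. The table for $\nphi$ yields $\nphi(u_{2(n-1)(n-3)+i}) = v_n$ and $\nphi(v_i) = v_{n+2}$, so the image is $\langle v_n, v_{n+2} \rangle$, the single element of $\nS(v_{n+2}) = \nS(\nphi(v_i))$. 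The remaining cases $v \in \{v_0, v_{n-2}, v_{n-1}, v_n, v_{n+1}, v_{n+2}\}$ are analogous direct computations.

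For condition (ii), I would enumerate all pairs of adjacent edges of $\X{1}$, organized by common vertex. The branch point $v_0$ is the unique vertex of $\X{1}$ of degree greater than two, and the $n$ edges incident to it (namely $\langle v_0, u_i \rangle$ for $i \in \oneton{n-3}$ together with $\langle v_0, u_{(2n-1)(n-3)+1} \rangle$, $\langle v_0, u_{(2n-1)(n-3)+2(n-1)+1} \rangle$, and $\langle v_0, u_{(2n-1)(n-3)+2(n-1)+2} \rangle$) are all sent by $\nphi$ to $\langle v_0, v_{n-1} \rangle$, which lies in $\nS(v_0)$; so every pair at $v_0$ satisfies the condition trivially. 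The remaining vertices $u_j$, $v_{n-1}$, and $v_n$ each have degree $2$ in $\X{1}$, so each contributes a single pair to verify (degree-$1$ endpoints contribute none). For each such pair, the verification splits into subcases according to the piecewise definition of $\nphi(u_{j(n-3)+i})$ (depending on the parity of $j$) and the special indices surrounding $(2n-1)(n-3)+\cdot$; in every subcase one checks directly that at least one of the two edge images lies in the required $\nS$-set.

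The main obstacle is bookkeeping, not substance: the index formulas $j(n-3)+i$ and the special range $(2n-1)(n-3)+\cdot$ partition the $u$-vertices into several regions with distinct image patterns under $\nphi$, and the delicate point is handling the transitions between the ``middle'' portion of $\arm[i]{1}$ (where $\nphi$ alternates by parity of $j$ between $v_0$ and $v_{(i-1+j/2) \bmod (n-2)}$) and the specially-defined endpoints of $\arm[n-2]{1}$, $\arm[n-1]{1}$, and $\arm[n]{1}$ without overlooking a case. No conceptual argument beyond the direct enumeration is required.
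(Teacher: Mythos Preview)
Your proposal is correct and follows essentially the same approach as the paper: a direct, exhaustive verification of conditions (i) and (ii) of Definition~\ref{def5.7} via the explicit tables for $\nphi$ and $\nS$. The paper organizes condition (ii) slightly differently---it handles the degree-$2$ vertices $v_{n-1}$ and $v_n$ by invoking the computations already done in part (i) rather than rechecking them---but this is a cosmetic distinction, and your plan for the $u_j$-indexed vertices (case split by parity of $j$ and by the special index ranges) matches the paper's enumeration exactly.
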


\begin{proof}
i)  (Recall Definition \ref{def5.1})
\begin{table}[here]
\begin{center}
\begin{tabular}[t]{cl}
$\imphisubedge{i}{v}{0}{v}{i}$&= $\imphiedge{u}{2(n-1)(n-3)+i}{v}{i}$ = $\edge{v}{n}{v}{n+2}\in\imnS{n+2}$\\
&= $\imnSphi{v}{i}$ for all $i\in\oneton{n-3}$,\\
$\imphisubedge{n-2}{v}{0}{v}{n-2}$&=
$\imphiedge{u}{(2n-1)(n-3)+2(n-1)+1}{v}{n-2}$ =
$\edge{v}{n-1}{v}{n+1}\in$\\& $\imnS{n+1}$ = $\imnSphi{v}{n-2}$,\\
$\imphisubedge{n-1}{v}{0}{v}{n-1}$&= $\imphiedge{u}{(2n-1)(n-3)+2(n-1)+3}{v}{n-1}$ = $\edge{v}{0}{v}{n}\in\imnS{n}$\\&= $\imnSphi{v}{n-1}$,\\
$\imphisubedge{n+1}{v}{n-1}{v}{n+1}$&= $\imphiedge{v}{n-1}{v}{n+1}$ = $\edge{v}{n}{v}{n+2}\in\imnS{n+2}$\\&= $\imnSphi{v}{n+1}$,\\
$\imphisubedge{n}{v}{0}{v}{n}$&= $\imphiedge{u}{(2n-1)(n-3)+2(n-1)}{v}{n}$ = $\edge{v}{0}{v}{n}\in\imnS{n}$\\&= $\imnSphi{v}{n}$,\\
$\imphisubedge{n+2}{v}{n}{v}{n+2}$&= $\imphiedge{v}{n}{v}{n+2}$ = $\edge{v}{n}{v}{n+2}\in\imnS{n+2}$\\&= $\imnSphi{v}{n+2}$,\\
$\imphisubedge{0}{v}{0}{v}{i}$&= $\imphiedge{v}{0}{u}{i}$ = $\edge{v}{0}{v}{n-1}\in\imnS{0}$\\&= $\imnSphi{v}{0}$ for all $i\in\oneton{n-3}$,\\
$\imphisubedge{0}{v}{0}{v}{n-2}$&= $\imphiedge{v}{0}{u}{(2n-1)(n-3)+2(n-1)+1}$ = $\edge{v}{0}{v}{n-1}\in\imnS{0}$\\&= $\imnSphi{v}{0}$, and\\
$\imphisubedge{0}{v}{0}{v}{n-1}$&=
$\imphiedge{v}{0}{u}{(2n-1)(n-3)+2(n-1)+2}$ =
$\edge{v}{0}{v}{n-1}\in\imnS{0}$\\&= $\imnSphi{v}{0}.$
\end{tabular}
\end{center}
\end{table}
\newpage
ii)\\
$\imphiedge{v}{0}{u}{i} = \edge{v}{0}{v}{n-1}\in\imnS{n-1} =
\imnSphi{u}{i}$ for all $i\in\oneton{n-3}$,\\
$\imphiedge{u}{(j-1)(n-3)+i}{u}{j(n-3)+i} =
\imphiedge{u}{j(n-3)+i}{u}{(j+1)(n-3)+i} =
\edge{v}{0}{v}{{i-1+\frac{j}{2}}\bmod{n-2}}\in$\\\indent\indent$\imnS{0}\bigcap\imnS{{i-1+\frac{j}{2}}\bmod{n-2}}
= \imnSphi{u}{(j-1)(n-3)+i}\bigcap\imnSphi{u}{(j+1)(n-3)+i}\bigcap$\\
\indent\indent$\imnSphi{u}{j(n-3)+i}$ for all $i\in\oneton{n-3}$
and for all even\\\indent\indent$j\in\oneton[2]{2(n-1)-2}$,\\
$\imphiedge{u}{(2(n-1)-1)(n-3)+i}{u}{2(n-1)(n-3)+i} =
\edge{v}{0}{v}{n}\in\imnS{n} = \imnSphi{u}{2(n-1)(n-3)+i}$
for\\\indent\indent all $i\in\oneton{n-3}$,\\
$\imphiedge{v}{0}{u}{(2n-1)(n-3)+1} = \edge{v}{0}{v}{n-1}\in\imnS{n-1} = \imnSphi{u}{(2n-1)(n-3)+1}$,\\
$\imphiedge{u}{(2n-1)(n-3)+2}{u}{(2n-1)(n-3)+3} =
\edge{v}{0}{v}{n-2}\in\imnS{0}\bigcap\imnS{n-2} =$\\
\indent\indent$\imnSphi{u}{(2n-1)(n-3)+2}\bigcap\imnSphi{u}{(2n-1)(n-3)+3}$,\\
$\imphiedge{u}{(2n-1)(n-3)+i-1}{u}{(2n-1)(n-3)+i} =
\imphiedge{u}{(2n-1)(n-3)+i}{u}{(2n-1)(n-3)+i+1} =
\edge{v}{0}{v}{\frac{i-3}{2}}$\\\indent\indent$\in\imnS{0}\bigcap\imnS{\frac{i-3}{2}}
= \imnSphi{u}{(2n-1)(n-3)+i-1}\bigcap\imnSphi{u}{(2n-1)(n-3)+i+1}\bigcap$\\
\indent\indent$\imnSphi{u}{(2n-1)(n-3)+i}$ for all odd $i\in\oneton[5]{2(n-1)-1}$,\\
$\imphiedge{v}{0}{u}{(2n-1)(n-3)+2(n-1)+1} =
\edge{v}{0}{v}{n-1}\in\imnS{n-1} =
\imnSphi{u}{(2n-1)(n-3)+2(n-1)+1}$,\\\indent\indent and\\
$\imphiedge{v}{0}{u}{(2n-1)(n-3)+2(n-1)+2} =
\imphiedge{u}{(2n-1)(n-3)+2(n-1)+2}{u}{(2n-1)(n-3)+2(n-1)+3} =$\\
\indent\indent$\edge{v}{0}{v}{n-1}\in\imnS{0}\bigcap\imnS{n-1} =
\imnSphi{u}{(2n-1)(n-3)+2(n-1)+3}\bigcap$\\\indent\indent$\imnSphi{u}{(2n-1)(n-3)+2(n-1)+2}$.\\
With (i) above, condition (ii) of Definition \ref{def5.7} is
satisfied whenever $v\in\mathrm{V}(\X{0})$.
\end{proof}

Let $D(\X{0})$ be as in Definition \ref{def2.1}.  Then,
$\mathrm{V}(D(\X{0}))=\{a_i:\in\oneton{n+2}\}$ and
$\mathrm{E}(D(\X{0}))=\{\edge{a}{i}{a}{j}:\linebreak[0]i,j\linebreak[0]\in\linebreak[0]\oneton{n}\linebreak[0]\mbox{
and
}\linebreak[0]i\not=j\}\linebreak[0]\bigcup\linebreak[0]\{\edge{a}{n-1}{a}{n+1},
\linebreak[0]\edge{a}{n}{a}{n+2}\}$, where
$\dualver{a}{i}=\edge{v}{0}{v}{i}$ for all $i\in\oneton{n}$ and
$\dualver{a}{n+1}=\edge{v}{n-1}{v}{n+1}$ and
$\dualver{a}{n+2}=\edge{v}{n}{v}{n+2}$.

Let $D(\nphi,\X{1})$ be as in Definition \ref{def2.4}.  Then,
$\mathrm{V}(D(\nphi,\X{1}))=\{b_i:i\in\oneton[0]{n(n-3)+n+3}\}$
and
$\mathrm{E}(D(\nphi,\X{1}))=\{\edge{b}{0}{b}{i}:\linebreak[0]i\in\linebreak[0]\oneton{n-3}\linebreak[0]\bigcup\linebreak[0]\{n(n-3)+1,\linebreak[0]n(n-3)+n+1,\linebreak[0]n(n-3)+n+2\}\}
\linebreak[0]\bigcup\linebreak[0]\{\edge{b}{j(n-3)+i}{b}{(j+1)(n-3)+i}:\linebreak[0]i\in\oneton{n-3}\linebreak[0]\mbox{
and
}\linebreak[0]j\in\oneton[0]{n-2}\}\linebreak[0]\bigcup\linebreak[0]\{\edge{b}{n(n-3)+i}{b}{n(n-3)+i+1}:\linebreak[0]i\in\oneton{n-1}\}\linebreak[0]\bigcup
\linebreak[0]\{\edge{b}{n(n-3)+n+2}{b}{n(n-3)+n+3}\}$, where $b^*$
for $b\in\mathrm{V}(D(\nphi,\X{1}))$ is given in the following
table. \vspace{-.5cm}
\begin{table}[here]
\begin{center}
\begin{tabular}[t]{cll}
$\dualver{b}{0}$&=&$\bigcup\limits_{i=1}^{n-3}\edge{v}{0}{u}{i}\bigcup\limits_{i=1}^{n-3}\edge{u}{i}{u}{n-3+i}
\bigcup\edge{v}{0}{u}{(2n-1)(n-3)+1}\bigcup$
\brkedge{u}{(2n-1)(n-3)+1}{u}{(2n-1)(n-3)+2}$\bigcup\edge{v}{0}{u}{(2n-1)(n-3)+2(n-1)+1}\bigcup\edge{v}{0}{u}{(2n-1)(n-3)+2(n-1)+2}$\\&&
$\bigcup\edge{u}{(2n-1)(n-3)+2(n-1)+2}{u}{(2n-1)(n-3)+2(n-1)+3}$\\
$\dualver{b}{j(n-3)+i}$&=&$\edge{u}{(2j+1)(n-3)+i}{u}{(2j+2)(n-3)+i}\bigcup
\edge{u}{(2j+2)(n-3)+i}{u}{(2j+3)(n-3)+i}$\\&&
for each $i\in\oneton{n-3}$ and for each $j\in\oneton[0]{n-3}$\\
$\dualver{b}{(n-2)(n-3)+i}$&=&$\edge{u}{(2(n-1)-1)(n-3)+i}{u}{2(n-1)(n-3)+i}$
for each $i\in\oneton{n-3}$\\
$\dualver{b}{(n-1)(n-3)+i}$&=&$\edge{u}{2(n-1)(n-3)+i}{v}{i}$ for
each $i\in\oneton{n-3}$\\
$\dualver{b}{n(n-3)+i}$&=&$\edge{u}{(2n-1)(n-3)+2i}{u}{(2n-1)(n-3)+2i+1}
\bigcup\edge{u}{(2n-1)(n-3)+2i+1}{u}{(2n-1)(n-3)+2i+2}$\\&&
for each $i\in\oneton{n-2}$\\
$\dualver{b}{n(n-3)+n-1}$&=&$\edge{u}{(2n-1)(n-3)+2(n-1)}{v}{n}$\\
$\dualver{b}{n(n-3)+n}$&=&$\edge{v}{n}{v}{n+2}$\\
$\dualver{b}{n(n-3)+n+1}$&=&$\edge{u}{(2n-1)(n-3)+2(n-1)+1}{v}{n-2}$\\
$\dualver{b}{n(n-3)+n+2}$&=&$\edge{u}{(2n-1)(n-3)+2(n-1)+3}{v}{n-1}$\\
$\dualver{b}{n(n-3)+n+3}$&=&$\edge{v}{n-1}{v}{n+1}$
\end{tabular}
\end{center}
\end{table}
\vspace{-1.26cm}
\begin{table}[here]
\caption{Subgraphs of \X{1} corresponding to vertices of
$D(\nphi,\X{1})$}
\end{table}

Let $B^i\subseteq D(\nphi,\X{1})$ for each $i\in\oneton{n}$ be the
arc given by the following table.
\begin{table}[here]
\vspace{-.3cm}
\begin{center}
\begin{tabular}[t]{clc}
arc&\vline&as a subgraph\\\hline $B^i$&\vline&$\langle
b_0,\linebreak[0]b_i,\linebreak[0]b_{n-3+i},\linebreak[0]b_{2(n-3)+i},\linebreak[0]\ldots,\linebreak[0]b_{(n-1)(n-3)+i}
\rangle$ for all $i\in\oneton{n-3}$\\ $B^{n-2}$&\vline&$\langle
b_0,\linebreak[0]b_{n(n-3)+n+1} \rangle$\\
$B^{n-1}$&\vline&$\langle
b_0,\linebreak[0]b_{n(n-3)+n+2},\linebreak[0]b_{n(n-3)+n+3}
\rangle$\\ $B^n$&\vline&$\langle
b_0,\linebreak[0]b_{n(n-3)+1},\linebreak[0]\ldots,\linebreak[0]b_{n(n-3)+n}
\rangle$
\end{tabular}
\caption{Determination of $D(\nphi,\X{1})$}
\end{center}
\end{table}

Then, $\bigcup\limits_{i=1}^{n}B^i= D(\nphi,\X{1})$ and
$B^i\bigcap B^j=\{b_0\}$ for all distinct $i,j\in\oneton{n}$.

\begin{proposition} \label{claim4}
Let \dnphi{} be as in Definition \ref{def2.4}.  Then, \dnphi{} is
an ultra light simplicial map (Definition \ref{def4.1}).
\end{proposition}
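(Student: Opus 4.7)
The plan is to verify the two defining properties of ultra-lightness directly from the tables giving $\nphi$ and the vertex set $\mathrm{V}(D(\nphi,\X{1}))$: first, that $\dnphi$ is light (collapses no edge of $D(\nphi,\X{1})$), and second, that every $v \in \mathrm{V}(D(\dnphi, D(\nphi,\X{1})))$ satisfies $v^{\ast}$ being a single edge of $D(\nphi,\X{1})$. Recall that $\dnphi(b) = a$ exactly when $\nphi(b^{\ast}) = a^{\ast}$, so I will begin by tabulating $\dnphi(b)$ for every $b \in \mathrm{V}(D(\nphi,\X{1}))$ using the explicit description of $b^{\ast}$ and the table defining $\nphi$. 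Because $\nphi$ is simplicial with no edge collapsed, the image of each $b^{\ast}$ is a well-defined edge of $\X{0}$; checking the cases yields, for example, $\dnphi(b_0) = a_{n-1}$, $\dnphi(b_{j(n-3)+i}) = a_{(i+j)\bmod(n-2)}$ for $j \in \oneton[0]{n-3}$, $\dnphi(b_{(n-2)(n-3)+i}) = a_n$, $\dnphi(b_{(n-1)(n-3)+i}) = a_{n+2}$, and so on.

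Next I will establish lightness of $\dnphi$. Since $D(\nphi,\X{1}) = \bigcup_{i=1}^{n} B^i$ with $B^i \cap B^j = \{b_0\}$, it suffices to walk along each arm $B^i$ and check that consecutive vertices are sent to distinct vertices of $D(\X{0})$. Along $B^i$ for $i \in \oneton{n-3}$ the images step through $a_{n-1}, a_i, a_{i+1}, \ldots, a_{i-1\bmod(n-2)}, a_n, a_{n+2}$, which is a strictly non-stuttering sequence of adjacent vertices of $D(\X{0})$; the arms $B^{n-2}$, $B^{n-1}$ and $B^n$ are checked similarly, the only nontrivial stretch being $B^n$ where the image sequence reads $a_{n-1}, a_{n-2}, a_1, a_2, \ldots, a_{n-3}, a_n, a_{n+2}$, which is again edge-by-edge non-degenerate.

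For the ultra-light condition, I will use the following standard reformulation: $\dnphi$ is ultra light iff no two distinct edges of $D(\nphi,\X{1})$ meeting at a common vertex $b$ are mapped to the same edge of $D(\X{0})$. (Indeed, a component of $\dnphi^{-1}(e)$ surjecting onto $e$ fails to be a single edge iff such a pair exists.) So the proof reduces to a vertex-by-vertex check at every $b \in \mathrm{V}(D(\nphi,\X{1}))$. The only vertex at which many edges meet is $b_0$, where the incident edges are $\edge{b}{0}{b}{i}$ for $i \in \oneton{n-3}$, together with $\edge{b}{0}{b}{n(n-3)+1}$, $\edge{b}{0}{b}{n(n-3)+n+1}$, and $\edge{b}{0}{b}{n(n-3)+n+2}$; using the table, their images are respectively $\edge{a}{n-1}{a}{i}$, $\edge{a}{n-1}{a}{n-2}$, $\edge{a}{n-1}{a}{n+1}$, and $\edge{a}{n-1}{a}{n}$, which are $n$ pairwise distinct edges of $D(\X{0})$ — this is the key input coming from the specific shape of $\nphi$. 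At every remaining vertex at most two edges meet, and the computation of $\dnphi$ along each arm $B^i$ already performed shows these two images are distinct.

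The main obstacle is purely the bookkeeping at $b_0$ and along the arm $B^n$, where the indices in the definition of $\nphi$ are most intricate; elsewhere the patterns are uniform in $i$ and $j$ and can be handled by a single generic argument. With all cases verified, $\dnphi$ is light and each component of each preimage of an edge is a single edge of $D(\nphi,\X{1})$, so $\dnphi$ is ultra light as claimed.
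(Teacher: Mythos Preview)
Your proposal is correct and follows essentially the same strategy as the paper: both compute $\dnphi$ on vertices from the tables defining $\nphi$ and $b^{\ast}$, then verify ultra-lightness by direct case analysis. The only organizational difference is that the paper lists $\dnphi^{-1}(e)$ explicitly for each edge $e$ of $D(\X{0})$ and observes each preimage is a disjoint union of edges, whereas you perform the equivalent local check that no two edges of $D(\nphi,\X{1})$ meeting at a common vertex map to the same edge; these are dual formulations of the same verification.
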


\begin{proof}
$\dnphi:D(\nphi,\X{1})\longrightarrow D(\X{0})$, where
$b\in\mathrm{V}(D(\nphi,\X{1}))$, and the subgraphs forming the
inverse images of edges under \dnphi{} are given in the next
tables. \vspace{-.5cm}
\begin{table}[here]
\begin{center}
\begin{tabular}[t]{clc}
$b$&\vline&$\dnphi(b)$\\\hline \imdnphi{0}{n-1}\\
\imdnphi{j(n-3)+i}{{i-1+\frac{2j+2}{2}}\bmod{n-2}}=$a_{{i+j}\bmod{n-2}}$\\&\vline&
for all $i\in\oneton{n-3}$ and for all $j\in\oneton[0]{n-3}$\\
\imdnphi{(n-2)(n-3)+i}{n} for all $i\in\oneton{n-3}$\\
\imdnphi{(n-1)(n-3)+i}{n+2} for all $i\in\oneton{n-3}$\\
\imdnphi{n(n-3)+1}{n-2}\\
\imdnphi{n(n-3)+i}{\frac{2i+1-3}{2}}=$a_{i-1}$ for all
$i\in\oneton[2]{n-2}$\\ \imdnphi{n(n-3)+n-1}{n}\\
\imdnphi{n(n-3)+n}{n+2}\\ \imdnphi{n(n-3)+n+1}{n+1}\\
\imdnphi{n(n-3)+n+2}{n}\\ \imdnphi{n(n-3)+n+3}{n+2}
\end{tabular}
\caption{Determination of \dnphi}
\end{center}
\end{table}

\vspace{.4cm}
\begin{figure}[here]
\setlength{\unitlength}{1cm} \hspace{1.6cm}
\begin{picture}(0,2)

\thicklines \put(2.6,0){\line(1,0){1.4}}
\put(4.2,0){\line(1,0){2.8}} \put(7.2,0){\line(1,0){1.4}}

\put(2.6,0){\line(-1,0){3}}

\put(2.6,0){\line(1,1){3}}

\put(8.6,0){\line(-1,1){3}}

\put(2.6,-3){\line(-1,-1){2}}

\put(2.6,0){\line(0,-1){3}} \put(2.6,-3){\line(1,0){6}}

\put(8.6,0){\line(0,-1){3}}

\put(2.6,-3){\line(1,2){3}}

\put(8.6,-3){\line(-1,2){3}}

\put(2.6,0){\line(2,-1){1.1}} \put(3.9,-.65){\line(2,-1){4.7}}

\put(8.6,0){\line(-2,-1){1.1}} \put(7.3,-.65){\line(-2,-1){1.6}}
\put(5.5,-1.55){\line(-2,-1){2.9}}

\put(2.9,.1){\small\ensuremath{a_4}} \color{LimeGreen}
\put(2.6,0){\circle*{.4}} \normalcolor

\put(-.6,-.4){\small\ensuremath{a_6}} \color{LimeGreen}
\put(-.4,0){\circle*{.4}} \normalcolor

\put(5.45,2.5){\small\ensuremath{a_3}} \color{RawSienna}
\put(5.6,3){\circle*{.4}} \normalcolor

\put(8,.1){\small\ensuremath{a_2}} \color{Orchid}
\put(8.6,0){\circle*{.4}} \normalcolor

\put(.4,-5.4){\small\ensuremath{a_7}} \color{Red}
\put(.6,-5){\circle*{.4}} \normalcolor

\put(2.4,-3.4){\small\ensuremath{a_5}} \color{Red}
\put(2.6,-3){\circle*{.4}} \normalcolor

\put(8.4,-3.35){\small\ensuremath{a_1}} \color{ProcessBlue}
\put(8.6,-3){\circle*{.4}} \normalcolor

\thinlines \color{Orchid} \put(2.3,.3){\line(1,0){4}}
\put(6.4,.3){\line(1,0){2.5}} \put(8.9,.3){\line(-1,1){2.7}}
\put(6.2,3){\line(1,-2){1.325}} \put(7.575,.25){\line(1,-2){.34}}
\put(7.965,-.53){\line(1,-2){.31}}
\put(8.325,-1.25){\line(1,-2){1.025}}
\put(9.05,-3.3){\oval(.6,.6)[br]}
\put(9.05,-3.6){\line(-1,0){6.05}} \put(3,-3.6){\line(-1,-1){1.4}}
\normalcolor \put(9.05,.2){\tiny\ensuremath{b_2}}
\put(8.9,.3){\circle*{.2}} \put(6.35,2.9){\tiny\ensuremath{b_4}}
\put(6.2,3){\circle*{.2}} \put(8.95,-3.9){\tiny\ensuremath{b_6}}
\put(9.05,-3.6){\circle*{.2}} \put(3,-3.45){\tiny\ensuremath{b_8}}
\put(3,-3.6){\circle*{.2}}
\put(1.5,-5.3){\tiny\ensuremath{b_{10}}}
\put(1.6,-5){\circle*{.2}}

\color{RawSienna} \put(2.3,.3){\line(-1,0){2.4}} \normalcolor
\put(-.2,.5){\tiny\ensuremath{b_{16}}} \put(-.1,.3){\circle*{.2}}

\color{Red} \put(2.3,.3){\line(1,1){2.7}}
\put(5,3){\line(1,-2){2.7}} \put(7.7,-1.8){\oval(1.2,1.2)[br]}
\put(8.3,-1.8){\line(0,1){1.5}} \put(8.3,-.3){\line(-2,-1){1.26}}
\put(6.91,-.995){\line(-2,-1){3.41}}
\put(3.5,-2.7){\line(-1,-1){2.3}} \normalcolor
\put(4.9,3.2){\tiny\ensuremath{b_{11}}} \put(5,3){\circle*{.2}}
\put(3.6,-2.9){\tiny\ensuremath{b_{14}}}
\put(3.5,-2.7){\circle*{.2}}
\put(7.85,-.15){\tiny\ensuremath{b_{13}}}
\put(8.3,-.3){\circle*{.2}} \put(1,-5.3){\tiny\ensuremath{b_{15}}}
\put(1.2,-5){\circle*{.2}}
\put(7.65,-2.2){\tiny\ensuremath{b_{12}}}
\put(7.7,-2.4){\circle*{.2}}

\color{ProcessBlue} \put(2.6,0){\oval(.6,.6)[bl]}
\put(2.3,0){\line(0,1){.3}} \put(2.6,-.3){\line(1,0){.3}}
\put(2.9,-.3){\line(2,-1){2.65}}
\put(5.65,-1.675){\line(2,-1){2.5}}
\put(8.15,-2.925){\line(0,-1){.15}}
\put(8.6,-3.075){\oval(.9,.75)[b]}
\put(9.05,-3.075){\line(0,1){.325}}
\put(9.05,-2.65){\line(0,1){2.65}} \put(9.05,.3){\oval(.6,.6)[r]}
\put(9.05,.6){\line(-1,1){2.7}} \put(6.35,3.3){\line(-1,0){.95}}
\put(5.4,3.3){\line(-1,-2){.25}}
\put(5.1,2.7){\line(-1,-2){1.175}}
\put(3.875,.25){\line(-1,-2){.39}}
\put(3.435,-.63){\line(-1,-2){1.035}}
\put(2.4,-2.7){\line(-1,-1){2.3}} \normalcolor
\put(8.825,-3.025){\tiny\ensuremath{b_{1}}}
\put(9,-3.225){\circle*{.2}}
\put(8.95,.8){\tiny\ensuremath{b_{3}}} \put(9.05,.6){\circle*{.2}}
\put(6.25,3.5){\tiny\ensuremath{b_{5}}}
\put(6.35,3.3){\circle*{.2}} \put(2,-2.8){\tiny\ensuremath{b_{7}}}
\put(2.1,-3){\circle*{.2}} \put(0,-5.3){\tiny\ensuremath{b_{9}}}
\put(.1,-5){\circle*{.2}}

\color{LimeGreen} \put(2.2,.3){\line(0,-1){2.7}}
\put(2.2,-2.4){\line(-1,-1){2.6}} \normalcolor
\put(1.7,-2.4){\tiny\ensuremath{b_{17}}}
\put(2.2,-2.4){\circle*{.2}}
\put(-.6,-5.3){\tiny\ensuremath{b_{18}}}
\put(-.4,-5){\circle*{.2}}

\put(2.1,.5){\tiny\ensuremath{b_0}} \put(2.3,.3){\circle*{.2}}

\put(4.25,-.7){\LARGE\ensuremath{D(\X[5]{0})}}
\put(-.6,1){\LARGE\ensuremath{D(\nphi[5],\X[5]{1})}}

\end{picture}
\end{figure}
\vspace{1.6cm}
\begin{figure}
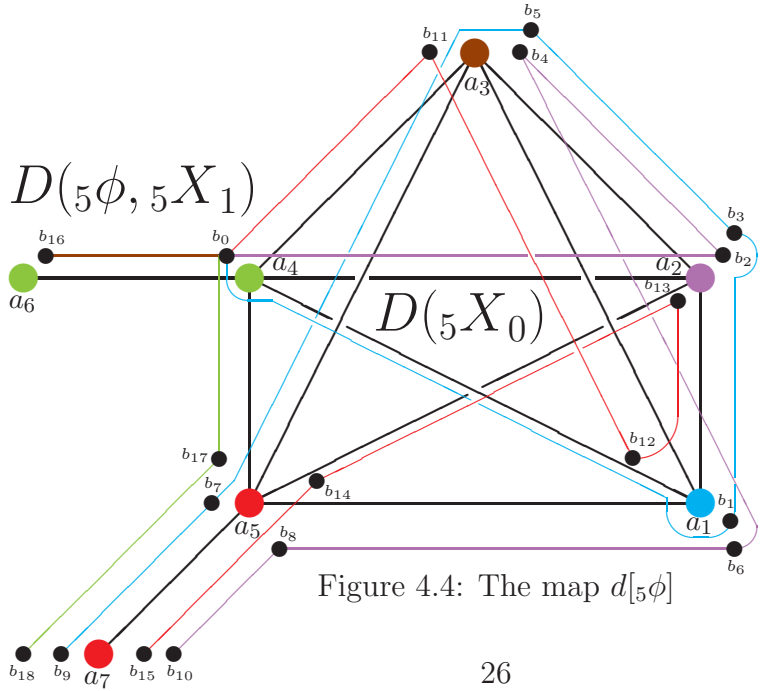

\caption{The map $d[{_5\phi}]$}
\end{figure}

\newpage

\begin{table}[here]
\begin{center}
\begin{tabular}[t]{cll}
\imdnphiinv{n-1}{i}{0}{i} for all $i\in\oneton{n-3}$\\
\imdnphiinv{n-1}{n-2}{0}{n(n-3)+1}\\
\imdnphiinv{n-1}{n+1}{0}{n(n-3)+n+1}\\
\imdnphiinv{n-1}{n}{0}{n(n-3)+n+2}\\
$\dnphi^{-1}(\edge{a}{i}{a}{i+1})$&=&$\bigcup\limits_{j=0}^{i-1}\edge{b}{j(n-3)+i-j}{b}{(j+1)(n-3)+i-j}
\bigcup\limits_{j=1}^{n-4-i}$\brkedge{b}{(n-3-j)(n-3)+i+1+j}{b}{(n-2-j)(n-3)+i+1+j}
$\bigcup\edge{b}{n(n-3)+i+1}{b}{n(n-3)+i+2}$\\&&
for all $i\in\oneton{n-4}$\\
$\dnphi^{-1}(\edge{a}{n-3}{a}{n-2})$&=&$\bigcup\limits_{i=1}^{n-3}\edge{b}{(n-3-i)(n-3)+i}{b}{(n-2-i)(n-3)+i}$\\
\imdnphiinv{i}{n}{(n-3)(n-3)+i+1}{(n-2)(n-3)+i+1} for all
$i\in\oneton{n-4}$\\ \imdnphiinv{n-3}{n}{n(n-3)+n-2}{n(n-3)+n-1}\\
\imdnphiinv{n-2}{n}{(n-3)(n-3)+1}{(n-2)(n-3)+1}\\
\imdnphiinv{n}{n+2}{n(n-3)+n-1}{n(n-3)+n}$\bigcup\edge{b}{n(n-3)+n+2}{b}{n(n-3)+n+3}$\\&&
$\bigcup\limits_{i=1}^{n-3}\edge{b}{(n-2)(n-3)+i}{b}{(n-1)(n-3)+i}$
\end{tabular}
\caption{Edge inverses under \dnphi}
\end{center}
\end{table}
Each subgraph is a union of disjoint edges, and so each component
of an edge inverse mapping onto that edge is an edge.\end{proof}

Let \Y{1} be a subdivision of \X{0}, where \Barm{1} is a
subdivision of \arm{0} for each $i\in\oneton{n}$ defined in the
following table. \vspace{-.1cm}
\begin{table}[here]
\begin{center}
\begin{tabular}[t]{clc}
arc&\vline&as a subgraph\\\hline $\Barm{1}$&\vline&$\Barmonei$ for all $i\in\oneton{n-3}$\\
$\Barm[n-2]{1}$&\vline&$\Barmonenminustwo$\\
$\Barm[n-1]{1}$&\vline&$\Barmonenminusone$\\
$\Barm[n]{1}$&\vline&$\Barmonen$
\end{tabular}
\caption{Definition of \Y{1}}
\end{center}
\end{table}

Define $\nlambda:\Y{1}\longrightarrow D(\nphi,\X{1})$ to be the
simplicial isomorphism determined by the following table where
$v\in\mathrm{V}(\Y{1})$.
\begin{table}[here]
\begin{center}
\begin{tabular}[t]{clc}
$v$&\vline&$\nlambda(v)$\\\hline \llambdadef{v}{0}{0}\\
\llambdadef{v}{i}{(n-1)(n-3)+i} for all
$i\in\oneton{n-3}$\\ \llambdadef{v}{n-2}{n(n-3)+n+1}\\
\llambdadef{v}{n-1}{n(n-3)+n+2}\\ \llambdadef{v}{n}{n(n-3)+n-1}\\
\llambdadef{v}{n+1}{n(n-3)+n+3}\\ \llambdadef{v}{n+2}{n(n-3)+n}\\
\llambdadef{w}{j(n-3)+i}{j(n-3)+i} for all $j\in\oneton[0]{n-2}$
and for all $i\in\oneton{n-3}$\\
\llambdadef{w}{(n-1)(n-3)+i}{n(n-3)+i} for all $i\in\oneton{n-2}$
\end{tabular}
\caption{Definition of \nlambda}
\end{center}
\end{table}

\begin{proposition} \label{claim5}
\nlambda{} is a consistency isomorphism (Definition
\ref{def5.5.1}).
\end{proposition}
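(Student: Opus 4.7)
The plan is to unpack Definition \ref{def5.5.1} and verify its two inclusion conditions directly from the explicit tabular definitions of \nlambda, of the subgraphs $b^*$ for $b\in\mathrm{V}(D(\nphi,\X{1}))$, of \Y{1}, and of \nS. The vertex assignment in the definition of \nlambda{} is plainly bijective (both vertex sets have cardinality $n^2-2n+4$) and one verifies by direct inspection of the adjacency structures of \Y{1} and $D(\nphi,\X{1})$ that adjacency is preserved in both directions, so \nlambda{} is indeed a simplicial isomorphism; and \Y{1} subdivides \X{0} by construction. Only the two inclusion checks remain.

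First I would verify condition (i): that $(v,e,\X{1})\subseteq[\nlambda(v)]^*$ for each $v\in\mathrm{V}(\X{0})$ and each $e\in\nS(v)$. For each leaf $v_i$ with $i\in\oneton{n+2}$, $\nS(v_i)$ is a single edge and $(v_i,e,\X{1})$ is the unique edge of $(e,\X{1})$ incident to $v_i$, which one matches with the unique edge forming $[\nlambda(v_i)]^*$. For example, $\nlambda(v_i)=b_{(n-1)(n-3)+i}$ gives $[\nlambda(v_i)]^*=\edge{u}{2(n-1)(n-3)+i}{v}{i}=(v_i,\armzeroi,\X{1})$ for $i\in\oneton{n-3}$, and analogous single-edge equalities handle $v_{n-2}$, $v_{n-1}$, $v_n$, $v_{n+1}$, $v_{n+2}$. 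The substantive case is $v_0$: $\nS(v_0)$ consists of all $n-1$ edges $\armzeroi$ for $i\in\oneton{n-1}$, and each of the corresponding initial edges---namely $\edge{v}{0}{u}{i}$ for $i\in\oneton{n-3}$, $\edge{v}{0}{u}{(2n-1)(n-3)+2(n-1)+1}$ for $i=n-2$, and $\edge{v}{0}{u}{(2n-1)(n-3)+2(n-1)+2}$ for $i=n-1$---appears in the listing of $b_0^*=[\nlambda(v_0)]^*$.

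Next I would verify condition (ii): that $[\nlambda(v)]^*\subseteq(e,\X{1})$ for each $e\in\mathrm{E}(\X{0})$ and each $v\in\mathrm{V}((e,\Y{1}))\setminus\mathrm{V}(\X{0})$. The condition is vacuous for $e\in\{\armzeroi[n-2],\armzeroi[n-1],\langle v_{n-1},v_{n+1}\rangle,\langle v_n,v_{n+2}\rangle\}$, since $\Barm[n-2]{1}=\arm[n-2]{0}$ and $\Barm[n-1]{1}=\arm[n-1]{0}$ contribute no interior subdivision vertices. For $e=\armzeroi$ with $i\in\oneton{n-3}$, the interior vertices of $(e,\Y{1})=\Barm{1}$ are $w_{j(n-3)+i}$ for $j\in\oneton[0]{n-2}$, and $[\nlambda(w_{j(n-3)+i})]^*=b_{j(n-3)+i}^*$ is a union of at most two consecutive edges of $\arm{1}=(e,\X{1})$ (two edges when $j<n-2$, one when $j=n-2$). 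For $e=\armzeroi[n]$, the interior vertices of $(e,\Y{1})$ are $w_{(n-1)(n-3)+i}$ for $i\in\oneton{n-2}$, and each $[\nlambda(w_{(n-1)(n-3)+i})]^*=b_{n(n-3)+i}^*$ is a pair of consecutive edges lying on the subarc of $\arm[n]{1}$ between $v_0$ and $v_n$, hence inside $(e,\X{1})$.

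The main obstacle is purely the index-bookkeeping across the several tables (the definitions of \nphi, of $b^*$, of \nlambda, of \Y{1}, and of \nS); there is no conceptual difficulty. The proposition ultimately holds because \nlambda{} was constructed precisely so that $b_0^*=[\nlambda(v_0)]^*$ collects every initial edge at $v_0$ prescribed by $\nS(v_0)$, while each interior-vertex image $[\nlambda(w)]^*$ is a short chain of edges contained in the corresponding subdivided arc of \X{1}---a reflection, in particular, of the ultralightness of \nphi{} established in Proposition \ref{claim4}.
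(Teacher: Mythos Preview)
Your proposal is correct and follows essentially the same approach as the paper: both verify conditions (i) and (ii) of Definition~\ref{def5.5.1} by direct case-by-case inspection from the tables, treating the leaf vertices $v_1,\ldots,v_{n+2}$ (where $[\nlambda(v_i)]^*$ is a single edge), the branch vertex $v_0$ (where one checks each initial edge lies in $b_0^*$), and then the interior subdivision vertices $w_{j(n-3)+i}$ and $w_{(n-1)(n-3)+i}$. Your write-up is in fact a bit more careful than the paper's in explicitly noting which cases of (ii) are vacuous and in remarking that \nlambda{} is a simplicial isomorphism, but the substance is identical.
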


\begin{proof} i)
\begin{table}[here]
\begin{center}
\begin{tabular}[t]{cl}
$\subedge{i}{v}{0}{v}{i}$&=$\edge{u}{2(n-1)(n-3)+i}{v}{i}=\dualver{b}{(n-1)(n-3)+i}$\\&=$\dualver{[\nlambda(v_{i})]}{}$
for all $i\in\oneton{n-3}$,\\
$\subedge{n-2}{v}{0}{v}{n-2}$&=$\edge{u}{(2n-1)(n-3)+2(n-1)+1}{v}{n-2}=\dualver{b}{n(n-3)+n+1}$\\&=$\dualver{[\nlambda(v_{n-2})]}{}$,\\
$\subedge{n-1}{v}{0}{v}{n-1}$&=$\edge{u}{(2n-1)(n-3)+2(n-1)+3}{v}{n-1}=\dualver{b}{n(n-3)+n+2}$\\&=$\dualver{[\nlambda(v_{n-1})]}{}$,\\
$\subedge{n+1}{v}{n-1}{v}{n+1}$&=$\edge{v}{n-1}{v}{n+1}=\dualver{b}{n(n-3)+n+3}$\\&=$\dualver{[\nlambda(v_{n+1})]}{}$,\\
$\subedge{n}{v}{0}{v}{n}$&=$\edge{u}{(2n-1)(n-3)+2(n-1)}{v}{n}=\dualver{b}{n(n-3)+n-1}$\\&=$\dualver{[\nlambda(v_{n})]}{}$,
\end{tabular}
\end{center}
\end{table}
\begin{table}[here]
\begin{center}
\begin{tabular}[t]{cl}
$\subedge{n+2}{v}{n}{v}{n+2}$&=$\edge{v}{n}{v}{n+2}=\dualver{b}{n(n-3)+n}$\\&=$\dualver{[\nlambda(v_{n+2})]}{}$,\\
$\subedge{0}{v}{0}{v}{i}$&=$\edge{v}{0}{u}{i}=\dualver{b}{0}$\\&=$\dualver{[\nlambda(v_{0})]}{}$
for all $i\in\oneton{n-3}$,\\
$\subedge{0}{v}{0}{v}{n-2}$&=$\edge{v}{0}{u}{(2n-1)(n-3)+2(n-1)+1}=\dualver{b}{0}$\\&=$\dualver{[\nlambda(v_{0})]}{}$, and\\
$\subedge{0}{v}{0}{v}{n-1}$&=$\edge{v}{0}{u}{(2n-1)(n-3)+2(n-1)+2}=\dualver{b}{0}$\\&=$\dualver{[\nlambda(v_{0})]}{}$.
\end{tabular}
\end{center}
\end{table}

ii)\\
$\dualver{[\nlambda(w_{j(n-3)+i})]}{}=\dualver{b}{j(n-3)+i}=\edge{u}{(2j+1)(n-3)+i}{u}{(2j+2)(n-3)+i}
\bigcup\bbrkedge{u}{(2j+2)(n-3)+i}{u}{(2j+3)(n-3)+i}\subseteq\arm{1}=(\edge{v}{0}{v}{i},\X{1})$
for all $i\in\oneton{n-3}$ and \\\indent\indent for all
$j\in\oneton[0]{n-3}$,\\
$\dualver{[\nlambda(w_{(n-2)(n-3)+i})]}{}=\dualver{b}{(n-2)(n-3)+i}=\edge{u}{(2(n-1)-1)(n-3)+i}{u}{2(n-1)(n-3)+i}
\subseteq\arm{1}\\\indent\indent=(\edge{v}{0}{v}{i},\X{1})$ for all
$i\in\oneton{n-3}$, and\\
$\dualver{[\nlambda(w_{(n-1)(n-3)+i})]}{}=\dualver{b}{n(n-3)+i}=\edge{u}{(2n-1)(n-3)+2i}{u}{(2n-1)(n-3)+2i+1}
\bigcup\\\indent\indent\edge{u}{(2n-1)(n-3)+2i+1}{u}{(2n-1)(n-3)+2i+2}\subseteq\arm{1}=(\edge{v}{0}{v}{n},\X{1})$\\
\indent\indent for each $i\in\oneton{n-2}$.
\end{proof}

\begin{proposition} \label{claim6}
\nphi{} is consistent on \nS{} (Definition \ref{def5.5}).
\end{proposition}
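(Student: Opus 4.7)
The plan is to observe that Proposition \ref{claim6} is essentially an immediate consequence of Proposition \ref{claim5}, obtained by unwinding Definition \ref{def5.5.1}. Specifically, to show $\nphi$ is consistent on $\nS$, I need to exhibit a graph $H_1$ subdividing $\X{0}$ and a simplicial isomorphism $\lambda : H_1 \longrightarrow D(\nphi, \X{1})$ satisfying the two inclusion conditions
\[
(v, e, \X{1}) \subseteq [\lambda(v)]^* \quad\text{and}\quad [\lambda(v)]^* \subseteq (e, H_1)
\]
for the appropriate $v$ and $e$.

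First I would note that the graph $\Y{1}$, constructed before Proposition \ref{claim5}, is by its very definition a subdivision of $\X{0}$ (each $\Barm[i]{1}$ is a subdivision of $\arm[i]{0}$ for $i\in\oneton{n}$), so it plays the role of $H_1$. Next, $\nlambda : \Y{1} \longrightarrow D(\nphi, \X{1})$ was constructed as a simplicial isomorphism, matching $\mathrm{V}(\Y{1})$ bijectively with $\mathrm{V}(D(\nphi,\X{1}))$ so as to preserve adjacencies. Hence the structural prerequisites of Definition \ref{def5.5.1} are in place without any further verification.

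Finally I would appeal directly to Proposition \ref{claim5}: part (i) of its proof checks, case by case over $v\in\mathrm{V}(\X{0})$ and $e\in\nS(v)$, that $(v,e,\X{1}) = [\nlambda(v)]^*$ (in particular giving the required containment), while part (ii) checks that for each edge $e$ of $\X{0}$ and each vertex of $(e, \Y{1})$ not lying in $\mathrm{V}(\X{0})$, the subgraph $[\nlambda(v)]^*$ is contained in $(e, \X{1})$. Together these are exactly conditions (i) and (ii) of Definition \ref{def5.5.1} for the triple $(\nphi, \nS, \nlambda)$. Therefore $\nphi$ is consistent on $\nS$, with $\nlambda$ serving as the consistency isomorphism, and the proof amounts to a single sentence invoking Proposition \ref{claim5}. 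There is no substantive obstacle to overcome here; the work has already been done in the preceding proposition, and the only care needed is ensuring that the statement of Definition \ref{def5.5.1} is being matched correctly against what $\nlambda$ provides.
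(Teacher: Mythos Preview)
Your proposal is correct and matches the paper's proof exactly: the paper's argument is the single line ``The claim follows from Proposition \ref{claim5} and Definition \ref{def5.5},'' and your elaboration simply unpacks why Proposition \ref{claim5} (establishing $\nlambda$ as a consistency isomorphism) immediately yields consistency of $\nphi$ on $\nS$ via Definition \ref{def5.5.1}. One small inaccuracy: for $v=v_0$ the proof of Proposition \ref{claim5}(i) gives only the containment $(v_0,e,\X{1})\subseteq [\nlambda(v_0)]^*=b_0^*$ rather than equality, since $b_0^*$ is a union of several edges---but as you note, containment is all that is required.
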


\begin{proof}
The claim follows from Proposition \ref{claim5} and Definition
\ref{def5.5}.
\end{proof}

Let $f:G_1\longrightarrow G_0$ be a light simplicial map between
graphs and $x_0,x_1,\ldots,x_m$ for some $m\in\{1,2,\ldots\}$ be
vertices of $G_1$ so that $x_i$ and $x_{i+1}$ are adjacent for
each $i\in\oneton[0]{m-1}$.  Define $\overline{f}(\langle
x_0,x_1,\ldots,x_m
\rangle)=\edge{f(x_0),f(x_1),\ldots}{}{f(x_m)}{}$.

For each $j\in\{0,1,\ldots\}$, let
$\arm{j}=(\edge{v}{0}{v}{i},\X{j})$ for each $i\in\oneton{n-2}$,
$\arm[n-1]{j}=(\edge{v}{0}{v}{n-1},\X{j})\linebreak[0]\hspace{-.1cm}\bigvee\linebreak[0]\hspace{-.05cm}(\edge{v}{n-1}{v}{n+1},\X{j})$,
and
$\arm[n]{j}=(\edge{v}{0}{v}{n},\X{j})\hspace{-.1cm}\bigvee\hspace{-.05cm}(\edge{v}{n}{v}{n+2},\linebreak[0]\X{j})$
(Definition \ref{def5.1}).

\begin{proposition} \label{claim7}
Let $m\in\{1,2,\ldots\}$ and, for each $i\in\oneton{n}$,
$\xseq{i}{0}{1}{k_i}=\imPhibar{m-1}{0}{i}{m-1}$ for some
$k_i\in\{1,2,\ldots\}$.  Then,

i)
$\imPhibar{m}{0}{i}{m}\hspace{-.1cm}=\hspace{-.1cm}\xxxxxseq{n-1}{0}{1}{k_{n-1}-1}$
\hspace{-.17cm}$\bigvee$
\hspace{-.17cm}$\xxxxxseq{n-1}{k_{n-1}-1}{k_{n-1}-2}{0}$
\hspace{-.25cm}$\bigvee\limits_{j=0}^{n-3}$
\hspace{-.25cm}$(\hspace{-.05cm}\xxxxxseq{{i+j}\bmod{n-2}}{0}{1}{k_{{i+j}\bmod{n-2}}}$
\hspace{-.15cm}$\bigvee$
\hspace{-.15cm}$\xxxxxseq{{i+j}\bmod{n-2}}{k_{{i+j}\bmod{n-2}}}{k_{{i+j}\bmod{n-2}}-1}{0})$
\hspace{-.15cm}$\bigvee$
\hspace{-.15cm}$\imPhibar{m-1}{0}{n}{m-1}$ for each
$i\in\oneton{n-3}$,

ii)  $\imPhibar{m}{0}{n-2}{m}=\imPhibar{m-1}{0}{n-1}{m-1}$,

iii) $\imPhibar{m}{0}{n-1}{m}=\xxxxxseq{n-1}{0}{1}{k_{n-1}-1}$
\hspace{-.15cm}$\bigvee$
\hspace{-.15cm}$\xxxxxseq{n-1}{k_{n-1}-1}{k_{n-1}-2}{0}$
\hspace{-.15cm}$\bigvee$
\hspace{-.15cm}$\imPhibar{m-1}{0}{n}{m-1}$, and

iv) $\imPhibar{m}{0}{n}{m}=\xseq{n-1}{0}{1}{k_{n-1}-1}$
\hspace{-.15cm}$\bigvee$
\hspace{-.15cm}$\xseq{n-1}{k_{n-1}-1}{k_{n-1}-2}{0}$
\hspace{-.15cm}$\bigvee$
\hspace{-.15cm}$\xseq{n-2}{0}{1}{k_{n-2}}$
\hspace{-.15cm}$\bigvee$
\hspace{-.15cm}$\xseq{n-2}{k_{n-2}}{k_{n-2}-1}{0}$
\hspace{-.15cm}$\bigvee\limits_{j=1}^{n-3}$
\hspace{-.15cm}$(\xseq{j}{0}{1}{k_j}$ \hspace{-.15cm}$\bigvee$
\hspace{-.15cm}$\xseq{j}{k_j}{k_j-1}{0})$
\linebreak[0]\hspace{-.15cm}$\bigvee$
\hspace{-.15cm}$\imPhibar{m-1}{0}{n}{m-1}$.
\end{proposition}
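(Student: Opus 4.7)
The plan is to factor $\overline{\nPhi{m}{0}} = \overline{\nPhi{m-1}{0}} \circ \overline{\nPhi{m}{m-1}}$, first describing $\overline{\nPhi{m}{m-1}}(\arm{m})$ as a concatenation of arm traversals in $\X{m-1}$, and then pushing each traversal down to $\X{0}$ using the hypothesis that $\overline{\nPhi{m-1}{0}}(\arm[k]{m-1}) = \xseq{k}{0}{1}{k_k}$. The key enabling observation is that by Proposition \ref{prop5.4} and Definition \ref{def5.3}, $\nPhi{m}{m-1}$ is a subdivision of $\nphi$ matching $\X{m-1}$; hence for every edge $e\in\mathrm{E}(\X{1})$ with $\nphi(e)$ nondegenerate, the subarc $(e,\X{m})$ of $\X{m}$ is mapped isomorphically by $\nPhi{m}{m-1}$ onto $(\nphi(e),\X{m-1})$. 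Consequently $\overline{\nPhi{m}{m-1}}(\arm{m})$ traces in $\X{m-1}$ exactly the same arm pattern that $\overline{\nphi}(\arm{1})$ traces in $\X{0}$.

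Next I would compute the coarse vertex-level image $\overline{\nphi}(\arm{1})$ directly from the arm description of $\X{1}$ and the vertex table of $\nphi$. For $i\in\{1,\ldots,n-3\}$, the alternating pattern on $u_{j(n-3)+i}$ (odd $j$ mapping to $v_0$, even $j=2k$ mapping to $v_{(i-1+k)\bmod(n-2)}$), together with $u_i\mapsto v_{n-1}$, $u_{2(n-1)(n-3)+i}\mapsto v_n$, and $v_i\mapsto v_{n+2}$, yields the zigzag $\langle v_0, v_{n-1}, v_0, v_{i\bmod(n-2)}, v_0, v_{(i+1)\bmod(n-2)}, \ldots, v_0, v_{(i+n-3)\bmod(n-2)}, v_0, v_n, v_{n+2}\rangle$. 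The cases $i=n-2,n-1,n$ reduce to three short direct computations yielding $\arm[n-1]{0}$, $\langle v_0, v_{n-1}, v_0, v_n, v_{n+2}\rangle$, and the longer zigzag of case (iv). Lifting to level $m-1$, each block $v_0, v_k, v_0$ becomes a full out-and-back traversal of $\arm[k]{m-1}$ whose image under $\overline{\nPhi{m-1}{0}}$ is $\xseq{k}{0}{1}{k_k}\bigvee\xseq{k}{k_k}{k_k-1}{0}$; the terminal block $v_0, v_n, v_{n+2}$ lifts to a full traversal of $\arm[n]{m-1}$ with image $\imPhibar{m-1}{0}{n}{m-1}$; and each block $v_0, v_{n-1}, v_0$ lifts to a partial traversal of $\arm[n-1]{m-1}$ going only from $v_0$ to the interior vertex $v_{n-1}$ and back.

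The main obstacle will be identifying the image of this partial traversal as $\xseq{n-1}{0}{1}{k_{n-1}-1}\bigvee\xseq{n-1}{k_{n-1}-1}{k_{n-1}-2}{0}$. For this I will prove by induction on $m$ that the edges $\edge{v}{n-1}{v}{n+1}$ and $\edge{v}{n}{v}{n+2}$ remain unsubdivided at every level of the tower: both occur as single unsubdivided edges at the ends of $\arm[n-1]{1}$ and $\arm[n]{1}$, and since $\nphi$ sends $\edge{v}{n-1}{v}{n+1}$ to $\edge{v}{n}{v}{n+2}$ and $\edge{v}{n}{v}{n+2}$ to itself, Definition \ref{def5.3} forces $(\edge{v}{n-1}{v}{n+1},\X{m-1})$ and $(\edge{v}{n}{v}{n+2},\X{m-1})$ to be single edges for all $m\geq 1$. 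This pins down $x^{n-1}_{k_{n-1}-1}=v_n$ and $x^{n-1}_{k_{n-1}}=v_{n+2}$, so the partial traversal of $\arm[n-1]{m-1}$ stopping at $v_{n-1}$ has image $\xseq{n-1}{0}{1}{k_{n-1}-1}$ under $\overline{\nPhi{m-1}{0}}$. With this truncation identified, formulas (i)--(iv) assemble from the zigzag computations via $\bigvee$ at the common branch point $v_0$; the remaining bookkeeping is the modular arithmetic in case (i), handled by reindexing $j=k-1$ to match $(i-1+k)\bmod(n-2)=(i+j)\bmod(n-2)$.
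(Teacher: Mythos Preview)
Your proposal is correct and follows essentially the same approach as the paper: factor through $\nPhi{m}{m-1}$ as a subdivision of $\nphi$ matching $\X{m-1}$, read off the arm pattern from $\overline{\nphi}(\arm{1})$, and use the invariance of the edges $\edge{v}{n-1}{v}{n+1}$ and $\edge{v}{n}{v}{n+2}$ under subdivision (proved via $\nphi(\edge{v}{n}{v}{n+2})=\edge{v}{n}{v}{n+2}$ and $\nphi(\edge{v}{n-1}{v}{n+1})=\edge{v}{n}{v}{n+2}$) to identify $\imedgePhibar{m-1}{0}{0}{n-1}{m-1}=\xseq{n-1}{0}{1}{k_{n-1}-1}$. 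One small slip: the assertion $x^{n-1}_{k_{n-1}-1}=v_n$, $x^{n-1}_{k_{n-1}}=v_{n+2}$ fails for $m=1$ (where $\nPhi{0}{0}$ is the identity), but you do not actually need those specific values---the unsubdivided-edge claim alone yields the truncation you want.
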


\begin{proof}
\underline{\scriptsize \bf CLAIM}:
$\subdivedge{n-1}{n}{i}=\edge{v}{n-1}{v}{n+1}$ for each
$i\in\{1,2,\ldots\}$.  Since \nPhi{i}{i-1} is a subdivision of
\nphi{} matching \X{i-1} for each $i\in\{1,2,\ldots\}$ and \nphi{}
embeds \edge{v}{n}{v}{n+2} onto \edge{v}{n}{v}{n+2}, then
$\subdivedge{n}{n+2}{i}=\edge{v}{n}{v}{n+2}$ for each
$i\in\{1,2,\ldots\}$. Since \nphi{} embeds \edge{v}{n-1}{v}{n+1}
onto \edge{v}{n}{v}{n+2}, then
$\subdivedge{n-1}{n+1}{i}=\edge{v}{n-1}{v}{n+1}$ for each
$i\in\{1,2,\ldots\}$.

i)  Since \nPhi{m}{m-1} is a subdivision of \nphi{} matching
\X{m-1} and \nphi{} embeds \edge{v}{0}{u}{i} onto
\edge{v}{0}{v}{n-1}, \hspace{-.05cm}\edge{u}{i}{u}{n-3+i}
\hspace{-.05cm}onto \hspace{-.05cm}\edge{v}{n-1}{v}{0},
\hspace{-.05cm}\edge{u}{j(n-3)+i}{u}{(j+1)(n-3)+i}
\hspace{-.05cm}onto
\hspace{-.05cm}\edge{v}{0}{v}{{i-1+\frac{j+1}{2}}\bmod{n-2}} and
\edge{u}{(j+1)(n-3)+i}{u}{(j+2)(n-3)+i} onto
\edge{v}{{i-1+\frac{j+1}{2}}\bmod{n-2}}{v}{0} for each odd
$j\in\oneton{2(n-1)-3}$, and $\langle
u_{(2(n-1)-1)(n-3)+i},u_{2(n-1)(n-3)+i},v_{i} \rangle$ onto
\armzeron, for each $i\in\oneton{n-3}$, then
$\imPhibar{m}{m-1}{i}{m}=\subdivedge{0}{n-1}{m-1}\bigvee\subdivedge{n-1}{0}{m-1}
\bigvee\limits_{j=0}^{n-3}(\subdivedge{0}{{i+j}\bmod{n-2}}{m-1}
\bigvee\subdivedge{{i+j}\bmod{n-2}}{0}{m-1})\bigvee\arm[n]{m-1}$
for each $i\in\oneton{n-3}$.  By the claim,
$\imedgePhibar{m-1}{0}{0}{n-1}{m-1}=\xseq{n-1}{0}{1}{k_{n-1}-1}$.
Thus,
$\imPhibar{m}{0}{i}{m}=\overline{\nPhi{m-1}{0}}(\imPhibar{m}{m-1}{i}{m})
=\overline{\nPhi{m-1}{0}}(\subdivedge{0}{n-1}{m-1}\bigvee\subdivedge{n-1}{0}{m-1}
\bigvee\limits_{j=0}^{n-3}\linebreak[0](\subdivedge{0}{{i+j}\bmod{n-2}}{m-1}\hspace{-.1cm}\bigvee\hspace{-.05cm}\subdivedge{{i+j}\bmod{n-2}}{0}{m-1})\hspace{-.1cm}\bigvee\hspace{-.1cm}\arm[n]{m-1})
=\imedgePhibar{m-1}{0}{0}{n-1}{m-1}
\hspace{-.09cm}\bigvee\hspace{-.09cm}\imedgePhibar{m-1}{0}{n-1}{0}{m-1}\hspace{-.09cm}\bigvee\limits_{j=0}^{n-3}\hspace{-.09cm}(\imedgePhibar{m-1}{0}{0}{{i+j}\bmod{n-2}}{m-1}\hspace{-.09cm}\bigvee
\hspace{-.09cm}\imedgePhibar{m-1}{0}{{i+j}\bmod{n-2}}{0}{m-1})$
$\bigvee$ $\imPhibar{m-1}{0}{n}{m-1}=\xseq{n-1}{0}{1}{k_{n-1}-1}$
$\bigvee$ $\xxxseq{n-1}{k_{n-1}-1}{k_{n-1}-2}{0}$
$\bigvee\limits_{j=0}^{n-3}$
$(\xxxxxxxxseq{{i+j}\bmod{n-2}}{0}{1}{k_{{i+j}\bmod{n-2}}}$
$\bigvee$
$\xxxseq{{i+j}\bmod{n-2}}{k_{{i+j}\bmod{n-2}}}{k_{{i+j}\bmod{n-2}}-1}{0})
\bigvee\imPhibar{m-1}{0}{n}{m-1}$ for each $i\in\oneton{n-3}$.

ii)  Since \nPhi{m}{m-1} is a subdivision of \nphi{} matching
\X{m-1} and \nphi{} embeds $\langle
v_0,\linebreak[0]u_{(2n-1)(n-3)+2(n-1)+1},\linebreak[0]v_{n-2}
\rangle$ onto \armzeronminusone, then
$\imPhibar{m}{m-1}{n-2}{m}=\arm[n-1]{m-1}$, giving
$\imPhibar{m}{0}{n-2}{m}=\overline{\nPhi{m-1}{0}}(\imPhibar{m}{m-1}{n-2}{m})=\imPhibar{m-1}{0}{n-1}{m-1}$.

iii)  Since \nPhi{m}{m-1} is a subdivision of \nphi{} matching
\X{m-1} and \nphi{} embeds \edge{v}{0}{u}{(2n-1)(n-3)+2(n-1)+2}
onto \edge{v}{0}{v}{n-1},
\edge{u}{(2n-1)(n-3)+2(n-1)+2}{u}{(2n-1)(n-3)+2(n-1)+3} onto
\edge{v}{n-1}{v}{0}, and $\langle
u_{(2n-1)(n-3)+2(n-1)+3},v_{n-1},v_{n+1} \rangle$ onto \armzeron,
then
$\imPhibar{m}{m-1}{n-1}{m}\linebreak[0]=\subdivedge{0}{n-1}{m-1}\bigvee\subdivedge{n-1}{0}{m-1}\bigvee\arm[n]{m-1}$,
and so by the claim,
$\imPhibar{m}{0}{n-1}{m}\linebreak[0]=\overline{\nPhi{m-1}{0}}(\imPhibar{m}{m-1}{n-1}{m})
=\imedgePhibar{m-1}{0}{0}{n-1}{m-1}
\bigvee\imedgePhibar{m-1}{0}{n-1}{0}{m-1}
\linebreak[0]\bigvee\imPhibar{m-1}{0}{n}{m-1}=\xseq{n-1}{0}{1}{k_{n-1}-1}\bigvee
\xseq{n-1}{k_{n-1}-1}{k_{n-1}-2}{0}\bigvee\imPhibar{m-1}{0}{n}{m-1}$.

iv)  Since \nPhi{m}{m-1} is a subdivision of \nphi{} matching
\X{m-1} and \nphi{} embeds \edge{v}{0}{u}{(2n-1)(n-3)+1} onto
\edge{v}{0}{v}{n-1}, \edge{u}{(2n-1)(n-3)+1}{u}{(2n-1)(n-3)+2}
onto \edge{v}{n-1}{v}{0},
\linebreak\edge{u}{(2n-1)(n-3)+2}{u}{(2n-1)(n-3)+3} onto
\edge{v}{0}{v}{n-2}, \edge{u}{(2n-1)(n-3)+3}{u}{(2n-1)(n-3)+4}
onto \linebreak\edge{v}{n-2}{v}{0},
\edge{u}{(2n-1)(n-3)+i}{u}{(2n-1)(n-3)+i+1} onto
\edge{v}{0}{v}{\frac{i-2}{2}} and
\bbbbrkedge{u}{(2n-1)(n-3)+i+1}{u}{(2n-1)(n-3)+i+2} onto
\edge{v}{\frac{i-2}{2}}{v}{0} for each even
$i\in\oneton[4]{2(n-1)-2}$, and \linebreak$\langle
u_{((2n-1)(n-3)+2(n-1))(n-3)+i},v_{n},{v}_{n+2} \rangle$ onto
\armzeron, then
$\imPhibar{m}{m-1}{n}{m}=\ssssubdivedge{0}{n-1}{m-1}$ $\bigvee$
$\subdivedge{n-1}{0}{m-1}$ $\bigvee$ $\subdivedge{0}{n-2}{m-1}$
$\bigvee$ $\subdivedge{n-2}{0}{m-1}$ $\bigvee\limits_{i=1}^{n-3}$
$\linebreak(\subdivedge{0}{i}{m-1}
\bigvee\subdivedge{i}{0}{m-1})\bigvee\arm[n]{m-1}$, and so by the
claim,
$\imPhibar{m}{0}{n}{m}=\overline{\nPhi{m-1}{0}}(\imPhibar{m}{m-1}{n}{m})
=\imedgePhibar{m-1}{0}{0}{n-1}{m-1}$ $\bigvee$
$\imedgePhibar{m-1}{0}{n-1}{0}{m-1}$ $\bigvee$
$\imedgePhibar{m-1}{0}{0}{n-2}{m-1}$ \hspace{-.1cm}$\bigvee$
\hspace{-.1cm}$\imedgePhibar{m-1}{0}{n-2}{0}{m-1}$
\hspace{-.1cm}$\bigvee\limits_{i=1}^{n-3}$
\hspace{-.1cm}$(\imedgePhibar{m-1}{0}{0}{i}{m-1}$ $\bigvee$
$\imedgePhibar{m-1}{0}{i}{0}{m-1})$ $\bigvee$
$\imPhibar{m-1}{0}{n}{m-1}=\xseq{n-1}{0}{1}{k_{n-1}-1}$ $\bigvee$
$\xseq{n-1}{k_{n-1}-1}{k_{n-1}-2}{0}$ $\bigvee$
$\xseq{n-2}{0}{1}{k_{n-2}}$ $\bigvee$
$\xseq{n-2}{k_{n-2}}{k_{n-2}-1}{0}$ $\bigvee\limits_{i=1}^{n-3}$
$(\xseq{i}{0}{1}{k_i}$ $\bigvee$ $\xseq{i}{k_i}{k_i-1}{0})$
$\bigvee$ $\imPhibar{m-1}{0}{n}{m-1}$.

\end{proof}

Let $p:\oneton{n-2}\longrightarrow\oneton{n-2}$ be the bijection
defined as $p(i)=i+1$ for each $i\in\oneton{n-3}$ and $p(n-2)=1$.

Let $C=\{\vvseqonea{}{}{}$\hspace{-.05cm},\linebreak[0]
\hspace{-.075cm}$\vseqpa$,\linebreak[0] $\vseqtwo$,\linebreak[0]
$\vvvvseqthree$,\linebreak[0] $\vseqfour:$\linebreak[0]
$j\linebreak[0]\in\linebreak[0]\oneton{n-2}\}$.

\begin{proposition} \label{claim8}
Let $e\in C$, $k\in\{1,2,\ldots\}$, and
$\{x_0,x_1,\ldots,x_k\}\subseteq\mathrm{V}(\X{0})$ so that
$e=\xseq{}{0}{1}{k}$.  Then, $\xseq{}{k}{k-1}{0}=e$.
\end{proposition}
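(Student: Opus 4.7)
The plan is to prove the claim by case analysis on the five forms of vertex sequence that define the collection $C$. The statement reduces to showing that each element of $C$, when read as a sequence of vertices of $\X{0}$, coincides with its own reversal---in other words, that every element of $C$ is a palindrome. This is a structural property visible directly from the definitions, so no deep argument is needed; the proof is a verification that each defining macro produces a symmetric sequence about an identifiable central vertex.

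First I would dispatch the three short sequences. For $\vseqtwo$ the middle vertex is $v_{n+1}$, and the flanking blocks on either side are $\langle v_0,v_{n-1}\rangle$ and $\langle v_{n-1},v_0\rangle$, which are reverses of each other. For $\vseqthree$ the middle vertex is $v_n$, flanked on each side by $\langle v_0,v_{n-1},v_0\rangle$, which is itself a palindrome. For $\vseqfour$ the middle vertex is $v_{n+2}$, flanked on each side by $\langle v_n,v_0,v_{n-1},v_0\rangle$ and $\langle v_0,v_{n-1},v_0,v_n\rangle$, which are reverses of each other. In each case the palindrome property is read off immediately.

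Next I would treat the two longer families. For $\vseqone$ the middle vertex is $v_n$. The portion to the left of $v_n$ reads $v_0,v_{n-1},v_0,v_{n-2},v_0,v_1,v_0,v_2,v_0,\ldots,v_{n-3},v_0$, while the portion to the right of $v_n$ reads $v_0,v_{n-3},v_0,v_{n-4},v_0,\ldots,v_1,v_0,v_{n-2},v_0,v_{n-1},v_0$; reversing the right portion recovers the left portion verbatim, so the full sequence equals its reversal. The analysis for $\vseqp$ is strictly parallel: the middle vertex is $v_{n+2}$ (flanked symmetrically by copies of $v_n$ and then by copies of $v_0$), and the two halves are palindromic mirrors of each other with indices $v_{\impj{1}},\ldots,v_{\impj{n-2}}$ in one order on one side and in reverse order on the other. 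The permutation $p$ plays no role in the symmetry argument since the $\impj{\cdot}$-indexing is applied identically to corresponding vertices on both sides of the central block.

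The only potential obstacle is bookkeeping: the macros $\vseqonebase$ and $\vseqpbase$ are notationally dense, and care must be taken to align the underlined blocks on the left with the underlined blocks on the right when reversing. This is a routine symbol-matching exercise rather than a mathematical difficulty, and completing it finishes the proof.
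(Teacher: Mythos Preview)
Your proposal is correct and follows essentially the same approach as the paper: identify a central vertex of symmetry for each element of $C$ and observe that the two halves mirror each other. The paper's proof is more terse, grouping the cases by whether $v_{n+1}$, $v_{n+2}$, or (otherwise) $v_n$ appears in $e$ and naming that vertex as the center of symmetry, but the content is identical to your case-by-case verification.
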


\begin{proof}
If $v_{n+1}\in e$, then $v_{n+1}$ is a vertex of symmetry for $e$.
If $v_{n+2}\in e$, then $v_{n+2}$ is a vertex of symmetry for $e$.
Otherwise, $v_n$ is a vertex of symmetry for $e$.
\end{proof}

\begin{proposition} \label{claim9}
Let $m\ge 3$.  Then, $\imPhibar{m-1}{0}{i}{m-1}=\vseqthree\bigvee
e_1^{i,m-1}\bigvee e_2^{i,m-1}\bigvee\cdots\bigvee
e_{k_{i,m-1}}^{i,m-1}\bigvee\langle
v_0,\linebreak[0]v_{n-1},\linebreak[0]v_0,\linebreak[0]v_{n-2},\linebreak[0]v_0,\linebreak[0]v_1,\linebreak[0]v_0,\linebreak[0]\underline{v_2,\linebreak[0]v_0},\linebreak[0]\ldots,\linebreak[0]\underline{v_{n-3},\linebreak[0]v_0},\linebreak[0]v_n,\linebreak[0]v_{n+2}
\rangle$ for some $k_{i,m-1}\in\{0,1,\ldots\}$ for each
$i\in\oneton{n}\setminus\{n-2\}$ and
$\imPhibar{m}{0}{n-2}{m}=\vseqthree\bigvee e_1^{n-2,m}\bigvee
e_2^{n-2,m}\bigvee\cdots\bigvee
e_{k_{n-2,m}}^{n-2,m}\bigvee\langle
v_0,\linebreak[0]v_{n-1},\linebreak[0]v_0,\linebreak[0]v_{n-2},\linebreak[0]v_0,\linebreak[0]v_1,\linebreak[0]v_0,\linebreak[0]\underline{v_2,\linebreak[0]v_0},\linebreak[0]\ldots,\linebreak[0]\underline{v_{n-3},\linebreak[0]v_0},\linebreak[0]v_n,\linebreak[0]v_{n+2}
\rangle$ for some $k_{n-2,m}\in\{0,1,\ldots\}$, where
$e_k^{i,m-1}\linebreak[0]\in C$ for each $k\in\oneton{k_{i,m-1}}$
and each $i\in\oneton{n}\setminus\{n-2\}$ and $e_k^{n-2,m}\in C$
for each $k\in\oneton{k_{n-2,m}}$.
\end{proposition}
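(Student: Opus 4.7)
\emph{Proof plan.} I proceed by induction on $m$, driven by the recursion in Proposition \ref{claim7}. For the base case $m = 3$, I compute $\imPhibar{2}{0}{i}{2}$ for each $i \in \oneton{n} \setminus \{n-2\}$ and $\imPhibar{3}{0}{n-2}{3}$ directly: starting from $\imPhibar{1}{0}{j}{1} = \arm[j]{0}$, one or two unfoldings of Proposition \ref{claim7} yield explicit vertex sequences which, by inspection, decompose as the $\vseqthree$ prefix, a (possibly empty) concatenation of edges from $C$, and the required suffix $\langle v_0,v_{n-1},v_0,v_{n-2},v_0,v_1,v_0,\underline{v_2,v_0},\ldots,\underline{v_{n-3},v_0},v_n,v_{n+2} \rangle$.

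For the inductive step, assume the conclusion for $m - 1$. The case $i = n-2$ is immediate from Proposition \ref{claim7}(ii), which gives $\imPhibar{m}{0}{n-2}{m} = \imPhibar{m-1}{0}{n-1}{m-1}$; since $n-1 \neq n-2$, the hypothesis applies directly. For the remaining $i \in \oneton{n-3} \cup \{n-1, n\}$, I read off $\imPhibar{m}{0}{i}{m}$ from parts (i), (iii), and (iv) of Proposition \ref{claim7} as an initial concatenation of palindromic blocks $\xseq{j}{0}{1}{k_j} \bigvee \xseq{j}{k_j}{k_j-1}{0}$ followed by the terminal factor $\imPhibar{m-1}{0}{n}{m-1}$. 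By the inductive hypothesis this terminal factor already supplies the $\vseqthree$ prefix, a middle run of $C$-edges, and the prescribed suffix, so it delivers both the prefix and suffix of the decomposition for $\imPhibar{m}{0}{i}{m}$.

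It therefore remains to check that each initial palindromic block $\xseq{j}{0}{1}{k_j} \bigvee \xseq{j}{k_j}{k_j-1}{0}$ is itself an element of $C$. Proposition \ref{claim8} guarantees that every element of $C$ equals its own reversal, so it suffices to match the forward half $\xseq{j}{0}{1}{k_j}$, which is $\imPhibar{m-1}{0}{j}{m-1}$ traced out to the branch, against the appropriate representative in $C$. Here the cyclic index $(i+j) \bmod (n-2)$ appearing in Proposition \ref{claim7}(i) is exactly the action of the bijection $p$ used in defining $\vseqpa$, so the ``interior'' arms contribute the $\vseqpa$ family, while the distinguished arms $n-2$, $n-1$, and $n$ account for $\vseqonea$, $\vseqtwo$, $\vseqthree$, and $\vseqfour$ respectively. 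The main obstacle is precisely this combinatorial identification of every initial block with one of the five families defining $C$; once it is carried out in a case analysis across the four parts of Proposition \ref{claim7}, and combined with the inductive hypothesis applied to each $\imPhibar{m-1}{0}{j}{m-1}$ that appears, the induction closes.
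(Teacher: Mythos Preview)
There is a genuine gap in your inductive step. You write that it ``remains to check that each initial palindromic block $\xseq{j}{0}{1}{k_j}\bigvee\xseq{j}{k_j}{k_j-1}{0}$ is itself an element of $C$,'' and that it suffices to ``match the forward half $\xseq{j}{0}{1}{k_j}$ \ldots\ against the appropriate representative in $C$.'' But the forward half is $\imPhibar{m-1}{0}{j}{m-1}$, and for $m-1>2$ this is far longer than any member of $C$ (the elements of $C$ have fixed length, bounded by $4(n-2)+5$). The palindromic blocks are therefore never single elements of $C$. What the paper does instead is apply the inductive hypothesis to each $\imPhibar{m-1}{0}{j}{m-1}$, writing it as $\vseqthree\bigvee e_1\bigvee\cdots\bigvee e_k\bigvee(\text{suffix})$; the palindrome then reads
\[
\vseqthree\bigvee e_1\bigvee\cdots\bigvee e_k\bigvee\bigl(\text{suffix}\bigvee\text{reversed suffix}\bigr)\bigvee e_k\bigvee\cdots\bigvee e_1\bigvee\vseqthree,
\]
where Proposition~\ref{claim8} is invoked on the $e_\ell$'s and on $\vseqthree$ individually. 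Only the central piece (suffix concatenated with its reversal) is a \emph{new} element of $C$---it becomes $\vseqone$ when the first block stops at $v_n$ (since $k_{n-1}-1$ drops the final $v_{n+2}$), and $\vseqp[j']$ for the subsequent full-length blocks reflected at $v_{n+2}$. Your closing phrase ``combined with the inductive hypothesis applied to each $\imPhibar{m-1}{0}{j}{m-1}$'' hints at this mechanism, but it contradicts the explicit plan you laid out just before.

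Two smaller points follow from this. First, the overall prefix $\vseqthree$ cannot be supplied by the terminal factor $\imPhibar{m-1}{0}{n}{m-1}$, which sits at the \emph{end} of the concatenation; it comes from the leading $\vseqthree$ of the \emph{first} palindromic block (the $j=n-1$ block, once decomposed as above). The terminal factor contributes only the required suffix and one more copy of $\vseqthree\in C$. Second, because the arm $n-2$ appears among the palindromic blocks (explicitly in part~(iv) and via the cyclic index in part~(i)), you need $\imPhibar{m-2}{0}{n-2}{m-2}$, which is the $(m-1)$-instance of the $n-2$ clause, not the $m$-instance; this is why the paper uses strong induction and carries both $m=3$ and $m=4$ as base cases. (Also, the starting identity is $\imPhibar{0}{0}{j}{0}=\arm[j]{0}$, not level~$1$.)
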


\begin{proof}
By Proposition \ref{claim7}, $\imPhibar{0}{0}{i}{0}=\armzeroi$ for
all $i\in\oneton{n-2}$, \
$\imPhibar{0}{0}{n-1}{0}=\armzeronminusone$, \
$\imPhibar{0}{0}{n}{0}=\armzeron$; \
$\imPhibar{1}{0}{i}{1}=\vseqphalfa[i-1]$ for all
$i\in\oneton{n-3}$, \
$\imPhibar{1}{0}{n-2}{1}\linebreak[0]=\vseqtwohalf$, \
$\imPhibar{1}{0}{n-1}{1}=\vseqfourhalf$, \
$\imPhibar{1}{0}{n}{1}=\vseqonehalfa$; \
$\imPhibar{2}{0}{i}{2}=\vseqthree
\linebreak[0]\bigvee\limits_{j=i-1}^{n-4}\vvvvseqpa{}{}{}$
$\bigvee$ $\vseqtwo$ $\bigvee\limits_{j=0}^{i-2}$
$\vvvseqpb{}{}{}$ \hspace{-.15cm}$\bigvee$
\hspace{-.12cm}$\vseqonehalf$ for all $i\in\oneton{n-3}$, \
$\imPhibar{2}{0}{n-2}{2}=\vseqfourhalf$, \
$\imPhibar{2}{0}{n-1}{2}=\vseqthree$ $\bigvee$ $\vseqonehalf$, \
$\imPhibar{2}{0}{n}{2}=\vseqthree$ $\bigvee$ $\vseqtwo$
\hspace{-.07cm}$\bigvee\limits_{i=1}^{n-3}$
\hspace{-.07cm}$\vseqpc[i-1]$ $\bigvee$ $\vvseqonehalfa{}$; \
$\imPhibar{3}{0}{n-2}{3}=\vseqthree$ $\bigvee$ $\vseqonehalfa$.
Thus, the claim is true for $m=3$.

By Proposition \ref{claim7} and Proposition \ref{claim8},
$\imPhibar{3}{0}{i}{3}=\vseqthree\bigvee\eseq{n-1,2}{1}{2}{k_{n-1,2}}
\bigvee\vseqonehalf[]\linebreak[0]$ $\bigvee$
\hspace{-.15cm}$\langle
v_n,v_0,v_{n-3},v_0,\underline{v_{n-4},v_0},\ldots,\underline{v_1,v_0},v_{n-2},v_0,v_{n-1},v_0
\rangle$ \hspace{-.15cm}$\bigvee$
\hspace{-.1cm}$\eseq{n-1,2}{k_{n-1,2}}{k_{n-1,2}-1}{1}\linebreak[0]$
$\bigvee$ $\vseqthree$ $\bigvee\limits_{j=i}^{n-3}(\vseqthree
\bigvee\eseq{j,2}{1}{2}{k_{j,2}}\linebreak[0]\bigvee\linebreak[0]$
$\vseqonehalfa\bigvee\langle
v_{n+2},\linebreak[0]v_n,\linebreak[0]v_0,\linebreak[0]v_{n-3},\linebreak[0]v_0,\linebreak[0]\underline{v_{n-4},}$
$\underline{\linebreak[0]v_0},\linebreak[0]\ldots,\linebreak[0]\underline{v_1,\linebreak[0]v_0},\linebreak[0]v_{n-2},\linebreak[0]v_0,\linebreak[0]v_{n-1},\linebreak[0]v_0
\rangle$ $\bigvee$ $\eseq{j,2}{k_{j,2}}{k_{j,2}-1}{1}$ $\bigvee$
$\vseqthree) \linebreak[0]$ $\bigvee\linebreak[0]$ $\vseqfour$
$\bigvee\limits_{j=1}^{i-1}(\vseqthree$ $\bigvee$
$\eseq{j,2}{1}{2}{k_{j,2}}$ $\bigvee$ $\vseqonehalfa$ $\bigvee$
$\langle
v_{n+2},\linebreak[0]v_n,\linebreak[0]v_0,\linebreak[0]v_{n-3},\linebreak[0]v_0,\linebreak[0]\underline{v_{n-4},\linebreak[0]v_0},\linebreak[0]\ldots,\linebreak[0]\underline{v_1,\linebreak[0]v_0},\linebreak[0]v_{n-2},\linebreak[0]v_0,\linebreak[0]v_{n-1},\linebreak[0]v_0
\rangle$ \hspace{-.1cm}$\bigvee$
$\eseq{j,2}{k_{j,2}}{k_{j,2}-1}{1}$ $\bigvee$
\hspace{-.1cm}$\vseqthree)$ $\bigvee$ $\vseqthree$ $\bigvee$
$\eseq{n,2}{1}{2}{k_{n,2}}\bigvee\vseqonehalf=\vseqthree$
$\bigvee$ $\eseq{n-1,2}{1}{2}{k_{n-1,2}}$ $\bigvee$ $\vseqone$
$\bigvee$ $\eseq{n-1,2}{k_{n-1,2}}{k_{n-1,2}-1}{1}$ $\bigvee$
$\vseqthree\linebreak[0]$ $\bigvee\limits_{j=i}^{n-3}(\vseqthree$
\hspace{-.1cm}$\bigvee$ $\eseq{j,2}{1}{2}{k_{j,2}}$ $\bigvee$
\hspace{-.1cm}$\vvvseqoneb[v_{n+2},v_n,]{}{}{}$ $\bigvee$
$\eseq{j,2}{k_{j,2}}{k_{j,2}-1}{1}$ $\bigvee$ $\vseqthree)$
$\bigvee$ $\vseqfour \linebreak[0]$
$\bigvee\limits_{j=1}^{i-1}(\vseqthree$ $\bigvee$
$\eseq{j,2}{1}{2}{k_{j,2}}$ $\bigvee$
$\vvvseqoneb[v_{n+2},v_n,]{}{}{}$ $\bigvee$
$\eseq{j,2}{k_{j,2}}{k_{j,2}-1}{1}$ $\bigvee$ $\vseqthree$
$\bigvee\vseqthree$ $\bigvee$ $\eseq{n,2}{1}{2}{k_{n,2}}$
$\bigvee$ $\vseqonehalfa$  for all $i\in\oneton{n-3}$, \
$\imPhibar{3}{0}{n-1}{3}=\vseqthree$ $\bigvee$
$\eseq{n-1,2}{1}{2}{k_{n-1,2}}$ $\bigvee$ $\vseqonehalf[]$
$\bigvee$ $\langle
v_n,\linebreak[0]v_0,\linebreak[0]v_{n-3},\linebreak[0]v_0,\linebreak[0]\underline{v_{n-4},\linebreak[0]v_0},\linebreak[0]\ldots,\linebreak[0]\underline{v_1,\linebreak[0]v_0},\linebreak[0]v_{n-2},\linebreak[0]v_0,\linebreak[0]v_{n-1},\linebreak[0]v_0
\rangle$ $\bigvee$ $\eseq{n-1,2}{k_{n-1,2}}{k_{n-1,2}-1}{1}$
$\bigvee$ $\vseqthree$ $\bigvee$ $\vseqthree$ $\bigvee$
$\eseq{n,2}{1}{2}{k_{n,2}}$ $\bigvee$ $\vseqonehalf= \vseqthree$
$\bigvee$ $\eseq{n-1,2}{1}{2}{k_{n-1,2}}$ $\bigvee\vseqoneb$
$\bigvee$ $\eseq{n-1,2}{k_{n-1,2}}{k_{n-1,2}-1}{1}$ $\bigvee$
$\vseqthree)$ $\bigvee$ $\linebreak[0]\vseqthree$ $\bigvee$
$\eseq{n,2}{1}{2}{k_{n,2}}$ $\bigvee$ $\vseqonehalf$, \
$\imPhibar{3}{0}{n}{3}=\vseqthree$ $\bigvee$
$\eseq{n-1,2}{1}{2}{k_{n-1,2}}$ $\bigvee$ $\vseqonehalf[]$
$\bigvee$ $\langle
v_n,\linebreak[0]v_0,\linebreak[0]v_{n-3},\linebreak[0]v_0,\linebreak[0]\underline{v_{n-4},\linebreak[0]v_0},\linebreak[0]\ldots,\linebreak[0]\underline{v_1,\linebreak[0]v_0},\linebreak[0]v_{n-2},\linebreak[0]v_0,\linebreak[0]v_{n-1},\linebreak[0]v_0
\rangle$ $\bigvee$ $\eseq{n-1,2}{k_{n-1,2}}{k_{n-1,2}-1}{1}$
$\bigvee$ $\vseqthree$ $\linebreak[0]\bigvee$
$\linebreak[0]\vseqfour$ $\bigvee\limits_{i=i}^{n-3}(\vseqthree$
$\bigvee$ $\eseq{i,2}{1}{2}{k_{i,2}}$ $\bigvee$ $\vseqonehalf$
$\bigvee$ $\langle
v_{n+2},\linebreak[0]v_n,\linebreak[0]v_0,\linebreak[0]v_{n-3},\linebreak[0]v_0,\linebreak[0]\underline{v_{n-4},\linebreak[0]v_0},\linebreak[0]\ldots,\linebreak[0]\underline{v_1,\linebreak[0]v_0},\linebreak[0]v_{n-2},\linebreak[0]v_0,\linebreak[0]v_{n-1},\linebreak[0]v_0
\rangle$ $\bigvee$ $\eseq{i,2}{k_{i,2}}{k_{i,2}-1}{1}$ $\bigvee$
$\vseqthree)$ $\bigvee$ $\vseqthree$ $\bigvee$
$\eseq{n,2}{1}{2}{k_{n,2}}$ $\bigvee$ $\vseqonehalf=\vseqthree$
$\bigvee$ $\eseq{n-1,2}{1}{2}{k_{n-1,2}}$ $\bigvee$
$\vvvseqoneb{}{}{}$ $\bigvee$
$\eseq{n-1,2}{k_{n-1,2}}{k_{n-1,2}-1}{1}$ $\bigvee$ $\vseqthree$
$\bigvee$ $\vseqfour$ $\bigvee\limits_{i=1}^{n-3}(\vseqthree$
$\bigvee$ $\eseq{i,2}{1}{2}{k_{i,2}}$ $\linebreak[0]\bigvee$
$\vvvseqone[v_{n+2},v_n,]{}{}{}$ $\bigvee$
$\eseq{i,2}{k_{i,2}}{k_{i,2}-1}{1}$ $\bigvee$ $\vseqthree)$
$\bigvee$ $\vseqthree$ \hspace{.02cm}$\bigvee$
$\eseq{n,2}{1}{2}{k_{n,2}}$ $\bigvee$
\hspace{.02cm}$\vseqonehalfa$; \
$\imPhibar{4}{0}{n-2}{4}=\imPhibar{3}{0}{n-1}{3}$.  Thus, the
claim is true for $m=4$.

Let $m\ge4$ and suppose the claim is true for all
$\tilde{m}\in\oneton[3]{m}$.  By Proposition \ref{claim7} and
Proposition \ref{claim8}, $\imPhibar{m}{0}{i}{m}=\vseqthree$
$\bigvee$
$\eseq{n-1,m-1}{1}{2}{k_{n-1,m-1}}\bigvee\vseqonehalf[]\bigvee\langle
v_n,\linebreak[0]v_0,\linebreak[0]v_{n-3},\linebreak[0]v_0,\linebreak[0]\underline{v_{n-4},\linebreak[0]v_0},\linebreak[0]\ldots,\linebreak[0]\underline{v_1,\linebreak[0]v_0},\linebreak[0]v_{n-2},\linebreak[0]v_0,\linebreak[0]v_{n-1},\linebreak[0]v_0
\rangle$ $\bigvee$ $\eseq{n-1,m-1}{k_{n-1,m-1}}{k_{n-1,m-1}-1}{1}$
$\linebreak[0]\bigvee$ $\vseqthree$
$\bigvee\limits_{j=0}^{n-3}(\vseqthree$ $\bigvee$
$\eeeeseq{{i+j}\bmod{n-2},m-1}{1}{2}{k_{{i+j}\bmod{n-2},m-1}}\hspace{-.05cm}\bigvee\hspace{-.05cm}\vseqonehalf\bigvee\langle
v_{n+2},v_n,v_0,v_{n-3},v_0,\underline{v_{n-4},v_0},\ldots,\underline{v_1,v_0},v_{n-2},v_0,v_{n-1},v_0
\rangle$ $\bigvee$
$\eseq{{i+j}\bmod{n-2},m-1}{k_{{i+j}\bmod{n-2},m-1}}{k_{{i+j}\bmod{n-2},m-1}-1}{1}$
$\bigvee$ $\vseqthree)$ $\bigvee$
$\vseqthree\bigvee\eseq{n,m-1}{1}{2}{k_{n,m-1}}\bigvee\vseqonehalf=\vseqthree$
$\bigvee$ $\eeeseq{n-1,m-1}{1}{2}{k_{n-1,m-1}}$
$\linebreak[0]\bigvee$ $\vseqoneb$ $\bigvee$
$\eseq{n-1,m-1}{k_{n-1,m-1}}{k_{n-1,m-1}-1}{1}$ $\bigvee$
$\vseqthree$ $\bigvee\limits_{j=0}^{n-3}\linebreak[0](\vseqthree$
$\bigvee$
$\eseqa{{i+j}\bmod{n-2},m-1}{1}{2}{k_{{i+j}\bmod{n-2},m-1}}$
$\linebreak[0]\bigvee$ $\vvseqonec[v_{n+2},v_n,]{}{}{}$ $\bigvee$
$\eeseqa{{i+j}\bmod{n-2},m-1}{k_{{i+j}\bmod{n-2},m-1}}{k_{{i+j}\bmod{n-2},m-1}-1}{1}$
$\bigvee$ $\vseqthree)$ $\bigvee$ $\vseqthree$ $\bigvee$
$\eseq{n,m-1}{1}{2}{k_{n,m-1}}$ $\linebreak[0]\bigvee$
$\vseqonehalf$ for all $i\in\oneton{n-3}$, \
$\imPhibar{m}{0}{n-1}{m}=\vseqthree$ $\bigvee$
$\eseqa{n-1,m-1}{1}{2}{k_{n-1,m-1}}\bigvee\linebreak[0]\vseqonehalf[]\bigvee\langle
v_n,\linebreak[0]v_0,\linebreak[0]v_{n-3},\linebreak[0]v_0,\linebreak[0]\underline{v_{n-4},\linebreak[0]v_0},\linebreak[0]\ldots,\linebreak[0]\underline{v_1,\linebreak[0]v_0},\linebreak[0]v_{n-2},\linebreak[0]v_0,\linebreak[0]v_{n-1},\linebreak[0]v_0
\rangle\bigvee$ $\eseq{n-1,m-1}{k_{n-1,m-1}}{k_{n-1,m-1}-1}{1}$
$\bigvee$ $\vseqthree$ $\linebreak[0]\bigvee$ $\vseqthree$
$\bigvee$ $\eseq{n,m-1}{1}{2}{k_{n,m-1}}$ $\bigvee$ $\vseqonehalf=
\vseqthree\bigvee\eseqa{n-1,m-1}{1}{2}{k_{n-1,m-1}}\bigvee\vseqonea$
\hspace{-.1cm}$\bigvee$
\hspace{-.1cm}$\eseqq{n-1,m-1}{k_{n-1,m-1}}{k_{n-1,m-1}-1}{1}\bigvee\vseqthree)\bigvee\vseqthree$
$\bigvee$ $\eseq{n,m-1}{1}{2}{k_{n,m-1}}$ $\bigvee$
$\vvvvvseqonehalf{}$, \
\linebreak[0]$\imPhibar{m}{0}{n}{m}=\vseqthree$
$\bigvee\eseq{n-1,m-1}{1}{2}{k_{n-1,m-1}}$ $\bigvee$
$\vseqonehalf[]\bigvee\langle
v_n,v_0,v_{n-3},v_0,\underline{v_{n-4},v_0},\ldots,\underline{v_1,v_0}\linebreak,v_{n-2},v_0,v_{n-1},v_0
\rangle\bigvee\eseq{n-1,m-1}{k_{n-1,m-1}}{k_{n-1,m-1}-1}{1}\bigvee\vseqthree$
$\bigvee$ $\vseqthree$ $\bigvee$
$\eseq{n-2,m-1}{1}{2}{k_{n-2,m-1}}$ $\bigvee$ $\vvvvvseqonehalf{}$
$\bigvee$ $\langle
v_{n+2},\linebreak[0]v_n,\linebreak[0]v_0,\linebreak[0]v_{n-3},\linebreak[0]v_0,\linebreak[0]\underline{v_{n-4},\linebreak[0]v_0},\linebreak[0]\hspace{.08cm}\ldots\hspace{.08cm},\linebreak\underline{v_1,\linebreak[0]v_0},\linebreak[0]v_{n-2},\linebreak[0]v_0,\linebreak[0]v_{n-1},\linebreak[0]v_0
\rangle$ $\bigvee$ $\eseq{n-2,m-1}{k_{n-2,m-1}}{k_{n-2,m-1}-1}{1}$
$\bigvee$ $\vseqthree$ $\bigvee\limits_{i=i}^{n-3}(\vseqthree$
$\bigvee$ $\eseq{i,m-1}{1}{2}{k_{i,m-1}}$ $\bigvee$
$\vseqonehalf\hspace{-.12cm}\bigvee\hspace{-.03cm}\langle
v_{n+2},\linebreak[0]v_n,\linebreak[0]v_0,\linebreak[0]v_{n-3},\linebreak[0]v_0,\linebreak[0]\underline{v_{n-4},\linebreak[0]v_0},\linebreak[0]\ldots,\linebreak[0]\underline{v_1,\linebreak[0]v_0},\linebreak[0]v_{n-2},\linebreak[0]v_0,\linebreak[0]v_{n-1},\linebreak[0]v_0
\rangle$ $\bigvee$ $\eseq{i,m-1}{k_{i,m-1}}{k_{i,m-1}-1}{1}$
$\linebreak[0]\bigvee$ $\linebreak[0]\vseqthree)$ $\bigvee$
$\vseqthree$ $\bigvee$ $\eseq{n,m-1}{1}{2}{k_{n,m-1}}$ $\bigvee$
$\vseqonehalf\hspace{-.05cm}=\hspace{-.05cm}\vseqthree\bigvee\eseq{n-1,m-1}{1}{2}{k_{n-1,m-1}}$
$\bigvee$
$\vseqonec\hspace{-.04cm}\bigvee\hspace{-.04cm}\eseq{n-1,m-1}{k_{n-1,m-1}}{k_{n-1,m-1}-1}{1}\bigvee\vseqthree$
$\bigvee$ $\vseqthree$ $\bigvee$
$\eseq{n-2,m-1}{1}{2}{k_{n-2,m-1}}\bigvee\vvvseqonea[v_{n+2},v_n,]{}{}{}$
$\bigvee$ $\eseq{n-2,m-1}{k_{n-2,m-1}}{k_{n-2,m-1}-1}{1}$
$\linebreak[0]\bigvee$
$\linebreak[0]\vseqthree\hspace{-.05cm}\bigvee\limits_{i=1}^{n-3}\hspace{-.05cm}(\vseqthree\hspace{-.05cm}\bigvee\hspace{-.05cm}\eseq{i,m-1}{1}{2}{k_{i,m-1}}\hspace{-.05cm}\bigvee\hspace{-.05cm}\vvvvseqonea[v_{n+2},v_n,]{}{}{}$
$\bigvee$ $\eseq{i,m-1}{k_{i,m-1}}{k_{i,m-1}-1}{1}$ $\bigvee$
$\vseqthree)$ $\bigvee$ $\vseqthree$ $\bigvee$
$\eseq{n,m-1}{1}{2}{k_{n,m-1}}$ $\bigvee$ $\vseqonehalfa$; \
$\imPhibar{m+1}{0}{n-2}{m+1}=\imPhibar{m}{0}{n-1}{m}$, and so the
claim holds by induction.
\end{proof}

\section{Factoring}

In this section, the complexity of \K{} is addressed by resolving
whether or not it is simpler (Proposition \ref{claim18}). Were it
to be, in keeping with the strategy by Minc, a contradiction to a
factoring such as given by Theorem \ref{claim17} is obtained. In
line with what was developed in \cite{minc2}, a certain factoring
allowing for control of the complexity of the dual system is
deduced, through properties of the bonding map, when given a
factoring in general. In particular, a factoring such that the
dual of the simpler object through which the bonding map is being
factored remains simpler, deduced, here, from the more general
factoring in which branch point of the simpler object goes to
branch point. For \K, the symmetry of the bonding maps described
in the previous section (Proposition \ref{claim9}) facilitates
this with Propositons \ref{claim13} and \ref{claim14}. The
argument of \cite{minc2} is then further adapted, with the
conditions of Minc's program being satisfied as established in the
previous section, to allow for a ``shortening'' and ``lifting'' of
such a factoring after passing to the dual (Proposition
\ref{claim15}), showing that a factoring cannot occur (Proposition
\ref{claim16}), provided that certain induction conditions are
preserved and that the defining bonding map (\nphi) cannot be
factored.

For Propositions \ref{claim10} through \ref{claim14}, let $m\ge2$,
$\nt\in\oneton{n-1}$, and $T$ be a simple-\nt-od.  Let
$S^1,S^2,\ldots,S^{\nt}$ be \nt{} arcs so that
$\bigcup\limits_{i=1}^{\nt}S^i=T$ and $S^i\bigcap S^j=\{s_0\}$ for
each distinct $i,j\in\oneton{\nt}$, and so $s_0$ is an endpoint of
$S^i$ for each $i\in\oneton{\nt}$.  Define $T$ as a graph with
subgraph $S^i=\sseqzero{i}{k_i}$ for some $k_i\in\{1,2,\ldots\}$
for each $i\in\oneton{\nt}$.  Suppose $\alpha:\X{m}\longrightarrow
T$ and $\beta:T\longrightarrow\X{0}$ are simplicial maps so that
$\alpha$ is surjective, $\beta\circ\alpha=\nPhi{m}{0}$, and
$\beta(s_0)=v_0$.  Let $\arm{m}=\xseq{i}{0}{1}{\ell_i}$ for some
$\ell_i\in\{1,2,\ldots\}$ for each $i\in\oneton{n}$ where
$x_0^i=v_0$ for all $i\in\oneton{n}$.

In the case of the existence of distinct
$x,y,z\in\mathrm{V}(\X{m})$ so that $x\mbox{ and }y$ are adjacent,
$y\mbox{ and }z$ are adjacent, $\alpha(y)=s_0$,
$\nPhi{m}{0}(z)\not=v_{n-1}$, and $\nPhi{m}{0}(x)=v_{n}$,
Propositions \ref{claim10} through \ref{claim12} show that for all
vertices of \X{m} mapped by $\alpha$ to $s_0$ a certain uniqueness
applies in describing how they are ``locally'' mapped by
\nPhi{m}{0}. This uniqueness ensures certain instances, in which
the above case holds, do not prevent the  ``controllable''
factoring arranged in Proposition \ref{claim13} from being
established as well-defined.

\begin{proposition} \label{claim10}
Let $i\in\oneton{n}$, $r\in\oneton[0]{\ell_i}$, and
$j\in\oneton{n-2}$ so that
$\overline{\nPhi{m}{0}}(\xseq{i}{r+1}{r+2}{r+2(n-2)})=\vpjseq[]$,
$\overline{\alpha}(\xseq{i}{r+1}{r+2}{r+t})=\sseq[,s_0]{\hat{\imath}}{t-1}{t-2}{1}$
for some even $t\in\oneton[2]{2(n-2)}$ and for some
$\hat{\imath}\in\oneton{\nt}$, and
$\{v_n,v_{\impj{\frac{\tilde{t}}{2}}}:\tilde{t}\mbox{ is even and
}\tilde{t}\in\oneton[t]{2(n-2)}\}\subseteq\{\beta(s_1^{\delta}):\delta\in\oneton{\nt}\}$.
Then,

i)  there exists $r^{\prime}\in\oneton[0]{\ell_i}$ so that
$r^{\prime}<r$,
$\overline{\nPhi{m}{0}}(\xseq{i}{r^{\prime}+1}{r^{\prime}+2}{r^{\prime}+2(n-2)})=\vpjseq[]$,
$\overline{\alpha}(\hspace{-.05cm}\xseq{i}{r^{\prime}+1}{r^{\prime}+2}{r^{\prime}+t^{\prime}}\hspace{-.05cm})\hspace{-.125cm}=\hspace{-.125cm}\sseq[,s_0]{\hat{\imath}^{\prime}}{t^{\prime}-1}{t^{\prime}-2}{1}$
for some even $t^{\prime}\in\oneton[2]{2(n-2)}$ and for some
$\hat{\imath}^{\prime}\in\oneton{\nt}$, and
$\{v_n,v_{\impj{\frac{\tilde{t}}{2}}}:\tilde{t}\mbox{ is even and
}\tilde{t}\in\oneton[t^{\prime}]{2(n-2)}\}\subseteq\{\beta(s_1^{\delta}):\delta\in\oneton{\nt}\}$,
or

ii) there exists \hspace{.05cm}$r^{\prime}\in\oneton[0]{\ell_i}$
so that $r^{\prime}<r$,
$\overline{\nPhi{m}{0}}(\xseq{i}{r^{\prime}+1}{r^{\prime}+2}{r^{\prime}+4(n-2)+5})\hspace{-.1cm}=
\vpjseq$ (or
$\overline{\nPhi{m}{0}}(\xseq{i}{r^{\prime}+1}{r^{\prime}+2}{r^{\prime}+4(n-2)+3})\linebreak[0]=
\langle
v_{n-2},v_0,v_1,v_0,\underline{v_2,v_0},\ldots,\underline{v_{n-3},v_0},
v_n,\linebreak[0]v_0,v_{n-3},v_0,\underline{v_{n-4},v_0},\ldots,\underline{v_1,v_0},v_{n-2},v_0,v_{n-1}
\rangle$),
$\overline{\alpha}(\xseq{i}{r^{\prime}+4(n-2)+5-(t+1)}{r^{\prime}+4(n-2)+5-t}{r^{\prime}+4(n-2)+5})
\hspace{-.125cm}=\hspace{-.125cm}\sseqzero{\hat{\imath}}{t+1}$ (or
$\overline{\alpha}(\hspace{-.075cm}\xxxxxseq{i}{r^{\prime}+4(n-2)+3-(t+1)}{r^{\prime}+4(n-2)+3-t}{r^{\prime}+4(n-2)+3}\hspace{-.05cm})
=\sseqzero{\hat{\imath}}{t+1}$), and
$\overline{\alpha}(\xseq{i}{0}{1}{r^{\prime}+2(n-2)+1})\subseteq
S^{\hat{\imath}^{\prime}}$ where
$\hat{\imath}^{\prime}\in\oneton{\nt}$ so that
$\imalpha{i}{r^{\prime}+1}\in S^{\hat{\imath}^{\prime}}$  or

iii) $\overline{\alpha}(\xseq{i}{0}{1}{r})\subseteq
S^{\hat{\imath}}$.
\end{proposition}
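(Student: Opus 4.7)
The plan is to proceed by a backward tracing argument: assume that case (iii) fails, so the image $\overline{\alpha}(\xseq{i}{0}{1}{r})$ is not contained in the single arm $S^{\hat{\imath}}$, and then locate the latest ``earlier'' crossing of the branch point $s_0$ in $T$ by the image of $\langle x_0^i,\ldots,x_r^i\rangle$. Concretely, let $r^{\prime\prime}$ be the largest index less than $r+1$ with $\alpha(x_{r^{\prime\prime}}^i)=s_0$ (such an index exists when (iii) fails, since the path must enter $S^{\hat{\imath}}$ for the first time from $s_0$ after some prior visit, using that $\alpha$ is continuous on the subdivided arc and $s_0$ is the only vertex shared among arms). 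This reduces the question to describing what $\overline{\nPhi{m}{0}}$ must look like on a neighborhood of $x_{r^{\prime\prime}}^i$.

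To identify the local pattern of $\overline{\nPhi{m}{0}}$ around $x_{r^{\prime\prime}}^i$, I would invoke Proposition~\ref{claim9}: the whole image $\imPhibar{m}{0}{i}{m}$ decomposes as a wedge of occurrences of the five basic ``blocks'' of the collection $C$, namely $\vseqone$, $\vseqp$, $\vseqtwo$, $\vseqthree$, and $\vseqfour$, separated by the short bridge $\vseqthree$. The subword $\vpjseq[]$ appearing in the hypothesis is precisely the central spine of a $\vseqp$ block, so the segment $\xseq{i}{r+1}{r+2}{r+2(n-2)}$ lies inside exactly one such $\vseqp$ block. A $\vseqp$ block comes in two flavors in Proposition~\ref{claim9}: the ``short'' version of length $2(n-2)$ (the palindrome from $v_0$ to $v_{n-1}$ and back through the same $\impj{k}$-sequence), and the ``long'' version of length $4(n-2)+5$ (the palindrome that additionally loops through $v_n$, $v_{n+2}$, $v_n$). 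The same block type occurs earlier in the wedge decomposition at some $r^\prime<r$ by the symmetry recorded in Proposition~\ref{claim8}.

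The case split now comes from which flavor of $\vseqp$ block contains $r^{\prime\prime}$, together with how $\alpha$ behaves on that earlier block. If the earlier occurrence is the short $\vseqp$ and $\alpha$ folds it symmetrically from $s_{t^\prime-1}^{\hat{\imath}^\prime}$ down to $s_0$, I would verify conclusion (i), inheriting the final containment condition on $\{v_n,v_{\impj{\tilde t/2}}\}$ directly because the remaining hypothesis pattern is a suffix of the previous one. If the earlier occurrence is the long $\vseqp$ and the $\alpha$-fold extends further, producing an initial run $\sseqzero{\hat{\imath}}{t+1}$ landing at $s_0$, and moreover $\overline{\alpha}(\xseq{i}{0}{1}{r^{\prime}+2(n-2)+1})\subseteq S^{\hat{\imath}^{\prime}}$ (which follows from the minimality/maximality of $r^{\prime\prime}$), then conclusion (ii) holds. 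The constraint $\{v_n,v_{\impj{\tilde t/2}}\}\subseteq\{\beta(s_1^\delta)\}$ is used at each step to rule out degenerate collapses of $\alpha$ at $s_0$: it forces $\beta$ to send the edges of $T$ emerging from $s_0$ onto distinct $v_n$- and $v_{\impj{k}}$-directed edges of $\X{0}$, so $\alpha$ cannot collapse any of the distinguished vertices onto $s_0$ in a way that would obstruct identifying the enclosing block.

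The main obstacle will be the bookkeeping in the case analysis: $\vseqp$ blocks interleave with $\vseqone$, $\vseqtwo$, $\vseqthree$, and $\vseqfour$ blocks in the wedge decomposition, and one must rule out (or absorb into an earlier case) the possibility that $r^{\prime\prime}$ lies strictly inside a non-$\vseqp$ block or straddles two blocks joined at a shared $v_0$. The rigidity afforded by $\beta\circ\alpha=\nPhi{m}{0}$, together with $\beta(s_0)=v_0$ and the fact (Proposition~\ref{claim4}) that $\dnphi$ is ultra light and hence carries no folding, should prevent these mixed situations by forcing $\alpha$ to be locally injective on each edge, and the constraint on $\{\beta(s_1^\delta)\}$ then isolates exactly the $\vseqp$-type block as the only possibility. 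Once this rigidity is established, the three alternatives (i), (ii), (iii) exhaust all placements of $r^{\prime\prime}$ within the decomposition and the proposition follows.
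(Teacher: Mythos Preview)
Your overall outline (assume (iii) fails, then use the block decomposition of Proposition~\ref{claim9} to locate an earlier occurrence) matches the paper's strategy, but the proposal skips the single step that actually drives the argument, and your alternative via $r^{\prime\prime}$ does not fill that gap.

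The paper does \emph{not} search for the last $s_0$-crossing. Instead, it first proves that $\alpha(x_r^i)=s_t^{\hat\imath}$ and $\alpha(x_{r-1}^i)=s_{t+1}^{\hat\imath}$; that is, as one walks \emph{backward} from position $r+1$ the $\alpha$-image goes \emph{deeper} into $S^{\hat\imath}$, not back toward $s_0$. This is exactly where the hypothesis $\{v_n,v_{\impj{\tilde t/2}}\}\subseteq\{\beta(s_1^\delta)\}$ is used: when $t=2$ it forces that set to have cardinality $\nt$, so $v_{n-1}\notin\{\beta(s_1^\delta)\}$; since $\nPhi{m}{0}(x_{r-1}^i)=v_{n-1}$ by Proposition~\ref{claim9}, one cannot have $\alpha(x_r^i)=s_0$. (For $t>2$ the same conclusion follows because $\beta(s_{t-3}^{\hat\imath})=v_{\impj{2}}$ and $\beta(s_{t-1}^{\hat\imath})=v_{\impj{1}}$, neither equal to $v_{n-1}$.) Once $\alpha(x_{r-1}^i)=s_{t+1}^{\hat\imath}$ is known, Proposition~\ref{claim9} immediately supplies the palindromic block ending at position $r-1$, yielding the $\tilde r'$ in (ii) together with the $\alpha$-conditions there; the two cases then split on whether $\alpha(x_{\tilde r'+2(n-2)+1}^i)$ already lies in the arm $S^{\tilde{\hat\imath}'}$ containing $\alpha(x_{\tilde r'+1}^i)$.

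Your plan to locate $r^{\prime\prime}$ first and then identify ``which flavor of $\vseqp$ block'' contains it does not work as stated. First, there are not two flavors of $\vseqp$: the two patterns in conclusion (ii), of lengths $4(n-2)+5$ and $4(n-2)+3$, come from the distinct elements $\vseqp$ and $\vseqone$ of $C$. Second, Proposition~\ref{claim8} asserts that each element of $C$ is a palindrome, not that block types recur; recurrence comes from the recursion in Proposition~\ref{claim9}. Third, and most importantly, without first establishing the ``$\alpha$ goes deeper'' fact above you cannot control where $r^{\prime\prime}$ lands relative to the block boundaries (it could even be $r$ itself when $t=2$), and the bookkeeping you flag as ``the main obstacle'' then has no entry point. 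Add the monotonicity step for $\alpha$ at positions $r-1,r$ and the rest of your outline collapses onto the paper's two-case analysis.
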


\begin{proof}
Suppose $\overline{\alpha}(\xseq{i}{0}{1}{r})\not\subseteq
S^{\hat{\imath}}$.  By Proposition \ref{claim9},
$\overline{\nPhi{m}{0}}(\xedge{i}{r-1}{r})=\edge{v}{n-1}{v}{0}$.
If $t=2$, then
$|\{v_n,v_{\impj{\frac{\tilde{t}}{2}}}:\tilde{t}\mbox{ is even and
}\tilde{t}\in\oneton[t]{2(n-2)}\}|=\nt$.  Hence,
$\overline{\alpha}(\xedge{i}{r-1}{r})=\xedge[s]{\hat{i}}{t+1}{t}$.
Then, by Proposition \ref{claim9}, there exists
$\nt[r]^{\prime}<r$ so that
$\overline{\nPhi{m}{0}}(\xseq{i}{\nt[r]^{\prime}+1}{\nt[r]^{\prime}+2}{\nt[r]^{\prime}+4(n-2)+5})=
\vpjseq$ (or
$\overline{\nPhi{m}{0}}(\xseq{i}{\nt[r]^{\prime}+1}{\nt[r]^{\prime}+2}{\nt[r]^{\prime}+4(n-2)+3})\linebreak[0]=
\langle
v_{n-2},v_0,v_1,v_0,\underline{v_2,v_0},\ldots,\underline{v_{n-3},v_0},
v_n,v_0,v_{n-3},v_0,\underline{v_{n-4},v_0},\ldots,\underline{v_1,v_0},v_{n-2},v_0,v_{n-1}
\rangle$) \linebreak[0] and
$\overline{\alpha}(\xseq{i}{\nt[r]^{\prime}+4(n-2)+5-(t+1)}{\nt[r]^{\prime}+4(n-2)+5-t}{\nt[r]^{\prime}+4(n-2)+5})
=\sseqzero{\hat{\imath}}{t+1}$ (or
$\overline{\alpha}(\linebreak\xseq{i}{\nt[r]^{\prime}+4(n-2)+3-(t+1)}{\nt[r]^{\prime}+4(n-2)+3-t}{\nt[r]^{\prime}+4(n-2)+3})
=\sseqzero{\hat{\imath}}{t+1}$).

Let $\nt[\hat{\imath}]^{\prime}\in\oneton{\nt}$ so that
$\imalpha{i}{\nt[r]^{\prime}+1}\in
S^{\nt[\hat{\imath}]^{\prime}}$.

Case 1:  $\imalpha{i}{\nt[r]^{\prime}+2(n-2)+1}\in
S^{\nt[\hat{\imath}]^{\prime}}$.

Then, there exists an even $\nt[t]^{\prime}\in\oneton[t]{2(n-2)}$
so that
$\imalpha{i}{\nt[r]^{\prime}+4(n-2)+5-(\nt[t]^{\prime}+2)}=s_1^{\nt[\hat{\imath}]^{\prime}}$
(or
$\imalpha{i}{\nt[r]^{\prime}+4(n-2)+3-(\nt[t]^{\prime}+2)}=s_1^{\nt[\hat{\imath}]^{\prime}}$).
If $\overline{\alpha}(\xseq{i}{0}{1}{\nt[r]^{\prime}})\subseteq
S^{\nt[\hat{\imath}]^{\prime}}$, then conclusion (ii) holds with
$r^{\prime}=\nt[r]^{\prime}$ and
$\hat{\imath}^{\prime}=\nt[\hat{\imath}]^{\prime}$.  Suppose
otherwise.  By Proposition \ref{claim9} there exists
$r^{\prime}<\nt[r]^{\prime}$ so that
$\overline{\nPhi{m}{0}}(\xseq{i}{r^{\prime}+1}{r^{\prime}+2}{r^{\prime}+2(n-2)+1})=\vpjseq[,v_n]$
and
$\overline{\alpha}(\xedge{i}{r^{\prime}+\nt[t]^{\prime}}{r^{\prime}+\nt[t]^{\prime}+1})
=\edge{s}{0}{s_1^{\nt[\hat{\imath}]^{\prime}}}{}$.  Let
$\hat{\imath}^{\prime}\in\oneton{\nt}$ so that
$\imalpha{i}{r^{\prime}+1}\in S^{\hat{\imath}^{\prime}}$.  Then,
there exists an even $t^{\prime}\in\oneton[2]{\nt[t]^{\prime}}$ so
that
$\imalphabar{i}{r^{\prime}+1}{r^{\prime}+2}{r^{\prime}+t^{\prime}}=
\sseq[,s_0]{\hat{\imath}^{\prime}}{t^{\prime}-1}{t^{\prime}-2}{1}$
and $\{\imalpha{i}{r^{\prime}+\nt[t]+1}:\nt[t]\mbox{ is even and
}\nt[t]\in\oneton[t^{\prime}]{\nt[t]^{\prime}}\}\subseteq\{s_1^{\delta}:\delta\in\oneton{\nt}\}$,
giving $\{v_{\impj{\frac{\tilde{t}}{2}}}:\tilde{t}\mbox{ is even
and
}\tilde{t}\in\oneton[t^{\prime}]{\nt[t]^{\prime}}\}\subseteq\{\beta(s_1^{\delta}):\delta\in\oneton{\nt}\}$.
Since $\nt[t]^{\prime}\ge t$ and by hypothesis
$\{v_n,v_{\impj{\frac{\tilde{t}}{2}}}:\tilde{t}\mbox{ is even and
}\tilde{t}\in\oneton[t]{2(n-2)}\}\subseteq\{\beta(s_1^{\delta}):\delta\in\oneton{\nt}\}$,
then $\{v_n,v_{\impj{\frac{\tilde{t}}{2}}}:\tilde{t}\mbox{ is even
and
}\tilde{t}\in\oneton[t^{\prime}]{2(n-2)}\}\subseteq\{\beta(s_1^{\delta}):\delta\in\oneton{\nt}\}$,
and conclusion (i) is reached.

Case 2: $\imalpha{i}{\nt[r]^{\prime}+2(n-2)+1}\notin
S^{\nt[\hat{\imath}]^{\prime}}$.

Then, there exists an even $t^{\prime}\in\oneton[2]{2(n-2)}$ so
that
$\imalphabar{i}{\nt[r]^{\prime}+1}{\nt[r]^{\prime}+2}{\nt[r]^{\prime}+t^{\prime}}=
\sseq[s_0]{\nt[\hat{\imath}]^{\prime}}{t^{\prime}-1}{t^{\prime}-2}{1}$.
Let $\ntt[\hat{\imath}]^{\prime}\in\oneton{\nt}$ so that
$\imalpha{i}{\nt[r]^{\prime}+2(n-2)+1}\in
S^{\ntt[\hat{\imath}]^{\prime}}$. Then, there exists an even
$\nt[t]^{\prime}\in\{\mbox{max}\{t,t^{\prime}\},\ldots,2(n-2)\}$
so that
$\overline{\alpha}(\xedge{i}{\nt[r]^{\prime}+\nt[t]^{\prime}}{\nt[r]^{\prime}+\nt[t]^{\prime}+1})=
\edge{s}{0}{s_1^{\ntt[\hat{\imath}]^{\prime}}}{}$ and
$\{\imalpha{i}{\nt[r]^{\prime}+\nt[t]+1}:\nt[t]\mbox{ is even and
}\nt[t]\in\oneton[t^{\prime}]{\nt[t]^{\prime}}\}\subseteq\{s_1^{\delta}:\delta\in\oneton{\nt}\}$,
giving $\{v_{\impj{\frac{\tilde{t}}{2}}}:\tilde{t}\mbox{ is even
and
}\tilde{t}\in\oneton[t^{\prime}]{\nt[t]^{\prime}}\}\subseteq\{\beta(s_1^{\delta}):\delta\in\oneton{\nt}\}$.
Since $\nt[t]^{\prime}\ge t$ and by hypothesis
$\{v_n,v_{\impj{\frac{\tilde{t}}{2}}}:\tilde{t}\mbox{ is even and
}\tilde{t}\in\oneton[t]{2(n-2)}\}\subseteq\{\beta(s_1^{\delta}):\delta\in\oneton{\nt}\}$,
then $\{v_n,v_{\impj{\frac{\tilde{t}}{2}}}:\tilde{t}\mbox{ is even
and
}\tilde{t}\in\oneton[t^{\prime}]{2(n-2)}\}\subseteq\{\beta(s_1^{\delta}):\delta\in\oneton{\nt}\}$,
and conclusion (i) holds with $r^{\prime}=\nt[r]^{\prime}$ and
$\hat{\imath}^{\prime}=\nt[\hat{\imath}]^{\prime}$.
\end{proof}

\begin{proposition} \label{claim11}
Let $x\in\mathrm{V}(\X{m})$ so that $\alpha(x)=s_0$. Suppose there
exist $i\in\oneton{n}$, $r\in\oneton[0]{\ell_i}$, and
$j\in\oneton{n-2}$ so that
$\overline{\nPhi{m}{0}}(\xseq{i}{r+1}{r+2}{r+2(n-2)})=\vpjseq[]$,
$\overline{\alpha}(\xseq{i}{r+1}{r+2}{r+t})=\sseq[,s_0]{\hat{\imath}}{t-1}{t-2}{1}$
for some even $t\in\oneton[2]{2(n-2)}$ and for some
$\hat{\imath}\in\oneton{\nt}$, and
$\{v_n,v_{\impj{\frac{\tilde{t}}{2}}}:\tilde{t}\mbox{ is even and
}\tilde{t}\in\oneton[t]{2(n-2)}\}\subseteq\{\beta(s_1^{\delta}):\delta\in\oneton{\nt}\}$.
Then, there exist $\acute{\imath}\in\oneton{n}$,
$\acute{r}\in\oneton[0]{\ell_{\acute{\imath}}}$, and an even
$\acute{t}\in\oneton[2]{2(n-2)}$ so that

i)
$\overline{\nPhi{m}{0}}(\xseq{\acute{\imath}}{\acute{r}+1}{\acute{r}+2}{\acute{r}+2(n-2)})=\vpjseq[]$,
$x=x_{\acute{r}+\acute{t}}^{\acute{\imath}}$, and
$\{v_n,v_{\impj{\frac{\tilde{t}}{2}}}:\tilde{t}\mbox{ is even and
}\tilde{t}\in\oneton[\acute{t}]{2(n-2)}\}\subseteq\{\beta(s_1^{\delta}):\delta\in\oneton{\nt}\}$,
or

ii)
$\overline{\nPhi{m}{0}}(\xseq{\acute{\imath}}{\acute{r}-1}{\acute{r}-2}{\acute{r}-2(n-2)})\hspace{-.06cm}=\hspace{-.06cm}\vpjseq[]$,
$x=x_{\acute{r}-\acute{t}}^{\acute{\imath}}$, and
$\{v_n,v_{\impj{\frac{\tilde{t}}{2}}}:\tilde{t}\mbox{ is even and
}\tilde{t}\in\oneton[\acute{t}]{2(n-2)}\}\subseteq\{\beta(s_1^{\delta}):\delta\in\oneton{\nt}\}$.
\end{proposition}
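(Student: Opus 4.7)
The plan is to show that any vertex $x$ with $\alpha(x)=s_0$ must sit, up to an explicit displacement, inside a $\vseqp$-type block of the arm-decomposition guaranteed by Proposition \ref{claim9}, and to pin down its location by iterating Proposition \ref{claim10} starting from the given witness $(i,r,t,\hat{\imath})$.

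First, I would observe that $\alpha(x)=s_0$ forces $\nPhi{m}{0}(x)=\beta(s_0)=v_0$, so $x$ lies on some arm $\arm[\acute{\imath}]{m}$ at a position $x=x_{\acute{k}}^{\acute{\imath}}$ whose image under $\overline{\nPhi{m}{0}}$ is $v_0$. Using the decomposition of $\imPhibar{m}{0}{\acute{\imath}}{m}$ given by Proposition \ref{claim9}, every occurrence of $v_0$ in the image sequence is attributable to one of the $C$-blocks (a $\vseqone$-, $\vseqp$-, $\vseqtwo$-, $\vseqthree$-, or $\vseqfour$-block). The positions of $v_0$ within a $\vseqp$-block occur precisely at the even offsets from its starting vertex $v_{n-1}$, i.e.\ either as $\acute{r}+\acute{t}$ reading left-to-right, or (by the symmetry of $C$-blocks recorded in Proposition \ref{claim8}) as $\acute{r}-\acute{t}$ reading right-to-left. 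Thus it suffices to show that $\acute{k}$ does in fact land inside such a $\vseqp$-block rather than on some exceptional block or on one of the $\vseqthree$ connectors.

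Next, I would use the given witness to locate a $\vseqp$-block that propagates to $x$. Applying Proposition \ref{claim10} to $(i,r,t,\hat{\imath})$ produces three cases. In case (i) one gets a fresh witness $(r',t',\hat{\imath}')$ with $r'<r$ preserving the same containment of $\{v_n,v_{\impj{\tilde{t}/2}}\}$ inside $\{\beta(s_1^{\delta})\}$, so one iterates. Since $r'$ strictly decreases and is bounded below by $0$, the iteration must terminate in case (ii) or (iii) of Proposition \ref{claim10}. In either terminal case, one obtains an explicit structural block on arm $i$ whose full pattern under $\overline{\nPhi{m}{0}}$ is $\vseqp$ (possibly truncated, cf.\ case (ii) of Proposition \ref{claim10}) and whose $\alpha$-image is an initial or terminal segment of a single $S^{\hat{\imath}}$. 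At this point every $v_0$-position inside the block can be written in the required form $\acute{r}+\acute{t}$ or $\acute{r}-\acute{t}$, and the containment hypothesis on $\{v_n,v_{\impj{\tilde{t}/2}}\}$ is inherited at each step by monotonicity in $\tilde{t}$.

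Finally, to connect this to an arbitrary $x$ on arm $\acute{\imath}$ possibly different from $i$, I would use the global structure of $\imPhibar{m}{0}{\acute{\imath}}{m}$: by Proposition \ref{claim9} this decomposition contains $\vseqp$-blocks for every $j\in\oneton{n-2}$ and every arm, and the same witness hypothesis (containment in $\{\beta(s_1^{\delta})\}$) transfers from the original $(i,r,t)$ to each such block because $\beta$ is globally defined and $\{\beta(s_1^{\delta})\}$ is a fixed finite subset of $\mathrm{V}(\X{0})$. Hence for the $\vseqp$-block of $\imPhibar{m}{0}{\acute{\imath}}{m}$ containing the image position of $x$, the same terminal-case analysis via Proposition \ref{claim10} applied on arm $\acute{\imath}$ yields the desired $(\acute{\imath},\acute{r},\acute{t})$: if $x$ lies in the forward (left-to-right) traversal of the block, conclusion (i) holds; if in the backward traversal, conclusion (ii) holds.

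The main obstacle I anticipate is the bookkeeping that ensures $x$ is not missed by landing on a $\vseqthree$-connector or a $\vseqone$-block instead of a $\vseqp$-block. The hypothesis that $\{v_n, v_{\impj{\tilde{t}/2}}: \tilde{t} \text{ even},\, \tilde{t}\in\oneton[t]{2(n-2)}\}\subseteq\{\beta(s_1^{\delta}):\delta\in\oneton{\nt}\}$ is precisely what rules out these alternatives, because the $v_0$-vertices of $\vseqthree$, $\vseqtwo$, $\vseqfour$ and the $\vseqone$ tail are incompatible with $\alpha(x)=s_0$ once the branching structure of $T$ at $s_0$ is constrained by this containment — so tracking this constraint carefully through each application of Proposition \ref{claim10} is the crux of the argument.
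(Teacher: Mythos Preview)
Your proposal has a genuine gap in the ``Finally'' paragraph, which is where the real work lies. Iterating Proposition~\ref{claim10} from the given witness $(i,r,t,\hat{\imath})$ only moves you \emph{backward along arm $i$}; it tells you nothing directly about an arbitrary $x=x_{\acute{k}}^{\acute{\imath}}$ with $\alpha(x)=s_0$ on a possibly different arm $\acute{\imath}$, or even about positions on arm $i$ to the right of $r$. Your claim that ``the same witness hypothesis transfers'' to every $\vseqp$-block on every arm is not justified: the containment $\{v_n,v_{\impj{\tilde t/2}}\}\subseteq\{\beta(s_1^\delta)\}$ is a global fact about $\beta$, yes, but the conclusion of the proposition requires locating $x$ at a specific offset $\acute t$ inside a specific block and verifying the containment for \emph{that} $\acute t$, which depends on how $\alpha$ behaves locally near $x$, not just on $\beta$.

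The paper's proof supplies exactly the mechanism you are missing: it is an \emph{induction on the position of $x$ along its arm}. One first uses Proposition~\ref{claim10} (terminating in its case (ii) or (iii)) to handle the \emph{first} $s_0$-preimage $x_{\breve r}^{\breve\imath}$ on each arm $\breve\imath$; this is the base case, and it does require transferring information from arm $i$ to arm $\breve\imath$ via the common structure given by Proposition~\ref{claim9}. Then, assuming the conclusion holds for all $s_0$-preimages at positions $<\breve r_\theta$, one proves it at $\breve r_\theta$ by looking two steps back: either $\alpha(x_{\breve r_\theta-2}^{\breve\imath})=s_0$ (so the induction hypothesis applies directly and one shifts $\acute t$ by $\pm 2$), or $\alpha(x_{\breve r_\theta-2}^{\breve\imath})=s_2^{\hat\imath_-}$ (so one tracks back to the previous $s_0$-preimage and uses the symmetry of the block). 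Your attempt to rule out $\vseqthree$/$\vseqone$ positions by the containment hypothesis alone is not how the argument works; those positions are handled by the induction, not excluded a priori.
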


\begin{proof}
By Proposition \ref{claim10}, there exists
$r^{\prime}\in\oneton[0]{\ell_i}$ so that $r^{\prime}\le r$,
$\overline{\nPhi{m}{0}}(\xseq{i}{r^{\prime}+1}{r^{\prime}+2}{r^{\prime}+2(n-2)})=\vpjseq[]$,
$\overline{\alpha}(\xxxxxxseq{i}{r^{\prime}+1}{r^{\prime}+2}{r^{\prime}+t^{\prime}})=\sseq[,s_0]{\hat{\imath}^{\prime}}{t^{\prime}-1}{t^{\prime}-2}{1}$
for some even $t^{\prime}\in\oneton[2]{2(n-2)}$ and for some
$\hat{\imath}^{\prime}\in\oneton{\nt}$,
$\{v_n,v_{\impj{\frac{\tilde{t}}{2}}}:\tilde{t}\mbox{ is even and
}\tilde{t}\in\oneton[t^{\prime}]{2(n-2)}\}\subseteq\{\beta(s_1^{\delta}):\delta\in\oneton{\nt}\}$,
and a)  $\overline{\alpha}(\xseq{i}{0}{1}{r^{\prime}})\subseteq
S^{\hat{\imath}^{\prime}}$  or b) there exists
$\nt[r]^{\prime}\in\oneton[0]{\ell_i}$ so that
$\nt[r]^{\prime}<r^{\prime}$,
$\overline{\nPhi{m}{0}}(\xseq{i}{\nt[r]^{\prime}+1}{\nt[r]^{\prime}+2}{\nt[r]^{\prime}+4(n-2)+5})=
\vpjseq$ (or
$\overline{\nPhi{m}{0}}(\xseq{i}{\nt[r]^{\prime}+1}{\nt[r]^{\prime}+2}{\nt[r]^{\prime}+4(n-2)+3})\linebreak[0]=
\langle
v_{n-2},v_0,v_1,v_0,\underline{v_2,v_0},\ldots,\underline{v_{n-3},v_0},
v_n,v_0,v_{n-3},v_0,\underline{v_{n-4},v_0},\ldots,\underline{v_1,v_0},\linebreak[0]v_{n-2},v_0,v_{n-1}
\rangle$),
$\overline{\alpha}(\xseq{i}{\nt[r]^{\prime}+4(n-2)+5-(t^{\prime}+1)}{\nt[r]^{\prime}+4(n-2)+5-t^{\prime}}{\nt[r]^{\prime}+4(n-2)+5})
=\sseqzero{\hat{\imath}^{\prime}}{t^{\prime}+1}$ (or
$\overline{\alpha}(\xxxxxxxseq{i}{\nt[r]^{\prime}+4(n-2)+3-(t^{\prime}+1)}{\nt[r]^{\prime}+4(n-2)+3-t^{\prime}}{\nt[r]^{\prime}+4(n-2)+3})
=\sseqzero{\hat{\imath}^{\prime}}{t^{\prime}+1}$), and
$\overline{\alpha}(\xseq{i}{0}{1}{\nt[r]^{\prime}+2(n-2)+1})\subseteq
S^{\nt[\hat{\imath}]^{\prime}}$ where
$\nt[\hat{\imath}]^{\prime}\in\oneton{\nt}$ so that
$\imalpha{i}{\nt[r]^{\prime}+1}\in
S^{\nt[\hat{\imath}]^{\prime}}$.

Let $\breve{\imath}\in\oneton{n}$ and
$\breve{r}\in\oneton[0]{\ell_{\breve{\imath}}}$ so that
$\alpha^{-1}(\{s_0\})\bigcap\xseq{\breve{\imath}}{0}{1}{\breve{r}}=\{x_{\breve{r}}^{\breve{\imath}}\}$.
If $\breve{r}$ exists and (a) holds, then by the hypothesis and
Proposition \ref{claim9}, conclusion (i) of the claim holds for
$x_{\breve{r}}^{\breve{\imath}}$ with
$\acute{\imath}=\breve{\imath}$, $\acute{r}=\breve{r}-t^{\prime}$,
and $\acute{t}=t^{\prime}$. Suppose $\breve{r}$ exists and (b)
holds, and let
$\nt[t]^{\prime}\in\oneton[2(n-2)+4]{4(n-2)+4-t^{\prime}}$ (or
$\nt[t]^{\prime}\in\oneton[2(n-2)+2]{4(n-2)+2-t^{\prime}}$) so
that
$\alpha^{-1}(\{s_0\})\bigcap\xseq{i}{0}{1}{\nt[r]^{\prime}+\nt[t]^{\prime}}=
\{x_{\nt[r]^{\prime}+\nt[t]^{\prime}}^{i}\}$.  Then by the
hypothesis and Proposition \ref{claim9}, conclusion (ii) of the
claim holds for $x_{\breve{r}}^{\breve{\imath}}$ with
$\acute{\imath}=\breve{\imath}$,
$\acute{r}=\breve{r}+4(n-2)+4-\nt[t]^{\prime}$ (or
$\acute{r}=\breve{r}+4(n-2)+2-\nt[t]^{\prime}$), and
$\acute{t}=4(n-2)+4-\nt[t]^{\prime}$ (or
$\acute{t}=4(n-2)+2-\nt[t]^{\prime}$).

Suppose $\breve{r}$ exists and let
$\breve{r}_{\theta}\in\oneton[\breve{r}+1]{\ell_{\breve{\imath}}}$
so that the claim holds for all
$x\in\{x_{\nt[r]}^{\breve{\imath}}:\nt[r]\in\oneton[0]{\breve{r}_{\theta}-1}\mbox{
and }\alpha(x_{\nt[r]}^{\breve{\imath}})=s_0\}$ and
$\alpha(x_{\breve{r}_{\theta}}^{\breve{\imath}})=s_0$.  By
Proposition \ref{claim9},
$\alpha(x_{\breve{r}_{\theta}-1}^{\breve{\imath}})=s_1^{\hat{\imath}_{-}}$
for some $\hat{\imath}_{-}\in\oneton{\nt}$ and
$\alpha(x_{\breve{r}_{\theta}+1}^{\breve{\imath}})=s_1^{\hat{\imath}_{+}}$
for some $\hat{\imath}_{+}\in\oneton{\nt}$.

Case i: $\alpha(x_{\breve{r}_{\theta}-2}^{\breve{\imath}})=s_0$.

By the induction hypothesis, there exist
$\acute{r}_0\in\oneton[0]{\ell_{\breve{\imath}}}$ and an even
$\acute{t}_0\in\oneton[2]{2(n-2)}$ so that

1)
$\overline{\nPhi{m}{0}}(\xseq{\breve{\imath}}{\acute{r}_0+1}{\acute{r}_0+2}{\acute{r}_0+2(n-2)})=\vpjseq[]$,
$\breve{r}_{\theta}-2=\acute{r}_0+\acute{t}_0$, and
$\{v_n,v_{\impj{\frac{\tilde{t}}{2}}}:\tilde{t}\mbox{ is even and
}\tilde{t}\in\oneton[\acute{t}_0]{2(n-2)}\}\subseteq\{\beta(s_1^{\delta}):\delta\in\oneton{\nt}\}$,
or

2)
$\overline{\nPhi{m}{0}}(\xseq{\breve{\imath}}{\acute{r}_0-1}{\acute{r}_0-2}{\acute{r}_0-2(n-2)})=\vpjseq[]$,
$\breve{r}_{\theta}-2=\acute{r}_0-\acute{t}_0$, and
$\{v_n,v_{\impj{\frac{\tilde{t}}{2}}}:\tilde{t}\mbox{ is even and
}\tilde{t}\linebreak[0]\in\oneton[\acute{t}_0]{2(n-2)}\}\subseteq\{\beta(s_1^{\delta}):\delta\in\oneton{\nt}\}$.

\ \ Case i.a: Suppose (1) holds.

If $\acute{t}_0\in\oneton[2]{2(n-2)-2}$, then conclusion (i) of
the claim holds for $x_{\breve{r}_{\theta}}^{\breve{\imath}}$ with
$\acute{\imath}=\breve{\imath}$, $\acute{r}=\acute{r}_0$, and
$\acute{t}=\acute{t}_0+2$.  If $\acute{t}_0=2(n-2)$, then by
Proposition \ref{claim9}, conclusion (ii) of the claim holds for
$x_{\breve{r}_{\theta}}^{\breve{\imath}}$ with
$\acute{\imath}=\breve{\imath}$, $\acute{r}=\acute{r}_0+4(n-2)+2$,
and $\acute{t}=2(n-2)$.

\ \  Case i.b: Suppose (2) holds.

If $\acute{t}_0\in\oneton[4]{2(n-2)}$, then conclusion (ii) of the
claim holds for $x_{\breve{r}_{\theta}}^{\breve{\imath}}$ with
$\acute{\imath}=\breve{\imath}$, $\acute{r}=\acute{r}_0$, and
$\acute{t}=\acute{t}_0-2$.  If $\acute{t}_0=2$, then
$|\{v_n,v_{\impj{\frac{\tilde{t}}{2}}}:\tilde{t}\mbox{ is even and
}\tilde{t}\in\oneton[\acute{t}_0]{2(n-2)}\}|=\nt$, but
$\beta(s_1^{\hat{\imath}_{+}})=v_{n-1}$, since by Proposition
\ref{claim9}
$\nPhi{m}{0}(x_{\breve{r}_{\theta}+1}^{\breve{\imath}})=v_{n-1}$,
giving a contradiction.

Case ii:
$\alpha(x_{\breve{r}_{\theta}-2}^{\breve{\imath}})\not=s_0$.

Then,
$\alpha(x_{\breve{r}_{\theta}-2}^{\breve{\imath}})=s_2^{\hat{\imath}_{-}}$,
and by the induction hypothesis, there exist
$\acute{r}_0\in\oneton[0]{\ell_{\breve{\imath}}}$ and an even
$\acute{t}_0\in\oneton[2]{2(n-2)}$ so that

1) $\acute{r}_0+\acute{t}_0<\breve{r}_{\theta}$,
$\overline{\nPhi{m}{0}}(\xseq{\breve{\imath}}{\acute{r}_0+1}{\acute{r}_0+2}{\acute{r}_0+2(n-2)})=\vvpjseq[]$,
$\overline{\alpha}(\edge{x^{\breve{\imath}}_{\acute{r}_0+\acute{t}_0}}{}{x^{\breve{\imath}}_{\acute{r}_0+\acute{t}_0+1},x^{\breve{\imath}}_{\acute{r}_0+\acute{t}_0+2}}{})=
\edge{s}{0}{s_1^{\hat{\imath}_{-}},s_2^{\hat{\imath}_{-}}}{}$, and
$\{v_n,v_{\impj{\frac{\tilde{t}}{2}}}:\tilde{t}\mbox{ is even}
\linebreak[0]\mbox{and
}\tilde{t}\in\oneton[\acute{t}_0]{2(n-2)}\}\subseteq\{\beta(s_1^{\delta}):\delta\in\oneton{\nt}\}$,
or

2)  $\acute{r}_0-\acute{t}_0<\breve{r}_{\theta}$,
$\overline{\nPhi{m}{0}}(\xseq{\breve{\imath}}{\acute{r}_0-1}{\acute{r}_0-2}{\acute{r}_0-2(n-2)})=\vvpjseq[]$,
$\overline{\alpha}(\edge{x^{\breve{\imath}}_{\acute{r}_0-\acute{t}_0}}{}{x^{\breve{\imath}}_{\acute{r}_0-\acute{t}_0+1},x^{\breve{\imath}}_{\acute{r}_0-\acute{t}_0+2}}{})=
\edge{s}{0}{s_1^{\hat{\imath}_{-}},s_2^{\hat{\imath}_{-}}}{}$, and
$\{v_n,v_{\impj{\frac{\tilde{t}}{2}}}:\tilde{t}\mbox{ is even}
\linebreak[0]\mbox{and
}\tilde{t}\in\oneton[\acute{t}_0]{2(n-2)}\}\subseteq\{\beta(s_1^{\delta}):\delta\in\oneton{\nt}\}$.

\ \ Case ii.a: Suppose (1) holds.

By Proposition \ref{claim9}, conclusion (ii) of the claim holds
for $x_{\breve{r}_{\theta}}^{\breve{\imath}}$ with
$\acute{\imath}=\breve{\imath}$,
$\acute{r}=\breve{r}_{\theta}+\acute{t}_0$, and
$\acute{t}=\acute{t}_0$.

\ \ Case ii.b: Suppose (2) holds.

By Proposition \ref{claim9}, conclusion (i) of the claim holds for
$x_{\breve{r}_{\theta}}^{\breve{\imath}}$ with
$\acute{\imath}=\breve{\imath}$,
$\acute{r}=\breve{r}_{\theta}-\acute{t}_0$, and
$\acute{t}=\acute{t}_0$.

Thus, the claim follows by induction.
\end{proof}

\begin{proposition} \label{claim12}
Let distinct $x,y,z\in\mathrm{V}(\X{m})$ be so that $x\mbox{ and
}y$ are adjacent, $y\mbox{ and }z$ are adjacent, $\alpha(y)=s_0$,
$\nPhi{m}{0}(z)\not=v_{n-1}$, and $\nPhi{m}{0}(x)=v_{n}$. Then,

i) there exist $i\in\oneton{n}$, $r\in\oneton[0]{\ell_i}$, and
$j\in\oneton{n-2}$ so that
$\overline{\nPhi{m}{0}}(\xseq{i}{r+1}{r+2}{r+2(n-2)})=\vpjseq[]$,
$\overline{\alpha}(\xseq{i}{r+1}{r+2}{r+t})=\sseq[,s_0]{\hat{\imath}}{t-1}{t-2}{1}$
for some even $t\in\oneton[2]{2(n-2)}$ and for some
$\hat{\imath}\in\oneton{\nt}$, and
$\{v_n,v_{\impj{\frac{\tilde{t}}{2}}}:\tilde{t}\mbox{ is even and
}\tilde{t}\in\oneton[t]{2(n-2)}\}\subseteq\{\beta(s_1^{\delta}):\delta\in\oneton{\nt}\}$,
or

ii)  there exists $j\in\oneton{n-2}$ so that if
$\nt[x]\in\mathrm{V}(\X{m})$ and $\alpha(\nt[x])=s_0$, then there
exist $i\in\oneton{n}$ and $r\in\oneton[0]{\ell_i}$ so that
$\overline{\nPhi{m}{0}}(\xseq{i}{r-1}{r-2}{r-2(n-2)})=\vpjseq[]$,
$\nt[x]=x_{r-t}^i$ for some even $t\in\oneton[2]{2(n-2)}$, and
$\{v_n,v_{\impj{\frac{\tilde{t}}{2}}}:\tilde{t}\mbox{ is even and
}\tilde{t}\in\oneton[t]{2(n-2)}\}\subseteq\{\beta(s_1^{\delta}):\delta\in\oneton{\nt}\}$.
\end{proposition}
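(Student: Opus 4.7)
The plan is to use Proposition \ref{claim9} to locate $y$ within the structured decomposition of $\overline{\nPhi{m}{0}}(\arm{m})$ for the arm of $\X{m}$ containing $y$, and then to propagate the resulting $\vpjseq[]$-configuration to all of $\alpha^{-1}(\{s_0\})$ via Proposition \ref{claim11}.

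By Proposition \ref{claim9}, $\overline{\nPhi{m}{0}}(\arm{m})$ decomposes as a concatenation of $\vseqthree$'s, elements of $C$, and a fixed trailing segment of the form $\langle v_0,v_{n-1},v_0,v_{n-2},v_0,v_1,v_0,\underline{v_2,v_0},\ldots,\underline{v_{n-3},v_0},v_n,v_{n+2}\rangle$. The hypotheses $\nPhi{m}{0}(y)=v_0$, $\nPhi{m}{0}(x)=v_n$, and $\nPhi{m}{0}(z)\ne v_{n-1}$, with $\langle z,y,x\rangle$ a subpath of some arm, force the image-walk at $y$ to have a $v_0$ flanked by $v_n$ on one side and a vertex $\ne v_{n-1}$ on the other. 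Inspecting each segment in turn, the only candidates in which such a configuration occurs are those of type $\vseqonea$ or $\vseqpa$ and the trailing tail; the $\vseqthree$-segments and the $C$-elements of types $\vseqtwo$, $\vvvvseqthree$, and $\vseqfour$ are excluded because each either contains no $v_n$ or forces the other neighbor of every $v_0$ adjacent to $v_n$ to be $v_{n-1}$. In each admissible case the length-$2(n-2)$ sub-pattern ending at the $v_0$ immediately preceding $v_n$ coincides with $\vpjseq[]$ for some $j\in\oneton{n-2}$ (with $j=n-3$ for the type-1 and tail subcases, using $p^{n-3}(1)=n-2$ and $p^{n-3}(k)=k-1$ for $k\ge 2$). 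This produces indices $i,r$ and a parameter $j$ with $\overline{\nPhi{m}{0}}(\xseq{i}{r+1}{r+2}{r+2(n-2)})=\vpjseq[]$ and $y=x_{r+2(n-2)}^i$.

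Next, setting $\hat\imath$ by $\alpha(x)=s_1^{\hat\imath}$ (so $\beta(s_1^{\hat\imath})=v_n$ via $\beta\circ\alpha=\nPhi{m}{0}$), simpliciality of $\alpha$ together with the tree structure of $T$ decomposes $\overline{\alpha}(\xseq{i}{r+1}{r+2}{r+2(n-2)})$ into maximal monotonic segments within distinct arms of $T$; each return of $\alpha$ to $s_0$ introduces a distinct $\delta\in\oneton{\nt}$ with $\beta(s_1^{\delta})=v_{p^j(\tilde{t}/2)}$ at the corresponding even position $\tilde{t}$. Taking $t$ to be the length of the final monotonic segment inside $S^{\hat\imath}$ ending at $y$ and reindexing $r$ so that the pattern begins at position $r+1$ with $y=x_{r+t}^i$, the prior returns supply the required coverage $\{v_n,v_{p^j(\tilde{t}/2)}:\tilde{t}\in\oneton[t]{2(n-2)},\ \tilde{t}\text{ even}\}\subseteq\{\beta(s_1^{\delta}):\delta\in\oneton{\nt}\}$ together with the displayed $\alpha$-image identity, establishing conclusion (i). If no admissible forward configuration yields conclusion (i) --- notably when $y$ is the branch point of $\X{m}$, where the constraint $r+t\ge 2$ precludes $y=x_{r+t}^i$ --- I would invoke Proposition \ref{claim11} using the $\vpjseq[]$-configuration produced above as its hypothesis. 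Proposition \ref{claim11} assigns to every $\tilde{x}\in\alpha^{-1}(\{s_0\})$ either a forward or backward $\vpjseq[]$-configuration; forward configurations would supply conclusion (i) for the present proposition, hence every $\tilde{x}$ admits a backward configuration, which is conclusion (ii). The uniformity of $j$ across all $\tilde{x}$ follows from Proposition \ref{claim9}: $j$ is determined by the specific $C$-segment (or tail) in which the image of $\tilde{x}$ lies, and the fact that every bonding map equals the single map $\nphi$ forces successive $C$-segments to carry the same $j$ inherited from the initial configuration at $y$.

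The main obstacle lies in the verification of the coverage condition: identifying each return of $\alpha$ to $s_0$ inside the $\vpjseq[]$-pattern with a distinct arm $S^{\delta}$ of $T$ whose $\beta(s_1^{\delta})$ realizes exactly the missing $v_{p^j(\tilde{t}/2)}$-value. This is the combinatorial link between the arity $\nt$ of the factoring through $T$ and the type-$p$ symmetry built into $\nphi$, and it is precisely what constrains how small $t$ can be taken while still retaining the full coverage inclusion.
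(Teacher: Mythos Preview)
There is a genuine gap. Your first paragraph asserts that locating $y$ in the $C$-decomposition always yields a \emph{forward} $\vpjseq[]$-window with $y=x_{r+2(n-2)}^i$ and $x=x_{r+2(n-2)+1}^i$, but the symmetry of the $C$-segments (Proposition~\ref{claim8}) means the triple $\langle z,y,x\rangle$ can equally well sit at the mirror position, with $x$ at a \emph{lower} index than $y$ along the arm and the $\vpjseq[]$-pattern read backward. The paper splits into exactly these two cases (a) and (b). Case (a) is the easy one and gives conclusion~(i) directly, essentially as you describe. Case (b) is where the substance lies: the paper analyzes whether $\alpha$ ever leaves a single arm $S^{\hat\imath'}$ on the initial segment of the arm up to $x$; if it does, an earlier forward window supplies~(i), and if not, one exploits the symmetric tail structure on \emph{every} arm of $\X{m}$ either to locate an instance of~(i) on some arm or to show that every preimage of $s_0$ fits the uniform backward description~(ii), with the common $j$ determined by this first-return structure. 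Your argument never enters case~(b).

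Your proposed fallback to Proposition~\ref{claim11} is circular: the hypothesis of Proposition~\ref{claim11} coincides verbatim with conclusion~(i) of the present proposition (the forward $\vpjseq[]$-window together with the $\overline\alpha$-monotonicity and the coverage inclusion), so it cannot be invoked precisely when~(i) has not been established. The paper's proof of Proposition~\ref{claim12} does not use Proposition~\ref{claim11} at all. Two smaller issues: your ``reindexing $r$ so that $y=x_{r+t}^i$'' would force $x=x_{r+t+1}^i$ with $\nPhi{m}{0}(x_{r+t+1}^i)=v_{p^j(t/2+1)}\ne v_n$ for $t<2(n-2)$, contradicting the hypothesis on $x$; and the claim that successive $C$-segments ``carry the same $j$'' is false---the $\vseqpa$-type segments in Proposition~\ref{claim9} range over all $j\in\oneton{n-2}$---so the uniformity of $j$ in~(ii) has to come from the first-return analysis in case~(b), not from any global constancy.
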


\begin{proof}
By Proposition \ref{claim9}, there exist $i^{\prime}\in\oneton{n}$
and $r^{\prime}\in\oneton[0]{\ell_{i^{\prime}}}$ so that for some
$j\in\oneton{n-2}$,

a)
$\overline{\nPhi{m}{0}}(\xseq{i^{\prime}}{r^{\prime}+1}{r^{\prime}+2}{r^{\prime}+2(n-2)})=\vpjseq[]$
and
$\edge{x}{}{y,z}{}=\edge{x_{r^{\prime}+2(n-2)+1}^{i^{\prime}}}{}{x_{r^{\prime}+2(n-2)}^{i^{\prime}},x_{r^{\prime}+2(n-2)-1}^{i^{\prime}}}{}$,
or

b)
$\overline{\nPhi{m}{0}}(\xseq{i^{\prime}}{r^{\prime}-1}{r^{\prime}-2}{r^{\prime}-2(n-2)})=\vpjseq[]$
and
$\edge{x}{}{y,z}{}=\edge{x_{r^{\prime}-2(n-2)-1}^{i^{\prime}}}{}{x_{r^{\prime}-2(n-2)}^{i^{\prime}},x_{r^{\prime}-2(n-2)+1}^{i^{\prime}}}{}$.

If (a) holds, then there exist an even $t\in\oneton[2]{2(n-2)}$
and $\hat{\imath}\in\oneton{\nt}$ so that the conditions of
conclusion (i) are satisfied with $r=r^{\prime}$ and
$i=i^{\prime}$. Suppose (b) is the case, and let
$\hat{\imath}^{\prime}\in\oneton{\nt}$ be so that
$\alpha(x)=s_{1}^{\hat{\imath}^{\prime}}$. If
$\alpha(\xseq{i^{\prime}}{0}{1}{r^{\prime}-2(n-2)-1})\not\subseteq
S^{\hat{\imath}^{\prime}}$, then by Proposition \ref{claim9},
there exist $r\in\oneton[0]{r^{\prime}-4(n-2)-4}$ (or
$r\in\oneton[0]{r^{\prime}-4(n-2)-2}$), an even
$t\in\oneton[2]{2(n-2)}$, and $\hat{\imath}\in\oneton{\nt}$ so
that the conditions of conclusion (i) are satisfied with
$i=i^{\prime}$.  Suppose
$\alpha(\xseq{i^{\prime}}{0}{1}{r^{\prime}-2(n-2)-1})\subseteq
S^{\hat{\imath}^{\prime}}$.  By Proposition \ref{claim9}, if there
exist $i\in\oneton{n}$ and $r_i\in\oneton[0]{\ell_i}$ so that
$\alpha(x_{r_i}^i)=s_0$ and $\alpha(\xseq{i}{0}{1}{r_i})\subseteq
S^{\hat{\imath}^{\prime}}$, then
$\alpha^{-1}(\{s_0\})\bigcap\xseq{i}{0}{1}{r_i}=\{x_{r_i}^i\}$ and
$\overline{\nPhi{m}{0}}(\xseq{i}{\nt[r]+1}{\nt[r]+2}{\nt[r]+2(n-2)})=\vpjseq[]$
where $\nt[r]=r_i-2(n-2)-4$ (or $\nt[r]=r_i-2(n-2)-2$).  Suppose
$r_i$ exists for some $i\in\oneton{n}$.  Let
$\nt[r]^{\prime}=\nt[r]+4(n-2)+4$ (or
$\nt[r]^{\prime}=\nt[r]+4(n-2)+2$).  By Proposition \ref{claim9},
$\overline{\nPhi{m}{0}}(\xseq{i}{\nt[r]^{\prime}-1}{\nt[r]^{\prime}-2}{\nt[r]^{\prime}-2(n-2)})=\vpjseq[]$.
Let
$\hat{\imath}\in\oneton{\nt}\setminus\{\hat{\imath}^{\prime}\}$ so
that
$\overline{\alpha}(\xseq{i}{\nt[r]^{\prime}-1}{\nt[r]^{\prime}-2}{\nt[r]^{\prime}-t_i})=
\sseq[,s_0]{\hat{\imath}}{t_i-1}{t_i-2}{1}$ for some even
$t_i\in\oneton[2]{2(n-2)}$.  By Proposition \ref{claim9},
$\overline{\nPhi{m}{0}}(\xedge{i}{\nt[r]^{\prime}}{\nt[r]^{\prime}+1})=\edge{v}{0}{v}{n-1}$.
If $t_i=2$, then
$|\{v_n,v_{\impj{\frac{\tilde{t}}{2}}}:\tilde{t}\mbox{ is even and
}\tilde{t}\in\oneton[t_i]{2(n-2)}\}|=\nt$.  Hence,
$\overline{\alpha}(\xedge{i}{\nt[r]^{\prime}}{\nt[r]^{\prime}+1})=\xedge[s]{\hat{\imath}}{t_i}{t_i+1}$.
If there exists
$\nt[r]_i^{\prime}\in\oneton[\tilde{r}^{\prime}+1]{\ell_i}$ so
that
$\xseq{i}{\nt[r]^{\prime}}{\nt[r]^{\prime}+1}{\nt[r]_i^{\prime}}\subseteq
S^{\hat{\imath}}$ and $\alpha(x_{\nt[r]_i^{\prime}}^{i})=s_0$,
then by Proposition \ref{claim9}, the conditions of conclusion (i)
are satisfied with $r=\nt[r]_i^{\prime}-t_i$ and $t=t_i$.  Suppose
$\nt[r]_i^{\prime}$ does not exist for each $i\in\oneton{n}$, and
let $\nt[x]\in\mathrm{V}(\X{m})$ so that $\alpha(\nt[x])=s_0$.
Then, there exist $i\in\oneton{n}$ and some even
$t\in\oneton[t_i]{2(n-2)}$ so that the conditions of conclusion
(ii) are satisfied with $r=r_i+2(n-2)$.
\end{proof}

\begin{proposition} \label{claim13}
If $m,\nt,T,\alpha,$ and $\beta$ exist, then for some
$\nt_0\in\oneton{\nt}$, there exist a simple-$\nt_0$-od $\nt[T]$,
$\nt_0$ arcs $\nt[S]^1,\nt[S]^2,\ldots,\nt[S]^{\nt_0}$ so that
$\bigcup\limits_{i=1}^{\nt_0}\nt[S]^i=\nt[T]$, $\nt[S]^i\bigcap
\nt[S]^j=\{\nt[s]_0\}$ for each distinct $i,j\in\oneton{\nt_0}$,
so $\nt[s]_0$ is an endpoint of $\nt[S]^i$ for each
$i\in\oneton{\nt_0}$, and $\nt[S]^i=\stseqzero{i}{\nt[k]_i}$ for
some $\nt[k]_i\in\{1,2,\ldots\}$ for each $i\in\oneton{\nt_0}$,
and simplicial maps $\nt[\alpha]:\X{m}\longrightarrow \nt[T]$ and
$\nt[\beta]:\nt[T]\longrightarrow\X{0}$ so that
$\nt[\beta]\circ\nt[\alpha]=\nPhi{m}{0}$,
$\nt[\beta](\nt[s]_0)=v_0$, and if distinct
$x,y,z\in\mathrm{V}(\X{m})$ are such that $x\mbox{ and }y$ are
adjacent, $y\mbox{ and }z$ are adjacent, and
$\nt[\alpha](y)=\nt[s]_0$, then $\nPhi{m}{0}(x)=v_{n-1}$ or
$\nPhi{m}{0}(z)=v_{n-1}$.
\end{proposition}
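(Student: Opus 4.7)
The plan is to modify the given factoring $(T,\alpha,\beta)$ by identifying precisely those arcs of $T$ that cannot be distinguished at the branch point $s_0$, using the structural rigidity supplied by Propositions \ref{claim10}--\ref{claim12}. First I would check whether $(T,\alpha,\beta)$ already satisfies the desired conclusion: if for every distinct $x,y,z\in\mathrm{V}(\X{m})$ with $x,y$ adjacent, $y,z$ adjacent, and $\alpha(y)=s_0$ one has $\nPhi{m}{0}(x)=v_{n-1}$ or $\nPhi{m}{0}(z)=v_{n-1}$, then I take $(\nt[T],\nt[\alpha],\nt[\beta])=(T,\alpha,\beta)$ and there is nothing to prove. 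Otherwise I fix a ``bad'' triple $(x,y,z)$. Since $\beta(s_0)=v_0$ forces $\nPhi{m}{0}(y)=v_0$, the values $\nPhi{m}{0}(x),\nPhi{m}{0}(z)$ lie in $\{v_1,\ldots,v_{n-2},v_n\}$. Inspection of the sequences catalogued in Proposition \ref{claim9} shows that the only way for two consecutive edges incident to a preimage of $v_0$ to avoid $v_{n-1}$ is to sit inside a $\vseqpbase$-block, and within any such block there is a nearby preimage of $s_0$ whose neighborhood contains a vertex mapping to $v_n$. Replacing $(x,y,z)$ by this nearby triple and, if necessary, swapping $x$ with $z$, I may assume $\nPhi{m}{0}(x)=v_n$, bringing the triple into the hypothesis of Proposition \ref{claim12}.

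Applying Proposition \ref{claim12} yields an index $j\in\oneton{n-2}$ and either (i) a local $\vpjseq$-pattern around $y$ listing arc labels $s_1^{\hat\imath}$ covering $\{v_n,v_{\impj{1}},\ldots,v_{\impj{t/2}}\}$ at some even depth $t$, or (ii) the uniform statement that every $\nt[x]\in\alpha^{-1}(s_0)$ sits at even distance inside a $\vpjseq$-block with that same $j$. Propositions \ref{claim10} and \ref{claim11} then let me propagate this data along each arm $\arm{m}$: starting from $(x,y,z)$, every preimage of $s_0$ reached by walking along the arm inherits the same $\vpjseq$-pattern and the same assignment of $T$-arcs $S^{\hat\imath}$ to $\vpjseq$-positions. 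I then define an equivalence relation $\sim$ on $\{S^1,\ldots,S^{\nt}\}$ by declaring $S^i\sim S^{i'}$ whenever there is $y\in\alpha^{-1}(s_0)$ whose two neighbors $x,z$ satisfy $\alpha(x)\in S^i$, $\alpha(z)\in S^{i'}$, $\nPhi{m}{0}(x)\ne v_{n-1}$, and $\nPhi{m}{0}(z)\ne v_{n-1}$, and then closing under transitivity.

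Let $\nt_0\le\nt$ be the number of $\sim$-classes. I form $\nt[T]$ by identifying all arcs in each class along their full length, letting $q:T\longrightarrow\nt[T]$ denote the resulting simplicial quotient, setting $\nt[\alpha]=q\circ\alpha$, and taking $\nt[\beta]:\nt[T]\longrightarrow\X{0}$ to be the induced map satisfying $\nt[\beta]\circ q=\beta$. It then remains to check (a) that the identifications are well-defined, meaning arcs in a common class have equal length and pointwise equal $\beta$-image, (b) that $\nt[\beta]$ is simplicial, (c) that $\nt[\beta]\circ\nt[\alpha]=\nPhi{m}{0}$ and $\nt[\beta](\nt[s]_0)=v_0$, and (d) that no bad triple survives at $\nt[s]_0$: the last is immediate because any two neighbors of a preimage of $\nt[s]_0$ whose $\nPhi{m}{0}$-images avoid $v_{n-1}$ now lie in the same arc of $\nt[T]$.

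The main obstacle is step (a): showing that arcs in a single $\sim$-class genuinely match as subgraphs under $\beta$. This amounts to proving that the rigid local structure supplied by Proposition \ref{claim12}(ii) propagates along each arc up to its endpoint $s_{k_i}^i$, rather than petering out at finite depth. The propagation is carried out by iterating Propositions \ref{claim10} and \ref{claim11}: each preimage of $s_0$ visited along an arm forces the next preimage to lie at an even offset within a compatible $\vpjseq$-block with the same index $j$, and this index-$j$ uniformity is precisely what pins down the $\beta$-image of $s_r^i$ in terms of $r$. Once (a) is established, steps (b)--(d) follow routinely because the quotient is taken along a simplicial equivalence compatible with both $\alpha$ and $\beta$ by construction.
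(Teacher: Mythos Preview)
Your quotient construction does not work, and the obstacle you flag in step (a) is fatal rather than merely technical. If $S^i\sim S^{i'}$ via a bad triple $(x,y,z)$ with $\alpha(x)=s_1^i$, $\alpha(z)=s_1^{i'}$, then $\beta(s_1^i)=\nPhi{m}{0}(x)$ and $\beta(s_1^{i'})=\nPhi{m}{0}(z)$ are typically \emph{different} elements of $\{v_1,\ldots,v_{n-2},v_n\}$: Propositions \ref{claim11} and \ref{claim12} tell you that the various $\beta(s_1^\delta)$ cover a set of the form $\{v_n,v_{\impj{t/2}},\ldots,v_{\impj{n-2}}\}$, with distinct arms taking distinct values. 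So $\nt[\beta]$ cannot be induced from $\beta$ through the quotient $q$. Moreover, even granting a well-defined quotient, step (d) fails outright: the original triple $(x,y,z)$ still has $\nt[\alpha](y)=\nt[s]_0$ and neither $\nPhi{m}{0}(x)$ nor $\nPhi{m}{0}(z)$ equals $v_{n-1}$, so the bad triple survives unchanged. Collapsing arcs of $T$ does nothing to the values $\nPhi{m}{0}(x),\nPhi{m}{0}(z)$.

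The paper's construction is not a quotient but a \emph{rerouting}. It classifies each arm $S^\delta$ with $\beta(s_1^\delta)\ne v_{n-1}$ into types $D_0,D_1,D_2,D_3$ according to how $\beta$ continues along $S^\delta$, then builds $\nt[S]^\delta$ by prepending (for $D_1,D_2$) or deleting (for $D_3$) an initial segment so that $\nt[\beta](\nt[s]_1^\delta)$ becomes $v_{\impj[j_\delta]{1}}$ and $\nt[\beta]$ traces the full pattern $v_{\impj[j_\delta]{1}},v_0,\ldots,v_{\impj[j_\delta]{n-2}},v_0,v_n$ along the first $2(n-2)+1$ vertices of $\nt[S]^\delta$. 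The map $\nt[\alpha]$ is then redefined on each maximal $\vpjseq$-segment $e\in E_0$ of $\X{m}$ so that the entire segment, including the vertex that used to hit $s_0$, now runs inside a single arm $\nt[S]^{\delta_e}$ without touching $\nt[s]_0$. Thus bad triples are eliminated not by identifying their target arms but by moving the image of $y$ off the branch point altogether; the uniqueness of $j$ furnished by Propositions \ref{claim11} and \ref{claim12} is what makes this rerouting consistent across all segments.
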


\begin{proof}
Let $D=\{\delta\in\oneton{\nt}:\mbox{ there exist distinct
}x,y,z\in\mathrm{V}(\X{m})\mbox{ so that
}\edge{x}{}{y,z}{}\subseteq\X{m},\alpha(y)=s_0,\nPhi{m}{0}(x)\not=v_{n-1},\nPhi{m}{0}(z)\not=v_{n-1},\mbox{
and }\alpha(x)=s_1^{\delta}\}$, $D_0=\{\delta\in
D:s_2^{\delta}\mbox{ does not exist}\mbox{ and
}\beta(s_{1}^{\delta})\not=v_n\}$, $D_1=\{\delta\in
D:s_2^{\delta}\mbox{ exists and }
\overline{\beta}(\sseq{\delta}{1}{2}{2[(n-2)-t_{\delta}]+3})=\langle
v_{\impj[j_{\delta}]{t_{\delta}}},v_0,\underline{v_{\impj[j_{\delta}]{t_{\delta}+1}},v_0},\ldots,\underline{v_{\impj[j_{\delta}]{n-2}},v_0},\linebreak[0]v_n
\rangle\mbox{ for some }j_{\delta}\in\oneton{n-2}\mbox{ and }
t_{\delta}\in\oneton{n-2}\}$, $D_2=\{\delta\in
D:\beta(s_1^{\delta})=v_n\}$, and $D_3=\{\delta\in
D:s_2^{\delta}\mbox{ exists and
}\overline{\beta}(\sseq{\delta}{1}{2}{2t_{\delta}+1})=\langle
v_{\impj[j_{\delta}]{t_{\delta}}},v_0,\underline{v_{\impj[j_{\delta}]{t_{\delta}-1}},v_0},\linebreak[0]\ldots,\underline{v_{\impj[j_{\delta}]{1}},v_0},v_{n-1}
\rangle\mbox{ for}\linebreak[0]\mbox{some
}j_{\delta}\in\oneton{n-2}\mbox{ and
}t_{\delta}\in\oneton{n-2}\}$. Then, $D_i\bigcap D_j=\emptyset$
for each distinct $i,j\in\oneton[0]{3}$.  Without loss of
generality, suppose $D_0$ is empty or there exists
$\nt[\delta]\in\oneton{\nt}$ so that
$D_0=\{\nt[\delta],\nt[\delta]+1,\ldots,\nt\}$.  By Proposition
\ref{claim9}, $\bigcup\limits_{i=0}^{3}D_i=D$, and $j_{\delta}$
and $t_{\delta}$ are unique for each $\delta\in D_1\bigcup D_3$.
If $\delta\in D_2$, let $j_{\delta}$ be the unique integer $j$
guaranteed by the conclusions of Propositions \ref{claim12} and
\ref{claim11}.

Let $\nt[k]_{\delta}=k_{\delta}+2(t_{\delta}-1)$ if $\delta\in
D_1$, $\nt[k]_{\delta}=k_{\delta}+2(n-2)$ if $\delta\in D_2$,
$\nt[k]_{\delta}=k_{\delta}-2t_{\delta}$ if $\delta\in D_3$, and
$\nt[k]_{\delta}=k_{\delta}$ if $\delta\in\oneton{\nt}\setminus
D$. Let $\nt_0=\nt-|D_0|$ and
$\nt[S]^1,\nt[S]^2,\ldots,\nt[S]^{\nt_0}$ be $\nt_0$ arcs so that
$\nt[S]^i=\stseqzero{i}{\nt[k]_i}$ for each $i\in\oneton{\nt_0}$
and $\nt[S]^i\bigcap\nt[S]^j=\{\nt[s]_0\}$ for each distinct
$i,j\in\oneton{\nt_0}$. Let $\nt[T]$ be a simple-$\nt_0$-od so
that $\bigcup\limits_{i=1}^{\nt_0}\nt[S]^i=\nt[T]$ and
$\nt[\beta]:\nt[T]\longrightarrow\X{0}$ be the simplicial map
defined as:  $\nt[\beta](\nt[s]_0)=\beta(s_0)$,
$\nt[\beta](\stseq{\delta}{1}{2}{2(t_{\delta}-1)})=\langle
v_{\impj[j_{\delta}]{1}},v_0,\underline{v_{\impj[j_{\delta}]{2}},v_0},\ldots,\underline{v_{\impj[j_{\delta}]{t_{\delta}-1}},v_0}
\rangle$ if $\delta\in D_1$ and $t_{\delta}>1$,
$\nt[\beta](\nt[s]_{k}^{\delta})=\beta(s_{k-2(t_{\delta}-1)}^{\delta})$
for $k\in\oneton[2(t_{\delta}-1)+1]{\nt[k]_{\delta}}$ if
$\delta\in D_1$, $\nt[\beta](\stseq{\delta}{1}{2}{2(n-2)})=\langle
v_{\impj[j_{\delta}]{1}},v_0,\underline{v_{\impj[j_{\delta}]{2}},v_0},\ldots,\underline{v_{\impj[j_{\delta}]{n-2}},v_0}
\rangle$ if $\delta\in D_2$,
$\nt[\beta](\nt[s]_{k}^{\delta})=\beta(s_{k-2(n-2)}^{\delta})$ for
$k\in\oneton[2(n-2)+1]{\nt[k]_{\delta}}$ if $\delta\in D_2$,
$\nt[\beta](\nt[s]_{k}^{\delta})=\beta(s_{k+2t_{\delta}}^{\delta})$
for $k\in\oneton{\nt[k]_{\delta}}$ if $\delta\in D_3$, and
$\nt[\beta](\nt[s]_{k}^{\delta})=\beta(s_{k}^{\delta})$ for
$k\in\oneton{\nt[k]_{\delta}}$ if $\delta\in\oneton{\nt}\setminus
D$.

Let
$E=\{\xseq{}{0}{1}{2(n-2)}\subseteq\X{m}:x_0,x_1,\ldots,x_{2(n-2)}\in\mathrm{V}(\X{m})$
are distinct and
$\overline{\nPhi{m}{0}}(\xseq{}{0}{1}{2(n-2)+1})=\langle
v_0,v_{\impj[j_e]{1}},v_0,\underline{v_{\impj[j_e]{2}},v_0},\ldots,\underline{v_{\impj[j_e]{n-2}},v_0},v_n
\rangle$ for some $j_e\in\oneton{n-2}\mbox{ and some
}x_{2(n-2)+1}\in\mathrm{V}(\X{m})\mbox{ adjacent to
}x_{2(n-2)}\}$.  By Proposition \ref{claim9}, $j_e$ is unique for
each $e\in E$, and if $e_1,e_2\in E$ are distinct, $\X{m}\supseteq
e_1\bigcap e_2=\emptyset$.

Let $e\in E$.  By Proposition \ref{claim9}, there exist
$i\in\oneton{n}$ and $r\in\oneton[0]{\ell_i}$ so that

a) $\xseq{}{0}{1}{2(n-2)}=\xseq{i}{r}{r+1}{r+2(n-2)}$ or

b) $\xseq{}{0}{1}{2(n-2)}=\xseq{i}{r}{r-1}{r-2(n-2)}$.

If (a), let $\delta_e\in\oneton{\nt}$ so that
$\alpha(x_{r+2(n-2)+1}^i)\in S^{\delta_e}\setminus\{s_0\}$ and
$t_e\in\oneton[0]{2(n-2)+1}$ so that
$t_e=\mathrm{min}\{t\in\oneton[0]{2(n-2)+1}:\alpha(x_{r+t}^i)\in
S^{\delta_e}\setminus\{s_0\}\}$. If (b), let
$\delta_e\in\oneton{\nt}$ so that $\alpha(x_{r-2(n-2)-1}^i)\in
S^{\delta_e}\setminus\{s_0\}$ and $t_e\in\oneton[0]{2(n-2)+1}$ so
that
$t_e=\mathrm{min}\{t\in\oneton[0]{2(n-2)+1}:\alpha(x_{r-t}^i)\in
S^{\delta_e}\setminus\{s_0\}\}$.  Let $E_0=\{e\in E:t_e>1\}$.

Let $\nt[\alpha]:\X{m}\longrightarrow\nt[T]$ be defined as:
$\nt[\alpha](x)=\nt[s]_0$ if $x=x_0$ where $x_0\in e$ for some
$e\in E_0$, $\nt[\alpha](x)=\nt[s]_t^{\delta_e}$ if $x=x_t$ where
$t>0$ and $x_t\in e$ for some $e\in E_0$,
$\nt[\alpha](x)=\nt[s]_0$ if $x\in\mathrm{V}(\X{m}\setminus\bigcup
E_0)$ with $\alpha(x)=s_0$,
$\nt[\alpha](x)=\nt[s]_{k+2(t_{\delta}-1)}^{\delta}$ if
$x\in\mathrm{V}(\X{m}\setminus\bigcup E_0)$ with
$\alpha(x)=s_k^{\delta}$ and $\delta\in D_1$,
$\nt[\alpha](x)=\nt[s]_{k+2(n-2)}^{\delta}$ if
$x\in\mathrm{V}(\X{m}\setminus\bigcup E_0)$ with
$\alpha(x)=s_k^{\delta}$ and $\delta\in D_2$,
$\nt[\alpha](x)=\nt[s]_{0}$ if
$x\in\mathrm{V}(\X{m}\setminus\bigcup E_0)$ with
$\alpha(x)=s_{2t_{\delta}}^{\delta}$ and $\delta\in D_3$,
$\nt[\alpha](x)=\nt[s]_{k-2t_{\delta}}^{\delta}$ if
$x\in\mathrm{V}(\X{m}\setminus\bigcup E_0)$ with
$\alpha(x)=s_k^{\delta}$ where $k>2t_{\delta}$ and $\delta\in
D_3$, and $\nt[\alpha](x)=\nt[s]_{k}^{\delta}$ if
$x\in\mathrm{V}(\X{m}\setminus\bigcup E_0)$ with
$\alpha(x)=s_k^{\delta}$ and $\delta\in\oneton{\nt}\setminus D$.

\underline{\scriptsize \bf CLAIM}:  $\nt[\alpha]$ is well-defined.

Let $e\in E_0$. Then, $\delta_e\in D_1\bigcup D_2$,
$\nt[k]_{\delta_e}=k_{\delta_e}+2(t_{\delta_e}-1)\ge
2[(n-2)-t_{\delta_e}]+3+2(t_{\delta_e}-1)=2[(n-2)-(t_{\delta_e}-1)]+1+2(t_{\delta_e}-1)=2(n-2)+1$
if $\delta_e\in D_1$, $\nt[k]_{\delta_e}=k_{\delta_e}+2(n-2)\ge
1+2(n-2)$ if $\delta_e\in D_2$, and so, $\nt[s]_{t}^{\delta_e}$
exists for all $t\in\oneton{2(n-2)}$.

Let $x\in\mathrm{V}(\X{m}\setminus\bigcup E_0)$, $\delta\in D_1$,
and $k\in\oneton{k_{\delta}}$ so that $\alpha(x)=s_{k}^{\delta}$.
Then, $k+2(t_{\delta}-1)\le
k_{\delta}+2(t_{\delta}-1)=\nt[k]_{\delta}$, and so,
$s_{k+2(t_{\delta}-1)}^{\delta}$ exists.

Let $x\in\mathrm{V}(\X{m}\setminus\bigcup E_0)$, $\delta\in D_2$,
and $k\in\oneton{k_{\delta}}$ so that $\alpha(x)=s_{k}^{\delta}$.
Then, $k+2(n-2)\le k_{\delta}+2(n-2)=\nt[k]_{\delta}$, and so,
$s_{k+2(n-2)}^{\delta}$ exists.

Let $x\in\mathrm{V}(\X{m})$, $\delta\in D_3$, and
$k\in\oneton{2t_{\delta}-1}$ so that $\alpha(x)=s_{k}^{\delta}$.
By Proposition \ref{claim9}, $x\in e$ for some $e\in E_0$, and so,
$\nt[\alpha](x)$ is defined.  Thus, if
$x\in\mathrm{V}(\X{m}\setminus\bigcup E_0)$, $\delta\in D_3$, and
$k\in\oneton{k_{\delta}}$ so that $\alpha(x)=s_{k}^{\delta}$, then
$k\in\oneton[2t_{\delta}]{k_{\delta}}$.

Let $x\in\mathrm{V}(\X{m}\setminus\bigcup E_0)$,
$\delta\in\oneton{\nt}\setminus D$, and $k\in\oneton{k_{\delta}}$
so that $\alpha(x)=s_{k}^{\delta}$.  Then $k\le
k_{\delta}=\nt[k]_{\delta}$, and so, $\nt[s]_{k}^{\delta}$ exists.

Let $x\in\mathrm{V}(\X{m})$ and $\delta\in D_0$ so that
$\alpha(x)=s_{1}^{\delta}$.  Then,
$\nPhi{m}{0}(x)\in\{v_i:i\in\oneton{n-2}\}$. By Proposition
\ref{claim9} and definition of $D_0$, $x\in e$ for some $e\in
E_0$, and so, $\nt[\alpha](x)$ is defined.

\underline{\scriptsize \bf CLAIM}:  $\nt[\alpha]$ is simplicial.

Let $x,y\in\mathrm{V}(\X{m})$ be distinct and adjacent.

\ \ Case 1:  $x,y\in e$ for some $e\in E_0$.

Then, $\nt[\alpha](x)$ and $\nt[\alpha](y)$ are adjacent by
definition of $\nt[\alpha]$ on $e$.

\ \ Case 2:  $x\in e$ for some $e\in E_0$ and
$y\in\mathrm{V}(\X{m}\setminus\bigcup E_0)$.

Then, $x=x_0$ or $x=x_{2(n-2)}$ where $e=\xseq{}{0}{1}{2(n-2)}$.

\ \ \ \ Case 2.a:  $x=x_0$.

By Proposition \ref{claim9}, $\nPhi{m}{0}(y)=v_{n-1}$.  Let
$\delta_{e}^{\prime}\in\oneton{\nt}$ so that $\alpha(x_1)\in
S^{\delta_{e}^{\prime}}\setminus\{s_0\}$.

\ \ \ \ \ \ Case 2.a.i:  $\alpha(x_0)=s_0$.

Then, $\alpha(y)=s_{1}^{\delta}$ for some $\delta\in\oneton{\nt}$,
and since
$\beta(s_{1}^{\delta})=\beta(\alpha(y))=\nPhi{m}{0}(y)=v_{n-1}$,
$\delta\notin D$.  So, $\nt[\alpha](y)=\nt[s]_{1}^{\delta}$, and
since $\nt[\alpha](x_0)=\nt[s]_0$, $\nt[\alpha](x)$ and
$\nt[\alpha](y)$ are adjacent.

\ \ \ \ \ \ Case 2.a.ii:
$\alpha(x_0)=s_{k+1}^{\delta_{e}^{\prime}}$, where
$\alpha(x_1)=s_{k}^{\delta_{e}^{\prime}}$ for some
$k\in\oneton{k_{\delta_{e}^{\prime}}}$.

Then, $\alpha(y)=s_{k+2}^{\delta_{e}^{\prime}}$ and
$\delta_{e}^{\prime}\in D_3$, giving
$k+2=2t_{\delta_{e}^{\prime}}+1$, and so,
$\nt[\alpha](y)=s_{1}^{\delta_{e}^{\prime}}$.  Since
$\nt[\alpha](x_0)=\nt[s]_0$, $\nt[\alpha](x)$ and $\nt[\alpha](y)$
are adjacent.

Since $\nPhi{m}{0}(y)=v_{n-1}$, by definition of $e$ and $E_0$,
these are all of the subcases of case 2.a.

\ \ \ \ Case 2.b:  $x=x_{2(n-2)}$.

By definition of $E$, $y=x_{2(n-2)+1}$ and $\nPhi{m}{0}(y)=v_n$.
Let $\delta_{e}\in\oneton{\nt}$ and $k\in\oneton{k_{\delta_{e}}}$
so that $\alpha(x_{2(n-2)+1})=s_k^{\delta_{e}}$. Then, by
definition of $e$ and $E_0$, $\delta_{e}\in D_1\bigcup D_2$.

\ \ \ \ \ \ Case 2.b.i:  $\delta_{e}\in D_1$.

Then, $k=2[(n-2)-t_{\delta_{e}]+3}$, giving
$\nt[\alpha](y)=\nt[s]_{k+2(t_{\delta_e}-1)}^{\delta_{e}}=
\nt[s]_{2(n-2)+1}^{\delta_{e}}$.  Since
$\nt[\alpha](x_{2(n-2)})=\nt[s]_{2(n-2)}^{\delta_{e}}$,
$\nt[\alpha](x)$ and $\nt[\alpha](y)$ are adjacent.

\ \ \ \ \ \ Case 2.b.ii:  $\delta_{e}\in D_2$.

Then, $k=1$, giving
$\nt[\alpha](y)=\nt[s]_{1+2(n-2)}^{\delta_{e}}$, and since
$\nt[\alpha](x_{2(n-2)})=\nt[s]_{2(n-2)}^{\delta_{e}}$,
$\nt[\alpha](x)$ and $\nt[\alpha](y)$ are adjacent.

\ \ Case 3:  $x,y\in\mathrm{V}(\X{m}\setminus\bigcup E_0)$.

\ \ \ \ Case 3.a:  $\alpha(x),\alpha(y)\in
S^{\delta}\setminus\{s_0\}$ for some
$\delta\in\oneton{\nt}\setminus D_0$.

Since $\alpha$ is simplicial, then $\alpha(x)$ and $\alpha(y)$ are
adjacent by Proposition \ref{claim9}, and so, $\nt[\alpha](x)$ and
$\nt[\alpha](y)$ are adjacent by definition of $\nt[\alpha]$.

\ \ \ \ Case 3.b:  $\alpha(x)=s_0$.

\ \ \ \ \ \ Case 3.b.i:  $\alpha(y)=s_{1}^{\delta}$ for some
$\delta\in\oneton{\nt}\setminus D$.

Then, $\nt[\alpha](y)=\nt[s]_{1}^{\delta}$ and
$\nt[\alpha](x)=\nt[s]_0$, and so, $\nt[\alpha](x)$ and
$\nt[\alpha](y)$ are adjacent.

\ \ \ \ \ \ Case 3.b.ii:  $\alpha(y)=s_{1}^{\delta}$ for some
$\delta\in D_1$.

Then,
$\nPhi{m}{0}(y)=\beta(\alpha(y))=\beta(s_{1}^{\delta})=v_{\impj[j_{\delta}]{t_{\delta}}}$.
Since $y\in\mathrm{V}(\X{m}\setminus\bigcup E_0)$ and by
Proposition \ref{claim9}, $y\in e$ for some $e\in E$, then
$t_{\delta}=1$.  Since
$\nt[\alpha](y)=\nt[s]_{1+2(t_{\delta}-1)}^{\delta}$, then
$\nt[\alpha](y)=\nt[s]_{1+2(1-1)}^{\delta}=s_{1}^{\delta}$, and
so, $\nt[\alpha](x)$ and $\nt[\alpha](y)$ are adjacent since
$\nt[\alpha](x)=\nt[s]_0$.

By Propositions \ref{claim9}, \ref{claim12}, and \ref{claim11},
these are all of the subcases of Case 3.b.

\underline{\scriptsize \bf CLAIM}:  $\nt[\beta](\nt[s]_0)=v_0$.

By definition of $\nt[\beta]$, $\nt[\beta](\nt[s]_0)=\beta(s_0)$,
and by definition of $\beta$, $\beta(s_0)=v_0$.

\underline{\scriptsize \bf CLAIM}:
$\nt[\beta]\circ\nt[\alpha]=\nPhi{m}{0}$.

Let $x\in\mathrm{V}(\X{m})$.

\ \ Case 1:  $x=x_0$ where $x_0\in e$ for some $e\in E_0$.

Then,
$\nPhi{m}{0}(x)=v_0=\beta(s_0)=\nt[\beta](\nt[s]_0)=\nt[\beta](\nt[\alpha](x))$.

\ \ Case 2:  $x=x_t$ where $t>0$ and $x_t\in e$ for some $e\in
E_0$.

Then, $\delta_e\in D_1\bigcup D_2$, $j=j_{\delta_e}$ where
$\nPhi{m}{0}(e)=\langle
v_0,v_{\impj{1}},v_0,\underline{v_{\impj{2}},v_0},\ldots,\underline{v_{\impj{n-2}},}$
$\underline{v_0} \rangle$, $t_e=2t_{\delta_e}-1$ if $\delta_e\in
D_1$, and $t_e=2(n-2)+1$ if $\delta_e\in D_2$.

\ \ \ \ Case 2.a:  $t\ge t_e$.

Then, $\delta_e\notin D_2$.

\ \ \ \ \ \ Case 2.a.i:  $t$ is even.

Since $t_e$ is odd, $t-t_e+1$ is even and $1=t_e-t_e+1\le
t-t_e+1=t-(2t_{\delta_e}-1)+1=t-2t_{\delta_e}+2\le
2(n-2)-2t_{\delta_e}+2=2[(n-2)-(t_{\delta_e}-1)]$.  Then,
$\beta(s_{t-t_e+1}^{\delta_e})=v_0$, giving
$\nPhi{m}{0}(x)=v_0=\beta(s_{t-t_e+1}^{\delta_e})=\beta(s_{t-2t_{\delta_e}+2}^{\delta_e})=
\beta(s_{t-2(t_{\delta_e}-1)}^{\delta_e})=\nt[\beta](\nt[s]_{t}^{\delta_e})=\nt[\beta](\nt[\alpha](x))$.

\ \ \ \ \ \ Case 2.a.ii:  $t$ is odd.

Since $t_e$ is odd, $t-t_e+1$ is odd and $1=t_e-t_e+1\le
t-t_e+1=t-(2t_{\delta_e}-1)+1=t-2t_{\delta_e}+2\le
2(n-2)-1-2t_{\delta_e}+2=2[(n-2)-(t_{\delta_e}-1)]-1$.  Then,
$\beta(s_{t-t_e+1}^{\delta_e})=v_{\impj[j_{\delta_e}]{t_{\delta_e}+\frac{t-t_e+1-1}{2}}}$,
giving
$\nPhi{m}{0}(x)=v_{\impj{\frac{t+1}{2}}}=v_{\impj[j_{\delta_e}]{t_{\delta_e}-t_{\delta_e}+\frac{t+1}{2}}}=
v_{\impj[j_{\delta_e}]{t_{\delta_e}-\frac{(2t_{\delta_e}-1)-t}{2}}}=
v_{\impj[j_{\delta_e}]{t_{\delta_e}+\frac{t-t_e}{2}}}=
v_{\impj[j_{\delta_e}]{t_{\delta_e}+\frac{t-t_e+1-1}{2}}}=
\beta(s_{t-t_e+1}^{\delta_e})=\beta(s_{t-2t_{\delta_e}+2}^{\delta_e})=\beta(s_{t-2(t_{\delta_e}-1)}^{\delta_e})=
\nt[\beta](\nt[s]_{t}^{\delta_e})=\nt[\beta](\nt[\alpha](x))$.

\ \ \ \ Case 2.b:  $t<t_e$.

Then, $1\le t\le t_e-1=2t_{\delta_e}-2=2(t_{\delta_e}-1)$ if
$\delta_e\in\ D_1$ and $1\le t\le t_e-1=2(n-2)+1-1=2(n-2)$ if
$\delta_e\in D_2$.

\ \ \ \ \ \ Case 2.b.i:  $t$ is even.

Then,
$\nPhi{m}{0}(x)=v_0=\nt[\beta](\nt[s]_{t}^{\delta_e})=\nt[\beta](\nt[\alpha](x))$.

\ \ \ \ \ \ Case 2.b.ii:  $t$ is odd.

Then,
$\nPhi{m}{0}(x)=v_{\impj{\frac{t+1}{2}}}=v_{\impj[j_{\delta_e}]{\frac{t+1}{2}}}=
\nt[\beta](\nt[s]_{t}^{\delta_e})=\nt[\beta](\nt[\alpha](x))$.

\ \ Case 3:  $x\in\mathrm{V}(\X{m}\setminus\bigcup E_0)$ and
$\alpha(x)=s_0$.

Then,
$\nPhi{m}{0}(x)=\beta(\alpha(x))=\beta(s_0)=\nt[\beta](\nt[s]_0)=\nt[\beta](\nt[\alpha](x))$.

\ \ Case 4:  $x\in\mathrm{V}(\X{m}\setminus\bigcup E_0)$ and
$\alpha(x)=s_{k}^{\delta}$ for some $\delta\in D_1\bigcup D_2$.

Then, $\nPhi{m}{0}(x)=\beta(\alpha(x))=\beta(s_{k}^{\delta})=
\nt[\beta](\nt[s]_{k+2(t_{\delta}-1)}^{\delta})=\nt[\beta](\nt[\alpha](x))$
if $\delta\in D_1$ and
$\nPhi{m}{0}(x)=\beta(\alpha(x))=\beta(s_{k}^{\delta})=
\nt[\beta](\nt[s]_{k+2(n-2)}^{\delta})=\nt[\beta](\nt[\alpha](x))$
if $\delta\in D_2$.

\ \ Case 5:  $x\in\mathrm{V}(\X{m}\setminus\bigcup E_0)$ and
$\alpha(x)=s_{k}^{\delta}$ for some $\delta\in D_3$.

Then, $\nPhi{m}{0}(x)=\beta(\alpha(x))=\beta(s_{k}^{\delta})=
\nt[\beta](\nt[s]_{k-2t_{\delta}}^{\delta})=\nt[\beta](\nt[\alpha](x))$.

\ \ Case 6:  $x\in\mathrm{V}(\X{m}\setminus\bigcup E_0)$ and
$\alpha(x)=s_{k}^{\delta}$ for some
$\delta\in\oneton{\nt}\setminus D$.

Then, $\nPhi{m}{0}(x)=\beta(\alpha(x))=\beta(s_{k}^{\delta})=
\nt[\beta](\nt[s]_{k}^{\delta})=\nt[\beta](\nt[\alpha](x))$.

\underline{\scriptsize \bf CLAIM}:  If $x,y,z\in\mathrm{V}(\X{m})$
so that $x,y,z$ are distinct, $\edge{x}{}{y,z}{}\subseteq\X{m}$,
and $\nt[\alpha](y)=\nt[s]_0$, then $\nPhi{m}{0}(x)=v_{n-1}$ or
$\nPhi{m}{0}(z)=v_{n-1}$.

Suppose $\nPhi{m}{0}(x)\not=v_{n-1}$. By Proposition \ref{claim9},
since
$\nPhi{m}{0}(y)=\nt[\beta](\nt[\alpha](y))=\nt[\beta](\nt[s]_0)=\beta(s_0)=v_0$,
$\nPhi{m}{0}(x)\not=v_0$ and $\nPhi{m}{0}(z)\not=v_0$, and so,
there exists $\delta\in\oneton{\nt}$ so that
$\nt[\alpha](x)=s_{1}^{\delta}$.  By Proposition \ref{claim9},
$\edge{x}{}{y,z}{}\subseteq\arm{m}$ for some $i\in\oneton{\nt}$.

\ \ Case 1:  $\delta\in D_1\bigcup D_2$.

\ \ \ \ Case 1.a:  $x\in e$ for some $e\in E_0$.

Then, $\delta_e=\delta$, $x=x_1$, and $y=x_0$ where $x_0,x_1\in
e$, and so by Proposition \ref{claim9}, $\nPhi{m}{0}(z)=v_{n-1}$.

\ \ \ \ Case 1.b:  $x\in\mathrm{V}(\X{m}\setminus\bigcup E_0)$.

Then $\delta\notin D_2$.  Since
$\nt[\alpha](x)=\nt[s]_{k+2(t_{\delta}-1)}^{\delta}$ where
$\alpha(x)=s_{k}^{\delta}$, $\nt[\alpha](x)=\nt[s]_{1}^{\delta}$,
and $1\le k\le k+2(t_{\delta}-1)=1$, then $k=1$ and
$t_{\delta}=1$, giving $\alpha(x)=s_{1}^{\delta}$.  If
$\alpha(y)=s_{2}^{\delta}$ and $y\in e$ for some $e\in E_0$, then
$x\in e$, giving a contradiction.  Thus, $\alpha(y)=s_0$.  By
Proposition \ref{claim9}, since $\nPhi{m}{0}(x)\not=v_{n-1}$,
there exists $\delta^{\prime}\in\oneton{\nt}\setminus\{\delta\}$
so that $\alpha(z)=s_{1}^{\delta^{\prime}}$.  If
$\nPhi{m}{0}(z)\not=v_{n-1}$, then $x\in e$ for some $e\in E_0$ by
Proposition \ref{claim9}, resulting in a contradiction.  Thus,
$\nPhi{m}{0}(z)=v_{n-1}$.

\ \ Case 2:  $\delta\in D_3$.

Then,
$\nPhi{m}{0}(x)=\nt[\beta](\nt[\alpha](x))=\nt[\beta](\nt[s]_{1}^{\delta}=
\beta(s_{1+2t_{\delta}}^{\delta})=v_{n-1}$, giving a
contradiction.  Thus, case 2 does not occur.

\ \ Case 3:  $\delta\in\oneton{\nt}\setminus D$.

\ \ \ \ Case 3.a:  $x\in e$ for some $e\in E_0$.

Then, $\delta_e=\delta$ and $\delta_e\in D_1\bigcup D_2$, giving a
contradiction.  Thus, Case 3.a does not occur.

\ \ \ \ Case 3.b:  $x\in\mathrm{V}(\X{m})\setminus\bigcup E_0$.

Then, $\alpha(x)=s_{k}^{\delta}$, where
$\nt[\alpha](x)=\nt[s]_{k}^{\delta}$, and so,
$\alpha(x)=s_{1}^{\delta}$.  If $\alpha(y)=s_{2}^{\delta}$ and
$y\in e$ for some $e\in E_0$, then $\delta\in D$, resulting in a
contradiction.  Thus, $\alpha(y)=s_0$.  Since
$\nPhi{m}{0}(x)\not=v_{n-1}$ and $\delta\notin D$, then
$\nPhi{m}{0}(z)=v_{n-1}$.
\end{proof}

\begin{proposition} \label{claim14}
If $m,\nt,T,\alpha,\mbox{ and }\beta$ exist, then for some
$\nt_1\in\oneton{\nt_0}$ where $\nt_0$ is as given in Proposition
\ref{claim13}, there exist a simple-$\nt_1$-od $\hat{T}$, $\nt_1$
arcs $\hat{S}^1,\hat{S}^2,\ldots,\linebreak[0]\hat{S}^{\nt_1}$ so
that $\bigcup\limits_{i=1}^{\nt_1}\hat{S}^i=\hat{T}$,
$\hat{S}^i\bigcap \hat{S}^j=\{\hat{s}_0\}$ for each distinct
$i,j\in\oneton{\nt_1}$, so $\hat{s}_0$ is an endpoint of
$\hat{S}^i$ for each $i\in\oneton{\nt_1}$, and
$\hat{S}^i=\shseqzero{i}{\hat{k}_i}$ for some
$\hat{k}_i\in\{1,2,\ldots\}$ for each $i\in\oneton{\nt_1}$, and
simplicial maps $\hat{\alpha}:\X{m}\longrightarrow \hat{T}$ and
$\hat{\beta}:\hat{T}\longrightarrow\X{0}$ so that
$\hat{\beta}\circ\hat{\alpha}=\nPhi{m}{0}$,
$\hat{\beta}(\hat{s}_0)=v_0$, and
$\hat{\beta}(\hat{s}_{1}^{\delta})=v_{n-1}$ for each
$\delta\in\oneton{\nt_1}$.
\end{proposition}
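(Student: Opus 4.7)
Apply Proposition \ref{claim13} to obtain $\nt_0$, $\nt[T]$, $\nt[\alpha]$, and $\nt[\beta]$ satisfying its conclusion, and set $D_{\mathrm{b}}=\{\delta\in\oneton{\nt_0}:\nt[\beta](\nt[s]_1^\delta)\ne v_{n-1}\}$. The task is to construct $\hat T$, $\hat\alpha$, $\hat\beta$ by removing each bad arm $\nt[S]^\delta$ (with $\delta\in D_{\mathrm{b}}$) and absorbing its $\X{m}$-preimages into appropriately extended good arms, in such a way that every remaining arm begins under $\hat\beta$ with the edge $\langle v_0,v_{n-1}\rangle$.

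First I would establish the structural fact that every neighbor of the branch point $v_0\in\X{m}$ is mapped to $v_{n-1}$ by $\nPhi{m}{0}$. Inspecting the definition of $\nphi$, the first vertex adjacent to $v_0$ on every arm of $\X{1}$ lies in the list $u_1,\dots,u_{n-3},u_{(2n-1)(n-3)+1},u_{(2n-1)(n-3)+2(n-1)+1},u_{(2n-1)(n-3)+2(n-1)+2}$, and each of these is sent by $\nphi$ to $v_{n-1}$. Iterating via Proposition \ref{claim7}, parts (i), (iii), (iv) show that $\overline{\nPhi{m}{0}}(\arm[i]{m})$ begins with the segment $\vseqthree$ (which starts $v_0,v_{n-1},\dots$), while part (ii) reduces to the $i=n-1$ case at level $m-1$, which has the same initial segment inductively.

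Next, for each $\delta\in D_{\mathrm{b}}$ and each $y\in\mathrm{V}(\X{m})$ with $\nt[\alpha](y)=\nt[s]_0$ having a neighbor $x$ with $\nt[\alpha](x)=\nt[s]_1^\delta$, the above observation forces $y$ to be an interior vertex of some arm of $\X{m}$ (not the branch point), since $\nPhi{m}{0}(x)=\nt[\beta](\nt[s]_1^\delta)\ne v_{n-1}$. Thus $y$ has exactly one other neighbor $z$, and the conclusion of Proposition \ref{claim13} forces $\nPhi{m}{0}(z)=v_{n-1}$; since $\nt[\alpha](z)=\nt[s]_0$ would give $\nPhi{m}{0}(z)=v_0$, we conclude $\nt[\alpha](z)=\nt[s]_1^{\delta'}$ for some $\delta'\in\oneton{\nt_0}\setminus D_{\mathrm{b}}$. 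The same reasoning rules out $D_{\mathrm{b}}=\oneton{\nt_0}$ (for then no junction vertex $y$ could have a good neighbor), securing $\nt_1:=\nt_0-|D_{\mathrm{b}}|\ge 1$. I would then define $\hat T$ with $\nt_1$ arcs $\hat S^{\delta'}$ indexed by the good $\delta'$, each obtained from $\nt[S]^{\delta'}$ by splicing in, at every interior junction vertex image, an excursion matching $\overline{\nt[\beta]}(\nt[S]^\delta)$ for each bad $\delta$ anchored at that junction. The map $\hat\alpha$ agrees with $\nt[\alpha]$ on preimages of good arc vertices and on $\nt[s]_0$-preimages that are not junctions, and sends each junction $y$ and its bad-arm neighbors $x$ along to the corresponding interior positions of the extended good arc; $\hat\beta$ is defined vertex-wise to make $\hat\beta\circ\hat\alpha=\nPhi{m}{0}$. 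Because $\hat S^{\delta'}$ still begins with the original good arm's first edge $\langle \hat s_0,\hat s_1^{\delta'}\rangle$ (mapping to $\langle v_0,v_{n-1}\rangle$), the required conditions $\hat\beta(\hat s_0)=v_0$ and $\hat\beta(\hat s_1^{\delta'})=v_{n-1}$ hold automatically.

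The principal obstacle is to verify that $\hat\alpha$ is well-defined and simplicial when several bad arms anchor to the same good arm at distinct junctions, or when a single bad arm has preimages at multiple junctions. As in the proof of Proposition \ref{claim13}, this reduces to a case analysis on adjacent vertex pairs in $\X{m}$, where the explicit walk structure of $\overline{\nPhi{m}{0}}(\arm[i]{m})$ given by Proposition \ref{claim9}, together with the local rigidity of $\nt[s]_0$-preimages established in Propositions \ref{claim10}--\ref{claim12}, provides exactly the combinatorial control needed to show that all excursions splice into the extended good arms consistently and that adjacent $\X{m}$-vertices are sent to adjacent $\hat T$-vertices.
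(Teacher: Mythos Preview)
Your approach diverges from the paper's and introduces a real gap. You propose to \emph{delete} the bad arms $D_{\mathrm{b}}=\{\delta:\nt[\beta](\nt[s]_1^\delta)\ne v_{n-1}\}$ and absorb their $\X{m}$-preimages into extended good arms. The paper does essentially the opposite: it \emph{keeps} every bad arm (its set $F_0$ coincides with your $D_{\mathrm{b}}$) by prepending two vertices mapping under $\hat\beta$ to $v_{n-1},v_0$, leaves the $F_1$ arms unchanged, shortens the $F_2$ arms (those whose first three $\nt[\beta]$-images are $v_{n-1},v_0,v_{n-1}$) by two, and the only arms it deletes are the $F_3$ arms---\emph{good} arms for which $\nt[s]_3^\delta$ does not exist. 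Thus $\nt_1=\nt_0-|F_3|$, not $\nt_0-|D_{\mathrm{b}}|$. Under the paper's scheme, a preimage of a bad-arm vertex $\nt[s]_k^\delta$ (outside a small junction neighborhood) simply shifts to $\hat s_{k+2}^\delta$ in the \emph{same} arm; the only rerouting of $\hat\alpha$ is local, on the explicit finite list of junction patterns $G'$ classified by the sets $G_j^H$.

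Your deletion-and-splice scheme breaks down as follows. Consider a path along one arm of $\X{m}$ whose $\nt[\alpha]$-image runs $\ldots,\nt[s]_1^{\delta'},\nt[s]_0,\nt[s]_1^{\delta},\ldots,\nt[s]_1^{\delta},\nt[s]_0,\nt[s]_1^{\delta''},\ldots$ with $\delta\in D_{\mathrm{b}}$ and $\delta',\delta''\notin D_{\mathrm{b}}$. Your local argument at each junction is correct, but nothing in Propositions~\ref{claim9}--\ref{claim13} forces $\delta'=\delta''$. If they differ, the bad-arm excursion has its two ends anchored to two distinct arcs of $\hat T$ meeting only at $\hat s_0$, so it cannot be spliced into either arc as an interior excursion while keeping $\hat\alpha$ simplicial at both junctions. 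Even when $\delta'=\delta''$, two distinct $\X{m}$-preimages of the same good-arm vertex $\nt[s]_k^{\delta'}$ may be preceded by bad excursions of different lengths (or none at all), so a single re-indexed arc $\hat S^{\delta'}$ cannot receive both consistently. The paper's prepend-rather-than-delete strategy sidesteps all of this by never reassigning a bad-arm preimage to a different arm.
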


\begin{proof}
Let $\nt[T], \nt[\alpha],\mbox{ and } \nt[\beta]$ be as given in
Proposition \ref{claim13}.

Let
$F_0=\{\delta\in\oneton{\nt_0}:\nt[\beta](\nt[s]_{1}^{\delta})\not=
v_{n-1}\}$, $F_1=\{\delta\in\oneton{\nt_0}:\nt[s]_{3}^{\delta}$
exists and $\overline{\nt[\beta]}(\langle
\nt[s]_{1}^{\delta},\nt[s]_{2}^{\delta},\nt[s]_{3}^{\delta}
\rangle)=\edge{v_{n-1}}{}{v_{n+1},v_{n-1}}{}$, or
$\nt[s]_{3}^{\delta}$ exists and $\overline{\nt[\beta]}(\langle
\nt[s]_{1}^{\delta},\nt[s]_{2}^{\delta},\nt[s]_{3}^{\delta}
\rangle)=\edge{v_{n-1}}{}{v_0,v}{}$ for some
$v\in\{v_1,v_2,\ldots,v_n\}\setminus\{v_{n-1}\}\}$,
$F_2=\{\delta\in\oneton{\nt_0}:s_{3}^{\delta}$ exists and
$\overline{\nt[\beta]}(\langle
\nt[s]_{1}^{\delta},\nt[s]_{2}^{\delta},\nt[s]_{3}^{\delta}
\rangle)=\edge{v_{n-1}}{}{v_0,v_{n-1}}{}\}$,
$F_3=\{\delta\in\oneton{\nt_0}:\nt[s]_{3}^{\delta}$ does not exist
and $\nt[\beta](\nt[s]_{1}^{\delta})= v_{n-1}\}$, and
$F=\bigcup\limits_{i=0}^{3}F_i$.  Then, $F_i\bigcap F_j=\emptyset$
for each distinct $i,j\in\oneton[0]{3}$, and by Proposition
\ref{claim9}, $|F|=\nt_0$.  Without loss of generality, suppose
$F_3$ is empty or there exists $\nt[\delta]\in\oneton{\nt_0}$ so
that $F_3=\{\nt[\delta],\nt[\delta]+1,\ldots,\nt_0\}$.

Let $\hat{k}_{\delta}=\nt[k]_{\delta}+2$ if $\delta\in F_0$,
$\hat{k}_{\delta}=\nt[k]_{\delta}$ if $\delta\in F_1$, and
$\hat{k}_{\delta}=\nt[k]_{\delta}-2$ if $\delta\in F_2$. Let
$\nt_1=\nt_0-|F_3|$ and
$\hat{S}^1,\hat{S}^2,\ldots,\hat{S}^{\nt_1}$ be $\nt_1$ arcs so
that $\hat{S}^i=\shseqzero{i}{\hat{k}_i}$ for each
$i\in\oneton{\nt_1}$ and $\hat{S}^i\bigcap
\hat{S}^j=\{\hat{s}_0\}$ for each distinct $i,j\in\oneton{\nt_1}$.
Let $\hat{T}$ be a simple-$\nt_1$-od so that
$\bigcup\limits_{i=1}^{\nt_1}\hat{S}^i=\hat{T}$ and
$\hat{\beta}:\hat{T}\longrightarrow\X{0}$ be the simplicial map
defined as:  $\hat{\beta}(\hat{s}_0)=\nt[\beta](\nt[s]_0)$,
$\overline{\hat{\beta}}(\langle
\hat{s}_{1}^{\delta},\hat{s}_{2}^{\delta}
\rangle)=\edge{v}{n-1}{v}{0}$ if $\delta\in F_0$,
$\hat{\beta}(\hat{s}_{k}^{\delta})=\nt[\beta](\nt[s]_{k-2}^{\delta})$
for $k\in\oneton[3]{\hat{k}_{\delta}}$ if $\delta\in F_0$,
$\hat{\beta}(\hat{s}_{k}^{\delta})=\nt[\beta](\nt[s]_{k}^{\delta})$
for $k\in\oneton{\hat{k}_{\delta}}$ if $\delta\in F_1$, and
$\hat{\beta}(\hat{s}_{k}^{\delta})=\nt[\beta](\nt[s]_{k+2}^{\delta})$
for $k\in\oneton{\hat{k}_{\delta}}$ if $\delta\in F_2$.

Let $G_0=\{\langle x_0^i,x_1^i,x_2^i \rangle:i\in\oneton{n}\}$,
$G_1=\{\langle x_r^i,x_{r+1}^i,x_{r+2}^i,x_{r+3}^i,x_{r+4}^i
\rangle:\overline{\nPhi{m}{0}}(\langle
x_r^i,x_{r+1}^i,x_{r+2}^i,x_{r+3}^i,x_{r+4}^i \rangle)=\langle
v_0,v_{n-1},v_0,v_{n-1},v_0 \rangle$ for some $i\in\oneton{n}$ and
for some $r\in\oneton[0]{\ell_i}\}$, $G_2=\{\langle
x_r^i,x_{r+1}^i,x_{r+2}^i,x_{r+3}^i,x_{r+4}^i
\rangle:\overline{\nPhi{m}{0}}(\langle
x_r^i,x_{r+1}^i,x_{r+2}^i,\linebreak[0]x_{r+3}^i,x_{r+4}^i
\rangle)=\langle v_0,v_{n-1},v_0,v_{n-1},v_{n+1} \rangle$ for some
$i\in\oneton{n}$ and for some $r\in\oneton[0]{\ell_i}\}$,
$G_3=\{\langle x_r^i,x_{r+1}^i,x_{r+2}^i,x_{r+3}^i,x_{r+4}^i
\rangle:\overline{\nPhi{m}{0}}(\langle
x_r^i,x_{r+1}^i,x_{r+2}^i,x_{r+3}^i,x_{r+4}^i \rangle)=\langle
v_{n+1},v_{n-1},v_0,v_{n-1},v_0 \rangle$ for some $i\in\oneton{n}$
and for some $r\in\oneton[0]{\ell_i}\}$, $G_4=\{\langle
x_r^i,x_{r+1}^i,x_{r+2}^i,x_{r+3}^i,x_{r+4}^i
\rangle:\overline{\nPhi{m}{0}}(\langle
x_r^i,x_{r+1}^i,x_{r+2}^i,x_{r+3}^i,x_{r+4}^i \rangle)=\langle
v_{n+1},v_{n-1},v_0,\linebreak[0]v_{n-1},\linebreak[0]v_{n+1}
\rangle$ for some $i\in\oneton{n}$ and for some
$r\in\oneton[0]{\ell_i}\}$, and $G=\bigcup\limits_{i=0}^{4} G_i$.
Then, $G_i\bigcap G_j=\emptyset$ for each distinct
$i,j\in\oneton[0]{4}$.

Let $g_1,g_2\in G$ be distinct.  By Proposition \ref{claim9}, if
$\X{m}\supseteq g_1\bigcap g_2\not=\emptyset$, then
$g_1,g_2\in\bigcup\limits_{i=1}^{4}G_i$ with
$x_{r_1+4}^i=x_{r_2}^i$ and
$\nPhi{m}{0}(x_{r_1+4}^{i})=\nPhi{m}{0}(x_{r_2}^{i})=v_{n+1}$ (or
$x_{r_2+4}^{i}=x_{r_1}^i$ and
$\nPhi{m}{0}(x_{r_1}^{i})=\nPhi{m}{0}(x_{r_2+4}^{i})=v_{n+1}$),
where $g_1=\xseq{i}{r_1}{r_1+1}{r_1+4}$ and
$g_2=\xseq{i}{r_2}{r_2+1}{r_2+4}$, or $g_1,g_2\in G_0$ with
$x_{0}^{i_1}=x_{0}^{i_2}=v_0$, where $g_1=\langle
x_{0}^{i_1},x_{1}^{i_1},x_{2}^{i_1} \rangle$ and $g_2=\langle
x_{0}^{i_2},x_{1}^{i_2},x_{2}^{i_2} \rangle$.

Let $G_0^H=\{\langle x_0^i,x_1^i,x_2^i \rangle\in
G_0:\nt[\alpha](x_k^i)=\nt[s]_0$ for each $k\in H\mbox{ and
}\nt[\alpha](x_k^i)\not=\nt[s]_0$ for each $k\in\{0,1,2\}\setminus
H\}$ for $H\subseteq\{0,1,2\}$ and $G_j^H=\{\langle
x_r^i,x_{r+1}^i,x_{r+2}^i,x_{r+3}^i,x_{r+4}^i \rangle\in
G_j:\nt[\alpha](x_{r+k}^i)=\nt[s]_0\mbox{ for each }k\in H\mbox{
and }\nt[\alpha](x_{r+k}^i)\not=\nt[s]_0\mbox{ for each
}k\in\oneton[0]{4}\setminus H\}$ for $H\subseteq\oneton[0]{4}$ and
for $j\in\oneton{4}$.  Then, if $G_{j_1}^{H_1}\bigcap
G_{j_2}^{H_2}\not=\emptyset$, $H_1=H_2$ and $j_1=j_2$.

Let $G_0^{\prime}=G_0$ if $G_0^{\{0\}}\not=\emptyset$ or
$G_0^{\{0,2\}}\not=\emptyset$, and $G_0^{\prime}=\emptyset$
otherwise.  Let $G_0^{\prime\prime}=G_0$ if
$G_0^{\{2\}}\not=\emptyset$, and $G_0^{\prime\prime}=\emptyset$
otherwise.  Then $\mathcal{G}_1\bigcap\mathcal{G}_2=\emptyset$ for
each distinct $\mathcal{G}_1,\mathcal{G}_2\in\{G_0\setminus
(G^{\prime}\bigcup
G^{\prime\prime}),G^{\prime},G^{\prime\prime}\}$. Let
$G^{\prime}=G_0^{\prime}\bigcup G_0^{\prime\prime}\bigcup
G_1^{\{0\}}\bigcup G_1^{\{0,2\}}\bigcup G_1^{\{4\}}\bigcup
G_1^{\{2,4\}}\bigcup G_1^{\{0,4\}}\bigcup\linebreak[0]
G_1^{\{0,2,4\}}\bigcup G_2^{\{0\}}\bigcup G_2^{\{0,2\}}\bigcup
G_3^{\{4\}}\bigcup G_3^{\{2,4\}}$.

Let $g=\xseq{i}{r}{r+1}{r+t}\in G$ and $\nt[t]\in\oneton[0]{t}$.
Let ${_g\delta}_{\nt[t]}\in\oneton{\nt_0}$ so that
$\nt[\alpha](x_{r+\nt[t]}^i)\in\nt[S]^{{_g\delta}_{\nt[t]}}\setminus\{\nt[s]_0\}$
whenever $\nt[\alpha](x_{r+\nt[t]}^i)\not=\nt[s]_0$,
${_g\delta}_{t+1}\in\oneton{\nt_0}$ so that
$\nt[\alpha](x_{r+t+1}^i)\in\nt[S]^{{_g\delta}_{t+1}}\setminus\{\nt[s]_0\}$
whenever $x_{r+t+1}^i$ exists and
$\nt[\alpha](x_{r+t+1}^i)\not=\nt[s]_0$, and
${_g\delta}_{-1}\in\oneton{\nt_0}$ so that
$\nt[\alpha](x_{r-1}^i)\in\nt[S]^{{_g\delta}_{-1}}\setminus\{\nt[s]_0\}$
whenever $x_{r-1}^i$ exists and
$\nt[\alpha](x_{r-1}^i)\not=\nt[s]_0$.  By Proposition
\ref{claim9}, $x_{r+t+1}^i$ exists, and if
$g\in\bigcup\limits_{i=1}^{4}G_i$, $x_{r-1}^i$ exists.

Let $\hat{\alpha}(\langle x_0^i,x_1^i,x_2^i \rangle)=\langle
\hat{s}_0,\hat{s}_1^{{_g\delta}_3},\hat{s}_2^{{_g\delta}_3}
\rangle$ if $g=\langle x_0^i,x_1^i,x_2^i \rangle\in
G_0^{\prime}\bigcup G_0^{\prime\prime}$, \
$\overline{\hat{\alpha}}(\langle
x_r^i,x_{r+1}^i,\linebreak[0]x_{r+2}^i,x_{r+3}^i,x_{r+4}^i)=\langle
\hat{s}_2^{{_g\delta}_{-1}},\hat{s}_1^{{_g\delta}_{-1}},\hat{s}_0,\hat{s}_1^{{_g\delta}_4},\hat{s}_2^{{_g\delta}_4}
\rangle$ if $g=\xseq{i}{r}{r+1}{r+4}\hspace{-.1cm}\in
\hspace{-.08cm}G_1^{\{0\}}\bigcup G_1^{\{0,2\}}$, \
$\overline{\hat{\alpha}}(\langle
x_r^i,x_{r+1}^i,x_{r+2}^i,x_{r+3}^i,x_{r+4}^i)=\langle
\hat{s}_2^{{_g\delta}_0},\hat{s}_1^{{_g\delta}_0},\hat{s}_0,\hat{s}_1^{{_g\delta}_5},\hat{s}_2^{{_g\delta}_5}
\rangle$ if $g=\xseq{i}{r}{r+1}{r+4}\in G_1^{\{4\}}\bigcup
G_1^{\{2,4\}}$, \ $\overline{\hat{\alpha}}(\langle
x_r^i,x_{r+1}^i,x_{r+2}^i,x_{r+3}^i,x_{r+4}^i)=\langle
\hat{s}_2^{{_g\delta}_{-1}},\hat{s}_1^{{_g\delta}_{-1}},\hat{s}_0,\hat{s}_1^{{_g\delta}_5},\hat{s}_2^{{_g\delta}_5}
\rangle$ if $g=\xseq{i}{r}{r+1}{r+4}\in G_1^{\{0,4\}}\bigcup
G_1^{\{0,2,4\}}$, \ $\overline{\hat{\alpha}}(\langle
x_r^i,x_{r+1}^i,x_{r+2}^i,x_{r+3}^i,x_{r+4}^i)=\langle
\hat{s}_2^{{_g\delta}_{-1}},\hat{s}_1^{{_g\delta}_{-1}},\hat{s}_0,\linebreak[0]\hat{s}_1^{{_g\delta}_4},\hat{s}_2^{{_g\delta}_4}
\rangle$ if $g=\xseq{i}{r}{r+1}{r+4}\in G_2^{\{0\}}\bigcup
G_2^{\{0,2\}}$, \ $\overline{\hat{\alpha}}(\langle
x_r^i,x_{r+1}^i,x_{r+2}^i,x_{r+3}^i,x_{r+4}^i)=\langle
\hat{s}_2^{{_g\delta}_0},\hat{s}_1^{{_g\delta}_0},\hat{s}_0,\hat{s}_1^{{_g\delta}_5},\hat{s}_2^{{_g\delta}_5}
\rangle$ if $g=\xseq{i}{r}{r+1}{r+4}\in G_3^{\{4\}}\bigcup
G_3^{\{2,4\}}$, \ $\hat{\alpha}(x)=\hat{s}_0$ if
$x\in\mathrm{V}(\X{m}\setminus\bigcup G^{\prime})$ and
$\nt[\alpha](x)=\nt[s]_0$, where $G^{\prime}=G_0^{\{0\}}\bigcup
G_0^{\{2\}}\bigcup G_0^{\{0,2\}}\bigcup G_1^{\{0\}}\bigcup
G_1^{\{0,2\}}\bigcup G_1^{\{4\}}\linebreak[0]\bigcup
G_1^{\{2,4\}}\bigcup G_1^{\{0,4\}} \bigcup G_1^{\{0,2,4\}}\bigcup
G_2^{\{0\}}\bigcup G_2^{\{0,2\}}\bigcup G_3^{\{4\}}\bigcup
G_3^{\{2,4\}}$, \ $\hat{\alpha}(x)=\hat{s}_{k+2}^{\delta}$ if
$x\in\mathrm{V}(\X{m}\linebreak[0]\setminus\bigcup G^{\prime})$,
$\nt[\alpha](x)=\nt[s]_k^{\delta}$, and $\delta\in F_0$, \
$\hat{\alpha}(x)=\hat{s}_{k}^{\delta}$ if
$x\in\mathrm{V}(\X{m}\setminus\bigcup G^{\prime})$,
$\nt[\alpha](x)=\nt[s]_k^{\delta}$, and $\delta\in F_1$, \
$\hat{\alpha}(x)=\hat{s}_0$ if
$x\in\mathrm{V}(\X{m}\setminus\bigcup G^{\prime})$,
$\nt[\alpha](x)=\nt[s]_2^{\delta}$, and $\delta\in F_2$, and
$\hat{\alpha}(x)=\hat{s}_{k-2}^{\delta}$ if
$x\in\mathrm{V}(\X{m}\setminus\bigcup G^{\prime})$,
$\nt[\alpha](x)=\nt[s]_k^{\delta}$, $\delta\in F_2$, and
$k\in\oneton[3]{\nt[k]_{\delta}}$.

\underline{\scriptsize \bf CLAIM}:  $\hat{\alpha}$ is
well-defined.

\ \ Case 1:  Let $g=\langle x_0^i,x_1^i,x_2^i \rangle\in
G_0^{\prime}\bigcup G_0^{\prime\prime}$.

By Proposition \ref{claim9}, $\nPhi{m}{0}(x_3^i)=v_n$.

\ \ \ \ Case1.a:  Suppose $g\in G_0^{\prime}$.

Then,

i) $\overline{\nt[\alpha]}(\langle x_0^i,x_1^i,x_2^i,x_3^i
\rangle)=\langle
\nt[s]_0,\nt[s]_1^{\delta_3},\nt[s]_2^{\delta_3},\nt[s]_3^{\delta_3}
\rangle$ for some $\delta_3\in\oneton{\nt_0}$ or

ii) $\overline{\nt[\alpha]}(\langle x_0^i,x_1^i,x_2^i,x_3^i
\rangle)=\langle
\nt[s]_0,\nt[s]_1^{\delta_1},\nt[s]_0,\nt[s]_1^{\delta_3} \rangle$
for some $\delta_1,\delta_3\in\oneton{\nt_0}$ so that
$\delta_1\not=\delta_3$.

If (i), then $\delta_3\in F_1$ and ${_g\delta}_3=\delta_3$, giving
$3\le\nt[k]_{\delta_3}=\hat{k}_{{_g\delta}_3}$, and so,
$\hat{s}_1^{{_g\delta}_3}$ and $\hat{s}_2^{{_g\delta}_3}$ exist.
If (ii), then $\delta_3\in F_0$ and ${_g\delta}_3=\delta_3$,
giving $3=1+2\le\nt[k]_{\delta_3}+2=\hat{k}_{{_g\delta}_3}$, and
so, $\hat{s}_1^{{_g\delta}_3}$ and $\hat{s}_2^{{_g\delta}_3}$
exist.

\ \ \ \ Case1.b:  Suppose $g\in G_0^{\prime\prime}$.

Then,

i) $\overline{\nt[\alpha]}(\langle x_0^i,x_1^i,x_2^i,x_3^i
\rangle)=\langle
\nt[s]_2^{\delta_0},\nt[s]_1^{\delta_0},\nt[s]_0,\nt[s]_1^{\delta_3}
\rangle$ for some $\delta_0,\delta_3\in\oneton{\nt_0}$ so that
$\delta_0\not=\delta_3$,

ii)  $\overline{\nt[\alpha]}(\langle x_0^i,x_1^i,x_2^i,x_3^i
\rangle)=\langle
\nt[s]_2^{\delta_3},\nt[s]_1^{\delta_3},\nt[s]_2^{\delta_3},\nt[s]_3^{\delta_3}
\rangle$ for some $\delta_3\in\oneton{\nt_0}$, or

iii) $\overline{\nt[\alpha]}(\langle x_0^i,x_1^i,x_2^i,x_3^i
\rangle)=\langle
\nt[s]_2^{\delta_3},\nt[s]_3^{\delta_3},\nt[s]_4^{\delta_3},\nt[s]_5^{\delta_3}
\rangle$ for some $\delta_3\in\oneton{\nt_0}$.

If (i), then $\delta_3\in F_0$ and ${_g\delta}_3=\delta_3$, giving
$3=1+2\le\nt[k]_{\delta_3}+2=\hat{k}_{{_g\delta}_3}$, and so,
$\hat{s}_1^{{_g\delta}_3}$ and $\hat{s}_2^{{_g\delta}_3}$ exist.
If (ii), then $\delta_3\in F_1$ and ${_g\delta}_3=\delta_3$,
giving $3\le\nt[k]_{\delta_3}=\hat{k}_{{_g\delta}_3}$, and so,
$\hat{s}_1^{{_g\delta}_3}$ and $\hat{s}_2^{{_g\delta}_3}$ exist.
If (iii), then $\delta_3\in F_2$ and ${_g\delta}_3=\delta_3$,
giving $3=5-2\le\nt[k]_{\delta_3}-2=\hat{k}_{{_g\delta}_3}$, and
so, $\hat{s}_1^{{_g\delta}_3}$ and $\hat{s}_2^{{_g\delta}_3}$
exist.

\ \ Case 2:  Let $g=\xseq{i}{r}{r+1}{r+4}\in G_1^{\{0\}}\bigcup
G_1^{\{0,2\}}$ (or $G_1^{\{4\}}\bigcup G_1^{\{2,4\}}$).

By Proposition \ref{claim9},
$\nPhi{m}{0}(x_{r-1}^i),\nPhi{m}{0}(x_{r+5}^i)\notin\{v_0,v_{n-1},v_{n+1},v_{n+2}\}$.
Then,

i)  $\overline{\nt[\alpha]}(\xseq{i}{r-1}{r}{r+5})=\langle
\nt[s]_1^{\delta_{-1}},\nt[s]_0,\nt[s]_1^{\delta_4},\nt[s]_2^{\delta_4},\nt[s]_3^{\delta_4},\nt[s]_4^{\delta_4},
\nt[s]_5^{\delta_4} \rangle$ for some
$\delta_{-1},\delta_4\in\oneton{\nt_0}$ so that
$\delta_{-1}\not=\delta_4$ (or
$\overline{\nt[\alpha]}(\xseq{i}{r-1}{r}{r+5})=\langle
\nt[s]_5^{\delta_0},\nt[s]_4^{\delta_0},\nt[s]_3^{\delta_0},\nt[s]_2^{\delta_0},\nt[s]_1^{\delta_0},\nt[s]_0,
\nt[s]_1^{\delta_5} \rangle$ for some
$\delta_0,\delta_5\in\oneton{\nt_0}$ so that
$\delta_0\not=\delta_5$),

ii) $\overline{\nt[\alpha]}(\xseq{i}{r-1}{r}{r+5})=\langle
\nt[s]_1^{\delta_{-1}},\nt[s]_0,\nt[s]_1^{\delta_4},\nt[s]_2^{\delta_4},\nt[s]_1^{\delta_4},\linebreak[0]\nt[s]_2^{\delta_4},
\nt[s]_3^{\delta_4} \rangle$ for some
$\delta_{-1},\delta_4\in\oneton{\nt_0}$ so that
$\delta_{-1}\not=\delta_4$ (or
$\overline{\nt[\alpha]}(\xseq{i}{r-1}{r}{r+5})=\langle
\nt[s]_3^{\delta_0},\nt[s]_2^{\delta_0},\nt[s]_1^{\delta_0},\nt[s]_2^{\delta_0},\nt[s]_1^{\delta_0},\nt[s]_0,
\nt[s]_1^{\delta_5} \rangle$ for some
$\delta_0,\delta_5\in\oneton{\nt_0}$ so that
$\delta_0\not=\delta_5$), or

iii) $\overline{\nt[\alpha]}(\xseq{i}{r-1}{r}{r+5})=\langle
\nt[s]_1^{\delta_{-1}},\nt[s]_0,\nt[s]_1^{\delta_1},\nt[s]_0,\nt[s]_1^{\delta_4},\nt[s]_2^{\delta_4},
\nt[s]_3^{\delta_4} \rangle$ for some
$\delta_{-1},\delta_1,\delta_4\in\oneton{\nt_0}$ so that
$\delta_{-1}\not=\delta_1$ and $\delta_{-1}\not=\delta_4$ (or
$\overline{\nt[\alpha]}(\xseq{i}{r-1}{r}{r+5})=\langle
\nt[s]_3^{\delta_0},\nt[s]_2^{\delta_0},\nt[s]_1^{\delta_0},\nt[s]_0,\nt[s]_1^{\delta_3},\linebreak[0]\nt[s]_0,
\nt[s]_1^{\delta_5} \rangle$ for some
$\delta_0,\delta_3,\delta_5\in\oneton{\nt_0}$ so that
$\delta_5\not=\delta_3$ and $\delta_5\not=\delta_0$).

If (i), then $\delta_{-1}\in F_0$, $\delta_4\in F_2$,
${_g\delta}_{-1}=\delta_{-1}$, and ${_g\delta}_4=\delta_4$, giving
$3=1+2\le\nt[k]_{\delta_{-1}}+2=\hat{k}_{{_g\delta}_{-1}}$ and
$3=5-2\le\nt[k]_{\delta_4}-2=\hat{k}_{{_g\delta}_4}$, and so,
$\hat{s}_1^{{_g\delta}_{-1}}$, $\hat{s}_2^{{_g\delta}_{-1}}$,
$\hat{s}_1^{{_g\delta}_4}$, and $\hat{s}_2^{{_g\delta}_4}$ exist
(or $\delta_0\in F_2$, $\delta_5\in F_0$, ${_g\delta}_0=\delta_0$,
and ${_g\delta}_5=\delta_5$, giving
$3=5-2\le\nt[k]_{\delta_0}-2=\hat{k}_{{_g\delta}_0}$ and
$3=1+2\le\nt[k]_{\delta_5}+2=\hat{k}_{{_g\delta}_5}$, and so,
$\hat{s}_1^{{_g\delta}_0}$, $\hat{s}_2^{{_g\delta}_0}$,
$\hat{s}_1^{{_g\delta}_5}$, and $\hat{s}_2^{{_g\delta}_5}$ exist).

If (ii), then $\delta_{-1}\in F_0$, $\delta_4\in F_1$,
${_g\delta}_{-1}=\delta_{-1}$, and ${_g\delta}_4=\delta_4$, giving
$3=1+2\le\nt[k]_{\delta_{-1}}+2=\hat{k}_{{_g\delta}_{-1}}$ and
$3\le\nt[k]_{\delta_4}=\hat{k}_{{_g\delta}_4}$, and so,
$\hat{s}_1^{{_g\delta}_{-1}}$, $\hat{s}_2^{{_g\delta}_{-1}}$,
$\hat{s}_1^{{_g\delta}_4}$, and $\hat{s}_2^{{_g\delta}_4}$ exist
(or $\delta_0\in F_1$, $\delta_5\in F_0$, ${_g\delta}_0=\delta_0$,
and ${_g\delta}_5=\delta_5$, giving
$3\le\nt[k]_{\delta_0}=\hat{k}_{{_g\delta}_0}$ and
$3=1+2\le\nt[k]_{\delta_5}+2=\hat{k}_{{_g\delta}_5}$, and so,
$\hat{s}_1^{{_g\delta}_0}$, $\hat{s}_2^{{_g\delta}_0}$,
$\hat{s}_1^{{_g\delta}_5}$, and $\hat{s}_2^{{_g\delta}_5}$ exist).

If (iii), then $\delta_{-1}\in F_0$, $\delta_1\in F_1\bigcup F_2$,
$\delta_4\in F_1$, ${_g\delta}_{-1}=\delta_{-1}$,
${_g\delta}_1=\delta_1$, and ${_g\delta}_4=\delta_4$, giving
$3=1+2\le\nt[k]_{\delta_{-1}}+2=\hat{k}_{{_g\delta}_{-1}}$ and
$3\le\nt[k]_{\delta_4}=\hat{k}_{{_g\delta}_4}$, and so,
$\hat{s}_1^{{_g\delta}_{-1}}$, $\hat{s}_2^{{_g\delta}_{-1}}$,
$\hat{s}_1^{{_g\delta}_4}$, and $\hat{s}_2^{{_g\delta}_4}$ exist
(or $\delta_0\in F_1$, $\delta_3\in F_1\bigcup F_2$, $\delta_5\in
F_0$, ${_g\delta}_0=\delta_0$, ${_g\delta}_3=\delta_3$, and
${_g\delta}_5=\delta_5$, giving
$3\le\nt[k]_{\delta_0}=\hat{k}_{{_g\delta}_0}$ and
$3=1+2\le\nt[k]_{\delta_5}+2=\hat{k}_{{_g\delta}_5}$, and so,
$\hat{s}_1^{{_g\delta}_0}$, $\hat{s}_2^{{_g\delta}_0}$,
$\hat{s}_1^{{_g\delta}_5}$, and $\hat{s}_2^{{_g\delta}_5}$ exist).

\ \ Case 3:  Let $g=\xseq{i}{r}{r+1}{r+4}\in G_1^{\{0,4\}}\bigcup
G_1^{\{0,2,4\}}$.

By Proposition \ref{claim9},
$\nPhi{m}{0}(x_{r-1}^i),\nPhi{m}{0}(x_{r+5}^i)\notin\{v_0,v_{n-1},v_{n+1},v_{n+2}\}$,
giving

i)
$\overline{\nt[\alpha]}(\linebreak[0]\xseq{i}{r-1}{r}{r+5})=\langle
\nt[s]_1^{\delta_{-1}},\nt[s]_0,\nt[s]_1^{\delta_2},\nt[s]_2^{\delta_2},\nt[s]_1^{\delta_2},\nt[s]_0,\nt[s]_1^{\delta_5}
\rangle$ for some $\delta_{-1},\delta_2,\delta_5\in\oneton{\nt_0}$
so that $\delta_2\not=\delta_{-1}$ and $\delta_2\not=\delta_5$, or

ii)  $\overline{\nt[\alpha]}(\xseq{i}{r-1}{r}{r+5})=\langle
\nt[s]_1^{\delta_{-1}},\nt[s]_0,\nt[s]_1^{\delta_1},\nt[s]_0,\nt[s]_1^{\delta_3},\nt[s]_0,\nt[s]_1^{\delta_5}
\rangle$ for some
$\delta_{-1},\delta_1,\delta_3,\delta_5\in\oneton{\nt_0}$ so that
$\delta_{-1},\delta_5\notin\{\delta_1,\delta_3\}$.

Then, $\delta_{-1},\delta_5\in F_0$,
${_g\delta}_{-1}=\delta_{-1}$, and ${_g\delta}_5=\delta_5$, giving
$3=1+2\le\nt[k]_{\delta_{-1}}+2=\hat{k}_{{_g\delta}_{-1}}$ and
$3=1+2\le\nt[k]_{\delta_5}+2=\hat{k}_{{_g\delta}_5}$, and so,
$\hat{s}_1^{{_g\delta}_{-1}}$, $\hat{s}_2^{{_g\delta}_{-1}}$,
$\hat{s}_1^{{_g\delta}_5}$, and $\hat{s}_2^{{_g\delta}_5}$ exist.

\ \ Case 4:  Let $g=\xseq{i}{r}{r+1}{r+4}\in G_2^{\{0\}}\bigcup
G_2^{\{0,2\}}$ (or $G_3^{\{4\}}\bigcup G_3^{\{2,4\}}$).

By Proposition \ref{claim9},
$\nPhi{m}{0}(x_{r-1}^i)\notin\{v_0,v_{n-1},v_{n+1},v_{n+2}\}$ (or
$\nPhi{m}{0}(x_{r+5}^i)\notin\{v_0,v_{n-1},\linebreak[0]v_{n+1},v_{n+2}\}$),
giving

i) $\overline{\nt[\alpha]}(\xseq{i}{r-1}{r}{r+4})=\langle
\nt[s]_1^{\delta_{-1}},\nt[s]_0,\nt[s]_1^{\delta_4},\nt[s]_2^{\delta_4},\nt[s]_3^{\delta_4},\nt[s]_4^{\delta_4}
\rangle$ for some
$\delta_{-1},\linebreak[0]\delta_4\in\oneton{\nt_0}$ so that
$\delta_{-1}\not=\delta_4$ (or
$\overline{\nt[\alpha]}(\xseq{i}{r}{r+1}{r+5})=\langle
\nt[s]_4^{\delta_0},\nt[s]_3^{\delta_0},\nt[s]_2^{\delta_0},\nt[s]_1^{\delta_0},\nt[s]_0,\nt[s]_1^{\delta_5}
\rangle$ for some $\delta_0,\delta_5\in\oneton{\nt_0}$ so that
$\delta_0\not=\delta_5$), or

ii) $\overline{\nt[\alpha]}(\xseq{i}{r-1}{r}{r+4})=\langle
\nt[s]_1^{\delta_{-1}},\nt[s]_0,\linebreak[0]\nt[s]_1^{\delta_1},\nt[s]_0,\nt[s]_1^{\delta_4},\nt[s]_2^{\delta_4}
\rangle$ for some $\delta_{-1},\delta_1,\delta_4\in\oneton{\nt_0}$
so that $\delta_{-1}\not=\delta_1$ and $\delta_{-1}\not=\delta_4$
(or $\overline{\nt[\alpha]}(\xseq{i}{r}{r+1}{r+5})=\langle
\nt[s]_2^{\delta_0},\nt[s]_1^{\delta_0},\nt[s]_0,\nt[s]_1^{\delta_3},\nt[s]_0,\nt[s]_1^{\delta_5}
\rangle$ for some $\delta_0,\delta_3,\delta_5\in\oneton{\nt_0}$ so
that $\delta_5\not=\delta_0$ and $\delta_5\not=\delta_3$).

If (i), then $\delta_{-1}\in F_0$, $\delta_4\in F_2$,
${_g\delta}_{-1}=\delta_{-1}$, and ${_g\delta}_4=\delta_4$, giving
$3=1+2\le\nt[k]_{\delta_{-1}}+2=\hat{k}_{{_g\delta}_{-1}}$ and
$2=4-2\le\nt[k]_{\delta_4}-2=\hat{k}_{{_g\delta}_4}$, and so,
$\hat{s}_1^{{_g\delta}_{-1}}$, $\hat{s}_2^{{_g\delta}_{-1}}$,
$\hat{s}_1^{{_g\delta}_4}$, and $\hat{s}_2^{{_g\delta}_4}$ exist
(or $\delta_0\in F_2$, $\delta_5\in F_0$, ${_g\delta}_0=\delta_0$,
and ${_g\delta}_5=\delta_5$, giving
$2=4-2\le\nt[k]_{\delta_0}-2=\hat{k}_{{_g\delta}_0}$ and
$3=1+2\le\nt[k]_{\delta_5}+2=\hat{k}_{{_g\delta}_5}$, and so,
$\hat{s}_1^{{_g\delta}_0}$, $\hat{s}_2^{{_g\delta}_0}$,
$\hat{s}_1^{{_g\delta}_5}$, and $\hat{s}_2^{{_g\delta}_5}$ exist).

If (ii), then $\delta_{-1}\in F_0$, $\delta_4\in F_1$,
${_g\delta}_{-1}=\delta_{-1}$, and ${_g\delta}_4=\delta_4$, giving
$3=1+2\le\nt[k]_{\delta_{-1}}+2=\hat{k}_{{_g\delta}_{-1}}$ and
$2\le\nt[k]_{\delta_4}=\hat{k}_{{_g\delta}_4}$, and so,
$\hat{s}_1^{{_g\delta}_{-1}}$, $\hat{s}_2^{{_g\delta}_{-1}}$,
$\hat{s}_1^{{_g\delta}_4}$, and $\hat{s}_2^{{_g\delta}_4}$ exist
(or $\delta_0\in F_1$, $\delta_5\in F_0$, ${_g\delta}_0=\delta_0$,
and ${_g\delta}_5=\delta_5$, giving
$2\le\nt[k]_{\delta_0}=\hat{k}_{{_g\delta}_0}$ and
$3=1+2\le\nt[k]_{\delta_5}+2=\hat{k}_{{_g\delta}_5}$, and so,
$\hat{s}_1^{{_g\delta}_0}$, $\hat{s}_2^{{_g\delta}_0}$,
$\hat{s}_1^{{_g\delta}_5}$, and $\hat{s}_2^{{_g\delta}_5}$ exist).

\ \ Case 5:  Let $x\in\mathrm{V}(\X{m}\setminus\bigcup
G^{\prime})$, $\delta\in F_0$, and $k\in\oneton{\nt[k]_{\delta}}$
so that $\nt[\alpha](x)=\nt[s]_k^{\delta}$.

Then, $k+2\le\nt[k]_{\delta}+2=\hat{k}_{\delta}$, and so,
$\hat{s}_{k+2}^{\delta}$ exists.

\ \ Case 6:  Let $x\in\mathrm{V}(\X{m})$ and $\delta\in F_2$ so
that $\nt[\alpha](x)=\nt[s]_1^{\delta}$.

Then,
$\nPhi{m}{0}(x)=\nt[\beta](\nt[\alpha](x))=\nt[\beta](\nt[s]_1^{\delta})=v_{n-1}$,
and by Proposition \ref{claim9}, $x\in g$ for some $g\in G$.

Suppose $g\notin G_0$, and let $\nt[t]\in\oneton[0]{4}$ so that
$x=x_{r+\nt[t]}^i$, where $g=\xseq{i}{r}{r+1}{r+4}$.  Then,
$\nt[t]\in\{1,3\}$.  Suppose $\nt[\alpha](x_r^i)\not=\nt[s]_0$ and
$\nt[\alpha](x_{r+4}^i)\not=\nt[s]_0$.

If $\nt[t]=1$, then $\nt[\alpha](x_r^i)=\nt[s]_2^{\delta}$, giving
$\nPhi{m}{0}(x_r^i)=v_0$, and
$\nt[\alpha](x_{r-1}^i)=\nt[s]_1^{\delta}$ or
$\nt[\alpha](x_{r-1}^i)=\nt[s]_3^{\delta}$.  Thus,
$\nPhi{m}{0}(x_{r-1}^i)=v_{n-1}$ and
$\overline{\nPhi{m}{0}}(\xseq{i}{r-1}{r}{r+3})=\edge{v_{n-1}}{}{v_0,v_{n-1},v_0,v_{n-1}}{}$,
contradicting Proposition \ref{claim9}.

If $\nt[t]=3$, then $\nt[\alpha](x_{r+4}^i)=\nt[s]_2^{\delta}$,
giving $\nPhi{m}{0}(x_{r+4}^i)=v_0$, and
$\nt[\alpha](x_{r+5}^i)=\nt[s]_1^{\delta}$ or
$\nt[\alpha](x_{r+5}^i)=\nt[s]_3^{\delta}$.  Thus,
$\nPhi{m}{0}(x_{r+5}^i)=v_{n-1}$ and
$\overline{\nPhi{m}{0}}(\xseq{i}{r+1}{r+2}{r+5})=\edge{v_{n-1}}{}{v_0,v_{n-1},v_0,v_{n-1}}{}$,
contradicting Proposition \ref{claim9}.

Thus, $\nt[\alpha](x_r^i)=\nt[s]_0$ or
$\nt[\alpha](x_{r+4}^i)=\nt[s]_0$, and so, $g\in G^{\prime}$,
giving that $\hat{\alpha}(x)$ is defined.  If $g\in G_0$, then by
Proposition \ref{claim9}, $x=x_1^i$ and $\nPhi{m}{0}(x_3^i)=v_n$,
where $g=\langle x_0^i,x_1^i,x_2^i \rangle$, and so,
$\nt[\alpha](x_2^i)\not=\nt[s]_2^{\delta}$.  Then,
$\nt[\alpha](x_2^i)=\nt[s]_0$, giving $g\in G_0^{\prime}\bigcup
G_0^{\prime\prime}\subseteq G^{\prime}$, and so, $\hat{\alpha}(x)$
is defined.

\ \ Case 6.5:  Let $x\in\mathrm{V}(\X{m})$ and $\delta\in F_3$ so
that $\nt[\alpha](x)=\nt[s]_{1}^{\delta}$.

Then, by Proposition \ref{claim9} and definition of $F_3$, $x\in
g$ for some $g\in G^{\prime}$, and so, $\hat{\alpha}(x)$ is
defined.

\ \ Case 7:  Let $x\in\mathrm{V}(\X{m}\setminus\bigcup
G^{\prime})$, $\delta\in F_2$, and $k\in\oneton{\nt[k]_{\delta}}$
so that $\nt[\alpha](x)=\nt[s]_k^{\delta}$.

From Case 6, $k\in\oneton[2]{\nt[k]_{\delta}}$.  If $k=2$, then
$\hat{\alpha}(x)$ is defined.  If
$k\in\oneton[3]{\nt[k]_{\delta}}$, then
$\hat{k}_{\delta}=\nt[k]_{\delta}-2\ge k-2\ge 1$, and so,
$\hat{s}_{k-2}^{\delta}$ exists.

\ \ Case 8:  Let $x\in\mathrm{V}(\X{m}\setminus\bigcup
G^{\prime})$, $\delta\in F_1$, and $k\in\oneton{\nt[k]_{\delta}}$
so that $\nt[\alpha](x)=\nt[s]_k^{\delta}$.

Then, $1\le k\le\nt[k]_{\delta}=\hat{k}_{\delta}$, and so,
$\hat{s}_k^{\delta}$ exists.

\ \ Case 9:  Let $g_1=\langle x_0^{i_1},x_1^{i_1},x_2^{i_1}
\rangle,g_2=\langle x_0^{i_2},x_1^{i_2},x_2^{i_2} \rangle\in G_0$
so that $i_1\not=i_2$ and $\X{m}\supseteq g_1\bigcap
g_2\not=\emptyset$.

Then, $x_0^{i_1}=x_0^{i_2}=v_0$.  If $D_0^{\prime}\bigcup
D_0^{\prime\prime}\not=\emptyset$, then
$\hat{\alpha}(x_0^{i_1})=\hat{\alpha}(x_0^{i_2})=\hat{s}_0$.  If
$D_0^{\prime}\bigcup D_0^{\prime\prime}=\emptyset$, then
$x_0^{i_1}=x_0^{i_2}=x$ for some $x$ as given in case 5, case 7,
or case 8.

\ \ Case 10:  Let
$g_1=\xseq{i}{r_1}{r_1+1}{r_1+4},g_2=\xseq{i}{r_2}{r_2+1}{r_2+4}\in\bigcup\limits_{i=1}^{4}G_i$
so that $r_1\not=r_2$ and $\X{m}\supseteq g_1\bigcap
g_2\not=\emptyset$.

Then, $x_{r_1+4}^i=x_{r_2}^i$  and
$\nPhi{m}{0}(x_{r_1+4}^i)=\nPhi{m}{0}(x_{r_2}^i)=v_{n+1}$, or
$x_{r_1}^i=x_{r_2+4}^i$ and
$\nPhi{m}{0}(x_{r_1}^i)=\nPhi{m}{0}(x_{r_2+4}^i)=v_{n+1}$. Without
loss of generality, suppose $x_{r_1+4}^i=x_{r_2}^i$.  Then,
$g_1\in G_2\bigcup G_4$ and $g_2\in G_3\bigcup G_4$.

\ \ \ \ Case 10.a:  $g_1,g_2\in G^{\prime}$.

Then, $g_1\in G_2^{\{0\}}\bigcup G_2^{\{0,2\}}$ and $g_2\in
G_3^{\{4\}}\bigcup G_3^{\{2,4\}}$, giving

i) $\overline{\nt[\alpha]}(\xseq{i}{r_1-1}{r_1}{r_1+4})=\langle
\nt[s]_1^{{_{g_1}\delta}_{-1}},\nt[s]_0,\nt[s]_1^{{_{g_1}\delta}_4},\nt[s]_2^{{_{g_1}\delta}_4},
\nt[s]_3^{{_{g_1}\delta}_4},\nt[s]_4^{{_{g_1}\delta}_4} \rangle$
for some
${_{g_1}\delta}_{-1},{_{g_1}\delta}_4\linebreak[0]\in\oneton{\nt_0}$
so that ${_{g_1}\delta}_{-1}\not={_{g_1}\delta}_4$ or

ii) $\overline{\nt[\alpha]}(\xseq{i}{r_1-1}{r_1}{r_1+4})=\langle
\nt[s]_1^{{_{g_1}\delta}_{-1}},\nt[s]_0,\nt[s]_1^{{_{g_1}\delta}_1},\nt[s]_0,
\nt[s]_1^{{_{g_1}\delta}_4},\nt[s]_2^{{_{g_1}\delta}_4} \rangle$
for some
${_{g_1}\delta}_{-1},{_{g_1}\delta}_1,\linebreak[0]{_{g_1}\delta}_4\in\oneton{\nt_0}$
so that ${_{g_1}\delta}_{-1}\not={_{g_1}\delta}_1$ and
${_{g_1}\delta}_{-1}\not={_{g_1}\delta}_4$, from case 4.

If (i), then
$\overline{\nt[\alpha]}(\xseq{i}{r_2}{r_2+1}{r_2+5})=\langle
\nt[s]_4^{{_{g_2}\delta}_0},\nt[s]_3^{{_{g_2}\delta}_0},\nt[s]_2^{{_{g_2}\delta}_0},\nt[s]_1^{{_{g_2}\delta}_0},
\nt[s]_0,\nt[s]_1^{{_{g_2}\delta}_5} \rangle$ for some
${_{g_2}\delta}_0,{_{g_2}\delta}_5\in\oneton{\nt_0}$ so that
${_{g_2}\delta}_0\not={_{g_2}\delta}_5$, from case 4.  So,
${_{g_1}\delta}_4={_{g_2}\delta}_0$, giving
$\hat{\alpha}(x_{r_1+4}^i)=\hat{s}_2^{{_{g_1}\delta}_4}=\hat{s}_2^{{_{g_2}\delta}_0}=\hat{\alpha}(x_{r_2}^i)$.

If (ii), then
$\overline{\nt[\alpha]}(\xseq{i}{r_2}{r_2+1}{r_2+5})=\langle
\nt[s]_2^{{_{g_2}\delta}_0},\nt[s]_1^{{_{g_2}\delta}_0},\nt[s]_0,\nt[s]_1^{{_{g_2}\delta}_3},
\nt[s]_0,\nt[s]_1^{{_{g_2}\delta}_5} \rangle$ for some
${_{g_2}\delta}_0,{_{g_2}\delta}_3,{_{g_2}\delta}_5\in\oneton{\nt_0}$
so that ${_{g_2}\delta}_5\not={_{g_2}\delta}_0$ and
${_{g_2}\delta}_5\not={_{g_2}\delta}_3$, from case 4.  So,
${_{g_1}\delta}_4={_{g_2}\delta}_0$, giving
$\hat{\alpha}(x_{r_1+4}^i)=\hat{s}_2^{{_{g_1}\delta}_4}=\hat{s}_2^{{_{g_2}\delta}_0}=\hat{\alpha}(x_{r_2}^i)$.

Thus, $\hat{\alpha}(x_{r_1+4}^i)$ is unique, since if $g_3\in G$
so that $g_3\not= g_1$ and $g_3\not= g_2$, $x_{r_1+4}^i\notin
g_3$.

\ \ \ \ Case 10.b:  $g_1\in G^{\prime}$ and $g_2\notin
G^{\prime}$, or $g_2\in G^{\prime}$ and $g_1\notin G^{\prime}$.

Since if $g_3\in G$ so that $g_3\not= g_1$ and $g_3\not= g_2$,
$x_{r_1+4}^i\notin g_3$, then $g_1$ is the unique element of
$G^{\prime}$ containing $x_{r_1+4}^i$ or $g_2$ is the unique
element of $G^{\prime}$ containing $x_{r_2}^i$, and so,
$\hat{\alpha}(x_{r_1+4}^i)$ is unique.

\ \ \ \ Case 10.c:  $g_1,g_2\notin G^{\prime}$.

Since if $g_3\in G$ so that $g_3\not=g_1$ and $g_3\not= g_2$,
$x_{r_1+4}^i\notin g_3$, then,
$x_{r_1+4}^i\in\mathrm{V}(\X{m}\setminus\bigcup G^{\prime})$, and
so, $\hat{\alpha}(x_{r_1+4}^i)$ is unique.

\underline{\scriptsize \bf CLAIM}:  $\hat{\alpha}$ is simplicial.

Let $x,y\in\mathrm{V}(\X{m})$ be distinct and adjacent.

\ \ Case 1:  $x,y\in g$ for some $g\in G^{\prime}$.

Then, $\hat{\alpha}(x)$ and $\hat{\alpha}(y)$ are adjacent by
definition of $\hat{\alpha}$ on $g$.

\ \ Case 2:  $x\in g$ for some $g\in G^{\prime}$ and
$y\in\mathrm{V}(\X{m}\setminus\bigcup G^{\prime})$.

\ \ \ \ Case 2.a:  $g\in G_0^{\prime}\bigcup G_0^{\prime\prime}$.

Then, $x=x_2^i$ and $y=x_3^i$ where $g=\langle x_0^i,x_1^i,x_2^i
\rangle$ for some $i\in\oneton{n}$.

\ \ \ \ \ \ Case 2.a.i:  $g\in G_0^{\prime}$.

From Case 1.a of the previous claim,

i) $\overline{\nt[\alpha]}(\langle x_0^i,x_1^i,x_2^i,x_3^i
\rangle)=\langle
\nt[s]_0,\nt[s]_1^{{_g\delta}_3},\nt[s]_2^{{_g\delta}_3},\nt[s]_3^{{_g\delta}_3}
\rangle$ for some ${_g\delta}_3\in F_1$ or

ii) $\overline{\nt[\alpha]}(\langle x_0^i,x_1^i,x_2^i,x_3^i
\rangle)=\langle
\nt[s]_0,\nt[s]_1^{{_g\delta}_1},\nt[s]_0,\nt[s]_1^{{_g\delta}_3}
\rangle$ for some ${_g\delta}_1\in F_1\bigcup F_2$ and for some
${_g\delta}_3\in F_0$.

If (i), then
$\hat{\alpha}(x)=\hat{\alpha}(x_2^i)=\hat{s}_2^{{_g\delta}_3}$ and
$\hat{\alpha}(y)=\hat{\alpha}(x_3^i)=\hat{s}_3^{{_g\delta}_3}$,
and so, $\hat{\alpha}(x)$ and $\hat{\alpha}(y)$ are adjacent.

If (ii), then
$\hat{\alpha}(x)=\hat{\alpha}(x_2^i)=\hat{s}_2^{{_g\delta}_3}$ and
$\hat{\alpha}(y)=\hat{\alpha}(x_3^i)=\hat{s}_{1+2}^{{_g\delta}_3}=\hat{s}_3^{{_g\delta}_3}$,
and so, $\hat{\alpha}(x)$ and $\hat{\alpha}(y)$ are adjacent.

\ \ \ \ \ \ Case 2.a.ii:  $g\in G_0^{\prime\prime}$.

From Case 1.b of the previous claim,

i) $\overline{\nt[\alpha]}(\langle x_0^i,x_1^i,x_2^i,x_3^i
\rangle)=\langle
\nt[s]_2^{{_g\delta}_0},\nt[s]_1^{{_g\delta}_0},\nt[s]_0,\nt[s]_1^{{_g\delta}_3}
\rangle$ for some ${_g\delta}_3\in F_0$,

ii) $\overline{\nt[\alpha]}(\langle x_0^i,x_1^i,x_2^i,x_3^i
\rangle)=\langle
\nt[s]_2^{{_g\delta}_3},\nt[s]_1^{{_g\delta}_3},\nt[s]_2^{{_g\delta}_3},\nt[s]_3^{{_g\delta}_3}
\rangle$ for some ${_g\delta}_3\in F_1$, or

iii) $\overline{\nt[\alpha]}(\langle x_0^i,x_1^i,x_2^i,x_3^i
\rangle)=\langle
\nt[s]_2^{{_g\delta}_3},\nt[s]_3^{{_g\delta}_3},\nt[s]_4^{{_g\delta}_3},\nt[s]_5^{{_g\delta}_3}
\rangle$ for some ${_g\delta}_3\in F_2$.

If (i), then
$\hat{\alpha}(x)=\hat{\alpha}(x_2^i)=\hat{s}_2^{{_g\delta}_3}$ and
$\hat{\alpha}(y)=\hat{\alpha}(x_3^i)=\hat{s}_{1+2}^{{_g\delta}_3}=\hat{s}_3^{{_g\delta}_3}$,
and so, $\hat{\alpha}(x)$ and $\hat{\alpha}(y)$ are adjacent.

If (ii), then
$\hat{\alpha}(x)=\hat{\alpha}(x_2^i)=\hat{s}_2^{{_g\delta}_3}$ and
$\hat{\alpha}(y)=\hat{\alpha}(x_3^i)=\hat{s}_3^{{_g\delta}_3}$,
and so, $\hat{\alpha}(x)$ and $\hat{\alpha}(y)$ are adjacent.

If (iii), then
$\hat{\alpha}(x)=\hat{\alpha}(x_2^i)=\hat{s}_2^{{_g\delta}_3}$ and
$\hat{\alpha}(y)=\hat{\alpha}(x_3^i)=\hat{s}_{5-2}^{{_g\delta}_3}=\hat{s}_3^{{_g\delta}_3}$,
and so, $\hat{\alpha}(x)$ and $\hat{\alpha}(y)$ are adjacent.

\ \ \ \ Case 2.b:  $g\in\bigcup\limits_{i=1}^{4}G_i$.

Then, $x=x_r^i$ or $x=x_{r+4}^i$ where $g=\xseq{i}{r}{r+1}{r+4}$.

\ \ \ \ \ \ Case 2.b.i:  $g\in G_1^{\{0\}}\bigcup G_1^{\{0,2\}}$.

From Case 2 of the previous claim,

i) $\overline{\nt[\alpha]}(\xseq{i}{r-1}{r}{r+5})=\langle
\nt[s]_1^{{_g\delta}_{-1}},\nt[s]_0,\nt[s]_1^{{_g\delta}_4},\nt[s]_2^{{_g\delta}_4},\linebreak[0]\nt[s]_3^{{_g\delta}_4},
\nt[s]_4^{{_g\delta}_4},\nt[s]_5^{{_g\delta}_4} \rangle$ for some
${_g\delta}_{-1}\in F_0$ and ${_g\delta}_4\in F_2$,

ii) $\overline{\nt[\alpha]}(\xseq{i}{r-1}{r}{r+5})=\langle
\nt[s]_1^{{_g\delta}_{-1}},\nt[s]_0,\nt[s]_1^{{_g\delta}_4},\nt[s]_2^{{_g\delta}_4},\nt[s]_1^{{_g\delta}_4},
\nt[s]_2^{{_g\delta}_4},\nt[s]_3^{{_g\delta}_4} \rangle$ for some
${_g\delta}_{-1}\in F_0$ and ${_g\delta}_4\in F_1$, or

iii) $\overline{\nt[\alpha]}(\xseq{i}{r-1}{r}{r+5})=\langle
\nt[s]_1^{{_g\delta}_{-1}},\nt[s]_0,\nt[s]_1^{{_g\delta}_1},\nt[s]_0,\nt[s]_1^{{_g\delta}_4},
\nt[s]_2^{{_g\delta}_4},\nt[s]_3^{{_g\delta}_4} \rangle$ for some
${_g\delta}_{-1}\in F_0$, ${_g\delta}_1\in F_1\bigcup F_2$, and
${_g\delta}_4\in F_1$.

\ \ \ \ \ \ \ \ Case 2.b.i.1:  $x=x_r^i$.

Then, $y=x_{r-1}^i$.

If (i),(ii), or (iii), then
$\hat{\alpha}(x)=\hat{\alpha}(x_r^i)=\hat{s}_2^{{_g\delta}_{-1}}$
and
$\hat{\alpha}(y)=\hat{\alpha}(x_{r-1}^i)=\hat{s}_{1+2}^{{_g\delta}_{-1}}=\hat{s}_3^{{_g\delta}_{-1}}$,
and so, $\hat{\alpha}(x)$ and $\hat{\alpha}(y)$ are adjacent.

\ \ \ \ \ \ \ \ Case 2.b.i.2:  $x=x_{r+4}^i$.

Then, $y=x_{r+5}^i$.

If (i), then
$\hat{\alpha}(x)=\hat{\alpha}(x_{r+4}^i)=\hat{s}_2^{{_g\delta}_4}$
and
$\hat{\alpha}(y)=\hat{\alpha}(x_{r+5}^i)=\hat{s}_{5-2}^{{_g\delta}_4}=\hat{s}_3^{{_g\delta}_4}$,
and so, $\hat{\alpha}(x)$ and $\hat{\alpha}(y)$ are adjacent.

If (ii) or (iii), then
$\hat{\alpha}(x)=\hat{\alpha}(x_{r+4}^i)=\hat{s}_2^{{_g\delta}_4}$
and
$\hat{\alpha}(y)=\hat{\alpha}(x_{r+5}^i)=\hat{s}_3^{{_g\delta}_4}$,
and so, $\hat{\alpha}(x)$ and $\hat{\alpha}(y)$ are adjacent.

\ \ \ \ \ \ Case 2.b.ii:  $g\in G_1^{\{4\}}\bigcup G_1^{\{2,4\}}$.

From Case 2 of the previous claim,

i) $\overline{\nt[\alpha]}(\xseq{i}{r-1}{r}{r+5})=\langle
\nt[s]_5^{{_g\delta}_0},\nt[s]_4^{{_g\delta}_0}\nt[s]_3^{{_g\delta}_0},\nt[s]_2^{{_g\delta}_0},\linebreak[0]\nt[s]_1^{{_g\delta}_0},
\nt[s]_0,\nt[s]_1^{{_g\delta}_5} \rangle$ for some
${_g\delta}_0\in F_2$ and ${_g\delta}_5\in F_0$,

ii) $\overline{\nt[\alpha]}(\xseq{i}{r-1}{r}{r+5})=\langle
\nt[s]_3^{{_g\delta}_0},\nt[s]_2^{{_g\delta}_0},\linebreak[0]\nt[s]_1^{{_g\delta}_0},\nt[s]_2^{{_g\delta}_0},\nt[s]_1^{{_g\delta}_0},
\nt[s]_0,\nt[s]_1^{{_g\delta}_5} \rangle$ for some
${_g\delta}_0\in F_1$ and ${_g\delta}_5\in F_0$, or

iii) $\overline{\nt[\alpha]}(\xseq{i}{r-1}{r}{r+5})=\langle
\nt[s]_3^{{_g\delta}_0},\nt[s]_2^{{_g\delta}_0},\nt[s]_1^{{_g\delta}_0},\nt[s]_0,\nt[s]_1^{{_g\delta}_3},
\nt[s]_0,\nt[s]_1^{{_g\delta}_5} \rangle$ for some
${_g\delta}_0\in F_1$, ${_g\delta}_3\in F_1\bigcup F_2$, and
${_g\delta}_5\in F_0$.

\ \ \ \ \ \ \ \ Case 2.b.ii.1:  $x=x_{r+4}^i$.

Then, $y=x_{r+5}^i$.

If (i),(ii), or (iii), then
$\hat{\alpha}(x)=\hat{\alpha}(x_{r+4}^i)=\hat{s}_2^{{_g\delta}_5}$
and
$\hat{\alpha}(y)=\hat{\alpha}(x_{r+5}^i)=\hat{s}_{1+2}^{{_g\delta}_5}=\hat{s}_3^{{_g\delta}_5}$,
and so, $\hat{\alpha}(x)$ and $\hat{\alpha}(y)$ are adjacent.

\ \ \ \ \ \ \ \ Case 2.b.ii.2:  $x=x_r^i$.

Then, $y=x_{r-1}^i$.

If (i), then
$\hat{\alpha}(x)=\hat{\alpha}(x_r^i)=\hat{s}_2^{{_g\delta}_0}$ and
$\hat{\alpha}(y)=\hat{\alpha}(x_{r-1}^i)=\hat{s}_{5-2}^{{_g\delta}_0}=\hat{s}_3^{{_g\delta}_0}$,
and so, $\hat{\alpha}(x)$ and $\hat{\alpha}(y)$ are adjacent.

If (ii) or (iii), then
$\hat{\alpha}(x)=\hat{\alpha}(x_r^i)=\hat{s}_2^{{_g\delta}_0}$ and
$\hat{\alpha}(y)=\hat{\alpha}(x_{r-1}^i)=\hat{s}_3^{{_g\delta}_0}$,
and so, $\hat{\alpha}(x)$ and $\hat{\alpha}(y)$ are adjacent.

\ \ \ \ \ \ Case 2.b.iii:  $g\in G_1^{\{0,4\}}\bigcup
G_1^{\{0,2,4\}}$.

From Case 3 of the previous claim,

i) $\overline{\nt[\alpha]}(\xseq{i}{r-1}{r}{r+5})=\langle
\nt[s]_1^{{_g\delta}_{-1}},\nt[s]_0,\nt[s]_1^{{_g\delta}_2},\nt[s]_2^{{_g\delta}_2},\linebreak[0]\nt[s]_1^{{_g\delta}_2},
\nt[s]_0,\nt[s]_1^{{_g\delta}_5} \rangle$ for some
${_g\delta}_{-1},{_g\delta}_5\in F_0$ and ${_g\delta}_2\in
F_1\bigcup F_2$ or

ii) $\overline{\nt[\alpha]}(\xseq{i}{r-1}{r}{r+5})=\langle
\nt[s]_1^{{_g\delta}_{-1}},\nt[s]_0,\nt[s]_1^{{_g\delta}_1},\nt[s]_0,\nt[s]_1^{{_g\delta}_3},
\nt[s]_0,\nt[s]_1^{{_g\delta}_5} \rangle$ for some
${_g\delta}_{-1},{_g\delta}_5\in F_0$ and
${_g\delta}_1,{_g\delta}_3\in F_1\bigcup F_2$.

\ \ \ \ \ \ \ \ Case 2.b.iii.1:  $x=x_r^i$.

Then, $y=x_{r-1}^i$.

If (i) or (ii), then
$\hat{\alpha}(x)=\hat{\alpha}(x_r^i)=\hat{s}_2^{{_g\delta}_{-1}}$
and
$\hat{\alpha}(y)=\hat{\alpha}(x_{r-1}^i)=\hat{s}_{1+2}^{{_g\delta}_{-1}}=\hat{s}_3^{{_g\delta}_{-1}}$,
and so, $\hat{\alpha}(x)$ and $\hat{\alpha}(y)$ are adjacent.

\ \ \ \ \ \ \ \ Case 2.b.iii.2:  $x=x_{r+4}^i$.

Then, $y=x_{r+5}^i$.

If (i) or (ii), then
$\hat{\alpha}(x)=\hat{\alpha}(x_{r+4}^i)=\hat{s}_2^{{_g\delta}_5}$
and
$\hat{\alpha}(y)=\hat{\alpha}(x_{r+5}^i)=\hat{s}_{1+2}^{{_g\delta}_5}=\hat{s}_3^{{_g\delta}_5}$,
and so, $\hat{\alpha}(x)$ and $\hat{\alpha}(y)$ are adjacent.

\ \ \ \ \ \ Case 2.b.iv:  $g\in G_2^{\{0\}}\bigcup G_2^{\{0,2\}}$.

From Case 4 of the previous claim,

i) $\overline{\nt[\alpha]}(\xseq{i}{r-1}{r}{r+4})=\langle
\nt[s]_1^{{_g\delta}_{-1}},\nt[s]_0,\nt[s]_1^{{_g\delta}_4},\nt[s]_2^{{_g\delta}_4},\linebreak[0]\nt[s]_3^{{_g\delta}_4},
\nt[s]_4^{{_g\delta}_4} \rangle$ for some ${_g\delta}_{-1}\in F_0$
and ${_g\delta}_4\in F_2$ or

ii) $\overline{\nt[\alpha]}(\xseq{i}{r-1}{r}{r+4})=\langle
\nt[s]_1^{{_g\delta}_{-1}},\nt[s]_0,\nt[s]_1^{{_g\delta}_1},\linebreak[0]\nt[s]_0,\nt[s]_1^{{_g\delta}_4},
\nt[s]_2^{{_g\delta}_4} \rangle$ for some ${_g\delta}_{-1}\in
F_0$, ${_g\delta}_1\in F_1\bigcup F_2$, and ${_g\delta}_4\in F_1$.

\ \ \ \ \ \ \ \ Case 2.b.iv.1:  $x=x_r^i$.

Then, $y=x_{r-1}^i$.

If (i) or (ii), then
$\hat{\alpha}(x)=\hat{\alpha}(x_r^i)=\hat{s}_2^{{_g\delta}_{-1}}$
and
$\hat{\alpha}(y)=\hat{\alpha}(x_{r-1}^i)=\hat{s}_{1+2}^{{_g\delta}_{-1}}=\hat{s}_3^{{_g\delta}_{-1}}$,
and so, $\hat{\alpha}(x)$ and $\hat{\alpha}(y)$ are adjacent.

\ \ \ \ \ \ \ \ Case 2.b.iv.2:  $x=x_{r+4}^i$.

Then, $y=x_{r+5}^i$.

If (i), then by Proposition \ref{claim9}, $x_{r+5}^i$ exists and
$\nPhi{m}{0}(x_{r+5}^i)=v_{n-1}$, and so,
$\nt[\alpha](x_{r+5}^i)=\nt[s]_3^{{_g\delta}_4}$ or
$\nt[\alpha](x_{r+5}^i)=\nt[s]_5^{{_g\delta}_4}$.  Then,
$\hat{\alpha}(x)=\hat{\alpha}(x_{r+4}^i)=\hat{s}_2^{{_g\delta}_4}$,
and
$\hat{\alpha}(y)=\hat{\alpha}(x_{r+5}^i)=\hat{s}_{3-2}^{{_g\delta}_4}=\hat{s}_1^{{_g\delta}_4}$
or
$\hat{\alpha}(y)=\hat{\alpha}(x_{r+5}^i)=\hat{s}_{5-2}^{{_g\delta}_4}=\hat{s}_3^{{_g\delta}_4}$,
and so, $\hat{\alpha}(x)$ and $\hat{\alpha}(y)$ are adjacent.

If (ii), then by Proposition \ref{claim9}, $x_{r+5}^i$ exists and
$\nPhi{m}{0}(x_{r+5}^i)=v_{n-1}$, and so,
$\nt[\alpha](x_{r+5}^i)=\nt[s]_1^{{_g\delta}_4}$ or
$\nt[\alpha](x_{r+5}^i)=\nt[s]_3^{{_g\delta}_4}$.  Then,
$\hat{\alpha}(x)=\hat{\alpha}(x_{r+4}^i)=\hat{s}_2^{{_g\delta}_4}$,
and
$\hat{\alpha}(y)=\hat{\alpha}(x_{r+5}^i)=\hat{s}_1^{{_g\delta}_4}$
or
$\hat{\alpha}(y)=\hat{\alpha}(x_{r+5}^i)=\hat{s}_3^{{_g\delta}_4}$,
and so, $\hat{\alpha}(x)$ and $\hat{\alpha}(y)$ are adjacent.

\ \ \ \ \ \ Case 2.b.v:  $g\in G_3^{\{4\}}\bigcup G_3^{\{2,4\}}$.

From Case 4 of the previous claim,

i) $\overline{\nt[\alpha]}(\xseq{i}{r}{r+1}{r+5})=\langle
\nt[s]_4^{{_g\delta}_0},\nt[s]_3^{{_g\delta}_0},\nt[s]_2^{{_g\delta}_0},\nt[s]_1^{{_g\delta}_0},\linebreak[0]
\nt[s]_0,\nt[s]_1^{{_g\delta}_5} \rangle$ for some
${_g\delta}_0\in F_2$ and ${_g\delta}_5\in F_0$ or

ii) $\overline{\nt[\alpha]}(\xseq{i}{r}{r+1}{r+5})=\langle
\nt[s]_2^{{_g\delta}_0},\nt[s]_1^{{_g\delta}_0},\nt[s]_0,\linebreak[0]\nt[s]_1^{{_g\delta}_3},
\nt[s]_0,\nt[s]_1^{{_g\delta}_5} \rangle$ for some
${_g\delta}_0\in F_1$, ${_g\delta}_3\in F_1\bigcup F_2$, and
${_g\delta}_5\in F_0$.

\ \ \ \ \ \ \ \ Case 2.b.v.1:  $x=x_{r+4}^i$.

Then, $y=x_{r+5}^i$.

If (i) or (ii), then
$\hat{\alpha}(x)=\hat{\alpha}(x_{r+4}^i)=\hat{s}_2^{{_g\delta}_5}$
and
$\hat{\alpha}(y)=\hat{\alpha}(x_{r+5}^i)=\hat{s}_{1+2}^{{_g\delta}_5}=\hat{s}_3^{{_g\delta}_5}$,
and so, $\hat{\alpha}(x)$ and $\hat{\alpha}(y)$ are adjacent.

\ \ \ \ \ \ \ \ Case 2.b.v.2:  $x=x_r^i$.

Then, $y=x_{r-1}^i$.

If (i), then by Proposition \ref{claim9}, $x_{r-1}^i$ exists and
$\nPhi{m}{0}(x_{r-1}^i)=v_{n-1}$, and so,
$\nt[\alpha](x_{r-1}^i)=\nt[s]_3^{{_g\delta}_0}$ or
$\nt[\alpha](x_{r-1}^i)=\nt[s]_5^{{_g\delta}_0}$.  Then,
$\hat{\alpha}(x)=\hat{\alpha}(x_r^i)=\hat{s}_2^{{_g\delta}_0}$,
and
$\hat{\alpha}(y)=\hat{\alpha}(x_{r-1}^i)=\hat{s}_{3-2}^{{_g\delta}_0}=\hat{s}_1^{{_g\delta}_0}$
or
$\hat{\alpha}(y)=\hat{\alpha}(x_{r-1}^i)=\hat{s}_{5-2}^{{_g\delta}_0}=\hat{s}_3^{{_g\delta}_0}$,
and so, $\hat{\alpha}(x)$ and $\hat{\alpha}(y)$ are adjacent.

If (ii), then by Proposition \ref{claim9}, $x_{r-1}^i$ exists and
$\nPhi{m}{0}(x_{r-1}^i)=v_{n-1}$, and so,
$\nt[\alpha](x_{r-1}^i)=\nt[s]_1^{{_g\delta}_0}$ or
$\nt[\alpha](x_{r-1}^i)=\nt[s]_3^{{_g\delta}_0}$.  Then,
$\hat{\alpha}(x)=\hat{\alpha}(x_r^i)=\hat{s}_2^{{_g\delta}_0}$,
and
$\hat{\alpha}(y)=\hat{\alpha}(x_{r-1}^i)=\hat{s}_1^{{_g\delta}_0}$
or
$\hat{\alpha}(y)=\hat{\alpha}(x_{r-1}^i)=\hat{s}_3^{{_g\delta}_0}$,
and so, $\hat{\alpha}(x)$ and $\hat{\alpha}(y)$ are adjacent.

\ \ Case 3:  $x,y\in\mathrm{V}(\X{m}\setminus\bigcup G^{\prime}$.

\ \ \ \ Case 3.a:
$\nt[\alpha](x),\nt[\alpha](y)\in\nt[S]^{\delta}\setminus\{\nt[s]_0\}$
for some $\delta\in\oneton{\nt_0}$.

Since $\nt[\alpha]$ is simplicial, then $\nt[\alpha](x)$ and
$\nt[\alpha](y)$ are adjacent by Proposition \ref{claim9}, and so,
$\hat{\alpha}(x)$ and $\hat{\alpha}(y)$ are adjacent by definition
of $\hat{\alpha}$.

\ \ \ \ Case 3.b:  $\nt[\alpha](x)=\nt[s]_0$.

\ \ \ \ \ \ Case 3.b.i:  $\nt[\alpha](y)=\nt[s]_1^{\delta}$ for
some $\delta\in F_0$.

Then,
$\nPhi{m}{0}(y)=\nt[\beta](\nt[\alpha](y))=\nt[\beta](\nt[s]_1^{\delta}\not=v_{n-1}$,
and so by Proposition \ref{claim9}, there exist $i\in\oneton{n}$
and $z\in\mathrm{V}(\X{m})$ so that $z$ is adjacent to $x$ and
$z\not=y$ and $\edge{z}{}{x,y}{}\subseteq\arm{m}$.  By Proposition
\ref{claim13}, $\nPhi{m}{0}(z)=v_{n-1}$, and so by Proposition
\ref{claim9}, $z\in g$ for some $g\in G$.

Suppose $g\notin G_0$.  Then, $z=x_{r+1}^i$ or $z=x_{r+3}^i$ where
$g=\xseq{i}{r}{r+1}{r+4}$.  Since $\nPhi{m}{0}(y)\not=v_{n-1}$,
then $\edge{z}{}{x,y}{}=\langle x_{r+1}^i,x_r^i,x_{r-1}^i \rangle$
or $\edge{z}{}{x,y}{}=\langle x_{r+3}^i,x_{r+4}^i,x_{r+5}^i
\rangle$, and so, $\nt[\alpha](x_r^i)=\nt[s]_0$ or
$\nt[\alpha](x_{r+4}^i)=\nt[s]_0$.  Thus, $g\in G^{\prime}$,
contradicting $x\in g$.

Suppose $g\in G_0$.  Then, $z=x_1^i$ and $x=x_2^i$ where
$g=\langle x_0^i,x_1^i,x_2^i \rangle$, and so $g\in
G_0^{\prime}\bigcup G_0^{\prime\prime}\subseteq G^{\prime}$,
contradicting $x\in g$.

Thus, Case 3.b.i does not occur.

\ \ \ \ \ \ Case 3.b.ii:  $\nt[\alpha](y)=\nt[s]_1^{\delta}$ for
some $\delta\in F_2$.

From Case 6 of the previous claim, $y\in g$ for some $g\in
G^{\prime}$, giving a contradiction.

Thus, Case 3.b.ii does not occur.

\ \ \ \ \ \ Case 3.b.iii:  $\nt[\alpha](y)=\nt[s]_1^{\delta}$ for
some $\delta\in F_1$.

Then, $\hat{\alpha}(x)=\hat{s}_0$ and
$\hat{\alpha}(y)=\hat{s}_1^{\delta}$, and so, $\hat{\alpha}(x)$
and $\hat{\alpha}(y)$ are adjacent.

\underline{\scriptsize \bf CLAIM}:  $\hat{\beta}(\hat{s}_0)=v_0$.

By definition of $\hat{\beta}$,
$\hat{\beta}(\hat{s}_0)=\nt[\beta](\nt[s]_0)$, and by hypothesis,
$\nt[\beta](\nt[s]_0)=v_0$.

\underline{\scriptsize \bf CLAIM}:
$\hat{\beta}\circ\hat{\alpha}=\nPhi{m}{0}$.

Let $x\in\mathrm{V}(\X{m})$.

\ \ Case 1:  $x\in g$ for some $g\in G^{\prime}$.

\ \ \ \ Case 1.a:  $g\in G_0^{\prime}\bigcup G_0^{\prime\prime}$.

Let $g=\langle x_0^i,x_1^i,x_2^i \rangle$ for some
$i\in\oneton{n}$.  By Proposition \ref{claim9},
$\nPhi{m}{0}(\langle x_0^i,x_1^i,x_2^i
\rangle)\linebreak[0]=\edge{v}{0}{v_{n-1},v_0}{}$.

\ \ \ \ \ \ Case1.a.i:  $g\in G_0^{\prime}$.

From Case 2.a.i of the second claim, (i) or (ii) holds.

If (i), then $\overline{\nPhi{m}{0}}(\langle x_0^i,x_1^i,x_2^i
\rangle)=\overline{\nt[\beta]}(\overline{\nt[\alpha]}(\langle
x_0^i,x_1^i,x_2^i \rangle))=\overline{\nt[\beta]}(\langle
\nt[s]_0,\nt[s]_1^{{_g\delta}_3},\nt[s]_2^{{_g\delta}_3}
\rangle)=\overline{\hat{\beta}}(\langle
\hat{s}_0,\hat{s}_1^{{_g\delta}_3},\linebreak[0]\hat{s}_2^{{_g\delta}_3}
\rangle)=\overline{\hat{\beta}}(\overline{\hat{\alpha}}(\langle
x_0^i,x_1^i,x_2^i \rangle))$.

If (ii), then $\overline{\nPhi{m}{0}}(\langle x_0^i,x_1^i,x_2^i
\rangle)=\edge{v}{0}{v_{n-1},v_0}{}=\overline{\hat{\beta}}(\langle
\hat{s}_0,\hat{s}_1^{{_g\delta}_3},\hat{s}_2^{{_g\delta}_3}
\rangle)=\overline{\hat{\beta}}(\overline{\hat{\alpha}}(\langle
x_0^i,x_1^i,\linebreak[0]x_2^i \rangle))$.

\ \ \ \ \ \ Case1.a.ii:  $g\in G_0^{\prime\prime}$.

From Case 2.a.ii of the second claim, (i),(ii), or (iii) holds.

If (i), then $\overline{\nPhi{m}{0}}(\langle x_0^i,x_1^i,x_2^i
\rangle)=\edge{v}{0}{v_{n-1},v_0}{}=\overline{\hat{\beta}}(\langle
\hat{s}_0,\hat{s}_1^{{_g\delta}_3},\hat{s}_2^{{_g\delta}_3}
\rangle)=\overline{\hat{\beta}}(\overline{\hat{\alpha}}(\langle
x_0^i,x_1^i,\linebreak[0]x_2^i \rangle))$.

If (ii), then $\overline{\nPhi{m}{0}}(\langle x_0^i,x_1^i,x_2^i
\rangle)=\overline{\nt[\beta]}(\overline{\nt[\alpha]}(\langle
x_0^i,x_1^i,x_2^i \rangle))=\overline{\nt[\beta]}(\langle
\nt[s]_0,\nt[s]_1^{{_g\delta}_3},\nt[s]_2^{{_g\delta}_3}
\rangle)=\overline{\hat{\beta}}(\langle
\hat{s}_0,\hat{s}_1^{{_g\delta}_3},\linebreak[0]\hat{s}_2^{{_g\delta}_3}
\rangle)=\overline{\hat{\beta}}(\overline{\hat{\alpha}}(\langle
x_0^i,x_1^i,x_2^i \rangle))$.

If (iii), then $\overline{\nPhi{m}{0}}(\langle x_0^i,x_1^i,x_2^i
\rangle)=\overline{\nt[\beta]}(\overline{\nt[\alpha]}(\langle
x_0^i,x_1^i,x_2^i \rangle))=\overline{\nt[\beta]}(\langle
\nt[s]_2^{{_g\delta}_3},\nt[s]_3^{{_g\delta}_3},\nt[s]_4^{{_g\delta}_3}
\rangle)=\overline{\hat{\beta}}(\langle
\hat{s}_0,\linebreak[0]\hat{s}_1^{{_g\delta}_3},\hat{s}_2^{{_g\delta}_3}
\rangle)=\overline{\hat{\beta}}(\overline{\hat{\alpha}}(\langle
x_0^i,x_1^i,x_2^i \rangle))$.

\ \ \ \ Case 1.b:  $g\in\bigcup\limits_{i=1}^{4}G_i$.

Let $g=\xseq{i}{r}{r+1}{r+4}$ for some $i\in\oneton{n}$ and
$r\in\oneton[0]{\ell_i}$.

\ \ \ \ \ \ Case 1.b.i:  $g\in G_1^{\{0\}}\bigcup G_1^{\{0,2\}}$.

From Case 2.b.i of the second claim, (i),(ii), or (iii) holds.

If (i), then
$\overline{\nPhi{m}{0}}(\xseq{i}{r}{r+1}{r+4})=\overline{\nPhi{m}{0}}(\xseq{i}{r}{r+1}{r+3})\bigvee
\overline{\nPhi{m}{0}}(\edge{x_{r+3}^i}{}{x_{r+4}^i}{})=\langle
v_0,v_{n-1},v_0,v_{n-1} \rangle\bigvee
\overline{\nPhi{m}{0}}(\edge{x_{r+3}^i}{}{x_{r+4}^i}{})=\overline{\hat{\beta}}(\langle
\hat{s}_2^{{_g\delta}_{-1}},\hat{s}_1^{{_g\delta}_{-1}},\hat{s}_0,\hat{s}_1^{{_g\delta}_4}
\rangle)\bigvee\overline{\nt[\beta]}(\overline{\nt[\alpha]}(\linebreak\edge{x_{r+3}^i}{}{x_{r+4}^i}{}))=
\overline{\hat{\beta}}(\overline{\hat{\alpha}}(\xseq{i}{r}{r+1}{r+3}))\bigvee\overline{\nt[\beta]}(\langle
\nt[s]_3^{{_g\delta}_4},\nt[s]_4^{{_g\delta}_4}
\rangle)=\overline{\hat{\beta}}(\overline{\hat{\alpha}}(\xseq{i}{r}{r+1}{r+3}\linebreak[0]))\bigvee\overline{\hat{\beta}}(\langle
\hat{s}_1^{{_g\delta}_4},\hat{s}_2^{{_g\delta}_4}
\rangle)=\overline{\hat{\beta}}(\overline{\hat{\alpha}}(\xseq{i}{r}{r+1}{r+3}))\bigvee\overline{\hat{\beta}}(\overline{\hat{\alpha}}((\langle
x_{r+3}^i,x_{r+4}^i
\rangle))=\overline{\hat{\beta}}(\overline{\hat{\alpha}}(\xseq{i}{r}{r+1}{r+4}))$,
and if (ii) or (iii), then
$\overline{\nPhi{m}{0}}(\xseq{i}{r}{r+1}{r+4})=\overline{\nPhi{m}{0}}(\xseq{i}{r}{r+1}{r+3})\bigvee
\overline{\nPhi{m}{0}}(\edge{x_{r+3}^i}{}{x_{r+4}^i}{})=\langle
v_0,v_{n-1},v_0,v_{n-1} \rangle\bigvee
\overline{\nPhi{m}{0}}(\edge{x_{r+3}^i}{}{x_{r+4}^i}{})=\overline{\hat{\beta}}(\langle
\hat{s}_2^{{_g\delta}_{-1}},\hat{s}_1^{{_g\delta}_{-1}},\linebreak[0]\hat{s}_0,\hat{s}_1^{{_g\delta}_4}
\rangle)\bigvee\overline{\nt[\beta]}(\overline{\nt[\alpha]}(\edge{x_{r+3}^i}{}{x_{r+4}^i}{}))=
\overline{\hat{\beta}}(\overline{\hat{\alpha}}(\xseq{i}{r}{r+1}{r+3}))\bigvee\overline{\nt[\beta]}(\langle
\nt[s]_1^{{_g\delta}_4},\nt[s]_2^{{_g\delta}_4}
\rangle)=\overline{\hat{\beta}}(\overline{\hat{\alpha}}(\xseq{i}{r}{r+1}{r+3}))\bigvee\overline{\hat{\beta}}(\langle
\hat{s}_1^{{_g\delta}_4},\hat{s}_2^{{_g\delta}_4}
\rangle)=\overline{\hat{\beta}}(\overline{\hat{\alpha}}(\xseq{i}{r}{r+1}{r+3}))\bigvee\overline{\hat{\beta}}(\overline{\hat{\alpha}}((\langle
x_{r+3}^i,x_{r+4}^i
\rangle))=\overline{\hat{\beta}}(\overline{\hat{\alpha}}(\xseq{i}{r}{r+1}{r+4}))$,
since
$\hat{\beta}(\hat{s}_1^{{_g\delta}_4})=\nt[\beta](\nt[s]_{1+2}^{{_g\delta}_4})=\nt[\beta](\nt[s]_{3}^{{_g\delta}_4})=v_{n-1}$
if ${_g\delta}_4\in F_2$ and
$\hat{\beta}(\hat{s}_1^{{_g\delta}_4})=\nt[\beta](\nt[s]_{1}^{{_g\delta}_4})=v_{n-1}$
if ${_g\delta}_4\in F_1$.

\ \ \ \ \ \ Case 1.b.ii:  $g\in G_1^{\{4\}}\bigcup G_1^{\{2,4\}}$.

From Case 2.b.ii of the second claim, (i),(ii), or (iii) holds.

If (i), then
$\overline{\nPhi{m}{0}}(\xseq{i}{r}{r+1}{r+4})=\overline{\nPhi{m}{0}}(\edge{x_{r}^i}{}{x_{r+1}^i}{})
\bigvee\overline{\nPhi{m}{0}}(\xseq{i}{r+1}{r+2}{r+4})=\overline{\nPhi{m}{0}}(\edge{x_{r}^i}{}{x_{r+1}^i}{})\bigvee\langle
v_{n-1},v_0,v_{n-1},v_0
\rangle=\overline{\nt[\beta]}(\overline{\nt[\alpha]}(\edge{x_{r}^i}{}{x_{r+1}^i}{}))\bigvee\overline{\hat{\beta}}(\langle
\hat{s}_1^{{_g\delta}_0},\hat{s}_0,\hat{s}_1^{{_g\delta}_5},\linebreak[0]\hat{s}_2^{{_g\delta}_5}
\rangle)=\overline{\nt[\beta]}(\langle
\nt[s]_4^{{_g\delta}_0},\nt[s]_3^{{_g\delta}_0}
\rangle)\bigvee\overline{\hat{\beta}}(\overline{\hat{\alpha}}(\xseq{i}{r+1}{r+2}{r+4}))=\overline{\hat{\beta}}(\langle
\hat{s}_2^{{_g\delta}_0},\hat{s}_1^{{_g\delta}_0}
\rangle)\bigvee\overline{\hat{\beta}}(\overline{\hat{\alpha}}(\xseq{i}{r+1}{r+2}{r+4}))
=\overline{\hat{\beta}}(\overline{\hat{\alpha}}(\edge{x_r^i}{}{x_{r+1}^i}{}))
\bigvee\overline{\hat{\beta}}(\overline{\hat{\alpha}}(\xseq{i}{r+1}{r+2}{r+4}))=\overline{\hat{\beta}}(\overline{\hat{\alpha}}(\xseq{i}{r}{r+1}{r+4}))$,
and if (ii) or (iii), then
$\overline{\nPhi{m}{0}}(\xseq{i}{r}{r+1}{r+4})=\overline{\nPhi{m}{0}}(\edge{x_{r}^i}{}{x_{r+1}^i}{})
\bigvee\linebreak[0]\overline{\nPhi{m}{0}}(\xseq{i}{r+1}{r+2}{r+4})=\overline{\nPhi{m}{0}}(\edge{x_{r}^i}{}{x_{r+1}^i}{})
\bigvee\langle v_{n-1},v_0,v_{n-1},v_0
\rangle=\overline{\nt[\beta]}(\overline{\nt[\alpha]}(\edge{x_{r}^i}{}{x_{r+1}^i}{}))\linebreak[0]
\bigvee\overline{\hat{\beta}}(\langle
\hat{s}_1^{{_g\delta}_0},\hat{s}_0,\hat{s}_1^{{_g\delta}_5},\hat{s}_2^{{_g\delta}_5}
\rangle)=\overline{\nt[\beta]}(\langle
\nt[s]_2^{{_g\delta}_0},\nt[s]_1^{{_g\delta}_0}
\rangle)\bigvee\overline{\hat{\beta}}(\overline{\hat{\alpha}}(\xseq{i}{r+1}{r+2}{r+4}))=
\overline{\hat{\beta}}(\langle
\hat{s}_2^{{_g\delta}_0},\hat{s}_1^{{_g\delta}_0}\linebreak[0]
\rangle)\bigvee\overline{\hat{\beta}}(\overline{\hat{\alpha}}(\xseq{i}{r+1}{r+2}{r+4}))
=\overline{\hat{\beta}}(\overline{\hat{\alpha}}((\langle
x_{r}^i,x_{r+1}^i
\rangle))\bigvee\overline{\hat{\beta}}(\overline{\hat{\alpha}}(\xseq{i}{r+1}{r+2}{r+4}))=\overline{\hat{\beta}}(\overline{\hat{\alpha}}(\xseq{i}{r}{r+1}{r+4}))$,
since
$\hat{\beta}(\hat{s}_1^{{_g\delta}_0})=\nt[\beta](\nt[s]_{1+2}^{{_g\delta}_0})=\nt[\beta](\nt[s]_{3}^{{_g\delta}_0})=v_{n-1}$
if ${_g\delta}_0\in F_2$ and
$\hat{\beta}(\hat{s}_1^{{_g\delta}_0})=\nt[\beta](\nt[s]_{1}^{{_g\delta}_0})=v_{n-1}$
if ${_g\delta}_0\in F_1$.

\ \ \ \ \ \ Case 1.b.iii:  $g\in G_1^{\{0,4\}}\bigcup
G_1^{\{0,2,4\}}$.

From Case 2.b.iii of the second claim, (i) or (ii) holds.

If (i) or (ii), then $\nPhi{m}{0}(\xseq{i}{r}{r+1}{r+4})=\langle
v_0,v_{n-1},v_0,v_{n-1},v_0 \rangle=\overline{\hat{\beta}}(\langle
\hat{s}_2^{{_g\delta}_{-1}},\linebreak[0]\hat{s}_1^{{_g\delta}_{-1}},\hat{s}_0,\hat{s}_1^{{_g\delta}_5},\hat{s}_2^{{_g\delta}_5}
\rangle)=\overline{\hat{\beta}}(\overline{\hat{\alpha}}(\xseq{i}{r}{r+1}{r+4}))$.

\ \ \ \ \ \ Case 1.b.iv:  $g\in G_2^{\{0\}}\bigcup G_2^{\{0,2\}}$.

From Case 2.b.iv of the second claim, (i) or (ii) holds.

If (i), then
$\overline{\nPhi{m}{0}}(\xseq{i}{r}{r+1}{r+4})=\overline{\nPhi{m}{0}}(\xseq{i}{r}{r+1}{r+3})\bigvee
\overline{\nPhi{m}{0}}(\edge{x_{r+3}^i}{}{x_{r+4}^i}{})=\langle
v_0,v_{n-1},v_0,v_{n-1} \rangle\bigvee
\overline{\nPhi{m}{0}}(\edge{x_{r+3}^i}{}{x_{r+4}^i}{})=\overline{\hat{\beta}}(\langle
\hat{s}_2^{{_g\delta}_{-1}},\hat{s}_1^{{_g\delta}_{-1}},\hat{s}_0,\hat{s}_1^{{_g\delta}_4}
\rangle)\bigvee\overline{\nt[\beta]}(\overline{\nt[\alpha]}(\linebreak\edge{x_{r+3}^i}{}{x_{r+4}^i}{}))=
\overline{\hat{\beta}}(\overline{\hat{\alpha}}(\xseq{i}{r}{r+1}{r+3}))\bigvee\overline{\nt[\beta]}(\langle
\nt[s]_3^{{_g\delta}_4},\nt[s]_4^{{_g\delta}_4}
\rangle)=\overline{\hat{\beta}}(\overline{\hat{\alpha}}(\xseq{i}{r}{r+1}{r+3}\linebreak[0]))\bigvee\overline{\hat{\beta}}(\langle
\hat{s}_1^{{_g\delta}_4},\hat{s}_2^{{_g\delta}_4}
\rangle)=\overline{\hat{\beta}}(\overline{\hat{\alpha}}(\xseq{i}{r}{r+1}{r+3}))\bigvee\overline{\hat{\beta}}(\overline{\hat{\alpha}}((\langle
x_{r+3}^i,x_{r+4}^i
\rangle))=\overline{\hat{\beta}}(\overline{\hat{\alpha}}(\xseq{i}{r}{r+1}{r+4}))$,
and if (ii), then
$\overline{\nPhi{m}{0}}(\xseq{i}{r}{r+1}{r+4})=\overline{\nPhi{m}{0}}(\xseq{i}{r}{r+1}{r+3})\bigvee\linebreak[0]
\overline{\nPhi{m}{0}}(\edge{x_{r+3}^i}{}{x_{r+4}^i}{})=\langle
v_0,v_{n-1},v_0,v_{n-1} \rangle\bigvee
\overline{\nPhi{m}{0}}(\edge{x_{r+3}^i}{}{x_{r+4}^i}{})=\overline{\hat{\beta}}(\langle
\hat{s}_2^{{_g\delta}_{-1}},\hat{s}_1^{{_g\delta}_{-1}},\hat{s}_0,\hat{s}_1^{{_g\delta}_4}
\rangle)\linebreak[0]\bigvee\overline{\nt[\beta]}(\overline{\nt[\alpha]}(\edge{x_{r+3}^i}{}{x_{r+4}^i}{}))=
\overline{\hat{\beta}}(\overline{\hat{\alpha}}(\xseq{i}{r}{r+1}{r+3}))\bigvee\overline{\nt[\beta]}(\langle
\nt[s]_1^{{_g\delta}_4},\nt[s]_2^{{_g\delta}_4}
\rangle)=\overline{\hat{\beta}}(\overline{\hat{\alpha}}(\xseq{i}{r}{r+1}{r+3}))\bigvee\overline{\hat{\beta}}(\langle
\hat{s}_1^{{_g\delta}_4},\hat{s}_2^{{_g\delta}_4}
\rangle)=\overline{\hat{\beta}}(\overline{\hat{\alpha}}(\xseq{i}{r}{r+1}{r+3}))\bigvee\overline{\hat{\beta}}(\overline{\hat{\alpha}}((\langle
x_{r+3}^i,x_{r+4}^i
\rangle))=\overline{\hat{\beta}}(\overline{\hat{\alpha}}\linebreak[0](\xseq{i}{r}{r+1}{r+4}))$,
since
$\hat{\beta}(\hat{s}_1^{{_g\delta}_4})=\nt[\beta](\nt[s]_{1+2}^{{_g\delta}_4})=\nt[\beta](\nt[s]_{3}^{{_g\delta}_4})=v_{n-1}$
if ${_g\delta}_4\in F_2$ and
$\hat{\beta}(\hat{s}_1^{{_g\delta}_4})=\nt[\beta](\nt[s]_{1}^{{_g\delta}_4})=v_{n-1}$
if ${_g\delta}_4\in F_1$.

\ \ \ \ \ \ Case 1.b.v:  $g\in G_3^{\{4\}}\bigcup G_3^{\{2,4\}}$.

From Case 2.b.v of the second claim, (i) or (ii) holds.

If (i), then
$\overline{\nPhi{m}{0}}(\xseq{i}{r}{r+1}{r+4})=\overline{\nPhi{m}{0}}(\edge{x_{r}^i}{}{x_{r+1}^i}{})
\bigvee\overline{\nPhi{m}{0}}(\xseq{i}{r+1}{r+2}{r+4})=\overline{\nPhi{m}{0}}(\edge{x_{r}^i}{}{x_{r+1}^i}{})\bigvee\langle
v_{n-1},v_0,v_{n-1},v_0
\rangle=\overline{\nt[\beta]}(\overline{\nt[\alpha]}(\edge{x_{r}^i}{}{x_{r+1}^i}{}))\bigvee\overline{\hat{\beta}}(\langle
\hat{s}_1^{{_g\delta}_0},\hat{s}_0,\hat{s}_1^{{_g\delta}_5},\linebreak[0]\hat{s}_2^{{_g\delta}_5}
\rangle)=\overline{\nt[\beta]}(\langle
\nt[s]_4^{{_g\delta}_0},\nt[s]_3^{{_g\delta}_0}
\rangle)\bigvee\overline{\hat{\beta}}(\overline{\hat{\alpha}}(\xseq{i}{r+1}{r+2}{r+4}))=\overline{\hat{\beta}}(\langle
\hat{s}_2^{{_g\delta}_0},\hat{s}_1^{{_g\delta}_0}
\rangle)\bigvee\overline{\hat{\beta}}(\overline{\hat{\alpha}}(\xseq{i}{r+1}{r+2}{r+4}))
=\overline{\hat{\beta}}(\overline{\hat{\alpha}}(\edge{x_r^i}{}{x_{r+1}^i}{}))
\bigvee\overline{\hat{\beta}}(\overline{\hat{\alpha}}(\xseq{i}{r+1}{r+2}{r+4}))=\overline{\hat{\beta}}(\overline{\hat{\alpha}}(\xseq{i}{r}{r+1}{r+4}))$,
and if (ii), then
$\overline{\nPhi{m}{0}}(\xseq{i}{r}{r+1}{r+4})=\overline{\nPhi{m}{0}}(\edge{x_{r}^i}{}{x_{r+1}^i}{})
\bigvee\overline{\nPhi{m}{0}}(\xseq{i}{r+1}{r+2}{r+4})=\overline{\nPhi{m}{0}}(\edge{x_{r}^i}{}{x_{r+1}^i}{})
\bigvee\langle v_{n-1},v_0,v_{n-1},v_0
\rangle=\overline{\nt[\beta]}(\overline{\nt[\alpha]}(\edge{x_{r}^i}{}{x_{r+1}^i}{}))
\bigvee\overline{\hat{\beta}}(\langle
\hat{s}_1^{{_g\delta}_0},\linebreak[0]\hat{s}_0,\hat{s}_1^{{_g\delta}_5},\hat{s}_2^{{_g\delta}_5}
\rangle)=\overline{\nt[\beta]}(\langle
\nt[s]_2^{{_g\delta}_0},\nt[s]_1^{{_g\delta}_0}
\rangle)\bigvee\overline{\hat{\beta}}(\overline{\hat{\alpha}}(\xseq{i}{r+1}{r+2}{r+4}))=
\overline{\hat{\beta}}(\langle
\hat{s}_2^{{_g\delta}_0},\hat{s}_1^{{_g\delta}_0}
\rangle)\bigvee\overline{\hat{\beta}}(\overline{\hat{\alpha}}(\linebreak[0]\xseq{i}{r+1}{r+2}{r+4}))
=\overline{\hat{\beta}}(\overline{\hat{\alpha}}((\langle
x_{r}^i,x_{r+1}^i
\rangle))\bigvee\overline{\hat{\beta}}(\overline{\hat{\alpha}}(\xseq{i}{r+1}{r+2}{r+4}))=\overline{\hat{\beta}}(\overline{\hat{\alpha}}(\xseq{i}{r}{r+1}{r+4}))$,
since
$\hat{\beta}(\hat{s}_1^{{_g\delta}_0})=\nt[\beta](\nt[s]_{1+2}^{{_g\delta}_0})=\nt[\beta](\nt[s]_{3}^{{_g\delta}_0})=v_{n-1}$
if ${_g\delta}_0\in F_2$ and
$\hat{\beta}(\hat{s}_1^{{_g\delta}_0})=\nt[\beta](\nt[s]_{1}^{{_g\delta}_0})=v_{n-1}$
if ${_g\delta}_0\in F_1$.

\ \ Case 2:  $x\in\mathrm{V}(\X{m}\setminus\bigcup G^{\prime})$.

\ \ \ \ Case 2.a:  $\nt[\alpha](x)=\nt[s]_0$.

Then,
$\nPhi{m}{0}(x)=\nt[\beta](\nt[\alpha](x))=\nt[\beta](\nt[s]_0)=\hat{\beta}(\hat{s}_0)=\hat{\beta}(\hat{\alpha}(x))$.

\ \ \ \ Case 2.b:  $\nt[\alpha](x)=\nt[s]_k^{\delta}$ for some
$\delta\in F_0$.

Then,
$\nPhi{m}{0}(x)=\nt[\beta](\nt[\alpha](x))=\nt[\beta](\nt[s]_k^{\delta})=\hat{\beta}(\hat{s}_{k+2}^{\delta})=\hat{\beta}(\hat{\alpha}(x))$.

\ \ \ \ Case 2.c:  $\nt[\alpha](x)=\nt[s]_k^{\delta}$ for some
$\delta\in F_2$.

\ \ \ \ \ \ Case 2.c.i:  $k=2$.

Then,
$\nPhi{m}{0}(x)=\nt[\beta](\nt[\alpha](x))=\nt[\beta](\nt[s]_2^{\delta})=v_0=\nt[\beta](\nt[s]_0)=\hat{\beta}(\hat{s}_0)=
\hat{\beta}(\hat{\alpha}(x))$.

\ \ \ \ \ \ Case 2.c.ii:  $k\in\oneton[3]{\nt[k]_{\delta}}$.

Then,
$\nPhi{m}{0}(x)=\nt[\beta](\nt[\alpha](x))=\nt[\beta](\nt[s]_k^{\delta})=\hat{\beta}(\hat{s}_{k-2}^{\delta})=
\hat{\beta}(\hat{\alpha}(x))$.

\ \ \ \ Case 2.d:  $\nt[\alpha](x)=\nt[s]_k^{\delta}$ for some
$\delta\in\ F_1$.

Then,
$\nPhi{m}{0}(x)=\nt[\beta](\nt[\alpha](x))=\nt[\beta](\nt[s]_k^{\delta})=\hat{\beta}(\hat{s}_k^{\delta})=\hat{\beta}(\hat{\alpha}(x))$.
\end{proof}

\newpage
\begin{figure}[here] \hspace{2cm}
\setlength{\unitlength}{.35cm}
\begin{picture}(0,4.29)

\put(8.8,-6.1){\Large\ensuremath{{_{5}X}_{1}}}

\normalcolor \put(14,-1.8){\line(-1,0){8}}
\put(10,-1.8){\line(0,1){4}} \color{Peach}
\put(10,-1.8){\line(-1,-1){2.8284}}
\put(10,-1.8){\line(1,-1){2.8284}} \normalcolor
\put(9,-1.3){\footnotesize\ensuremath{v_0}}
\put(10,-1.8){\circle*{.4}}
\put(10,2.2){\circle*{.4}}
\put(6,-1.8){\circle*{.4}}
\put(14,-1.8){\circle*{.4}}
\put(6.5,-2.6){\footnotesize\ensuremath{v_4}}
\put(6.7,-1.8){\circle*{.4}}
\put(7.1716,-4.6284){\circle*{.4}}
\put(7.6663,-4.1337){\circle*{.4}}
\put(12.8284,-4.6284){\circle*{.4}}

\put(10,1.5){\circle*{.2}}
\put(8.9,-1.8){\circle*{.2}}
\put(7.8,-1.8){\circle*{.2}}

\put(10.3667,-1.8){\circle*{.2}}
\put(10.7334,-1.8){\circle*{.2}}
\put(11.1001,-1.8){\circle*{.2}}
\put(11.4668,-1.8){\circle*{.2}}
\put(11.8335,-1.8){\circle*{.2}}
\put(12.2002,-1.8){\circle*{.2}}
\put(12.5669,-1.8){\circle*{.2}}
\put(12.9336,-1.8){\circle*{.2}}
\put(13.3,-1.8){\circle*{.2}}

\put(9.7407,-2.0593){\circle*{.2}}
\put(9.4814,-2.3186){\circle*{.2}}
\put(9.2221,-2.5779){\circle*{.2}}
\put(8.9628,-2.8372){\circle*{.2}}
\put(8.7035,-3.0965){\circle*{.2}}
\put(8.4442,-3.3558){\circle*{.2}}
\put(8.1849,-3.6151){\circle*{.2}}
\put(7.9256,-3.8744){\circle*{.2}}

\put(10.2593,-2.0593){\circle*{.2}}
\put(10.5186,-2.3186){\circle*{.2}}
\put(10.7779,-2.5779){\circle*{.2}}
\put(11.0372,-2.8372){\circle*{.2}}
\put(11.2965,-3.0965){\circle*{.2}}
\put(11.5558,-3.3558){\circle*{.2}}
\put(11.8151,-3.6151){\circle*{.2}}
\put(12.0744,-3.8744){\circle*{.2}}
\put(12.3337,-4.1337){\circle*{.2}}

\color{Black} \put(10.5186,-2.3186){\line(1,-1){2.3098}}
\put(9.4814,-2.3186){\line(-1,-1){2.3098}} \normalcolor

\color{Peach} \put(10,-1.8){\circle*{.4}}
\put(10.2593,-2.0593){\circle*{.2}}
\put(10.3667,-1.8){\circle*{.2}}
\put(9.7407,-2.0593){\circle*{.2}} \put(10,1.5){\circle*{.2}}
\put(8.9,-1.8){\circle*{.2}} \put(10.5186,-2.3186){\circle*{.2}}
\put(10.7334,-1.8){\circle*{.2}}
\put(9.4814,-2.3186){\circle*{.2}} \put(7.8,-1.8){\circle*{.2}}
\put(10,-1.8){\line(1,0){.7334}} \put(10,-1.8){\line(0,1){3.3}}
\put(10,-1.8){\line(-1,0){2.2}} \normalcolor

\put(-.7,-6.1){\Large\ensuremath{{_{5}X}_{0}}}

\normalcolor \put(4.5,-1.8){\line(-1,0){8}}
\put(.5,-1.8){\line(0,1){4}} \put(.5,-1.8){\line(-1,-1){2.8284}}
\put(.5,-1.8){\line(1,-1){2.8284}}
\put(-.5,-1.3){\footnotesize\ensuremath{v_0}}
\put(.5,-1.8){\circle*{.4}}
\put(.5,2.2){\circle*{.4}}
\put(-3.5,-1.8){\circle*{.4}}
\put(4.5,-1.8){\circle*{.4}}
\put(-3,-2.6){\footnotesize\ensuremath{v_4}}
\put(-2.8,-1.8){\circle*{.4}}
\put(-2.3284,-4.6284){\circle*{.4}}
\put(-1.8337,-4.1337){\circle*{.4}}
\put(3.3284,-4.6284){\circle*{.4}}

\color{Peach} \put(.5,-1.8){\circle*{.4}}
\put(.5,-1.8){\line(-1,0){3.3}} \put(-2.8,-1.8){\circle*{.4}}
\normalcolor

\color{Peach} \thicklines \put(7.25,.2){\vector(-1,0){4}}
\normalcolor \thinlines \put(4.95,.8){\Large
\ensuremath{{_5\phi}}}

\end{picture}

\vspace{-.75cm} \setlength{\unitlength}{.35cm} \hspace{11.45cm}
\begin{picture}(0,0)

\put(-.7,-6.1){\Large\ensuremath{{_{5}X}_{m}}}

\normalcolor \put(4.5,-1.8){\line(-1,0){8}}
\put(.5,-1.8){\line(0,1){4}} \color{Peach}
\put(.5,-1.8){\line(-1,-1){2.8284}}
\put(.5,-1.8){\line(1,-1){2.8284}} \normalcolor
\put(-.5,-1.3){\footnotesize\ensuremath{v_0}}
\put(.5,-1.8){\circle*{.4}}

\color{Black} \put(1.5186,-2.8186){\line(1,-1){1.8098}}
\put(-0.5186,-2.8186){\line(-1,-1){1.8098}} \normalcolor

\color{Peach} 
\put(-.6,-1.8){\circle*{.2}} \put(.5,-.7){\circle*{.2}}
\put(1.6,-1.8){\circle*{.2}} \put(-.3,-2.6){\circle*{.2}}
\put(1.3,-2.6){\circle*{.2}} \put(.5,-1.8){\line(-1,0){1.4}}
\put(.5,-1.8){\line(1,0){1.4}} \put(.5,-1.8){\line(0,1){1.4}}
\put(.5,-1.8){\line(-1,-1){1}} \put(.5,-1.8){\line(1,-1){1}}
\put(.5,-1.8){\circle*{.4}} \normalcolor

\color{Peach} \thicklines \put(-10.05,.2){\vector(-1,0){4}}
\put(-2.05,.2){\vector(-1,0){4}} \put(7.05,.2){\vector(-1,0){4}}
\normalcolor \put(-9,0){\ensuremath{\cdots}}
\put(-8,0){\ensuremath{\cdots}} \put(7.8,0){\ensuremath{\cdots}}

\color{Peach} \put(-4.05,2){\vector(-1,0){8}} \normalcolor
\thinlines \put(-8.9,2.8){\Large \ensuremath{\nPhi[5]{m}{1}}}

\end{picture}

\vspace{3cm} \setlength{\unitlength}{.35cm} \hspace{7.15cm}
\begin{picture}(0,0)

\put(0,-7.6){\Large\ensuremath{T}}

\put(4.5,-1.8){\line(-1,0){8}} \put(.5,-1.8){\line(0,1){4}}
\put(.5,-1.8){\line(0,-1){4}} 
\put(.5,-1.8){\circle*{.4}} 

\color{Peach} \put(.5,-1.8){\circle*{.4}} \normalcolor

\thicklines \put(9,3.5){\vector(-1,-1){3}} \color{Peach}
\put(-5,.5){\vector(-1,1){3}} \normalcolor \thinlines
\put(7.9,1.25){\Large \ensuremath{\alpha}} \put(-8.35,1){\Large
\ensuremath{\beta}}

\end{picture}
\end{figure}

\vspace{6cm} \begin{figure}[here] \hspace{2cm}
\setlength{\unitlength}{.35cm}
\begin{picture}(0,0)

\put(8.8,-6.1){\Large\ensuremath{{_{5}X}_{1}}}

\normalcolor \put(14,-1.8){\line(-1,0){8}}
\put(10,-1.8){\line(0,1){4}} \color{Peach}
\put(10,-1.8){\line(-1,-1){2.8284}}
\put(10,-1.8){\line(1,-1){2.8284}} \normalcolor
\put(9,-1.3){\footnotesize\ensuremath{v_0}}
\put(10,-1.8){\circle*{.4}}
\put(10,2.2){\circle*{.4}}
\put(6,-1.8){\circle*{.4}}
\put(14,-1.8){\circle*{.4}}
\put(6.5,-2.6){\footnotesize\ensuremath{v_4}}
\put(6.7,-1.8){\circle*{.4}}
\put(7.1716,-4.6284){\circle*{.4}}
\put(7.6663,-4.1337){\circle*{.4}}
\put(12.8284,-4.6284){\circle*{.4}}

\put(10,1.5){\circle*{.2}}
\put(8.9,-1.8){\circle*{.2}}
\put(7.8,-1.8){\circle*{.2}}

\put(10.3667,-1.8){\circle*{.2}}
\put(10.7334,-1.8){\circle*{.2}}
\put(11.1001,-1.8){\circle*{.2}}
\put(11.4668,-1.8){\circle*{.2}}
\put(11.8335,-1.8){\circle*{.2}}
\put(12.2002,-1.8){\circle*{.2}}
\put(12.5669,-1.8){\circle*{.2}}
\put(12.9336,-1.8){\circle*{.2}}
\put(13.3,-1.8){\circle*{.2}}

\put(9.7407,-2.0593){\circle*{.2}}
\put(9.4814,-2.3186){\circle*{.2}}
\put(9.2221,-2.5779){\circle*{.2}}
\put(8.9628,-2.8372){\circle*{.2}}
\put(8.7035,-3.0965){\circle*{.2}}
\put(8.4442,-3.3558){\circle*{.2}}
\put(8.1849,-3.6151){\circle*{.2}}
\put(7.9256,-3.8744){\circle*{.2}}

\put(10.2593,-2.0593){\circle*{.2}}
\put(10.5186,-2.3186){\circle*{.2}}
\put(10.7779,-2.5779){\circle*{.2}}
\put(11.0372,-2.8372){\circle*{.2}}
\put(11.2965,-3.0965){\circle*{.2}}
\put(11.5558,-3.3558){\circle*{.2}}
\put(11.8151,-3.6151){\circle*{.2}}
\put(12.0744,-3.8744){\circle*{.2}}
\put(12.3337,-4.1337){\circle*{.2}}

\color{Black} \put(10.5186,-2.3186){\line(1,-1){2.3098}}
\put(9.4814,-2.3186){\line(-1,-1){2.3098}} \normalcolor

\color{Peach} \put(10,-1.8){\circle*{.4}}
\put(10.2593,-2.0593){\circle*{.2}}
\put(10.3667,-1.8){\circle*{.2}}
\put(9.7407,-2.0593){\circle*{.2}} \put(10,1.5){\circle*{.2}}
\put(8.9,-1.8){\circle*{.2}} \put(10.5186,-2.3186){\circle*{.2}}
\put(10.7334,-1.8){\circle*{.2}}
\put(9.4814,-2.3186){\circle*{.2}} \put(7.8,-1.8){\circle*{.2}}
\put(10,-1.8){\line(1,0){.7334}} \put(10,-1.8){\line(0,1){3.3}}
\put(10,-1.8){\line(-1,0){2.2}} \normalcolor

\put(-.7,-6.1){\Large\ensuremath{{_{5}X}_{0}}}

\normalcolor \put(4.5,-1.8){\line(-1,0){8}}
\put(.5,-1.8){\line(0,1){4}} \put(.5,-1.8){\line(-1,-1){2.8284}}
\put(.5,-1.8){\line(1,-1){2.8284}}
\put(-.5,-1.3){\footnotesize\ensuremath{v_0}}
\put(.5,-1.8){\circle*{.4}}
\put(.5,2.2){\circle*{.4}}
\put(-3.5,-1.8){\circle*{.4}}
\put(4.5,-1.8){\circle*{.4}}
\put(-3,-2.6){\footnotesize\ensuremath{v_4}}
\put(-2.8,-1.8){\circle*{.4}}
\put(-2.3284,-4.6284){\circle*{.4}}
\put(-1.8337,-4.1337){\circle*{.4}}
\put(3.3284,-4.6284){\circle*{.4}}

\color{Peach} \put(.5,-1.8){\circle*{.4}}
\put(.5,-1.8){\line(-1,0){3.3}} \put(-2.8,-1.8){\circle*{.4}}
\normalcolor

\color{Peach} \thicklines \put(7.25,.2){\vector(-1,0){4}}
\normalcolor \thinlines \put(4.95,.8){\Large
\ensuremath{{_5\phi}}}

\end{picture}

\vspace{-.75cm} \setlength{\unitlength}{.35cm} \hspace{11.45cm}
\begin{picture}(0,0)

\put(-.7,-6.1){\Large\ensuremath{{_{5}X}_{m}}}

\normalcolor \put(4.5,-1.8){\line(-1,0){8}}
\put(.5,-1.8){\line(0,1){4}} \color{Peach}
\put(.5,-1.8){\line(-1,-1){2.8284}}
\put(.5,-1.8){\line(1,-1){2.8284}} \normalcolor
\put(-.5,-1.3){\footnotesize\ensuremath{v_0}}
\put(.5,-1.8){\circle*{.4}}

\color{Black} \put(1.5186,-2.8186){\line(1,-1){1.8098}}
\put(-0.5186,-2.8186){\line(-1,-1){1.8098}} \normalcolor

\color{Peach} 
\put(-.6,-1.8){\circle*{.2}} \put(.5,-.7){\circle*{.2}}
\put(1.6,-1.8){\circle*{.2}} \put(-.3,-2.6){\circle*{.2}}
\put(1.3,-2.6){\circle*{.2}} \put(.5,-1.8){\line(-1,0){1.4}}
\put(.5,-1.8){\line(1,0){1.4}} \put(.5,-1.8){\line(0,1){1.4}}
\put(.5,-1.8){\line(-1,-1){1}} \put(.5,-1.8){\line(1,-1){1}}
\put(.5,-1.8){\circle*{.4}} \normalcolor

\color{Peach} \thicklines \put(-10.05,.2){\vector(-1,0){4}}
\put(-2.05,.2){\vector(-1,0){4}} \put(7.05,.2){\vector(-1,0){4}}
\normalcolor \put(-9,0){\ensuremath{\cdots}}
\put(-8,0){\ensuremath{\cdots}} \put(7.8,0){\ensuremath{\cdots}}

\color{Peach} \put(-4.05,2){\vector(-1,0){8}} \normalcolor
\thinlines \put(-8.9,2.8){\Large \ensuremath{\nPhi[5]{m}{1}}}

\end{picture}

\vspace{3cm} \setlength{\unitlength}{.35cm} \hspace{7.15cm}
\begin{picture}(0,0)

\put(0,-7.6){\Large\ensuremath{T}}

\put(4.5,-1.8){\line(-1,0){8}} \put(.5,-1.8){\line(0,1){4}}
\put(.5,-1.8){\line(0,-1){4}} 
\put(.5,-1.8){\circle*{.4}} 

\color{Peach} \put(.5,-1.8){\circle*{.4}}
\put(.5,-1.8){\line(-1,0){1.4}} \put(-.6,-1.8){\circle*{.2}}
\put(.5,-1.8){\line(0,1){1.4}} \put(.5,-.7){\circle*{.2}}
\put(.5,-1.8){\line(1,0){1.4}} \put(1.6,-1.8){\circle*{.2}}
\put(.5,-1.8){\line(0,-1){1.4}} \put(.5,-2.9){\circle*{.2}}
\normalcolor

\thicklines \put(9,3.5){\vector(-1,-1){3}} \color{Peach}
\put(-5,.5){\vector(-1,1){3}} \normalcolor \thinlines
\put(7.9,1.25){\Large \ensuremath{\alpha}} \put(-8.35,1){\Large
\ensuremath{\beta}}

\end{picture}
\end{figure}
\vspace{2.5cm}
\begin{figure}[here]
\caption[Factoring of ${_{5}\Phi}_{0}^{m}$]{(top) Factoring of
\nPhi[5]{m}{0} given by Theorem \ref{claim17}. (bottom) Factoring
given by Proposition \ref{claim14}.}
\end{figure}

\newpage

For Propositions \ref{claim15} and \ref{claim16} let
$\varprojlim\invseq{Y}{\Gamma}$ be generated by
$d[\nphi]\circ{_n\lambda},\nphi,\nphi,\ldots$ (Definition
\ref{def5.10}).

\begin{proposition} \label{claim15} Let $m\in\{1,2,\ldots\}$,
$\nt\in\oneton{n-1}$, $\nt[T]$ be a simple-$\nt$-od, and
$\nt[\alpha]:{_nY}_m\longrightarrow\nt[T]$ and
$\nt[\beta]:\nt[T]\longrightarrow {_nY}_1$ be simplicial maps so
that $\nt[\beta]\circ\nt[\alpha]={_n\Gamma}_1^m$ and
$\nt[\beta](\nt[s]_0)=v_0$ where $\nt[s]_0$ is the branch point of
$\nt[T]$.  Then, there exist a simple-$\nt$-od $T$ and simplicial
maps $\alpha:\X{m-1}\longrightarrow T$ and $\beta:T\longrightarrow
\X{0}$ so that $\beta\circ\alpha=\nPhi{m-1}{0}$ and
$\beta(s_0)=v_0$ where $s_0$ is the branch point of $T$.
\end{proposition}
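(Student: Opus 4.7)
The plan is to invert the dualization procedure, lifting the given factoring at the $Y$-system level to one at the $X$-system level. By construction of $\varprojlim\invseq{Y}{\Gamma}$ via Definition \ref{def5.10}, we have ${_nY}_0 = D(\X{0})$, while ${_nY}_1$ is identified with $\Y{1}$ through the consistency isomorphism $\nlambda$ (Proposition \ref{claim5}). Since $\Y{1}$ subdivides $\X{0}$ and subsequent ${_nY}_i$ arise as matching subdivisions, there is a canonical correspondence between vertices of (the appropriate subdivision comprising) ${_nY}_m$ and edges of $\X{m-1}$, essentially expressed by the $d$-operation of Definition \ref{def2.4} applied repeatedly, together with the consistency isomorphism.

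First I would unpack this correspondence carefully: under the identification given by $\nlambda$, the bonding map ${_n\Gamma}_0 = \dnphi\circ\nlambda$ is essentially $\dnphi$, and the subsequent subdivision matchings arrange ${_n\Gamma}_1^m$ to be the result of applying $d[\nphi,\cdot]$ (Definition \ref{def2.10}) iteratively to $\nPhi{0}{m-1}$. Thus the hypothesized factoring $\nt[\beta]\circ\nt[\alpha]={_n\Gamma}_1^m$ is, after identifications, a factoring at the level of $d[\nPhi{0}{m-1}, \cdot]$, with $\nt[\alpha]:D(\nPhi{0}{m-1},\X{m-1})\to\nt[T]$ simplicial and $\nt[\beta]:\nt[T]\to D(\nphi,\X{1})$ simplicial, and with $\nt[\beta](\nt[s]_0)={_n\lambda}(v_0)$.

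Next, I would construct $T$ and the factoring primally. For each arc $\nt[S]^i$ of $\nt[T]$, the image $\overline{\nt[\beta]}(\nt[S]^i)$ is a walk in $D(\nphi,\X{1})$ starting at ${_n\lambda}(v_0)$; the dual vertices along this walk correspond to edges of $\X{1}$, and the ultra lightness of $\dnphi$ (Proposition \ref{claim4}), together with consistency of $\nphi$ on $\nS$ (Propositions \ref{claim3} and \ref{claim6}), ensures that this sequence of edges traces out a walk in $\X{0}$ starting at $v_0$. I would define $S^i$ to be the arc with the same combinatorial length as $\nt[S]^i$ and define $\beta|_{S^i}$ to follow the traced walk, yielding $\beta(s_0)=v_0$. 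The map $\alpha:\X{m-1}\to T$ is then defined on vertices by tracking, through $\nt[\alpha]$, how edges of $\X{m-1}$ (as vertices of $D(\nPhi{0}{m-1},\X{m-1})$ via consistency) are distributed among the arcs of $\nt[T]$; simpliciality of $\alpha$ and the identity $\beta\circ\alpha=\nPhi{m-1}{0}$ follow by chasing around the naturality square relating $d[\phi,\psi]$ to $\phi\circ\psi$ (Definition \ref{def2.10}).

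The main obstacle will be verifying that the candidate $T$ is actually a simple-$\nt$-od (and not some more complicated graph with extra identifications, backtracking, or multiple branch points) and that $\alpha$ is well-defined as a simplicial map. Both difficulties are controlled precisely by the ultra lightness of $\dnphi$ (Proposition \ref{claim4}), which guarantees no folding in passing between primal and dual, and by the preservation of the edge selection $\nS$ (Proposition \ref{claim3}), which lets one uniquely reconstruct edges of $\X{0}$ from the dual vertices encountered along each arc of $\nt[T]$. Once these structural reversibility properties are carefully applied, the branch-point condition $\beta(s_0)=v_0$ and the commutativity $\beta\circ\alpha=\nPhi{m-1}{0}$ fall out of the construction.
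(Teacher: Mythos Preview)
Your proposal misidentifies the relationship between ${_nY}_m$ and $\X{m-1}$. You treat ${_nY}_m$ as (isomorphic to) $D(\nPhi{m-1}{0},\X{m-1})$ and attempt to invert the $d$-operation. But look at how $\varprojlim\invseq{Y}{\Gamma}$ is generated: the sequence is $d[\nphi]\circ\nlambda,\nphi,\nphi,\ldots$, so only the \emph{first} bonding map involves dualization; from level $1$ onward the maps are subdivisions of $\nphi$. Consequently ${_nY}_1=\Y{1}$ is a subdivision of $\X{0}$, and ${_n\Gamma}_1^m$ is simply a subdivision of $\nPhi{m-1}{0}$ matching $\Y{1}$. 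There is no correspondence between vertices of ${_nY}_m$ and edges of $\X{m-1}$; rather, ${_nY}_m$ subdivides $\X{m-1}$, so $\mathrm{V}(\X{m-1})\subseteq\mathrm{V}({_nY}_m)$.

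The paper exploits exactly this. Since $\nt[\beta](\nt[s]_0)=v_0\in\mathrm{V}(\X{0})$ and ${_n\Gamma}_1^m$ is a subdivision of $\nPhi{m-1}{0}$, one gets $\nt[\alpha]^{-1}(\nt[s]_0)\subseteq\mathrm{V}(\X{m-1})$. Hence each $\nt[\alpha]^{-1}(\nt[S]^i)$ is a subdivision of a subgraph $\X{m-1}^i$ of the coarse graph $\X{m-1}$. The paper then invokes Proposition~5.13 of \cite{minc1} on each arm separately to obtain arc-factorings $\beta^i\circ\alpha^i=\nPhi{m-1}{0}|_{\X{m-1}^i}$, and glues these at the branch point. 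Your plan to define $\alpha$ on a vertex $x\in\mathrm{V}(\X{m-1})$ by ``tracking how edges incident to $x$ are distributed among arcs of $\nt[T]$'' would run into the problem that several such edges may land in different arms, with no evident way to choose a single value; the subdivision viewpoint dissolves this issue entirely. The isomorphism with $\varprojlim\dinvseq{X}{\Phi}$ that you are reaching for is used in Proposition~\ref{claim16}, not here.
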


\begin{proof}
Let $\nt[S]^1,\nt[S]^2,\ldots,\nt[S]^{\nt}$ be $\nt$ arcs so that
$\bigcup\limits_{i=1}^{\nt}\nt[S]^i=\nt[T]$ and
$\nt[S]^i\bigcap\nt[S]^j=\{\nt[s]_0\}$ for each distinct
$i,j\in\oneton{\nt}$, and so $\nt[s]_0$ is an endpoint of
$\nt[S]^i$ for each $i\in\oneton{\nt}$.

\underline{\scriptsize \bf CLAIM}:  $\nt[\alpha]^{-1}(\nt[S]^i)$
is a subdivision of a subgraph $\X{m-1}^i$ of $\X{m-1}$ for each
$i\in\oneton{\nt}$.

Let $i\in\oneton{\nt}$ and $x,y\in\mathrm{V}(\X{m-1})$ so that $x$
and $y$ are distinct and adjacent and
$(\edge{x}{}{y}{},{_nY}_m)\bigcap\nt[\alpha]^{-1}(\nt[S]^i)\not=\emptyset$.
Since $\nt[\beta](\nt[s]_0)=v_0$ and ${_n\Gamma}_1^m$ is a
subdivision of $\nPhi{m-1}{0}$ matching ${_nY}_1$, then
$\nt[\alpha]^{-1}(\nt[s]_0)\subseteq\mathrm{V}(\X{m-1})$.  Suppose
$\nt[\alpha](x)\in\nt[S]^i$.  If $\nt[\alpha](y)\notin\nt[S]^i$,
then
$(\edge{x}{}{y}{},{_nY}_m)\bigcap\nt[\alpha]^{-1}(\nt[S]^i)=\{x\}$,
and if $\nt[\alpha](y)\in\nt[S]^i$, then
$(\edge{x}{}{y}{},{_nY}_m)\subseteq\nt[\alpha]^{-1}(\nt[S]^i)$.
Suppose $\nt[\alpha](x)\notin\nt[S]^i$.  If
$\nt[\alpha](y)\notin\nt[S]^i$, then
$(\edge{x}{}{y}{},{_nY}_m)\bigcap\nt[\alpha]^{-1}(\nt[S]^i)=\emptyset$,
and if $\nt[\alpha](y)\in\nt[S]^i$, then
$(\edge{x}{}{y}{},{_nY}_m)\bigcap\nt[\alpha]^{-1}(\nt[S]^i)=\{y\}$.

Thus, the claim holds.

By Proposition 5.13 in \cite{minc1}, for each $i\in\oneton{\nt}$,
there exist an arc $S^i$, simplicial maps
$\beta^i:S^i\longrightarrow\nPhi{m-1}{0}(\X{m-1}^i)$ and
$\alpha^i:\X{m-1}^i\longrightarrow S^i$ so that
$\beta^i\circ\alpha^i=\nPhi{m-1}{0}|_{\X{m-1}^i}$, and an endpoint
$s_0^i$ of $S^i$ so that $\beta^i(s_0^i)=\nt[\beta](\nt[s]_0)$ and
${\alpha^i}^{-1}(s_0^i)={\nt[\alpha]}^{-1}(\nt[s]_0)$.

Let $T=\bigcup\limits_{i=1}^{\nt}S^i/\sim$, where $\sim$
identifies $s_0^i$ to $s_0$ for all $i\in\oneton{\nt}$, and
$q:\bigcup\limits_{i=1}^{\nt}S^i\longrightarrow\bigcup\limits_{i=1}^{\nt}S^i/\sim$
be the quotient map, and define $\beta:T\longrightarrow\X{0}$ as
$\beta(y)=\beta^i(x)$ if $y=q(x)$ for some
$x\in\mathrm{V}(S^i\setminus\{s_0^i\})$ and $i\in\oneton{\nt}$ and
$\beta(s_0)=\nt[\beta](\nt[s]_0)$, and
$\alpha:\X{m-1}\longrightarrow T$ as $\alpha(x)=q(\alpha^i(x))$ if
$x\in\mathrm{V}(\X{m-1}^i)$.

Since $\beta^i$ is simplicial and
$\beta^i(s_0^i)=\nt[\beta](\nt[s]_0)$ for all $i\in\oneton{\nt}$,
then $\beta$ is simplicial.

\underline{\scriptsize \bf CLAIM}:  $\alpha$ is well-defined.

Let $x\in\mathrm{V}(\X{m-1}^i\bigcap\X{m-1}^j)$ for distinct
$i,j\in\oneton{\nt}$.  Then, $x\in\nt[\alpha]^{-1}(\nt[s]_0)$, and
so, $q(\alpha^i(x))=q(s_0^i)=s_0=q(s_0^j)=q(\alpha^j(x))$.

\underline{\scriptsize \bf CLAIM}:  $\alpha$ is simplicial.

Let $x,y\in\mathrm{V}(\X{m-1})$ be distinct and adjacent.

By the first claim, $x,y\in\X{m-1}^i$ for some $i\in\oneton{\nt}$.
Since $q$ is simplicial, $\alpha^i$ is simplicial, and
$\nPhi{m-1}{0}$ is light, then $\alpha(x)=q(\alpha^i(x))$ and
$\alpha(y)=q(\alpha^i(y))$ are adjacent.

\underline{\scriptsize \bf CLAIM}:
$\beta\circ\alpha=\nPhi{m-1}{0}$.

Let $x\in\mathrm{V}(\X{m-1})$.  Then, $x\in\mathrm{V}(\X{m-1}^i)$
for some $i\in\oneton{\nt}$.  If $x\in\nt[\alpha]^{-1}(\nt[s]_0)$,
then
$\nPhi{m-1}{0}(x)=\beta^i(\alpha^i(x))=\beta^i(s_0^i)=\nt[\beta](\nt[s]_0)=\beta(s_0)=\beta(q(s_0^i))=\beta(q(\alpha^i(x)))=\beta(\alpha(x))$,
and if $x\notin\nt[\alpha]^{-1}(\nt[s]_0)$, then
$\alpha^i(x)\in\mathrm{V}(S^i\setminus\{s_0^i\})$ and
$\nPhi{m-1}{0}(x)=\beta^i(\alpha^i(x))=\beta(q(\alpha^i(x)))=\beta(\alpha(x))$.

\underline{\scriptsize \bf CLAIM}:  $\beta(s_0)=v_0$.

By definition, $\beta(s_0)=\nt[\beta](\nt[s]_0)$, and by
hypothesis, $\nt[\beta](\nt[s]_0)=v_0$.
\end{proof}

\begin{proposition} \label{claim16}
There do not exist a simple-$\nt$-od $T$ and simplicial maps
$\alpha:\X{m}\longrightarrow T$ and $\beta:T\longrightarrow\X{0}$
so that $\beta\circ\alpha=\nPhi{m}{0}$ and $\beta(s_0)=v_0$, where
$s_0$ is the branch point of $T$, for each $m\in\{1,2,\ldots\}$
and for each $\nt\in\oneton{n-1}$.
\end{proposition}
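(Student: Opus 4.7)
The plan is to argue by induction on $m$: show that any factoring of $\nPhi{m}{0}$ through a simple-$\nt$-od (with $\nt\le n-1$ and branch point sent to $v_0$) yields a factoring of $\nPhi{m-1}{0}$ through a simple-$\nt^{\prime}$-od (with $\nt^{\prime}\le\nt$, branch point sent to $v_0$), so that iteration descends to $m=0$, where an immediate contradiction arises: $\nPhi{0}{0}$ is the identity on $\X{0}$, so $\beta\circ\alpha=\mathrm{id}_{\X{0}}$ would force $\alpha$ to embed the simple-$n$-od $\X{0}$ into $T$, yet the simple-$\nt$-od $T$ contains no vertex of order as large as $n$.

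For the descent at level $m\ge 1$, start with any factoring $\beta\circ\alpha=\nPhi{m}{0}$ with $\beta(s_0)=v_0$ and apply Propositions \ref{claim13} and \ref{claim14} in succession to obtain a ``nice'' factoring $\hat\beta\circ\hat\alpha=\nPhi{m}{0}$, where $\hat T$ is a simple-$\nt_1$-od with $\nt_1\le\nt$, $\hat\beta(\hat s_0)=v_0$, and $\hat\beta(\hat s_1^{\delta})=v_{n-1}$ for every $\delta\in\oneton{\nt_1}$. Now pass to the dual construction: Propositions \ref{claim3}, \ref{claim5}, and \ref{claim6} (edge-selection preservation and consistency of $\nphi$ on $\nS$) identify $D(\nPhi{m}{0},\X{m})$ with $\Y{m}$ in a manner compatible with the bonding maps, so that $d[\nPhi{m}{0}]$ corresponds (modulo ${_n\lambda}$) to ${_n\Gamma}_1^m$, while Proposition \ref{claim4} (ultra-lightness of $d[\nphi]$) guarantees the dual maps remain simplicial. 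The dual maps $d[\hat\beta,\hat\alpha]$ and $d[\hat\beta]$ then factor $d[\nPhi{m}{0}]$ through the graph $D(\hat\beta,\hat T)$. Crucially, the uniform condition $\hat\beta(\hat s_1^{\delta})=v_{n-1}$ collapses all edges $\langle\hat s_0,\hat s_1^{\delta}\rangle$ into a single component of $\hat\beta^{-1}(\edge{v}{0}{v}{n-1})$, forcing $D(\hat\beta,\hat T)$ to remain a simple-$\nt_1$-od with branch point sent to $v_0$ by $d[\hat\beta]$. Translating through the $\Y{}$-system isomorphism yields a factoring of ${_n\Gamma}_1^m$ through this simple-$\nt_1$-od with branch point sent to $v_0\in\Y{1}$, and Proposition \ref{claim15} then lifts this to the sought factoring of $\nPhi{m-1}{0}$.

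The main obstacle is the verification, in the dualization step, that $D(\hat\beta,\hat T)$ is genuinely a simple-$\nt_1$-od rather than a graph with additional branching arising from back-folding of the arms of $\hat T$. The uniformization $\hat\beta(\hat s_1^{\delta})=v_{n-1}$ is engineered in Proposition \ref{claim14} precisely to eliminate the delicate behavior at the branch point, but one must still check, arm by arm, how the successive images $\hat\beta(\hat s_k^{\delta})$ evolve and how the subdivision of each $\hat S^{\delta}$ partitions into components of preimages of edges of $\X{0}$. A case analysis in the spirit of the proof of Proposition \ref{claim14} is needed, exploiting the ultra-lightness of $d[\nphi]$ (Proposition \ref{claim4}), the preserved edge selection $\nS$ (Propositions \ref{claim3} and \ref{claim6}), and the symmetric description of $\overline{\nPhi{m}{0}}$ given by Proposition \ref{claim9}, which together keep $\hat\beta$ constrained tightly enough to prevent any new branch point from appearing in $D(\hat\beta,\hat T)$.
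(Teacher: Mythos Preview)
Your inductive step is essentially the paper's argument, and your description of the dualization is right in spirit. Two corrections are in order.

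First, and most importantly, your base case is misplaced. You propose to descend all the way to $m=0$ and use that $\nPhi{0}{0}$ is the identity. But the descent machinery you invoke---Propositions \ref{claim13} and \ref{claim14}---is stated and proved only for $m\ge 2$; their proofs rely on Proposition \ref{claim9}, which gives the symmetric $\langle v_0,v_{n-1},v_0,\ldots\rangle$ structure of the arm images only from level $2$ onward. At level $m=1$ the arm images (e.g.\ $\overline{\nPhi{1}{0}}(\arm[n-2]{1})=\langle v_0,v_{n-1},v_{n+1}\rangle$) are too short for those arguments, so you cannot push the induction from $m=1$ down to $m=0$. The paper accordingly takes $m=1$ as the base case and dispatches it by a short direct pigeonhole argument: the $n$ arm images $\overline{\nPhi{1}{0}}(\arm{1})$ all begin $\langle v_0,v_{n-1},v_0,\ldots\rangle$ (except $\arm[n-2]{1}$, which begins $\langle v_0,v_{n-1},v_{n+1}\rangle$), and the third (resp.\ second) vertices after this prefix take all $n$ values $v_{n+1},v_n,v_1,\ldots,v_{n-2}$; since $\nt\le n-1$, two of these distinct values are forced to coincide under $\beta$ at the same position along a single arm of $T$. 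Your $m=0$ observation is correct but unreachable.

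Second, you overstate the difficulty of showing that $D(\hat\beta,\hat T)$ is a simple-$\nt_1$-od. No arm-by-arm case analysis or appeal to Proposition \ref{claim9} is needed here. Because $\hat\beta(\hat s_1^{\delta})=v_{n-1}$ for every $\delta$, the entire star $\bigcup_{\delta}\langle\hat s_0,\hat s_1^{\delta}\rangle$ lies in a single component $c_0^*$ of $\hat\beta^{-1}(\langle v_0,v_{n-1}\rangle)$. Every other vertex of $D(\hat\beta,\hat T)$ corresponds to a subarc of some single arm $\hat S^{\delta}$ (arms being arcs, their preimage components are linearly ordered), so the dual is $c_0$ with at most $\nt_1$ arcs attached---a simple-$\nt_1$-od. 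The paper records this in two sentences; the hard work was already absorbed into Proposition \ref{claim14}. Note also that $d[\hat\beta](c_0)=a_{n-1}\in D(\X{0})$, not $v_0$; the identification with $v_0\in\Y{1}$ comes only after composing with $\sigma$ and $\nlambda^{-1}$.
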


\begin{proof}
Let $\nt\in\oneton{n-1}$ and $\arm{1}=\xseq{i}{0}{1}{\ell_i}$ for
some $\ell_i\in\{1,2,\ldots\}$ for each $i\in\oneton{n}$ where
$x_0^i=v_0$ for all $i\in\oneton{n}$.

Let $m=1$ and suppose there exist a simple-$\nt$-od $T$ and
simplicial maps $\alpha:\X{1}\longrightarrow T$ and
$\beta:T\longrightarrow\X{0}$ so that
$\beta\circ\alpha=\nPhi{1}{0}$ and $\beta(s_0)=v_0$ where $s_0$ is
the branch point of $T$.

Since $\overline{\nPhi{1}{0}}(\arm{1})=\langle
v_0,v_{n-1},v_0,v_{\impj[i-1]{1}},v_0,\underline{v_{\impj[i-1]{2}},v_0},\ldots,\underline{v_{\impj[i-1]{n-2}},v_0},v_n,v_{n+2}
\rangle$ for each $i\in\oneton{n-3}$,
$\overline{\nPhi{1}{0}}(\arm[n-2]{1})=\langle v_0,v_{n-1},v_{n+1}
\rangle$, $\overline{\nPhi{1}{0}}(\arm[n-1]{1})=\langle
v_0,v_{n-1},\linebreak[0]v_0,v_n,v_{n+2} \rangle$, and
$\overline{\nPhi{1}{0}}(\arm[n]{1})=\langle
v_0,v_{n-1},v_0,v_{n-2},v_0,v_1,v_0,\underline{v_2,v_0},\ldots,\underline{v_{n-3},v_0},v_n,v_{n+2}
\rangle$, then $\nPhi{1}{0}(x_{t_i}^i)=\nPhi{1}{0}(x_t^j)$ for
some $t<t_j$ where
$x_{t_i}^i,x_{t_j}^j\in\{x_2^{n-2},x_3^k:k\in\oneton{n}\setminus\{n-2\}\}$
are distinct, giving a contradiction.

Let $m>1$, and suppose the claim holds for all
$\nt[m]\in\oneton{m-1}$.  Suppose there exist a simple-$\nt$-od
$T$ and simplicial maps $\alpha:\X{m}\longrightarrow T$ and
$\beta:T\longrightarrow\X{0}$ so that
$\beta\circ\alpha=\nPhi{m}{0}$ and $\beta(s_0)=v_0$ where $s_0$ is
the branch point of $T$.

Since $\X{m}$ is connected, $\alpha$ may be assumed to be
surjective.  By Proposition \ref{claim14}, there exist
$\nt_1,\hat{T},\hat{\alpha}$, and $\hat{\beta}$ as defined in
Proposition \ref{claim14}.  Since
$\hat{\beta}(\bigcup\limits_{i=1}^{\nt_1}\edge{\hat{s}}{0}{\hat{s}^i}{1})=\edge{v}{0}{v}{n-1}$,
then
$\bigcup\limits_{i=1}^{\nt_1}\edge{\hat{s}}{0}{\hat{s}^i}{1}\subseteq
c_0^*$ and $d[\hat{\beta}](c_0)=a_{n-1}$ where $c_0$ is a vertex
of $D(\hat{\beta},\hat{T})$.  So, if
$c_i,c_j\in\mathrm{V}(D(\hat{\beta},\hat{T}))$ are adjacent, then
$c_i^*\bigcup c_j^*\subseteq\hat{S}^{\delta}$ for some
$\delta\in\oneton{\nt}$, or $c_k=c_0$ for some $k\in\{i,j\}$ and
$c_{\ell}^*\subseteq\hat{S}^{\delta}$ for some
$\delta\in\oneton{\nt}$ where $\ell\in\{i,j\}$ and $\ell\not=k$,
giving $D(\hat{\beta},\hat{T})$ is a simple-$\nt$-od with branch
point $c_0$.

Since $\hat{\alpha}$ is surjective by definition,
$d[\hat{\beta},\hat{\alpha}]$ is surjective.  By Proposition
\ref{claim4}, Proposition 2.6 in \cite{minc1}, and Theorem 4.3 in
\cite{minc1}, there exists a simplicial map
$\sigma:D(\hat{\beta},\hat{T})\longrightarrow
D(\nPhi{1}{0},\X{1})$ so that $\sigma\circ
d[\hat{\beta},\hat{\alpha}]=d[\nPhi{1}{0},\nPhi{m}{1}]$.  By the
proof of Proposition \ref{claim4}, $\dnphi^{-1}(a_{n-1})=\{b_0\}$,
and so, since $d[\hat{\beta}](c_0)=a_{n-1}$, $\sigma(c_0)=b_0$.

By Proposition \ref{claim3}, Proposition \ref{claim6}, and Theorem
5.11 in \cite{minc1}, $\varprojlim\dinvseq{X}{\Phi}$ is isomorphic
to $\varprojlim\invseq{Y}{\Gamma}$ (Definition \ref{def5.10.1}).
Thus, there exist a simple-$\nt$-od $\nt[T]$ and simplicial maps
$\nt[\alpha]:{_nY}_m\longrightarrow\nt[T]$ and
$\nt[\beta]:\nt[T]\longrightarrow{_nY}_1$ so that
$\nt[\beta]\circ\nt[\alpha]={_n\Gamma}_1^m$ and
$\nt[\beta](\nt[s]_0)=v_0$ where $\nt[s]_0$ is the branch point of
$\nt[T]$.

By Proposition \ref{claim15}, there exist a simple-$\nt$-od $T$
and simplicial maps $\alpha:\X{m-1}\linebreak[0]\longrightarrow T$
and $\beta:T\longrightarrow\X{0}$ so that
$\beta\circ\alpha=\nPhi{m-1}{0}$ and $\beta(s_0)=v_0$ where $s_0$
is the branch point of $T$, contradicting the induction
hypothesis.  Therefore, the claim follows by induction.
\end{proof}

\newpage
\begin{figure}[here]
\setlength{\unitlength}{.35cm}
\begin{picture}(0,5.29)

\thicklines \put(2.6,0){\line(1,0){1.4}}
\put(4.2,0){\line(1,0){2.8}} \put(7.2,0){\line(1,0){1.4}}

\put(2.6,0){\line(-1,0){3}}

\put(2.6,0){\line(1,1){3}}

\put(8.6,0){\line(-1,1){3}}

\put(2.6,-3){\line(-1,-1){2}}

\put(2.6,0){\line(0,-1){3}} \put(2.6,-3){\line(1,0){6}}

\put(8.6,0){\line(0,-1){3}}

\put(2.6,-3){\line(1,2){3}}

\put(8.6,-3){\line(-1,2){3}}

\put(2.6,0){\line(2,-1){1.1}} \put(3.9,-.65){\line(2,-1){4.7}}

\put(8.6,0){\line(-2,-1){1.1}} \put(7.3,-.65){\line(-2,-1){1.6}}
\put(5.5,-1.55){\line(-2,-1){2.9}}

\put(1.9,.6){\footnotesize\ensuremath{a_4}} \color{Peach}
\put(2.6,0){\circle*{.4}} \normalcolor

\put(-.4,0){\circle*{.4}} \normalcolor

\put(5.6,3){\circle*{.4}} \normalcolor

\put(8.6,0){\circle*{.4}} \normalcolor

\put(.6,-5){\circle*{.4}} \normalcolor

\put(2.6,-3){\circle*{.4}} \normalcolor

\put(8.6,-3){\circle*{.4}} \normalcolor \thinlines


\put(3.25,-6){\Large\ensuremath{D(\X[5]{0})}}

\end{picture}

\vspace{-1cm} \setlength{\unitlength}{.35cm} \hspace{2.1cm}
\begin{picture}(0,0)

\put(6.3,-6.6){\Large\ensuremath{D(\nphi[5],\X[5]{1})}}

\normalcolor \put(14,-1.8){\line(-1,0){8}}
\put(10,-1.8){\line(0,1){4}} 
\put(10,-1.8){\line(-1,-1){2.8284}}
\put(10,-1.8){\line(1,-1){2.8284}} 
\put(9,-1.3){\footnotesize\ensuremath{v_0}}
\put(10,-1.8){\circle*{.4}}
\put(10,2.2){\circle*{.4}}
\put(6,-1.8){\circle*{.4}}
\put(14,-1.8){\circle*{.4}}
\put(6.5,-2.6){\footnotesize\ensuremath{v_4}}
\put(6.7,-1.8){\circle*{.4}}
\put(7.1716,-4.6284){\circle*{.4}}
\put(7.6663,-4.1337){\circle*{.4}}
\put(12.8284,-4.6284){\circle*{.4}}


\put(11.1001,-1.8){\circle*{.2}}
\put(11.8335,-1.8){\circle*{.2}}
\put(12.5669,-1.8){\circle*{.2}}
\put(13.3,-1.8){\circle*{.2}}

\put(9.2221,-2.5779){\circle*{.2}}
\put(8.7035,-3.0965){\circle*{.2}}
\put(8.1849,-3.6151){\circle*{.2}}

\put(10.7779,-2.5779){\circle*{.2}}
\put(11.2965,-3.0965){\circle*{.2}}
\put(11.8151,-3.6151){\circle*{.2}}
\put(12.3337,-4.1337){\circle*{.2}}

\color{Black} \put(10.5186,-2.3186){\line(1,-1){2.3098}}
\put(9.4814,-2.3186){\line(-1,-1){2.3098}} \normalcolor

\color{Peach} \put(10,-1.8){\circle*{.4}}
\normalcolor

\color{Peach} \thicklines \put(7.25,.2){\vector(-1,0){4}}
\normalcolor \thinlines \put(3.85,1){\Large
\ensuremath{d[{_5\phi}}]}

\end{picture}

\vspace{-.8cm} \setlength{\unitlength}{.35cm} \hspace{11.45cm}
\begin{picture}(0,0)

\put(-3.8,-6.6){\Large\ensuremath{D(\nPhi[5]{m}{0},\X[5]{m})}}

\normalcolor \put(4.5,-1.8){\line(-1,0){8}}
\put(.5,-1.8){\line(0,1){4}} \put(.5,-1.8){\line(-1,-1){2.8284}}
\put(.5,-1.8){\line(1,-1){2.8284}}
\put(-.5,-1.3){\footnotesize\ensuremath{v_0}}
\put(.5,-1.8){\circle*{.4}}

\color{Peach}
\put(.5,-1.8){\circle*{.4}} \normalcolor

\color{Peach} \thicklines \put(-10.05,.2){\vector(-1,0){4}}
\put(-2.05,.2){\vector(-1,0){4}} \put(7.05,.2){\vector(-1,0){4}}
\normalcolor \put(-9,0){\ensuremath{\cdots}}
\put(-8,0){\ensuremath{\cdots}} \put(7.8,0){\ensuremath{\cdots}}

\color{Peach} \put(-4.05,2){\vector(-1,0){8}} \normalcolor
\thinlines \put(-11.1,2.8){\Large
\ensuremath{d[\nphi[5],\nPhi[5]{m}{1}]}}

\end{picture}

\vspace{4cm} \setlength{\unitlength}{.35cm} \hspace{7.15cm}
\begin{picture}(0,0)

\put(-1.8,-7.6){\Large\ensuremath{D(\beta,T)}}

\put(4.5,-1.8){\line(-1,0){8}} \put(.5,-1.8){\line(0,1){4}}
\put(.5,-1.8){\line(0,-1){4}} 
\put(.5,-1.8){\circle*{.4}} 

\color{Peach} \put(.5,-1.8){\circle*{.4}} \normalcolor

\thicklines \color{Peach} \put(-5,2.5){\vector(-1,1){3}}
\normalcolor \put(9,5.5){\vector(-1,-1){3}} \thinlines
\put(8.1,3.25){\Large \ensuremath{d[\beta,\alpha]}}
\put(-9.85,3){\Large \ensuremath{d[\beta]}} \put(-1.6,4.1){\Large
\ensuremath{\sigma}} \color{Peach} \put(-2,2.5){\vector(0,1){4}}
\normalcolor

\end{picture}
\end{figure}
\vspace{2.5cm}
\begin{figure}[here]
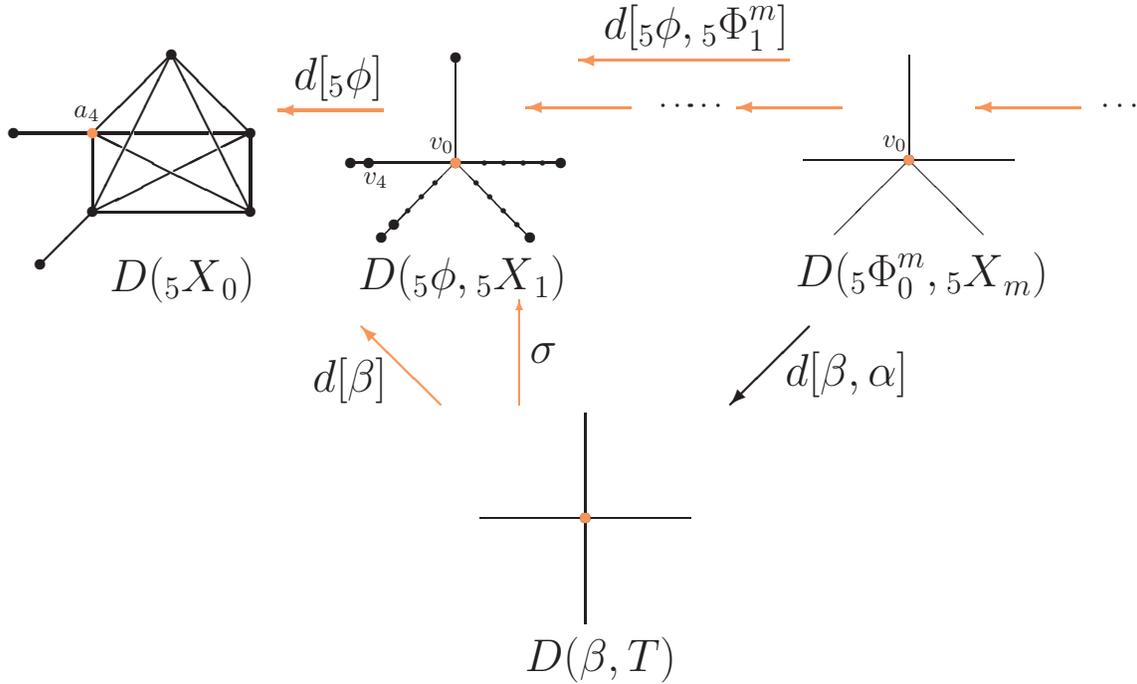

\caption{Factoring of ${_5\Gamma}_1^m$}
\end{figure}

\begin{proposition} \label{claim18}
$\K$ is not simple-$(n-1)$-od-like.
\end{proposition}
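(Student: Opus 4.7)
The plan is to argue by contradiction, using the two main results already established in this chapter: Theorem \ref{claim17} (the general factoring theorem) and Proposition \ref{claim16} (the combinatorial non-factoring result for the specific inverse system defining \K).

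First, I would verify that the hypotheses of Theorem \ref{claim17} apply to \K. By construction, $\K = \varprojlim\invseq{X}{\Phi}$ is generated by $\{\nphi, \nphi, \ldots\}$, so by Definition \ref{def5.10} each factor space $\X{i}$ is (up to the initial space) a subdivision of the simple-$n$-od $\X{0}$. Topologically, a subdivision of a simple-$n$-od is a simple-$n$-od graph (its unique vertex of order greater than $2$ is still $v_0$), and each $\nPhi{i+1}{i}$ is simplicial by construction. Thus $\K$ meets the hypotheses of Theorem \ref{claim17}.

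Now, assume for contradiction that \K{} is simple-$(n-1)$-od-like. Applying Theorem \ref{claim17} with $i=0$ produces an integer $j \in \{1,2,\ldots\}$, an integer $\nt \in \oneton{n-1}$, a simple-$\nt$-od $T$, and simplicial maps $\alpha \colon \X{j} \longrightarrow T$ and $\beta \colon T \longrightarrow \X{0}$ such that $\beta \circ \alpha = \nPhi{j}{0}$ and $\beta(s_0) = v_0$, where $s_0$ is the branch point of $T$. But this is precisely a configuration whose existence is excluded by Proposition \ref{claim16} (with $m = j$), giving the required contradiction.

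There is no real obstacle here: the hard work has already been done in Theorem \ref{claim17} (where branch-point-to-branch-point factoring is arranged in general) and in Proposition \ref{claim16} (where the specific symmetry of \nphi{} given by Proposition \ref{claim9}, together with Propositions \ref{claim13}--\ref{claim15}, is used to show that no such factoring can exist for the particular bonding maps $\nPhi{m}{0}$). The only point requiring a moment's attention in writing up is confirming that each $\X{i}$ in the inverse system really is a simple-$n$-od graph in the sense of Theorem \ref{claim17}, which is immediate from the definition of subdivision.
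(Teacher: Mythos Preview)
Your proposal is correct and follows essentially the same approach as the paper's own proof, which simply states that if \K{} were simple-$(n-1)$-od-like then the conclusion of Theorem \ref{claim17} (applied with $i=0$) would contradict Proposition \ref{claim16}. Your write-up is more explicit about checking that each $\X{i}$ is a simple-$n$-od graph and about matching the parameters, but the logical content is identical.
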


\begin{proof}
If $\K$ is simple-$(n-1)$-od-like, then the conclusion of Theorem
\ref{claim17} for $j=0$ contradicts Proposition \ref{claim16}.
\end{proof}

\begin{proposition} \label{claim19}
\K{} is indecomposable.
\end{proposition}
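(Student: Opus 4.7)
The plan is to assume \K{} decomposes and derive a contradiction by invoking Bing's 1951 theorem (from \cite{bing2}, recalled in the introduction) together with the propositions already established in this chapter. Specifically, Bing's theorem says that a hereditarily decomposable, tree-like, atriodic continuum is chainable, so the strategy is to verify each of these hypotheses for \K{} under the decomposability assumption and then use Proposition \ref{claim18} to obtain the contradiction.

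First I will suppose for contradiction that $\K=A\bigcup B$ where $A$ and $B$ are proper subcontinua of \K. By Proposition \ref{claim1}, each of $A$ and $B$ is an arc, and (nondegenerate) arcs are decomposable, so every proper nondegenerate subcontinuum of \K{} is decomposable; together with the standing assumption that \K{} itself decomposes, this shows that \K{} is hereditarily decomposable. Next, \K{} is tree-like, since by construction it is simple-$n$-od-like (an inverse limit of trees with simplicial bonding maps), and \K{} is atriodic by Proposition \ref{claim1.5}. With these three hypotheses in hand, Bing's theorem from \cite{bing2} applies and yields that \K{} is chainable.

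To finish, I will use the observation recalled in the introduction that a simple-$k$-od-like continuum is also simple-$(k+1)$-od-like. Chainability of \K{} means \K{} is simple-2-od-like, so iterating the observation (using $n-1\ge 2$) gives that \K{} is simple-$(n-1)$-od-like, directly contradicting Proposition \ref{claim18}. I do not anticipate any substantive obstacle, since all of the nontrivial content is already packaged in Propositions \ref{claim1}, \ref{claim1.5}, and \ref{claim18}; the only point that needs care is verifying the hypotheses of Bing's theorem, and each of those follows at once from these earlier results and from the simplicial inverse-limit construction of \K.
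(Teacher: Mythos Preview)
Your proposal is correct and follows essentially the same approach as the paper: the paper's proof is a one-line citation of Proposition \ref{claim1}, Proposition \ref{claim1.5}, Proposition \ref{claim18}, and Corollary 1 in \cite{bing2}, and you have spelled out precisely the logic that threads these together.
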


\begin{proof}
The claim follows from Proposition \ref{claim1}, Proposition
\ref{claim1.5}, Proposition \ref{claim18}, and Corollary 1 in
\cite{bing2}.
\end{proof}

\begin{theorem}
For each integer $n$ greater than or equal to 3, there exists a
simple-$n$-od-like continuum (\K) having the properties of not
being simple-$(n-1)$-od-like and of every proper nondegenerate
subcontinuum being an arc.
\end{theorem}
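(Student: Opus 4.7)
The plan is to simply invoke the construction and the propositions already established in the chapter, since the theorem merely collects the salient properties of the continuum \K{} that has been built. First I would point out that \K{} is presented in the previous section as $\varprojlim\invseq{X}{\Phi}$ generated by $\{\nphi,\nphi,\ldots\}$, where each factor space $\X{i}$ is (a subdivision of) a simple-$n$-od graph; hence \K{} is simple-$n$-od-like directly from the definition of $\mathcal{P}$-likeness applied to the collection of simple-$n$-od polyhedra.

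Next I would record that every proper nondegenerate subcontinuum of \K{} is an arc, which is exactly the statement of Proposition \ref{claim1}. The hard work of that fact lies in the design of \nphi{}: the embedding properties of \nphi{} on the distinguished edges $\langle u_{(2n-1)(n-3)+2(n-1)},v_n,v_{n+2} \rangle$ and $\langle v_0,u_{(2n-1)(n-3)+2(n-1)+2} \rangle$, combined with the fact that $\nPhi{j}{0}$ eventually covers all of \X{0}, force any subcontinuum containing an ``off-chain'' vertex to be all of \K{}. I would not reproduce that argument since it is already given in Proposition \ref{claim1}.

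Finally I would invoke Proposition \ref{claim18} to obtain that \K{} is not simple-$(n-1)$-od-like. That proposition is itself the payoff of the long factoring analysis (Propositions \ref{claim10}--\ref{claim16}): if \K{} were simple-$(n-1)$-od-like, Theorem \ref{claim17} would provide, for $j=0$, a simplicial factoring of $\nPhi{j}{0}$ through a simple-$\nt$-od with $\nt\le n-1$ sending branch point to branch point, and Proposition \ref{claim16} rules this out. With these three ingredients in hand the theorem follows at once; the main obstacle, namely the nonexistence of a factoring, has already been surmounted in the previous section, so at this stage there is nothing further to prove beyond assembling the citations.
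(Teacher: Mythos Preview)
Your proposal is correct and matches the paper's own proof, which simply reads ``The claim follows from Propositions \ref{claim1} and \ref{claim18}.'' You add a bit of helpful context (noting that \K{} is simple-$n$-od-like by construction and sketching why the cited propositions hold), but the argument is the same assembly of citations.
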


\begin{proof}
The claim follows from Propositions \ref{claim1} and
\ref{claim18}.
\end{proof}

\chapter{FURTHER QUESTIONS}

This dissertation provides, for each integer $n\ge 3$, a
simple-$n$-od-like continuum {\K} which is not
simple-$(n-1)$-od-like and whose every proper nondegenerate
subcontinuum is an arc.  Examples for the case $n=5$ and higher
were not proven previously.  The existence of such continua is
related to the problem of being able to distinguish among
tree-like continua those that are chainable.  A fundamental open
question in this area is the following (Question 5.1) due to L.
Mohler. One partial positive answer has been given by L.G.
Oversteegen in \cite{oversteegen} for continua satisfying the
additional conditions of being the continuous image of a chainable
continuum under an induced map and of having surjective semispan
equal to zero.  Another partial positive answer has been given by
P. Minc in \cite{minc3}, as another application of the
combinatorial machinery developed in \cite{minc1}, for continua
which are the inverse limits of trees with simplicial bonding
maps.

\begin{question} [Problem 16 \cite{lewis2}] Is every
atriodic tree-like continuum that is the continuous image of a
chainable continuum chainable?
\end{question}

Examining possible patterns for nested intersections of covers as
determined by the pattern of the bonding map \nphi{} (for $n\ge
4$), the examples presented here do not appear to be planar. The
question of just how ``simple'' examples with these properties
could be, corresponding to questions regarding the complexity of
subcontinua of the plane, is raised in the following.

\begin{question}  For each integer $n$ greater than 3, does there
exist a simple-$n$-od-like planar continuum having the properties
of not being simple-$(n-1)$-od-like and of every proper
nondegenerate subcontinuum being an arc (being atriodic)?
\end{question}

Do there exist examples with the above-mentioned properties,
replacing arc with pseudo-arc?  The combinatorial techniques
employed here, being dependent on a simplicial setting, could not
be directly utilized in showing that such a given example is not
simpler (Definition \ref{simpler2}).  The case for $n=3$ is known,
as mentioned in the Introduction, with an example in
\cite{ingram3}, constructed so as to be continuously mapped onto
the example in \cite{ingram2}. This implies nonchainabilty of the
example in \cite{ingram3} since the example in \cite{ingram2} is
not the continuous image of a chainable continuum.  This does not
naturally generalize to a method for showing that a hereditarily
indecomposable simple-$n$-od-like continuum is not
simple-$(n-1)$-od-like. Specifically, can the bonding map for \K{}
be modified, in a manner similar to that done to the bonding map
in \cite{ingram2} to give the example in \cite{ingram3}, to
produce a continuum which is not simple-$(n-1)$-od-like?  What
techniques could be used to recognize that such a continuum is not
simple-$(n-1)$-od-like?

\begin{question}  For each integer $n$ greater
than 3, does there exist a simple-$n$-od-like (planar) continuum
having the properties of not being simple-$(n-1)$-od-like and of
being hereditarily indecomposable (every proper nondegenerate
subcontinuum being a pseudo-arc)?
\end{question}

The following question concerns the existence of more
``complicated'' atriodic examples known to be like a certain graph
and not like any simpler graph (Definition \ref{simpler1}).  For
trees, in constructing an atriodic continuum like a given tree and
not like anything simpler, the examples \K{} could be a way to
control the order of the branching.

\begin{question}  For a graph (tree) $G$, does
there exist a $G$-like continuum having the properties of not
being $H$-like, for all graphs $H$ simpler (Definition
\ref{simpler1}) than $G$, and of being atriodic (being
hereditarily indecomposable)?
\end{question}

\begin{question} Does there exist a continuum $K$, with the
property of being atriodic (of every proper nondegenerate
subcontinuum being an arc), which is not simple-$n$-od-like for
each positive integer $n$ and which for each $\epsilon>0$ there
exist a positive integer $m$ and an open cover $\mathcal{U}$ of
$K$ so that $\mathrm{mesh}(\mathcal{U})<\epsilon$ and the nerve of
$\mathcal{U}$ is a simple-$m$-od?
\end{question}

If $v$ in a continuum $K$ is a branch point of $K$ of order $n$,
then $v$ is a branch point of $K$ of order $n+1$.  If $K$ is
simple-$(n-1)$-od-like, is $v$ a branch point of $K$ of order
$n-1$?  For \K, by construction, \branch{} is a branch point of
\K{} of order $n$. In supposing \K{} is simple-$(n-1)$-od-like, by
Proposition \ref{claim0}, \branch{} is ``close'' to being a branch
point of \K{} of order $n-1$.  Although sufficient in showing \K{}
not being simple-$(n-1)$-od-like, the argument could be made more
concise in the case of \branch{} necessarily being a branch point
of \K{} of order $n-1$.

\begin{question}  Does there exist a continuum $K$ with $v\in K$ so
that, for some integer $n$, $v$ is a branch point of $K$ of order
$n$, $v$ is not a branch point of $K$ of order $n-1$, and $K$ is
simple-$(n-1)$-od-like?
\end{question}

\renewcommand{\baselinestretch}{1}\selectfont
\addcontentsline{toc}{chapter}{BIBLIOGRAPHY}
\bibliographystyle{plain}

\renewcommand{\baselinestretch}{1.5}\selectfont

\newpage

\addcontentsline{toc}{chapter}{APPENDIX:  SOME BONDING MAPS}
\begin{appendix}
\begin{center}APPENDIX \\ SOME BONDING MAPS\end{center}

\addcontentsline{lof}{figure}{A.1 \ \ Ingram bonding map}

\begin{figure}[here] \vspace{4.3cm}
\hspace{2.8cm} \setlength{\unitlength}{.85cm}
\begin{picture}(0,0)

\thicklines 

\color{LimeGreen} \put(-3.3,-3.8){\large \ensuremath{{A}_{0}^{2}}}
\put(-2.3,-5){\line(1,0){7.5}} \put(-2.3,-5){\line(0,1){2.5}}
\put(-2.3,-2.5){\line(1,0){6.25}}

\color{Red} \put(13,-3.8){\large \ensuremath{{A}_{0}^{3}}}
\put(5.2,-5){\line(1,0){7.5}} \put(12.7,-5){\line(0,1){2.5}}
\put(12.7,-2.5){\line(-1,0){6.25}}

\color{RawSienna} \put(4.95,4.05){\large \ensuremath{{A}_{0}^{1}}}
\put(3.95,-2.5){\line(0,1){6.25}}
\put(6.45,-2.5){\line(0,1){6.25}} \put(3.95,3.75){\line(1,0){2.5}}
\normalcolor

\thinlines

\color{LimeGreen} \put(12.1,-3.85){\scriptsize
\ensuremath{{A}_{1}^{2}}}

\put(-1.675,-3.75){\line(1,0){6.5625}}
\put(4.8875,-3.75){\line(0,1){3.75}}
\put(4.8875,0){\line(1,0){.625}} \put(5.5125,0){\line(0,-1){3.75}}
\put(5.5125,-3.75){\line(1,0){6.5875}}

\color{RawSienna} \put(12.1,-3.225){\scriptsize
\ensuremath{{A}_{1}^{1}}}

\put(4.575,-3.125){\line(-1,0){6.25}}
\put(-1.675,-3.125){\line(0,-1){.625}}
\put(4.575,3.125){\line(0,-1){6.25}}
\put(4.575,3.125){\line(1,0){1.25}}
\put(5.825,3.125){\line(0,-1){6.25}}
\put(5.825,-3.125){\line(1,0){6.225}}

\color{Red} \put(12.1,-4.475){\scriptsize
\ensuremath{{A}_{1}^{3}}}

\put(-1.675,-4.375){\line(0,1){.625}}
\put(-1.675,-4.375){\line(1,0){13.725}} \normalcolor

\put(-1.675,-3.75){\circle*{.168}}

\end{picture}
\end{figure}
\vspace{4cm}
\begin{figure}[here]
\begin{center} Figure A.1:  The bonding map from \cite{ingram2}
\end{center}
\end{figure}

The bonding map above is used by W.T. Ingram as the single bonding
map in the construction of an inverse limit, published in 1972,
being the first proven counterexample to a question of Bing from
1951 as to whether every atriodic nonseparating plane continuum is
chainable. Nonchainability of Ingram's continuum follows from his
proof of positive span of the continuum, and the nature of the
bonding map ensures every proper nondegenerate subcontinuum being
an arc, implying atriodicity.  As evident by examining the pattern
above, the continuum is embeddable in the plane.  Ingram's
continuum is the first continuum which, by construction, is
simple-3-od-like and shown not to be simple-2-od-like.

\newpage
\addcontentsline{lof}{figure}{A.2 \ \ Davis-Ingram bonding map}

\begin{figure}[here] \vspace{4.3cm}
\hspace{2.8cm} \setlength{\unitlength}{.85cm}
\begin{picture}(0,0)

\thicklines 

\color{LimeGreen} \put(-3.3,-3.8){\large \ensuremath{{A}_{0}^{2}}}
\put(-2.3,-5){\line(1,0){7.5}} \put(-2.3,-5){\line(0,1){2.5}}
\put(-2.3,-2.5){\line(1,0){6.25}}

\color{Red} \put(13,-3.8){\large \ensuremath{{A}_{0}^{3}}}
\put(5.2,-5){\line(1,0){7.5}} \put(12.7,-5){\line(0,1){2.5}}
\put(12.7,-2.5){\line(-1,0){6.25}}

\color{RawSienna} \put(4.95,4.05){\large \ensuremath{{A}_{0}^{1}}}
\put(3.95,-2.5){\line(0,1){6.25}}
\put(6.45,-2.5){\line(0,1){6.25}} \put(3.95,3.75){\line(1,0){2.5}}
\normalcolor

\thinlines

\color{RawSienna} \put(-2.25,-3.85){\scriptsize
\ensuremath{{A}_{1}^{1}}}

\put(.95,-3.75){\line(-1,0){2.8}}

\color{Red} \put(12.1,-3.225){\scriptsize
\ensuremath{{A}_{1}^{3}}}

\put(.95,-3.75){\line(0,1){.625}}
\put(.95,-3.125){\line(1,0){3.625}}
\put(4.575,-3.125){\line(0,1){6.25}}
\put(4.575,3.125){\line(1,0){1.25}}
\put(5.825,3.125){\line(0,-1){6.25}}
\put(5.825,-3.125){\line(1,0){6.225}}

\color{LimeGreen} \put(12.1,-4.475){\scriptsize
\ensuremath{{A}_{1}^{2}}}

\put(.95,-3.75){\line(0,-1){.625}}
\put(.95,-4.375){\line(1,0){11.1}} \normalcolor

\put(.95,-3.75){\circle*{.168}}

\end{picture}
\end{figure}
\vspace{4cm}
\begin{figure}[here]
\begin{center} Figure A.2:  The bonding map from \cite{davis}
\end{center}
\end{figure}

The bonding map above is used by J.F. Davis and W.T. Ingram as the
single bonding map in the construction of an inverse limit,
published in 1988, with the properties of having positive span and
of having every proper nondegenerate subcontinuum as an arc in
common with the previous example, with similar techniques utilized
in demonstrating positive span.  The example is constructed to
satisfy an additional property.  The nature of the bonding map
allows for a continuous map to be induced from the Davis-Ingram
continuum to a chainable continuum, having only one nondegenerate
point inverse which is an arc.  The Davis-Ingram continuum is the
first known example, having the previous example's properties,
admitting a continuous monotone map to a chainable continuum.

\newpage
\addcontentsline{lof}{figure}{A.3 \ \ Minc bonding map}

\begin{figure}[here] \vspace{4.3cm}
\hspace{2.8cm} \setlength{\unitlength}{.85cm}
\begin{picture}(0,0)

\thicklines 

\color{Red} \put(4.8,-11.929){\large \ensuremath{{A}_{0}^{4}}}

\put(3.95,-5){\line(0,-1){6.25}} \put(6.45,-5){\line(0,-1){6.25}}
\put(3.95,-11.25){\line(1,0){2.5}}

\color{LimeGreen} \put(-3.3,-3.8){\large \ensuremath{{A}_{0}^{3}}}

\put(-2.3,-5){\line(1,0){6.25}} \put(-2.3,-5){\line(0,1){2.5}}
\put(-2.3,-2.5){\line(1,0){6.25}}

\color{Orchid} \put(13,-3.8){\large \ensuremath{{A}_{0}^{1}}}

\put(12.7,-5){\line(-1,0){6.25}} \put(12.7,-5){\line(0,1){2.5}}
\put(12.7,-2.5){\line(-1,0){6.25}}

\color{RawSienna} \put(4.95,4.05){\large \ensuremath{{A}_{0}^{2}}}

\put(3.95,-2.5){\line(0,1){6.25}}
\put(6.45,-2.5){\line(0,1){6.25}} \put(3.95,3.75){\line(1,0){2.5}}
\normalcolor

\thinlines

\color{RawSienna} \put(-2.25,-3.934){\scriptsize
\ensuremath{{A}_{1}^{2}}}

\put(.95,-3.834){\line(-1,0){2.8}}

\color{Red} \put(4.975,-11.074){\scriptsize
\ensuremath{{A}_{1}^{4}}}

\put(.784,-3.666){\line(1,0){.166}}
\put(.784,-3.666){\line(0,1){.707}}
\put(.784,-2.959){\line(1,0){3.791}}

\put(4.575,-2.959){\line(0,1){6.25}}
\put(4.575,3.291){\line(1,0){.625}}
\put(5.2,3.291){\line(0,-1){7.583}}
\put(5.2,-4.458){\line(0,-1){6.167}}

\color{LimeGreen} \put(4.375,-11.074){\scriptsize
\ensuremath{{A}_{1}^{3}}}

\put(.95,-3.75){\line(0,1){.625}}
\put(.95,-3.125){\line(1,0){4.167}}
\put(12.075,-3.75){\line(0,1){.625}}
\put(12.075,-3.125){\line(-1,0){6.792}}
\put(12.075,-3.75){\line(-1,0){6.792}}
\put(5.117,-3.75){\line(-1,0){.542}}
\put(4.575,-3.75){\line(0,-1){.542}}

\put(4.575,-4.458){\line(0,-1){6.167}}

\color{Orchid} \put(5.775,3.406){\scriptsize
\ensuremath{{A}_{1}^{1}}}

\put(.95,-3.75){\line(0,-1){.625}}
\put(.95,-4.375){\line(1,0){4.709}}
\put(5.659,-4.375){\line(0,-1){3.542}}
\put(5.659,-7.917){\line(1,0){.332}}

\put(5.991,3.291){\line(0,-1){6.25}}
\put(5.991,-2.959){\line(1,0){6.25}}
\put(5.991,-4.375){\line(1,0){6.25}}
\put(12.241,-4.375){\line(0,1){1.416}}
\put(5.991,-4.375){\line(0,-1){3.542}} \normalcolor

\put(.95,-3.75){\circle*{.168}}

\end{picture}
\end{figure}
\vspace{10cm}
\begin{figure}[here]
\begin{center} Figure A.3:  The bonding map from \cite{minc2}
\end{center}
\end{figure}

\vspace{-.3cm} The bonding map above is used by P. Minc as the
single bonding map in the construction of an inverse limit,
published in 1993, with the properties of not being
simple-3-od-like and of being atriodic.  As with the others, the
nature of the bonding map ensures every proper nondegenerate
subcontinuum being an arc.  In showing the continuum to be not
simple-3-od-like, alternate techniques to span are needed.  In so
doing, Minc adapts combinatorial techniques from \cite{minc1}.
Minc's continuum is the first continuum which, by construction, is
simple-4-od-like and shown not to be simple-3-od-like.

\end{appendix}

\end{document}